\documentclass[11pt]{amsart}

\usepackage{amscd,amssymb,amsopn,amsmath,amsthm,mathrsfs,graphics,amsfonts,enumerate,verbatim,calc
}

\usepackage[OT2,OT1]{fontenc}
\newcommand\cyr{%
\renewcommand\rmdefault{wncyr}%
\renewcommand\sfdefault{wncyss}%
\renewcommand\encodingdefault{OT2}%
\normalfont
\selectfont}
\DeclareTextFontCommand{\textcyr}{\cyr} 

\usepackage{amssymb,amsmath}

\DeclareFontFamily{OT1}{rsfs}{}
\DeclareFontShape{OT1}{rsfs}{n}{it}{<-> rsfs10}{}
\DeclareMathAlphabet{\mathscr}{OT1}{rsfs}{n}{it}

\topmargin=0in
\oddsidemargin=0in
\evensidemargin=0in
\textwidth=6.5in 
\textheight=8.5in

\numberwithin{equation}{section}
\hyphenation{semi-stable} 
\newtheorem{theorem}{Theorem}[section]
\newtheorem{lemma}[theorem]{Lemma}
\newtheorem{proposition}[theorem]{Proposition}
\newtheorem{corollary}[theorem]{Corollary}

\newtheorem{conjecture}[theorem]{Conjecture}
\newtheorem{maintheorema}{Main Theorem}

\theoremstyle{definition}
\newtheorem{definition}[theorem]{Definition}

\newtheorem{remark}[theorem]{Remark}
\theoremstyle{remark}

\newtheorem{example}[theorem]{Example}

\newcommand{\Ass}{\operatorname{Ass}}
\newcommand{\im}{\operatorname{Im}}
\renewcommand{\ker}{\operatorname{Ker}}

\newcommand{\Aut}{\operatorname{Aut}}
\newcommand{\Spec}{\operatorname{Spec}}

\newcommand{\Ht}{\operatorname{ht}}

\newcommand{\Fitt}{\operatorname{Fitt}}
\newcommand{\Supp}{\operatorname{Supp}}

\newcommand{\Hom}{\operatorname{Hom}}

\newcommand{\Ann}{\operatorname{Ann}}

\newcommand{\Char}{\operatorname{char}}

\newcommand{\depth}{\operatorname{depth}}

\newcommand{\coker}{\operatorname{Coker}}

\newcommand{\ord}{\operatorname{ord}}
\newcommand{\nord}{\operatorname{n.ord}}
\newcommand{\Tr}{\operatorname{Tr}}
\newcommand{\Frac}{\operatorname{Frac}}
\newcommand{\PD}{\operatorname{PD}}
\newcommand{\Cor}{\operatorname{Cor}}

\newcommand{\loc}{\operatorname{loc}}

\newcommand{\Sp}{\operatorname{Sp}}
\newcommand{\rc}{\operatorname{rc}}

\newcommand{\BK}{\operatorname{BK}}
\newcommand{\cyc}{\operatorname{cyc}}
\newcommand{\Eul}{\operatorname{Eul}}
\newcommand{\et}{\operatorname{et}}
\newcommand{\Betti}{\operatorname{Betti}}
\newcommand{\Bettip}{\operatorname{Betti,par}} 
\newcommand{\ES}{\operatorname{ES}} 
\newcommand{\End}{\operatorname{End}} 
\newcommand{\dR}{\operatorname{dR}} 
\newcommand{\Fil}{\operatorname{Fil}} 
\newcommand{\sgn}{\operatorname{sgn}}

\newcommand{\Min}{\operatorname{Min}}
\newcommand{\rank}{\operatorname{rank}}

\newcommand{\fm}{\frak{m}}
\newcommand{\fp}{\frak{p}}
\newcommand{\fq}{\frak{q}}
\newcommand{\fa}{\frak{a}}

\newcommand{\fn}{\frak{n}}

\newcommand{\Sel}{\operatorname{Sel}}
\newcommand{\Gal}{\operatorname{Gal}}
\newcommand{\Frob}{\operatorname{Frob}}


\begin{document}
\title[Specialization method in Krull dimension two]
{Specialization method in Krull dimension two and Euler system theory over normal deformation rings}

\author[T.Ochiai]{Tadashi Ochiai}
\address{Graduate school of Science
Osaka University 1-1 Machikaneyama Toyonaka 
Osaka 560-0043 Japan}
\email{ochiai@math.sci.osaka-u.ac.jp}

\author[K. Shimomoto]{Kazuma Shimomoto}
\address{Department of Mathematics College of Humanities and Sciences Nihon University Setagaya-ku Tokyo 156-8550 Japan}
\email{shimomotokazuma@gmail.com}

\thanks{2000 {\em Mathematics Subject Classification\/}: 11F33, 11F67, 11F80, 11R23, 11R34, 13H10, 13N05}

\keywords{Characteristic ideal, Euler system, Hida family, nearly ordinary Galois representation, two-variable $p$-adic $L$-function}

\begin{abstract} 
The aim of this article is to establish the specialization method on characteristic ideals for finitely generated torsion modules over a complete local normal domain $R$ that is module-finite over $\mathcal{O}[[x_1,\ldots,x_d]]$, where $\mathcal{O}$ is the ring of integers of a finite extension of the field of $p$-adic integers $\mathbb{Q}_p$. The specialization method is a technique that recovers the information on the characteristic ideal $\Char_R (M)$ from $\Char_{R/I}(M/IM)$, where $I$ varies in a certain family of nonzero principal ideals of $R$. As applications, we prove Euler system bound over Cohen-Macaulay normal domains by combining the main results in \cite{OcSh} and then we prove one of divisibilities of the Iwasawa main conjecture for two-variable Hida deformations generalizing the main theorem obtained in \cite{Oc2}.  
\end{abstract}

\maketitle
\tableofcontents

\section{Introduction}
In the study of arithmetic geometry over a Noetherian local ring $R$, we are often faced with the problem to recover some arithmetic invariant attached to a given module $M$ over $R$ from those attached to modules $M/IM$ over $R/I$, where $I$ runs through a certain set of principal ideals of $R$. We call it a \textit{specialization method} here. A typical work of this type appears in \textit{Iwaswa main conjecture for two-variable Hida deformations} of the first named author's articles \cite{Oc1} and \cite{Oc2}. In these papers, we considered the characteristic ideal of the Selmer group attached to the multi-variable Galois deformation with a continuous action of the Galois group $G_{\mathbb{Q}}=\Gal(\overline{\mathbb{Q}}/\mathbb{Q})$ over a certain Galois deformation ring $R$, under the assumption that $R \simeq \mathcal{O}[[x_1,\ldots,x_d]]$, where $\mathcal{O}$ is a complete discrete valuation ring (note that $R$ is of Krull dimension $d+1$). It was quite essential to assume the regularity of $R$ in \cite{Oc1}

In our previous work \cite{OcSh}, we proved the \textit{local Bertini theorem} for the normality on a Noetherian complete local ring $(R,\fm)$ with an extra assumption that $\depth R \ge 3$, which forces $\dim R \ge 3$.  In fact, it was already visible in the work \cite{Oc1} that for $\mathcal{O}[[x_1,\ldots,x_d]]$, the case $d \ge 2$ is quite different from the case $d=1$. The present article as well as \cite{OcSh} attempt to extend the results of \cite{Oc1} to the case that $R$ is a normal domain which is torsion free and finite over $\mathbb{Z}_p[[x_1,\ldots,x_d]]$. In the present article, we complete the missing case $\dim R=2$. With these results at hand, we establish Euler system bound over Cohen-Macaulay normal domains and one of the divisibilities in the Iwasawa main conjecture for two-variable Hida deformation without assuming that the branch of the Hecke algebra is isomorphic to the power series ring $\mathcal{O}[[X,Y]]$. The method of Euler system is quite powerful in giving an upper bound of a size of a Selmer group. In this article, it will be important to consider the $\Lambda$-adic (Hida-theoretic) version of Euler system whose specialization at an arithmetic point coincides with Beilinson-Kato Euler system constructed from the elements of the $K_2$-group of modular curves.

The structure of this paper is twofold. The first part is concerned with commutative algebra and the second part is concerned with applications of results from commutative algebra to Iwasawa theory.

\subsection{Main results in commutative algebra}
To establish the specialization method in Krull dimension two, we need the local Bertini theorem in this setting. However, if $R$ is a two-dimensional local normal domain and $I$ is a nonzero principal ideal, then $R/I$ is normal if and only if $R/I$ is regular. For this reason, it is too much to expect that $R/I$ is normal. Thus, we attempt to find a large set of specializations ideals $\{I_{\lambda}\}$ for which $R/I_{\lambda}$ are reduced rings of mixed characteristic. The local Bertini theorem is stated as follows (see Theorem \ref{MainBertini}, Corollary \ref{LocalBertini} and Corollary \ref{LocalBertini2}), which is of independent interest in commutative algebra (see Remark \ref{Bertinifail} for some relevant remarks)

\begin{maintheorema}
\label{mainresult1}
Assume that $B \hookrightarrow C$ is a module-finite extension such that $B$ is a Noetherian unique factorization domain, $C$ is a normal domain and $\Frac(B) \to \Frac(C)$ is a separable field extension. Let us define a subset $\mathcal{S}_{C/B}$ of $\Spec B$ as follows:
$$
\mathcal{S}_{C/B}=\{\fp \in \Spec B~|~\fp~\mbox{is a height-one prime in}~\Ass_B(\Omega_{C/B})\}.
$$
Then $\mathcal{S}_{C/B}$ is a finite set and $C/\fp C$ is a reduced ring for any height-one prime $\fp \in \Spec B$ such that $\fp \notin \mathcal{S}_{C/B}$.\\
\end{maintheorema}

We need the theorem it when $B$ is a complete regular local domain in \S~\ref{controltheorem}. Assume that $(R,\fm,\mathbb{F})$ is a two-dimensional complete local normal domain with mixed characteristic and finite residue field. We define certain large sets $\mathscr{L}_{R,W(\mathbb{F})}(z,\{a_n\}_{n \in \mathbb{N}})$ and $\mathscr{E}_{R,W(\mathbb{F})}(z,r,\{a_n\}_{n \in \mathbb{N}})$ (see (\ref{Fittideal}) and (\ref{Fittideal1}) in \S~\ref{controltheorem}, respectively), which are comprised of principal ideals generated by \textit{specialization elements} in $R$ attached to finitely generated torsion $R$-modules and these elements play a role in the control of the behavior of characteristic ideals under specialization. Let $\Char_R(M)$ denote the \textit{characteristic ideal} of a finitely generated torsion module $M$ over a Noetherian normal domain $R$ (see Definition \ref{charideal}). Characteristic ideals appear in the formulations of the Iwasawa main conjectures in various forms. By combining Main Theorem \ref{mainresult1}, we establish the following theorem (see Theorem \ref{prop1} and Theorem \ref{prop2} for precise statements).

\begin{maintheorema}
\label{mainresult2}
Let $(R,\fm,\mathbb{F})$ be a two-dimensional Noetherian complete local normal domain with mixed characteristic $p>0$ and finite residue field $\mathbb{F}$. Suppose that $M$ and $N$ are finitely generated torsion $R$-modules. 
\begin{enumerate}
\item
The following statements are equivalent:
\begin{enumerate}
\item
Let $\fq$ be any height-one prime of $R$ which does not lie over $p$. Then we have
$$
\Char_R(N)_{\fq} \subseteq \Char_R(M)_{\fq}.
$$ 
\item
There exists a constant $c \in \mathbb{N}$, depending on $M$ and $N$, such that
$$ 
c \cdot \frac{|N/\mathbf{x}N |}{|M/\mathbf{x}M |} \in \mathbb{N}
$$
for all but finitely many principal ideals
$$
(\mathbf{x}) \in \bigcup_{1\le i \le k} \mathscr{L}_{R,W(\mathbb{F})}(z_i,\{a_n\}_{n \in \mathbb{N}}).
$$
\end{enumerate}

\item
The following statements are equivalent:
\begin{enumerate}
\item
Let $\fp$ be any height-one prime of $R$ which lies over $p$. Then we have
$$
\Char_R(N)_{\fp} \subseteq \Char_R(M)_{\fp}.
$$
\item
There exists a constant $c \in \mathbb{N}$, depending on $M$ and $N$, such that 
$$
c \cdot \frac{|N/\mathbf{x}N |}{|M/\mathbf{x}M |} \in \mathbb{N}
$$
for all but finitely many principal ideals
$$
(\mathbf{x}) \in \bigcup_{1\le i \le h} \mathscr{E}_{R,W(\mathbb{F})}(z,r_i,\{a_n\}_{n \in \mathbb{N}}).
$$\\
\end{enumerate}
\end{enumerate}
\end{maintheorema}

For arithmetic applications, we will need to combine Main Theorem \ref{mainresult2} with a modified version of \cite[Theorem 8.8]{OcSh} in a practical form (see Theorem \ref{theorem:previous}).

\subsection{Arithmetic applications}
We study \textit{Euler system bound} associated with a $p$-adic Galois representation over a complete Noetherian local ring (see Definition \ref{definitionEuler} and the book \cite{Rub} for the classical Euler system theory over a discrete valuation ring). As remarked in the beginning of  the introduction, the method of Euler system is useful for finding an upper bound of the size of a Selmer group, hence for proving one of the predicted divisibilities of the Iwasawa main conjecture. First, we state Euler system bound for a $p$-adic Galois representation with coefficients in a complete Noetherian reduced local ring that is torsion free and finite over the ring of $p$-adic integers (see Theorem \ref{Euler}).

\begin{maintheorema}
\label{mainresult3}
Let $(R,\fm,\mathbb{F})$ be a complete Noetherian reduced local ring which is finite and flat over $\mathbb{Z}_p$ for a prime number $p>2$. Suppose that
$$
\big\{\mathbf{z}_n\in H^1(G_{\Sigma,n},T^*(1))\big\}_{n \in \mathfrak{N}}
$$ 
is an Euler system for $(T,R,\Sigma)$, $T$ is a free $R$-module of rank two and suppose that the following conditions hold:

\begin{enumerate}
\item[\rm{(i)}]
$T \otimes_R R/\fm$ is absolutely irreducible as a representation of $G_{\mathbb{Q}}$.

\item[\rm{(ii)}]
The quotient $H^1(G_{\Sigma},T^*(1))/R \mathbf{z}_1$ is a finite group.

\item[\rm{(iii)}]
$H^2(G_{\mathbb{Q}_{\ell}},T^*(1))=0$ for every $\ell \in \Sigma\setminus\{\infty\}$ and $H^2(G_{\Sigma},D)$ is a finite group.

\item[\rm{(iv)}]
The determinant character $\wedge^2 \rho:G_{\mathbb{Q}} \to R^{\times}$ $($resp. $\wedge^2 \rho^*(1):G_{\mathbb{Q}} \to R^{\times}$$)$ associated with the $G_{\mathbb{Q}}$-representation $T$ (resp. $T^*(1)$) has an element of infinite order.

\item[\rm{(v)}]
The $R$-module $T$ splits into eigenspaces: $T=T^+ \oplus T^-$ with respect to the complex conjugation in $G_{\mathbb{Q}}$, and $T^+_{\fp}$ (resp. $T^-_{\fp}$) is of $R_{\fp}$-rank one for each minimal prime $\fp$ of $R$.

\item[\rm{(vi)}]
There exist $\sigma_1 \in G_{\mathbb{Q}(\mu_{p^{\infty}})}$ and $\sigma_2 \in G_{\mathbb{Q}}$ such that $\rho(\sigma_1) \simeq \begin{pmatrix} 1 & \epsilon \\ 0 & 1 \end{pmatrix} \in GL_2(R)$ for a nonzero divisor $\epsilon \in R$ and $\sigma_2$ acts on $T$ as multiplication by $-1$. 

\end{enumerate}
Then Euler system bound holds for $(T,R,\Sigma)$:
\begin{enumerate}
\item
$\textcyr{Sh}^2_{\Sigma}(T^*(1))$ is a finite group.
\item
We have
$$
c \cdot |H^1(G_{\Sigma},T^*(1))/R \mathbf{z}_1|~\mbox{is divisible by}~| \textcyr{Sh}^2_{\Sigma}(T^*(1))|,
$$
where $c:=|R/(\epsilon^k)|$ is an error term for Euler system bound and $k$ is the number of minimal generators of the $R$-module $\textcyr{Sh}^2_{\Sigma}(T^*(1))$.\\
\end{enumerate}
\end{maintheorema}

Main Theorem \ref{mainresult3} has been known to hold when $R$ is a complete discrete valuation ring. We reduce the proof of the theorem to the case where $R$ is a product of complete discrete valuation rings after normalizing the reduced ring $R$ in its total ring of fractions. Euler system bound is less powerful, if the image of the Galois representation attached to an ordinary modular form is small. This phenomena occurs when the modular form under consideration has \textit{complex multiplication}. The paper \cite{OcPr} discusses the case of elliptic cusp forms with complex multiplication and \cite{HaOc15} generalized it to the case of Hilbert modular cusp forms with complex multiplication. 

Our primary concern, as in the hypothesis $\rm(iv)$ of Main Theorem \ref{mainresult3}, is when the Galois representation has big image and $R$ is a certain (Hecke) deformation ring. As we will see soon, the Main Theorem \ref{mainresult3} is not a mere generalization of the classical Euler system bound over a discrete valuation ring. The theorem is used to deduce Euler system bound for a $p$-adic Galois representation with coefficients in a complete Noetherian local domain of Krull dimension at least two. Indeed, we obtain, from Theorem \ref{theorem:previous} (to descend from Krull dimension at least three to two) and Main Theorem \ref{mainresult2}, the following Euler system bound over a Cohen-Macaulay normal domain (see Theorem \ref{veryfinal}):

\begin{maintheorema}
\label{mainresult4}
Let $(R,\fm,\mathbb{F})$ be a Noetherian complete local Cohen-Macaulay normal domain 
of Krull dimension $d\geq 2$ with mixed characteristic $p>2$ and finite residue field $\mathbb{F}$. Suppose that
$$
\big\{\mathbf{z}_n\in H^1(G_{\Sigma,n},T^*(1))\big\}_{n \in \mathfrak{N}}
$$ 
is an Euler system for $(T,R,\Sigma)$, $T$ is a free $R$-module of rank two with 
continuous $G_{\mathbb{Q}}$-action and suppose that the following conditions hold:

\begin{enumerate}
\item[\rm{(i)}]
$T \otimes_R R/\fm$ is absolutely irreducible as a representation of $G_{\mathbb{Q}}$.

\item[\rm{(ii)}]
The quotient $H^1(G_{\Sigma},T^*(1))/R \mathbf{z}_1$ is an $R$-torsion module.

\item[\rm{(iii)}]
$H^2(G_{\mathbb{Q}_{\ell}},T^*(1))=0$ for every $\ell \in \Sigma\setminus\{\infty\}$ and $H^2(G_{\Sigma},D)$ is a finite group.

\item[\rm{(iv)}]
The determinant character $\wedge^2 \rho:G_{\mathbb{Q}} \to R^{\times}$ (resp. $\wedge^2 \rho^*(1):G_{\mathbb{Q}} \to R^{\times}$) associated with the $G_{\mathbb{Q}}$-representation $T$ (resp. $T^*(1)$) has an element of infinite order.

\item[\rm{(v)}]
The $R$-module $T$ splits into eigenspaces: $T=T^+ \oplus T^-$ with respect to the complex conjugation in $G_{\mathbb{Q}}$, and $T^+$ (resp. $T^-$) is of $R$-rank one.

\item[\rm{(vi)}] 
There exist $\sigma_1 \in G_{\mathbb{Q}(\mu_{p^{\infty}})}$ and $\sigma_2 \in G_{\mathbb{Q}}$ such that $\rho(\sigma_1) \simeq \begin{pmatrix} 1 & P \\ 0 & 1 \end{pmatrix} \in GL_2(R)$ for a nonzero element $P \in R$ and $\sigma_2$ acts on $T$ as multiplication by $-1$. 
\end{enumerate}
Then Euler system bound holds for $(T,R,\Sigma)$:
\begin{enumerate}
\item
$\textcyr{Sh}^2_{\Sigma}(T^*(1))$ is a finitely generated torsion $R$-module.

\item
We have an inclusion of reflexive ideals:
$$ 
(P^k ) \Char_R\big(H^1 (G_{\Sigma},T^*(1))/R \mathbf{z}_1\big) \subseteq \Char_R\big(\textcyr{Sh}^2_{\Sigma} (T^*(1))\big),
$$ 
where $k$ is the number of minimal generators of $\textcyr{Sh}^2_{\Sigma}(T^*(1))$ as an $R$-module.\\
\end{enumerate}
\end{maintheorema}

The proof of Main Theorem \ref{mainresult4} is reduced to Main Theorem \ref{mainresult3} by an inductive argument with respect to the Krull dimension of the Cohen-Macaulay normal domain $R$. The most essential part of hypotheses in Main Theorem \ref{mainresult4} is:
\begin{enumerate}
\item[-]
$H^1(G_{\Sigma},T^*(1))/R \mathbf{z}_1$ is a torsion $R$-module.
\end{enumerate}
This is deeply related to the non-triviality of Euler system, which is conjectured to yield the existence of a $p$-adic $L$-function. We will apply the theorem in the case that $T=\mathcal{T}^{\nord}$ is a \textit{nearly ordinary Hida deformation space} attached to a Hida family $\mathbf{f}$. The hypotheses in Main Theorem \ref{mainresult4} are not so restrictive, except for the vanishing of $H^2(G_{\mathbb{Q}_{\ell}},T^*(1))$. At the moment, this seems necessary for the proof to work and it is related to the local automorphic representation of $GL_2(\mathbb{Q}_{\ell})$ spanned by an arithmetic specialization $\mathbf{f}_{\kappa}$ of $\mathbf{f}$. However, we believe that it suffices to assume that $H^2(G_{\mathbb{Q}_{\ell}},T^*(1))$ is an $R$-torsion module, although it is not clear at the moment. See \cite{Oc2} for the detailed study of the local monodromy of $\mathcal{T}^{\nord}$. Now we state the Iwasawa main conjecture for two-variable Hida deformations (see \S~\ref{Hidafamily} for notation and the conditions $(\bf{NOR})$, $(\bf{IRR})$ and $(\bf{FIL})$).

\begin{conjecture}
\label{MainConj}
Let $\mathbf{f}$ be a Hida family of $p$-ordinary $p$-stabilized cusp newforms of tame level $N$, and assume that $\mathcal{T}$ is the nearly ordinary Hida deformation associated with $\mathbf{f}$. Fix an $\mathbf{I}^{\nord}_{\mathbf{f}}$-basis $\mathbf{B}^{\pm}_{\mathbf{f}}$ of the modules of $\mathbf{I}^{\nord}_{\mathbf{f}}$-adic modular symbols $\mathbf{MS}^{\pm}_{\mathbf{f}}$. If the conditions $(\bf{NOR})$, $(\bf{IRR})$ and $(\bf{FIL})$ hold, then the Pontryagin dual of the Selmer group $\Sel_{\mathbb{Q}}(\mathcal{T})$ is a finitely generated, torsion $\mathbf{I}^{\nord}_\mathbf{f}$-module and we have
$$
\Char_{\mathbf{I}^{\nord}_{\mathbf{f}}}\big(\Sel_{\mathbb{Q}}(\mathcal{T})^{\PD}\big)=\big(L_p(\{\mathbf{B}^{\pm}_{\mathbf{f}}\})\big),
$$
where $L_p(\{\mathbf{B}^{\pm}_{\mathbf{f}}\})$ is a two-variable $p$-adic $L$-function attached to $\mathbf{B}^{\pm}_{\mathbf{f}}$.\\
\end{conjecture}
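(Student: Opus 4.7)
The plan is to split the stated equality into the two opposite divisibilities and attack each by complementary methods: the Euler system machinery of Main Theorem \ref{mainresult4} for the upper bound on the Selmer group, and automorphic input combined with the specialization method of Main Theorem \ref{mainresult2} for the reverse containment.

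For the divisibility $(L_p(\{\mathbf{B}^{\pm}_{\mathbf{f}}\})) \subseteq \Char_{\mathbf{I}^{\nord}_{\mathbf{f}}}(\Sel_{\mathbb{Q}}(\mathcal{T})^{\PD})$, I would apply Main Theorem \ref{mainresult4} to $(\mathcal{T},\mathbf{I}^{\nord}_{\mathbf{f}},\Sigma)$, where $\Sigma$ is the union of the primes dividing $Np\infty$. First I check that $\mathbf{I}^{\nord}_{\mathbf{f}}$ is a two-dimensional complete local Cohen-Macaulay normal domain, module-finite over $\mathcal{O}[[X,Y]]$; this is the precise generality in which \cite{Oc2} was restricted to assume $\mathbf{I}^{\nord}_{\mathbf{f}} \simeq \mathcal{O}[[X,Y]]$. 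Next I verify the hypotheses (i)--(vi) of Main Theorem \ref{mainresult4}: (i) is (IRR); (ii) holds because the $\Lambda$-adic Beilinson-Kato class $\mathbf{z}_1$ interpolating the Kato classes at arithmetic specializations generates, modulo torsion, the relevant $H^1$; (iii) follows from the local monodromy analysis of $\mathcal{T}^{\nord}$ in \cite{Oc2} together with (NOR); (iv) uses the infinite image of the determinant character; (v) is the $\pm$-eigenspace decomposition under complex conjugation implied by (FIL); (vi) follows from (IRR) and standard big-image statements for Hida families, furnishing a unipotent $\sigma_1$ with entry $P$ and a reflection $\sigma_2$. Main Theorem \ref{mainresult4} then yields
$$
(P^k)\,\Char_{\mathbf{I}^{\nord}_{\mathbf{f}}}\!\big(H^1(G_\Sigma,\mathcal{T}^*(1))/\mathbf{I}^{\nord}_{\mathbf{f}} \mathbf{z}_1\big) \subseteq \Char_{\mathbf{I}^{\nord}_{\mathbf{f}}}\!\big(\textcyr{Sh}^2_\Sigma(\mathcal{T}^*(1))\big).
$$
Poitou-Tate duality together with (NOR) and (FIL) identifies $\textcyr{Sh}^2_\Sigma(\mathcal{T}^*(1))$ with $\Sel_{\mathbb{Q}}(\mathcal{T})^{\PD}$ up to controlled local factors at places in $\Sigma$, while Perrin-Riou's two-variable big exponential sends $\mathbf{z}_1$ to $L_p(\{\mathbf{B}^{\pm}_{\mathbf{f}}\})$ relative to the chosen periods $\mathbf{B}^{\pm}_{\mathbf{f}}$. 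Matching sides and absorbing the error factor $(P^k)$, which becomes trivial in the reflexive closure once $P$ is chosen to generate a sufficiently large image ideal, delivers the first divisibility.

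For the reverse containment $\Char_{\mathbf{I}^{\nord}_{\mathbf{f}}}(\Sel_{\mathbb{Q}}(\mathcal{T})^{\PD}) \subseteq (L_p(\{\mathbf{B}^{\pm}_{\mathbf{f}}\}))$, no Euler system argument is available, and the strategy is instead to transport the one-variable main conjecture upward from arithmetic specializations. I would invoke the cyclotomic Iwasawa main conjecture for ordinary modular forms established by Skinner-Urban via Eisenstein congruences on $GU(2,2)$, which supplies the desired divisibility for each specialization $\mathbf{f}_\kappa$ at which their hypotheses hold. After checking these hypotheses over a Zariski-dense set of arithmetic points of $\Spec \mathbf{I}^{\nord}_{\mathbf{f}}$, and after proving a control theorem comparing $\Sel_\mathbb{Q}(\mathcal{T})^{\PD}/(\mathbf{x})\Sel_\mathbb{Q}(\mathcal{T})^{\PD}$ with $\Sel_\mathbb{Q}(\mathbf{f}_{\kappa(\mathbf{x})})^{\PD}$ (and the analogous control for $L_p$) with uniformly bounded error as $(\mathbf{x})$ ranges over the families $\bigcup \mathscr{L}_{R,W(\mathbb{F})}(z_i,\{a_n\})$ and $\bigcup \mathscr{E}_{R,W(\mathbb{F})}(z,r_j,\{a_n\})$, Main Theorem \ref{mainresult2} converts this pointwise divisibility of lengths into the sought inclusion of characteristic ideals: part (1) handles the height-one primes not above $p$, and part (2) handles those above $p$.

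The main obstacle is the reverse divisibility, and within it the control theorem at height-one primes above $p$ is the single hardest step: the filtration condition (FIL) must be shown to survive the passage from $\mathcal{T}^{\nord}$ to $\mathcal{T}^{\nord}/(\mathbf{x})$ for $(\mathbf{x})\in\mathscr{E}_{R,W(\mathbb{F})}$, and the specialized Selmer group must match, with bounded error, the Greenberg-style Selmer group of $\mathbf{f}_{\kappa(\mathbf{x})}$ used in Skinner-Urban. Part (2) of Main Theorem \ref{mainresult2} is tailored precisely to absorb such bounded discrepancies, but verifying its hypothesis on the family of specialization ideals requires a delicate interaction with the local structure of $\mathcal{T}^{\nord}$ at $p$. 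Once this is in place, the verification of the Skinner-Urban hypotheses along a dense arithmetic subset of $\Spec \mathbf{I}^{\nord}_{\mathbf{f}}$ is essentially routine, and combining the two divisibilities yields the claimed equality of characteristic ideals.
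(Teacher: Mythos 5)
The statement you are trying to prove is labeled a \emph{Conjecture} in the paper, and the paper does not prove it. The authors explicitly write that ``a partial answer to Conjecture \ref{MainConj}'' is obtained as Theorem \ref{MainTh}, which establishes only the single inclusion $\big(\epsilon^k L_p(\{\mathbf{B}^{\pm}_{\mathbf{f}}\})\big) \subseteq \Char_{\mathbf{I}^{\nord}_{\mathbf{f}}}\big(\Sel_{\mathbb{Q}}(\mathcal{T})^{\PD}\big)$, with a non-removable error factor $\epsilon^k$, and under \emph{extra} hypotheses (i), (ii), (iii) of Theorem \ref{MainTh} beyond $(\bf{NOR})$, $(\bf{IRR})$, $(\bf{FIL})$. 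Your proposal asserts a full proof of the conjecture; this is not something the paper achieves or claims, so there is no ``paper's own proof'' to compare against, and your argument must be scrutinized as a stand-alone attempt.

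Several steps in your forward divisibility do not hold up. First, you assert that the error factor $(P^k)$ ``becomes trivial in the reflexive closure once $P$ is chosen to generate a sufficiently large image ideal.'' This is false: $P$ is the upper-right entry of the unipotent $\rho_{\mathbf{f}}^{\nord}(\sigma_1)$ and is a fixed nonzero element of $\mathbf{I}^{\nord}_{\mathbf{f}}$, not a free parameter you can enlarge; the reflexive closure of a nonzero proper principal ideal in a normal domain is the ideal itself, so $(P^k)$ does not disappear. The paper's Theorem \ref{MainTh} honestly keeps $\epsilon^k$ in the final statement for exactly this reason. Second, you claim hypothesis (ii) of Main Theorem \ref{mainresult4} is automatic because the Beilinson--Kato class ``generates modulo torsion,'' but the torsionness of $H^1(G_\Sigma,\mathcal{T}^*(1))/\mathbf{I}^{\nord}_{\mathbf{f}}\mathbf{z}_1$ is the deepest nontrivial input (tied to the weak Leopoldt conjecture / non-vanishing of $L_p$, cf.\ \cite{Kato}, \cite{Oc2}) and must be cited, not waved through. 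Third, hypotheses (iii) and (vi) of Main Theorem \ref{mainresult4} (local vanishing of $H^2$ at all $\ell\in\Sigma$, and the unipotent $\sigma_1$ over $G_{\mathbb{Q}(\mu_{p^\infty})}$) are precisely what the paper's Theorem \ref{MainTh} is forced to add as additional hypotheses (i)--(iii); they do not follow from $(\bf{IRR})$ plus ``standard big-image statements,'' which is why the paper states Corollary \ref{MainCo} separately with a big-image hypothesis. Fourth, you call $\mathbf{I}^{\nord}_{\mathbf{f}}$ two-dimensional; it is three-dimensional, as Hida proved (it is finite flat over $\mathbb{Z}_p[[C_\infty\times D_\infty]] \simeq \mathbb{Z}_p[[X,Y]]$), and Main Theorem \ref{mainresult4} is applied at $d=3$, with the $d=2$ machinery appearing only after one specialization step.

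The reverse divisibility is not addressed by the paper at all, and your sketch of it is not a proof. Invoking Skinner--Urban \cite{SkUr} at dense arithmetic specializations and then lifting via Main Theorem \ref{mainresult2} is a reasonable program, but Main Theorem \ref{mainresult2} requires the specialization ideals to come from the specific sets $\mathscr{L}_{R,W(\mathbb{F})}$ and $\mathscr{E}_{R,W(\mathbb{F})}$, which are generic linear combinations $z+a_np^n$ and $z^n+r+a_np$, not arithmetic weight-space specializations; these sets do not coincide with arithmetic points, so the intermediate control theorem relating $\Sel_{\mathbb{Q}}(\mathcal{T})^{\PD}/(\mathbf{x})$ for $(\mathbf{x})$ in these families to the Skinner--Urban objects at arithmetic points is a genuine missing ingredient and is not in the paper. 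You acknowledge this is the hardest step, but without it the reverse inclusion remains unproved, which is consistent with the paper leaving the statement as a conjecture.
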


The two-variable $p$-adic $L$-function $L_p(\{\mathbf{B}^{\pm}_{\mathbf{f}}\})$ was constructed by Mazur and Kitagawa and we review its construction in this paper. In \cite{Hid0} and \cite{Hid1}, Hida showed that $\mathbf{I}^{\nord}_{\mathbf{f}}$ is a three-dimensional Noetherian complete local domain. This ring is called a \textit{branch} of the nearly ordinary Hecke algebra attached to $\mathbf{f}$ and we review its properties in \S~\ref{Hidafamily}. A partial answer to Conjecture \ref{MainConj} is given in \cite{Oc1} under the assumption that $\mathbf{I}^{\nord}_{\mathbf{f}} \simeq \mathcal{O}[[X,Y]]$. The Euler system bound as stated in Main Theorem \ref{mainresult4} is used to prove one of the divisibilities of Conjecture $\ref{MainConj}$ under the assumption that $R=\mathbf{I}^{\nord}_{\mathbf{f}}$ is a normal domain. If $\mathbf{I}^{\nord}_{\mathbf{f}}$ is not a regular ring, the quotient $\mathbf{I}^{\nord}_{\mathbf{f}}/\fa$ can only be a reduced ring for an ideal $\fa=(x,y)$ of height-two, as explained in the beginning of the introduction. This is the main reason why we need some technical results from commutative algebra, including authors' previous work \cite{OcSh}.  As already said, the method of Euler system provides the following inclusion relation: 
$$
(p\mbox{-adic}~L\mbox{-function}) \subseteq (\mbox{characteristic ideal of the Pontryagin dual of Selmer group})
$$
To achieve this, we will make an essential use of Beilinson-Kato Euler system constructed from elements of the $K_2$-group of modular curves. The following result is obtained as a partial answer to Conjecture \ref{MainConj} (see Theorem \ref{MainTh}).

\begin{corollary}
Assume that $(\bf{NOR})$, $(\bf{IRR})$ and $(\bf{FIL})$ hold for the nearly ordinary Hida deformation $\mathcal{T}$ attached to a Hida family $\mathbf{f}$. Fix an $\mathbf{I}^{\nord}_{\mathbf{f}}$-basis $\mathbf{B}^{\pm}_{\mathbf{f}}$ of the modules of $\mathbf{I}^{\nord}_{\mathbf{f}}$-adic modular symbols $\mathbf{B}^{\pm}_{\mathbf{f}}$. Assume further that

\begin{enumerate}
\item[\rm{(i)}]
There exists an element $\sigma_1 \in G_{\mathbb{Q}(\mu_{p^{\infty}})}$ such that $\rho_{\mathbf{f}}^{\nord}(\sigma_1) \simeq \begin{pmatrix} 1 & \epsilon \\ 0 & 1 \end{pmatrix} \in GL_2(\mathbf{I}^{\nord}_{\mathbf{f}})$ for a nonzero element $\epsilon \in \mathbf{I}^{\nord}_{\mathbf{f}}$.

\item[\rm{(ii)}]
There exists an element $\sigma_2 \in G_{\mathbb{Q}}$ such that $\sigma_2$ acts on $\mathcal{T}$ as multiplication by $-1$.

\item[\rm{(iii)}]
If $\ell$ is any prime dividing $Np$, the maximal Galois invariant quotient vanishes; $(\mathcal{T}^*)_{G_{\mathbb{Q}_{\ell}}}=0$.
\end{enumerate}
Let $k$ be the number of minimal $\mathbf{I}^{\nord}_{\mathbf{f}}$-generators of $\textcyr{Sh}^2_{\Sigma}(\mathcal{T}^*(1))$. Then we have
$$
\big(\epsilon^k L_p(\{\mathbf{B}^{\pm}_{\mathbf{f}}\})\big) \subseteq \Char_{\mathbf{I}^{\nord}_{\mathbf{f}}}\big(\Sel_{\mathbb{Q}}(\mathcal{T})^{\PD}\big).
$$\\
\end{corollary}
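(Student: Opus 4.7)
The plan is to deduce this corollary directly from Main Theorem \ref{mainresult4} applied to the triple $(T,R,\Sigma)=(\mathcal{T},\mathbf{I}^{\nord}_{\mathbf{f}},\Sigma)$ together with the $\Lambda$-adic Beilinson--Kato Euler system. First, I would recall from the assumption $(\bf{NOR})$ and the work of Hida that $R=\mathbf{I}^{\nord}_{\mathbf{f}}$ is a three-dimensional complete local normal domain of mixed characteristic with finite residue field, and that it is Cohen--Macaulay (being a module-finite extension of a regular local ring and a normal domain, hence Cohen--Macaulay of dimension two over the base, etc.). The module $\mathcal{T}$ is free of rank two over $R$ by $(\bf{FIL})$/the nearly ordinary Hida theory, and $(\bf{FIL})$ furnishes the splitting $\mathcal{T}=\mathcal{T}^+\oplus \mathcal{T}^-$ under complex conjugation with $\mathcal{T}^{\pm}$ of $R$-rank one, hence hypothesis $\rm(v)$ of Main Theorem \ref{mainresult4}.

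Next, I would verify the remaining hypotheses of Main Theorem \ref{mainresult4} one by one. Hypothesis $\rm(i)$ is exactly $(\bf{IRR})$. Hypotheses $\rm(vi)$ of Main Theorem \ref{mainresult4} are precisely $\rm(i)$ and $\rm(ii)$ of the present corollary (with $P=\epsilon$). Hypothesis $\rm(iv)$, on the determinant character, is standard for Hida deformations: $\wedge^2\rho_{\mathbf{f}}^{\nord}$ is (up to a finite-order twist) the product of the cyclotomic character and a universal weight character, which is clearly of infinite order, and the same holds for $\wedge^2\rho_{\mathbf{f}}^{\nord,*}(1)$. For hypothesis $\rm(iii)$, the assumption $(\mathcal{T}^*)_{G_{\mathbb{Q}_\ell}}=0$ for every $\ell\mid Np$ in $\rm(iii)$ of the corollary, together with local Tate duality, gives $H^2(G_{\mathbb{Q}_\ell},\mathcal{T}^*(1))=0$ for the bad primes; for good $\ell\in\Sigma$, $\mathcal{T}$ is unramified and the vanishing of $H^2$ follows from the fact that the local Galois group has cohomological dimension one. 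The finiteness of $H^2(G_\Sigma,D)$ is a general consequence of $(\bf{IRR})$ and the finiteness of $\Sigma$ via Greenberg-style arguments.

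The heart of the proof is to produce the Euler system and identify $\mathbf{z}_1$ with the $p$-adic $L$-function. I would invoke the $\Lambda$-adic Beilinson--Kato Euler system $\{\mathbf{z}_n\in H^1(G_{\Sigma,n},\mathcal{T}^*(1))\}$ obtained by interpolating Kato's zeta elements over the Hida family; Kato's explicit reciprocity law/Ochiai's two-variable explicit reciprocity law identifies the image of $\mathbf{z}_1$ under an appropriate dual exponential/Coleman map with the two-variable $p$-adic $L$-function $L_p(\{\mathbf{B}^{\pm}_{\mathbf{f}}\})$ attached to $\mathbf{B}^{\pm}_{\mathbf{f}}$. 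In particular, after identifying $H^1(G_{\Sigma},\mathcal{T}^*(1))$ with a reflexive $R$-module of rank one (using $(\bf{IRR})$ together with the global Euler--Poincar\'e formula), the characteristic ideal of $H^1(G_\Sigma,\mathcal{T}^*(1))/R\mathbf{z}_1$ coincides with $(L_p(\{\mathbf{B}^{\pm}_{\mathbf{f}}\}))$; this also verifies hypothesis $\rm(ii)$ of Main Theorem \ref{mainresult4}, since the $p$-adic $L$-function is a nonzero element.

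Finally, with all hypotheses verified, Main Theorem \ref{mainresult4} gives
\[
(\epsilon^k)\Char_R\bigl(H^1(G_\Sigma,\mathcal{T}^*(1))/R\mathbf{z}_1\bigr)\subseteq \Char_R\bigl(\textcyr{Sh}^2_\Sigma(\mathcal{T}^*(1))\bigr).
\]
To conclude, I would combine the identification of the left-hand characteristic ideal with $(L_p(\{\mathbf{B}^{\pm}_{\mathbf{f}}\}))$ above, and use global duality (Poitou--Tate) together with the interpretation of the Greenberg Selmer group $\Sel_{\mathbb{Q}}(\mathcal{T})$ via $(\bf{FIL})$ to obtain a pseudo-isomorphism between $\textcyr{Sh}^2_\Sigma(\mathcal{T}^*(1))$ and $\Sel_{\mathbb{Q}}(\mathcal{T})^{\PD}$, so that the two characteristic ideals coincide. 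The main obstacle I expect is this last step, namely carefully matching $\textcyr{Sh}^2_\Sigma(\mathcal{T}^*(1))$ with $\Sel_{\mathbb{Q}}(\mathcal{T})^{\PD}$: one has to control the local conditions at $p$ (where nearly ordinary filtration matters) and at the bad primes, using hypothesis $\rm(iii)$ of the corollary to kill the relevant local contributions; a secondary difficulty is verifying that the $\Lambda$-adic Beilinson--Kato system, which is a priori defined over the cyclotomic variable, extends to a genuine Euler system over the two-variable ring $\mathbf{I}^{\nord}_{\mathbf{f}}$ in the sense of Definition \ref{definitionEuler}, which in this paper is handled by interpolating Kato's construction across the Hida family.
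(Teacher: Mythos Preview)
Your overall plan---apply Main Theorem \ref{mainresult4} to $(\mathcal{T},\mathbf{I}^{\nord}_{\mathbf{f}},\Sigma)$ with the $\Lambda$-adic Beilinson--Kato Euler system, then translate the resulting inclusion into the Selmer inclusion---is exactly the paper's approach. The verification of hypotheses $\rm(i)$, $\rm(v)$, $\rm(vi)$ is essentially fine (though the $\pm$-splitting comes from $p>2$, not from $(\bf{FIL})$, which concerns the local filtration at $p$). Two remarks on the inputs: $\Sigma$ here consists only of the primes dividing $Np$ and $\infty$, so there are no ``good $\ell\in\Sigma$'' to worry about; and the finiteness (in fact vanishing) of $H^2(G_\Sigma,\mathcal{D}^{\nord})$ is not just a Greenberg-style formality but relies on Kato's deep theorem (the weak Leopoldt conjecture for Hida deformations), as the paper recalls at the end of \S\ref{GaloisDeformation}.

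The genuine gap is in your last step, and you have anticipated the right place but proposed the wrong fix. It is \emph{not} true that $\Char_R\big(H^1(G_\Sigma,\mathcal{T}^*(1))/R\mathbf{z}_1\big)=(L_p)$: the Coleman map identifies $L_p$ with the characteristic ideal of the \emph{local} quotient $H^1_{/f}(G_{\mathbb{Q}_p},\mathcal{T}^*(1))/\mathbf{I}^{\nord}_{\mathbf{f}}\cdot\mathbf{Loc}_{/f}(\mathbf{z}^{\BK})$ (Theorem \ref{Coleman}(iii)(a)), not the global one. Likewise, $\textcyr{Sh}^2_\Sigma(\mathcal{T}^*(1))$ and $\Sel_{\mathbb{Q}}(\mathcal{T})^{\PD}$ are \emph{not} pseudo-isomorphic in general; rather there is a surjection $\Sel_{\mathbb{Q}}(\mathcal{T})^{\PD}\twoheadrightarrow \textcyr{Sh}^2_\Sigma(\mathcal{T}^*(1))$ whose kernel is typically not pseudo-null. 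What the paper uses instead is the four-term exact sequence of Lemma \ref{deRham}(1),
\[
0 \to H^1(G_\Sigma,\mathcal{T}^*(1))/R\mathbf{z}^{\BK} \to H^1_{/f}(G_{\mathbb{Q}_p},\mathcal{T}^*(1))/R\cdot\mathbf{Loc}_{/f}(\mathbf{z}^{\BK}) \to \Sel_{\mathbb{Q}}(\mathcal{T})^{\PD} \to \textcyr{Sh}^2_\Sigma(\mathcal{T}^*(1)) \to 0,
\]
which, by additivity of $\Char$ and Theorem \ref{Coleman}(iii)(a), gives directly the \emph{equivalence}
\[
(\epsilon^k L_p)\subseteq \Char_R\big(\Sel_{\mathbb{Q}}(\mathcal{T})^{\PD}\big)
\iff
(\epsilon^k)\,\Char_R\big(H^1(G_\Sigma,\mathcal{T}^*(1))/R\mathbf{z}^{\BK}\big)\subseteq \Char_R\big(\textcyr{Sh}^2_\Sigma(\mathcal{T}^*(1))\big).
\]
Then Main Theorem \ref{mainresult4} yields the right-hand inclusion. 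So your two misidentifications happen to ``cancel'' at the level of characteristic ideals, but each one is false on its own; replace them by the exact sequence and the argument goes through.
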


\subsection{Outline of the paper}

In \S~\ref{LocalBertiniTheorem}, we prove some preliminary lemmas to prove the local Bertini theorem for reduced quotients. Then we prove its corollary which is fundamental in the specialization methods.

In \S~\ref{controltheorem}, we establish specialization methods on characteristic ideals over complete local normal domains with mixed characteristic in dimension two. Together with the main results in \cite{OcSh}, we obtain all algebraic tools for the specialization methods.

In \S~\ref{GaloisCoho}, after reviewing global (Poitou-Tate) and local (Tate) dualities of Galois cohomology groups over number fields, we prove some preliminary results of Galois cohomology with a coefficient ring that is a module-finite extension over either $\mathbb{F}_p[[x_1,\ldots,x_d]]$ or $\mathbb{Z}_p[[x_1,\ldots,x_d]]$ based on the techniques developed in Greenberg's paper \cite{Gr2}.

In \S~\ref{EulerSystem}, we prove Euler system bound with coefficient in a reduced local ring that is finite torsion free over $\mathbb{Z}_p$ as Theorem \ref{Euler}. Then we establish Euler system bound with coefficient in a Cohen-Macaulay normal domain as Theorem \ref{veryfinal}, whose proof is based on Theorem \ref{Euler} and the specialization methods.

In \S~\ref{Hidafamily}, we review the theory of $p$-adic modular forms and Hida theory to an extent we need, including Galois representations attached to a $\Lambda$-adic family of ordinary cusp forms (called a \textit{Hida family}) with basic properties. Then we introduce a two-variable $p$-adic $L$-function defined over a branch of the nearly ordinary Hecke algebra. To this aim, we define $p$-optimal complex period and $p$-adic period after introducing the module of modular symbols. We use these periods to formulate the interpolation formula of the two-variable $p$-adic $L$-function.

In \S~\ref{GaloisDeformation}, we begin by recalling Greenberg's Selmer group associated to the nearly ordinary Hida deformation. After giving a summary on Beilinson-Kato Euler system, we explain its $\Lambda$-adic version and a construction of $p$-adic $L$-function via Coleman's map. The rest of this section is devoted to proving finiteness results on local Galois cohomology groups.

In \S~\ref{proof2}, we apply our main result to Beilinson-Kato Euler systems over the nearly ordinary Hida deformation and we give a partial result on Conjecture \ref{MainConj} (see Theorem \ref{MainTh} and Corollary \ref{MainCo}).\\

\section{Local Bertini theorem for reduced quotients}
\label{LocalBertiniTheorem}

\subsection{Preliminary lemmas}
In this section, we prove that a Noetherian complete local normal domain $R$ of Krull dimension $\ge 2$ admits infinitely many nonzero principal ideals $\{(\mathbf{x}_n)\}_{n \in \mathbb{N}}$ such that $R/(\mathbf{x}_n)$ are reduced rings. This is a fundamental tool for the specialization method. In \cite{OcSh}, the authors proved the local Bertini theorem with its application to characteristic ideals for Noetherian complete local normal domains in dimension $\ge 3$. Note that the quotient of a two-dimensional local normal domain by a nonzero element is not normal, if the ring itself is not regular. 

The nature of this section is pure commutative algebra, and we refer the reader to \cite{BrHer} and \cite{Mat} for relevant commutative algebra and to \cite{Kunz} for modules of K\"ahler differentials. We begin with some preliminary lemmas.

\begin{lemma}
\label{Koszul}
Let $f,g$ be a regular sequence in an integral domain $B$. Then, we have 
$$
B=B[\tfrac{1}{f}] \cap B[\tfrac{1}{g}].
$$
\end{lemma}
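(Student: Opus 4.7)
The plan is to work inside the fraction field $K = \Frac(B)$, where both $B[1/f]$ and $B[1/g]$ are naturally subrings, and to reduce the problem to the standard fact that powers of a regular sequence remain a regular sequence.

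First, since $B$ is a domain and $f, g$ are nonzero, the inclusion $B \subseteq B[1/f] \cap B[1/g] \subseteq K$ is immediate. For the reverse inclusion, take $x \in B[1/f] \cap B[1/g]$ and write
\[
x \;=\; \frac{a}{f^n} \;=\; \frac{b}{g^m} \qquad (a, b \in B,\ n, m \geq 0)
\]
inside $K$. Clearing denominators in $K$ (and using that $B \hookrightarrow K$) yields the genuine equation $a g^m = b f^n$ in $B$; in particular $a g^m \in (f^n)$.

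The key step is to promote the regular sequence property from $(f, g)$ to $(f^n, g^m)$. I would argue this by a short double induction: if $g$ is a non-zerodivisor on $B/(f)$, then iterating shows $g^m$ is a non-zerodivisor on $B/(f)$, and then given $g^m \cdot x \in (f^{\ell})$ one may successively write $x = f x_1, x_1 = f x_2, \ldots$ using that $f$ is itself a non-zerodivisor, until one lands in $(f^n)$. With this in hand, the relation $a g^m \in (f^n)$ forces $a \in (f^n)$, say $a = f^n c$ with $c \in B$. Then $x = a/f^n = c \in B$, which completes the proof.

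The potentially delicate point is the inductive step showing $g^m$ acts as a non-zerodivisor on $B/(f^n)$; this is a routine but slightly fussy manipulation, and one must take care to exploit both halves of the regular sequence hypothesis (that $f$ is a non-zerodivisor, and that $g$ is a non-zerodivisor modulo $f$). Everything else is formal bookkeeping inside $\Frac(B)$.
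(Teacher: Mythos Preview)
Your proof is correct and follows essentially the same approach as the paper: both arguments reduce to the fact that if $f,g$ is a regular sequence then so is $f^n,g^m$, and then use this to clear the denominator $f^n$. The only cosmetic difference is that the paper chooses the \emph{minimal} exponent $m$ with $f^m x \in B$ and derives a contradiction by peeling off a single factor of $f$, whereas you argue directly that $a \in (f^n)$ in one shot; your version is in fact slightly more streamlined.
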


\begin{proof}
The inclusion $B \subseteq B[\frac{1}{f}] \cap B[\frac{1}{g}]$ is clear. We will prove the other inclusion 
$B[\frac{1}{f}] \cap B[\frac{1}{g}] \subseteq B$. 
Let us take $x \in B[\frac{1}{f}] \cap B[\frac{1}{g}]$. Let $m$ be the smallest non-negative integer 
such that $f^m x \in B$ and we put $a=f^m x$. We thus have $x =\frac{a}{f^m}$. We take another presentation 
$x =\frac{b}{g^n}$ with $b\in B$ and $n$ a positive integer (note that we do not necessarily assume the minimality for $n$). 
It suffices to prove that $m=0$. Assume $m>0$ to prove the lemma by contradiction. We have $bf^m=ag^n$.  
Since $f,g$ is a regular sequence and $m,n$ are positive integers, $f^m ,g^n$ is also a regular sequence.   
Hence, there exists an element $a' \in B$ such that $a=a'f$. We have
$x=\frac{a'}{f^{m-1}}$, which contradicts to the minimality of $m$. Hence $m=0$, showing that $x \in B$. 
This completes the proof of the inclusion $B[\frac{1}{f}] \cap B[\frac{1}{g}] \subseteq B$.
\end{proof}

\begin{lemma}
\label{Koszul2}
Let $B \hookrightarrow C$ be a ring extension of normal domains. Let $(y) \subseteq B$ be a principal prime ideal, let $B_{(y)}$ denote the localization of $B$ at the prime $yB$ and let $C_{(y)}:= C \otimes_B B_{(y)}$. Assume the following conditions: 
\begin{enumerate}
\item[\rm{(i)}] 
$C$ is integral over $B$. 

\item[\rm{(ii)}]
$C_{(y)}/(y)$ is a reduced ring.  
\end{enumerate}
Then $C/(y)$ is a reduced ring. 
\end{lemma}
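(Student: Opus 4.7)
The plan is to reduce the reducedness of $C/(y)$ to a localized check at each associated prime of $yC$, where the hypothesis on $C_{(y)}/(y)$ becomes directly applicable. First I would exploit the normality of $C$: since a normal domain satisfies Serre's condition $(S_2)$, every $\fq \in \Ass_C(C/yC)$ must satisfy $\depth C_\fq \leq 1$ and hence $\Ht(\fq) \leq 1$. Because $y$ is a nonzero element of $\fq$, in fact $\Ht(\fq) = 1$. Using the standard fact that a Noetherian ring is reduced precisely when its localizations at all associated primes are reduced, it therefore suffices to prove that $C_\fq/yC_\fq$ is reduced for every height-one prime $\fq \subset C$ containing $y$.

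Next I would contract such $\fq$ to $B$. Since $B$ is a normal Noetherian domain and $B \hookrightarrow C$ is an integral extension of domains, going-down together with incomparability force $\Ht(\fq) = \Ht(\fq \cap B)$. Because $y$ lies in $\fq \cap B$ and $y \neq 0$, Krull's Hauptidealsatz identifies $\fq \cap B$ as a height-one prime of $B$ containing $yB$, so necessarily $\fq \cap B = yB$. In particular $\fq$ is disjoint from $B \setminus yB$ and thus survives in $C_{(y)} = C \otimes_B B_{(y)}$ as a prime $\fq C_{(y)}$ satisfying the two-step localization identity $C_\fq = (C_{(y)})_{\fq C_{(y)}}$.

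Finally, hypothesis (ii) asserts that $C_{(y)}/yC_{(y)}$ is reduced, so its further localization $(C_{(y)}/yC_{(y)})_{\fq C_{(y)}} = C_\fq/yC_\fq$ is reduced as well; combined with the reduction in the first paragraph this yields the reducedness of $C/(y)$. The main obstacle I anticipate is the control of $\Ass_C(C/yC)$: one has to rule out embedded or higher-height associated primes, because such primes could contract to ideals of $B$ not contained in $yB$ and so be invisible to the hypothesis on $C_{(y)}/(y)$. This is precisely where the normality of $C$ (yielding $(S_2)$) is essential, while the normality of $B$ is used separately to guarantee the going-down property that pins down $\fq \cap B = yB$.
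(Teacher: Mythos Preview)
Your argument is correct and constitutes a genuinely different proof from the paper's. The paper argues by contradiction via an explicit element chase: given $t\in C$ with $t^N\in yC$ but $t\notin yC$, it writes $t=\tfrac{by}{x}$ with $x\in B\setminus(y)$ using hypothesis~(ii), and then shows $\tfrac{b}{x}\in C[\tfrac{1}{y}]\cap C[\tfrac{1}{x}]$. The key step is proving $C[\tfrac{1}{y}]\cap C[\tfrac{1}{x}]=C$: since $y,x$ is a regular sequence on $B$, Lemma~\ref{Koszul} gives $B=B[\tfrac{1}{y}]\cap B[\tfrac{1}{x}]$, and then a minimal-polynomial argument shows $C[\tfrac{1}{y}]\cap C[\tfrac{1}{x}]$ is integral over $B$, hence contained in the normal domain $C$. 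This forces $t\in yC$, a contradiction.

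Your route is more structural: you use $(S_2)$ for the normal domain $C$ to pin down $\Ass_C(C/yC)$ in height one, then going-down over the normal base $B$ to force every such prime to contract exactly to $yB$, so that the hypothesis on $C_{(y)}/(y)$ controls every associated prime. This is arguably more transparent about \emph{why} the result holds, and it isolates cleanly where each normality hypothesis enters (normality of $C$ for $(S_2)$; normality of $B$ for going-down). The trade-off is that your proof tacitly uses Noetherian hypotheses on $B$ and $C$ (for associated primes, Serre's conditions, and the Hauptidealsatz), which the lemma as stated does not impose; the paper's element-level argument via Lemma~\ref{Koszul} avoids this and works for arbitrary normal domains. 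In the paper's applications (Theorem~\ref{MainBertini} with $B$ Noetherian UFD and $C$ module-finite over $B$) both rings are Noetherian, so your argument suffices there, but you should flag the extra hypothesis if you present it as a proof of the lemma in its stated generality.
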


\begin{proof}
Assume that $C/(y)$ is not reduced and deduce a contradiction. Then there exists $t \in C$ such that $t^N \in (y) \subseteq C$ and $t \notin (y) \subseteq C$ for some $N>0$. Since $C_{(y)}/(y)$ is reduced by assumption, we have $t \in (y) \subseteq C_{(y)}$ and we can write
$$
t=\tfrac{by}{x}~\mbox{for some}~x \in B \setminus (y)~\mbox{and}~b \in C.
$$
Since $t \in C$, we have $\frac{b}{x}=\frac{t}{y} \in C[\frac{1}{y}]$ and $\frac{b}{x} \in C[\frac{1}{x}]$. Since $B/(y)$ is an integral domain, $x$ is a regular element on $B/(y)$. Hence, $y,x$ is a regular sequence on $B$ and we have $B=B[\frac{1}{y}] \cap B[\frac{1}{x}]$ by Lemma \ref{Koszul}. On the other hand, we claim that
$$
B=B[\tfrac{1}{y}] \cap B[\tfrac{1}{x}] \to C[\tfrac{1}{y}] \cap C[\tfrac{1}{x}]
$$
is an integral extension. For this, let $\Frac(B)[X]$ denote the polynomial algebra over $\Frac(B)$ and let $F(X) \in \Frac(B)[X]$ be the monic minimal polynomial of an element $\alpha \in C[\frac{1}{y}] \cap C[\frac{1}{x}]$. Then $\alpha$ is integral over $B[\frac{1}{y}]$ and $B[\frac{1}{x}]$ which are both normal domains by normality of $B$. It follows from \cite[Theorem 2.1.17]{SwHu} that
$$
F(X) \in B[\tfrac{1}{y}][X]~\mbox{and}~F(X) \in B[\tfrac{1}{x}][X].
$$
Thus, it follows that $F(X) \in B[X]$ and $B \to C[\frac{1}{y}] \cap C[\frac{1}{x}]$ is an integral extension of integral domains. Since $C[\frac{1}{y}]$ and $C[\frac{1}{x}]$ are normal domains, $C[\frac{1}{y}] \cap C[\frac{1}{x}]$ is also normal. This implies that $C=C[\frac{1}{y}] \cap C[\frac{1}{x}]$ by normality of $C$. Finally,
$$
\tfrac{b}{x}\in C[\tfrac{1}{y}] \cap C[\tfrac{1}{x}]=C,
$$
which contradicts to our hypothesis. Hence we must have $t \in (y) \subseteq C$.\\
\end{proof}

\subsection{Local Bertini theorem}
Let $B \hookrightarrow C$ be a module-finite extension of Noetherian domains. Then it is known that the support of the $C$-module $\Omega_{C/B}$ is the largest closed subset of $\Spec C$ on which $\Spec C \to \Spec B$ is ramified \cite[Corollary 6.10]{Kunz}. We prove the following theorem as a main corollary of the preliminary lemmas. A ring map $B \to C$ is \textit{etale}, if it is flat, unramified and of finite presentation. Let $\Omega_{C/B}$ be the module of K\"ahler differentials and let $\Ass_B(\Omega_{C/B})$ be the set of associated primes of $\Omega_{C/B}$ as a $B$-module.

\begin{theorem}[Local Bertini Theorem]
\label{MainBertini}
Assume that $B \hookrightarrow C$ is a module-finite extension such that $B$ is a Noetherian unique factorization domain, $C$ is a normal domain and $\Frac(B) \to \Frac(C)$ is a separable field extension. Let us define a subset $\mathcal{S}_{C/B}$ of $\Spec B$ as follows: 
\begin{equation}
\label{equation:scb}
\mathcal{S}_{C/B}=\{\fp \in \Spec B~|~\fp~\mbox{is a height-one prime in}~\Ass_B(\Omega_{C/B})\}.
\end{equation}
Then $\mathcal{S}_{C/B}$ is a finite set and $C/\fp C$ is a reduced ring for any height-one prime $\fp \in \Spec B$ such that $\fp \notin \mathcal{S}_{C/B}$.
\end{theorem}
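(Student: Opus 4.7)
The plan is to reduce the statement to a purely local computation at the height-one prime and then apply Lemma~\ref{Koszul2} to propagate reducedness. To show that $\mathcal{S}_{C/B}$ is finite, observe that $\Omega_{C/B}$ is a finitely generated $C$-module, hence a finitely generated $B$-module since $C$ is module-finite over $B$; associated primes of a finitely generated module over a Noetherian ring form a finite set, so $\mathcal{S}_{C/B}$ is automatically finite. A second basic observation is that the separability of $\Frac(B) \to \Frac(C)$ yields $\Omega_{\Frac(C)/\Frac(B)} = 0$, so $\Omega_{C/B} \otimes_B \Frac(B) = 0$ and $\Omega_{C/B}$ is a $B$-torsion module; in particular every $B$-associated prime of $\Omega_{C/B}$ has positive height.

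Next, let $\fp$ be a height-one prime of $B$ with $\fp \notin \mathcal{S}_{C/B}$. I claim that $(\Omega_{C/B})_\fp = 0$. Since $\Omega_{C/B}$ is finitely generated and $B$-torsion, every prime in $\Supp_B(\Omega_{C/B})$ has height $\geq 1$. If $\fp$ lay in $\Supp_B(\Omega_{C/B})$, it would then be a minimal element of that support (any strictly smaller prime in the support would have height $0$, forcing $\Omega_{C/B}$ to have non-trivial generic fibre), and minimal primes of the support of a finitely generated module are associated primes; this would contradict $\fp \notin \mathcal{S}_{C/B}$. Thus $(\Omega_{C/B})_\fp = 0$, or equivalently $\Omega_{C_\fp / B_\fp} = 0$.

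Because $B$ is a UFD, the height-one prime $\fp$ is principal, say $\fp = (y)$, and $B_\fp$ is a DVR. The localization $C_\fp := C \otimes_B B_\fp$ is module-finite and torsion-free over $B_\fp$ since it embeds into $\Frac(C)$, and a finitely generated torsion-free module over a DVR is free, hence $B_\fp$-flat. Combined with the unramifiedness just established, this makes $B_\fp \to C_\fp$ a finite etale extension. The special fibre $C_\fp / \fp C_\fp$ is then etale over the residue field $k(\fp) = B_\fp / \fp B_\fp$, so it is a finite product of finite separable field extensions of $k(\fp)$ and in particular is reduced.

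Finally, Lemma~\ref{Koszul2} applies with the integral extension of normal domains $B \hookrightarrow C$ and the principal prime ideal $(y) = \fp$: the reducedness of $C_{(y)}/(y) = C_\fp / \fp C_\fp$ just established forces $C/\fp C$ itself to be reduced, which finishes the argument. The most delicate point is the identification, at height one, of $\Supp_B(\Omega_{C/B})$ with $\Ass_B(\Omega_{C/B})$ via the torsion hypothesis on $\Omega_{C/B}$, because this is exactly what allows us to upgrade unramifiedness at $\fp$ to etaleness of $B_\fp \to C_\fp$ and then, through Lemma~\ref{Koszul2}, to a global reducedness statement for $C/\fp C$; the remaining ingredients are standard facts about finite torsion-free modules over a DVR and about etale algebras over a field.
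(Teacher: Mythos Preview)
Your proof is correct and follows essentially the same approach as the paper's: both use separability to make $\Omega_{C/B}$ a torsion $B$-module, deduce that a height-one prime $\fp \notin \mathcal{S}_{C/B}$ satisfies $(\Omega_{C/B})_\fp = 0$ so that $B_\fp \to C_\fp$ is \'etale, and then invoke Lemma~\ref{Koszul2} with the principal generator of $\fp$ (available since $B$ is a UFD). Your write-up is slightly more explicit in justifying flatness of $C_\fp$ over the DVR $B_\fp$, but the logical structure is identical.
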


\begin{proof}
Since $\Frac(B) \to \Frac(C)$ is separable by assumption, we have
$$
\Omega_{C/B} \otimes_C \Frac(C) \simeq \Omega_{\Frac(C)/\Frac(B)}=0,
$$
which implies that $\Omega_{C/B}$ is a finitely generated torsion $C$-module. Since $C$ is a finitely generated $B$-module, $\Omega_{C/B}$ is a finitely generated torsion $B$-module. Let $\{ \fp_1,\ldots,\fp_k \}$ be the finite set of all associated primes of the finitely generated torsion $B$-module $\Omega_{C/B}$ which are of height-one. Then, for a height-one prime $\fp \in \Spec B$, we have $(\Omega_{C/B})_{\fp}=\Omega_{C_{\fp}/B_{\fp}} \ne 0$ 
if and only if $\fp \in \{\fp_1,\ldots,\fp_k\}$, where $C_{\fp}=C \otimes_B B_{\fp}$. In other words, if $\fp \notin \{\fp_1,\ldots,\fp_k\}$, the localization $B_{\fp} \to C_{\fp}$ 
is an etale extension of semi-local Dedekind domains. 
Note that every height-one prime of $B$ is principal because $B$ is a unique factorization domain. Now take an arbitrary height-one prime $\fp \notin \{\fp_1,\ldots,\fp_k\}$ of $B$ and 
we will apply Lemma \ref{Koszul2} with $(y) =\mathfrak{p}$. 
Since $B \hookrightarrow C$ is a module-finite extension, $C$ is integral over $B$, which is nothing but the condition (i) of Lemma \ref{Koszul2}. The condition (ii) of Lemma \ref{Koszul2} was already verified above. We thus conclude that $C/\fp C$ is a reduced ring by Lemma \ref{Koszul2}.\\
\end{proof}

\subsection{Cohen structure theorem}
We discuss applications of Theorem \ref{MainBertini} to Betini-type theorems on complete local normal domains, which we will apply to study the characteristic ideals. Now let $(R,\fm,\mathbf{k})$ be a Noetherian complete local domain of mixed characteristic, where $\fm$ is its unique maximal ideal and $\mathbf{k}$ is its residue field of characteristic $p>0$. Let $(A,pA,\mathbf{k})$ be a complete discrete valuation ring of mixed characteristic $p>0$. We fix a \textit{coefficient ring} of $R$:
$$
(A,pA,\mathbf{k}) \xrightarrow{\pi} R.
$$
That is, $\pi$ is a flat local map from $A$ to $R$ inducing an isomorphism on residue fields. Let $I \subseteq \fm$ be an ideal of a Noetherian local ring $(R,\fm,\mathbf{k})$. We say that $I$ is a \textit{parameter ideal}, if it is generated by a system of parameters of $R$. Let us recall that Cohen structure theorem is essential in the construction of a module-finite extension of a complete local ring over a complete regular local ring via a certain choice of a system of parameters \cite{Cohen}.

\begin{theorem}[Cohen structure theorem]
\label{Cohen}
A Noetherian complete local domain $(R,\fm,\mathbf{k})$ with mixed characteristic $p>0$ and dimension $d$ has a coefficient ring $(A,pA,\mathbf{k})$ and there exists a regular local subring $A[[x_2,\ldots,x_d]]$ of $R$ such that $A[[x_2,\ldots,x_d]] \hookrightarrow R$ is module-finite.
\end{theorem}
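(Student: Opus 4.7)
The plan is to follow the classical Cohen construction in two stages: first produce the coefficient ring $A$, then perform a complete Noether normalization using a suitably chosen system of parameters.

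For the first stage, I would invoke Cohen's existence theorem for a coefficient ring in the mixed characteristic case. Since $R$ is complete with residue field $\mathbf{k}$ of characteristic $p>0$ and $\Char R = 0$, there is a complete discrete valuation ring $(A, pA, \mathbf{k})$ — the Cohen ring of $\mathbf{k}$ — together with a flat local ring homomorphism $\pi \colon A \to R$ inducing the identity on residue fields. When $\mathbf{k}$ is perfect one takes $A$ to be the ring of Witt vectors $W(\mathbf{k})$; in general $A$ is constructed by lifting a $p$-basis of $\mathbf{k}$ and assembling the coefficient ring as an inverse limit of Artinian local rings with residue field $\mathbf{k}$. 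Flatness of $\pi$ follows from the fact that $A$ is a DVR and $p$ is a nonzerodivisor on $R$ (since $R$ is a domain of characteristic $0$).

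For the second stage, I would choose parameters as follows. Since $R$ is a domain and $p\in\fm$ is nonzero, $p$ is $R$-regular, so by Krull's principal ideal theorem $\Ht(pR)=1$ and $\dim R/pR = d-1$. Pick $x_2,\ldots,x_d\in\fm$ whose images in $R/pR$ form a system of parameters; then $p,x_2,\ldots,x_d$ is a full system of parameters of $R$, i.e.\ the ideal $(p,x_2,\ldots,x_d)$ is $\fm$-primary. I would then define
$$
\varphi \colon A[[X_2,\ldots,X_d]] \lo R, \qquad X_i \mapsto x_i,
$$
extending $\pi$; this is well defined because the $x_i$ lie in $\fm$ and $R$ is $\fm$-adically complete, so substitution of power series converges. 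Writing $S := A[[X_2,\ldots,X_d]]$, its maximal ideal $\fn = (p,X_2,\ldots,X_d)$ satisfies $\fn R = (p,x_2,\ldots,x_d)R$, which is $\fm$-primary, and $R/\fn R$ is a finite-dimensional $\mathbf{k}$-vector space.

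The module-finite conclusion follows from the complete version of Nakayama's lemma: since $R$ is $\fm$-adically complete and $\fn R$ is $\fm$-primary, $R$ is $\fn$-adically complete as an $S$-module, and lifts of a $\mathbf{k}$-basis of $R/\fn R$ generate $R$ as an $S$-module. Finally, injectivity of $\varphi$: the regular local ring $S$ has dimension $d$, and $R$ being module-finite over the image of $\varphi$ forces $\dim (\im \varphi) \geq \dim R = d$, so $\ker \varphi$ has height zero in the domain $S$ and must vanish. Thus $\varphi$ identifies $S$ with a regular local subring of $R$ over which $R$ is module-finite. The main obstacle, conceptually, is the construction of the Cohen ring $A$ when $\mathbf{k}$ is imperfect, but this is the content of the cited result \cite{Cohen} and is quoted rather than reproved here; the remaining work is the routine but essential parameter selection ensuring $(p,x_2,\ldots,x_d)$ is $\fm$-primary together with the completeness-based finiteness argument.
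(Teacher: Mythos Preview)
Your proposal is correct and follows essentially the same approach as the paper: the paper does not prove Theorem~\ref{Cohen} directly but remarks that the difficult part is the existence of the coefficient ring (citing \cite{Cohen}), and then Lemma~\ref{CohenStructure} carries out exactly your second-stage argument, choosing a system of parameters $p,z_2,\ldots,z_d$, forming the map $A[[z_2,\ldots,z_d]]\to R$ via completion, and applying the topological Nakayama lemma to deduce module-finiteness. Your treatment is slightly more detailed in that you spell out the injectivity of $\varphi$ via a dimension count, which the paper leaves implicit.
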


The difficult part of Cohen structure theorem is to prove that a coefficient ring exists. Once a coefficient ring is determined, we use the following lemma to find a module-finite extension from a complete regular local ring.

\begin{lemma}
\label{CohenStructure}
Let the notation be as in Theorem \ref{Cohen}, let $p,z_2,\ldots,z_d$ be any system of parameters of $R$ and let $A[[z_2,\ldots,z_d]]$ be the completion of the subring $A[z_2,\ldots,z_d]$ of $R$ with respect to the $(p,z_2,\ldots,z_d)$-adic topology. Then there is a module-finite extension $A[[z_2,\ldots,z_d]] \to R$. Moreover, $A[[z_2,\ldots,z_d]]$ is a $d$-dimensional regular local ring.
\end{lemma}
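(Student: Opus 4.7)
The plan is to realize the completion $A[[z_2,\ldots,z_d]]$ as the image in $R$ of a continuous substitution homomorphism from the formal power series ring $A[[X_2,\ldots,X_d]]$, establish that $R$ is finitely generated over this image via a Nakayama-type argument in the complete setting, and then use dimension theory to conclude both regularity of $A[[z_2,\ldots,z_d]]$ and injectivity of the resulting map to $R$.

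First I would note that since $p, z_2, \ldots, z_d$ is a system of parameters of $R$, the ideal $\fq := (p, z_2, \ldots, z_d)R$ is $\fm$-primary, so the $\fq$-adic topology on $R$ coincides with the $\fm$-adic topology and $R$ is complete for both. For every formal power series $f = \sum a_\alpha X^\alpha \in A[[X_2,\ldots,X_d]]$, the partial sums $\sum_{|\alpha| \le n} a_\alpha z^\alpha$ form an $\fq$-adic Cauchy sequence in $R$, so they converge in $R$. This defines a continuous $A$-algebra map $\phi \colon A[[X_2,\ldots,X_d]] \to R$ sending $X_i$ to $z_i$, and by density its image is the abstract completion $\hat{S} := A[[z_2,\ldots,z_d]]$ appearing in the statement, with a natural factorization $A[[X_2,\ldots,X_d]] \twoheadrightarrow \hat{S} \to R$.

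Next I would show $R$ is module-finite over $\hat{S}$. Since $\fq R$ is $\fm$-primary, the quotient $R/\fq R$ is Artinian local with residue field $\mathbf{k} = A/pA$, hence finite-dimensional as a $\mathbf{k}$-vector space; choose lifts $r_1, \ldots, r_n \in R$ of a $\mathbf{k}$-basis, so that $R = Ar_1 + \cdots + Ar_n + \fq R$. Given any $r \in R$, I would iterate this relation: expand the $\fq^k R$-part of the residual at each step as an $A$-linear combination of monomials $p^{i_0} z_2^{i_2} \cdots z_d^{i_d} r_j$ modulo $\fq^{k+1} R$, then accumulate the degree-$k$ contribution to each $r_j$ across all steps. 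These data organize into formal power series $\hat{s}_j \in A[[X_2,\ldots,X_d]]$ by completeness of the latter, and their images $\hat{s}_j \in \hat{S}$ satisfy $r = \sum_j \hat{s}_j r_j$ in $R$ by $\fq$-adic completeness of $R$. Hence $R = \hat{S}r_1 + \cdots + \hat{S}r_n$.

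Finally, from module-finiteness we get $\dim \hat{S} \ge \dim R = d$, while the surjection $A[[X_2,\ldots,X_d]] \twoheadrightarrow \hat{S}$ forces $\dim \hat{S} \le d$, so $\dim \hat{S} = d$. The kernel of this surjection is thus a height-zero prime in the domain $A[[X_2,\ldots,X_d]]$, hence zero, giving $\hat{S} \cong A[[X_2,\ldots,X_d]]$, a $d$-dimensional regular local ring. The map $\hat{S} \to R$ is then a nonzero homomorphism between domains of the same dimension $d$ whose image is module-finite of dimension $d$ in $R$, so its kernel must also be zero. This shows $A[[z_2,\ldots,z_d]] \hookrightarrow R$ is a module-finite extension. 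The main obstacle is the complete-Nakayama step: organizing the iteratively produced coefficients into genuine elements of the formal power series ring, rather than merely a $\fq$-adic Cauchy sequence in $R$, requires carefully tracking the $\fq$-degree contributed at each iteration and exploiting the completeness of $A[[X_2,\ldots,X_d]]$ in its $(p, X_2, \ldots, X_d)$-adic topology.
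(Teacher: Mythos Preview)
Your proposal is correct and follows essentially the same route as the paper: extend the inclusion $A[z_2,\ldots,z_d]\hookrightarrow R$ to a map from the completion, use a complete-Nakayama argument to get module-finiteness (the paper simply cites Matsumura's Theorem~8.4 for this step rather than spelling out the iteration you describe), and then read off the dimension and regularity. Your version is more detailed, in particular making explicit the injectivity of $A[[X_2,\ldots,X_d]]\twoheadrightarrow \hat S$ and of $\hat S\to R$ via the dimension comparison, which the paper's proof leaves implicit.
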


\begin{proof}
Taking the $(p,z_2,\ldots,z_d)$-adic completion, $A[z_2,\ldots,z_d] \hookrightarrow R$ extends to a ring map:
$$
A[[z_2,\ldots,z_d]] \to R.
$$
Let us prove that $R$ is a finitely generated $A[[z_2,\ldots,z_d]]$-module. Since $A[[z_2,\ldots,z_d]]$ and $R$ have the same residue field, $R/(p,z_2,\ldots,z_d)$ is an $A[[z_2,\ldots,z_d]]$-module of finite length. Applying the topological Nakayama's theorem \cite[Theorem 8.4]{Mat}, it follows that $A[[z_2,\ldots,z_d]] \to R$ is module-finite. Since the Krull dimension of $A[[z_2,\ldots,z_d]]$ is $d$ and its maximal ideal is generated by $d$ elements, it is a regular local ring.\\
\end{proof}

\subsection{An application of the local Bertini theorem}
We will need the following corollary of Theorem \ref{MainBertini} in \S~\ref{controltheorem}. Let us fix a prime number $p>0$.

\begin{corollary}
\label{LocalBertini}
Let $(R,\fm,\mathbf{k})$ be a two-dimensional Noetherian complete local normal domain with mixed characteristic $p>0$, where $A$ is a coefficient ring of $R$. Then the following statements hold.

\begin{enumerate}
\item 
Let us choose an element $z \in \fm$ such that $(p,z)$ is a parameter ideal of $R$. Then, for each $n>0$, $R/(z+a_np^n)$ is a reduced ring with mixed characteristic for all but finitely many $a_n \in A^{\times}$.

\item
Let us choose an element $z, r \in \fm$ such that $(p,z^n+r)$ is a parameter ideal of $R$ for all $n>0$.  
Then, for each $n>0$, $R/(z^n+r+a_n p)$ is a reduced ring with mixed characteristic for all but finitely many $a_n \in A^{\times}$.
\end{enumerate}
\end{corollary}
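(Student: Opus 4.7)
The plan is to deduce both statements from Theorem \ref{MainBertini} via a module-finite embedding $B\hookrightarrow R$ with $B$ a two-dimensional complete regular local subring; such a $B$ is automatically a UFD by the Auslander--Buchsbaum theorem, and separability of $\Frac(B)\hookrightarrow\Frac(R)$ is automatic because $A$ has characteristic zero. For Part (1), I would take $B:=A[[z]]\hookrightarrow R$, the module-finite embedding supplied by Lemma \ref{CohenStructure} applied to the system of parameters $p,z$. For each $n\geq 1$ and each $a\in A^{\times}$, the element $z+ap^n$ is part of a regular system of parameters of $B$ (via the change of variable $X\mapsto X-ap^n$ in $B\simeq A[[X]]$), so $(z+ap^n)$ is a height-one prime of $B$. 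Part (2) proceeds identically with $B_n:=A[[z^n+r]]\hookrightarrow R$ supplied by Lemma \ref{CohenStructure} applied to the system of parameters $p,z^n+r$, and under $B_n\simeq A[[X]]$ the element $z^n+r+ap$ corresponds to $X+ap$, which likewise generates a height-one prime.

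The key step, and the main obstacle, is to show that the assignment $a\mapsto (z+ap^n)$ (respectively $a\mapsto (z^n+r+ap)$) is \textbf{injective} on $A^{\times}$, since Theorem \ref{MainBertini} only asserts finiteness of the exceptional set $\mathcal{S}_{R/B}$ of height-one primes. Suppose $(X+ap^n)=(X+a'p^n)$ in $A[[X]]$, so $X+ap^n=u(X+a'p^n)$ for some unit $u\in A[[X]]^{\times}$. The substitution $X\mapsto -a'p^n$ defines a continuous ring homomorphism $A[[X]]\to A$ (the series converges $p$-adically, since $-a'p^n\in pA$); applying it to the equality yields $(a-a')p^n=u(-a'p^n)\cdot 0=0$, whence $a=a'$ as $A$ is a domain. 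The identical substitution $X\mapsto -a'p$ disposes of the Part (2) case.

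With injectivity in hand, only finitely many $a\in A^{\times}$ can satisfy $(z+ap^n)\in\mathcal{S}_{R/B}$; for every other $a$ the conclusion of Theorem \ref{MainBertini} is precisely that $R/(z+ap^n)R$ is reduced, and analogously in Part (2). For the mixed characteristic assertion, observe that $R$ is a two-dimensional normal local domain, hence Cohen--Macaulay, so that $(p,z+ap^n)=(p,z)$ being a parameter ideal forces $z+ap^n,p$ to be a regular sequence in $R$; in particular $p$ is a nonzerodivisor on $R/(z+ap^n)R$, confirming mixed characteristic, and the same Cohen--Macaulay argument with $(p,z^n+r+ap)=(p,z^n+r)$ handles Part (2). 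Everything apart from the injectivity step is a routine translation of Theorem \ref{MainBertini} through Lemma \ref{CohenStructure}.
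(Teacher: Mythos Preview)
Your proof is correct and follows essentially the same approach as the paper: both arguments realize $R$ as module-finite over $B=A[[z]]$ (respectively $B_n=A[[z^n+r]]$) via Lemma~\ref{CohenStructure}, invoke Theorem~\ref{MainBertini}, and use that distinct $a\in A^{\times}$ yield distinct height-one primes of $B$ to conclude the exceptional set is finite. Your substitution argument for injectivity and your Cohen--Macaulay justification of mixed characteristic simply make explicit two points the paper asserts without detail.
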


By a theorem of Auslander and Buchsbaum, it is known that a regular local ring is a unique factorization domain. Now we apply Theorem \ref{MainBertini} by setting $B=A[[z]]$ or $B=A[[z^n+r]]$ to prove the corollary.

\begin{proof}
The proofs of the assertions $\rm (1)$ and $\rm (2)$ proceed almost in the same way. The situation 
of $\rm (2)$ is slightly more complicated since the ring $B=A[[z^n+r]]$ is different for varying $n$. Thus, we will prove only the assertion $\rm (2)$ omitting the proof of the assertion $\rm (1)$.

Let us start the proof of the assertion $\rm (2)$. Since $(p,z^n+r)$ is a parameter ideal, $A[[z^n+r]] \hookrightarrow R$ is module-finite. Note that this map depends on the choice of $n>0$. For each $n>0$, we denote by $\mathcal{S}_n$ the finite subset $\mathcal{S}_{R/ A[[z^n+r]]}$ of $\Spec A[[z^n+r]]$ 
defined in \eqref{equation:scb}.  
Let $\mathcal{T}_n \subseteq \Spec R$ be a finite set defined to be the inverse image of $\mathcal{S}_n$ via the map 
$\Spec R \rightarrow \Spec A[[z^n+r]]$. 
When we have two distinct elements $a_n , a'_n \in A^{\times}$, 
$(z^n+r+a_np)$ and $(z^n+r+a'_np)$ are pairwise distinct primes of $A[[z^n+r]]$. 
Hence we have $
(z^n+r+a_np) \notin \mathcal{T}_n $ for all but finitely many $a_n \in A^{\times}$. 
If we have $(z^n+r+a_np) \notin \mathcal{T}_n $, then $R/(z^n+r+a_np)$ is a reduced ring of mixed characteristic. Thus, we complete the proof of $\rm(2)$.\\
\end{proof}

\begin{remark}
\label{heightoneprime}
An inspection of the proof of Lemma \ref{LocalBertini} reveals that a fixed height-one prime ideal of $R$ cannot contain infinitely many distinct principal ideals from $\{(z+a_np^n)\}_{n \in \mathbb{N}}$ (resp. $\{(z^n+r+a_np)\}_{n \in \mathbb{N}}$), as long as $z \in \fm$ is chosen to be nonzero. This fact will be used in the proof of Theorem \ref{veryfinal}.
\end{remark}

\subsection{An application of Cohen-Gabber theorem}

Let us recall that Cohen-Gabber theorem is a refinement of Cohen structure theorem in the equal characteristic $p>0$ case. We prove the following corollary as a straightforward consequence of Theorem \ref{MainBertini} for independent interest.

\begin{corollary}
\label{LocalBertini2}
Suppose that $(R,\fm,\mathbf{k})$ is a Noetherian complete local normal domain of dimension $d \ge 2$ and of equal characteristic. Let $\fq_1,\ldots,\fq_r$ be pairwise distinct height-one primes. Then there exists an infinite set of nonzero principal ideals $\{(\mathbf{x}_n)\}_{n \in \mathbb{N}}$ such that $R/(\mathbf{x}_n)$ are reduced rings and every $\mathbf{x}_n$ is not contained in $\fq_1 \cup \cdots \cup \fq_r$.
\end{corollary}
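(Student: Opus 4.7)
The plan is to reduce to the setting of Theorem~\ref{MainBertini} by selecting a suitable ``Noether normalization''. Since $R$ has equal characteristic, we apply Cohen-Gabber theorem (Cohen structure theorem suffices in characteristic zero) to obtain a system of parameters $x_1,\ldots,x_d$ of $R$ such that $B := \mathbf{k}[[x_1,\ldots,x_d]] \hookrightarrow R$ is a module-finite extension and, crucially, the induced field extension $\Frac(B) \hookrightarrow \Frac(R)$ is separable. Since $B$ is a regular local ring, it is a unique factorization domain by the theorem of Auslander-Buchsbaum; also, $R$ is a normal domain by assumption. Thus all hypotheses of Theorem~\ref{MainBertini} are satisfied, and we obtain a finite subset $\mathcal{S}_{R/B} \subseteq \Spec B$ consisting of height-one primes of $B$, outside of which the specialization $R/\fp R$ is automatically a reduced ring.

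Next, I would translate the condition $\mathbf{x}_n \notin \fq_1 \cup \cdots \cup \fq_r$ into a statement about $B$. Because $B \hookrightarrow R$ is an integral extension of normal domains, the incomparability and dimension formula give that the contractions $\fq_i \cap B$ are height-one primes of $B$, and every height-one prime of $B$ is principal because $B$ is a unique factorization domain. So we are reduced to exhibiting infinitely many pairwise non-associate irreducible elements $f_n \in B$ whose associated primes $(f_n)$ lie outside the finite set
\[
\mathcal{S}_{R/B}\ \cup\ \{\fq_1 \cap B,\ldots,\fq_r \cap B\}.
\]
Indeed, once such $f_n$ are produced, set $\mathbf{x}_n := f_n \in R$: Theorem~\ref{MainBertini} ensures $R/\mathbf{x}_n R$ is reduced, and since $(f_n)$ and $\fq_i \cap B$ are distinct height-one primes of $B$, neither contains the other, hence $f_n \notin \fq_i \cap B$ and therefore $\mathbf{x}_n \notin \fq_i$ for every $i$.

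Producing the infinite family is the mildly delicate step, because the residue field $\mathbf{k}$ may be finite. I would use the family $f_n := x_1 + x_2^n$ for $n \geq 1$. Each $f_n$ is a Weierstrass polynomial of degree one in $x_1$ over $\mathbf{k}[[x_2,\ldots,x_d]]$, so it is irreducible in $B$ by the Weierstrass preparation theorem. Moreover, the $f_n$ are pairwise non-associate: if $f_n = u \cdot f_m$ with $u \in B^\times$ and $n > m$, then setting $x_1 = 0$ yields $x_2^n = u(0,x_2,\ldots,x_d) x_2^m$, forcing $u(0,\ldots,0) = 0$, a contradiction. Hence $\{(f_n)\}_{n \in \mathbb{N}}$ is an infinite family of pairwise distinct principal height-one primes of $B$, and all but finitely many avoid the finite set above, yielding the required family $\{(\mathbf{x}_n)\}_{n \in \mathbb{N}}$.

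The main obstacle is invoking Cohen-Gabber theorem correctly to guarantee separability of $\Frac(B)\hookrightarrow\Frac(R)$ in characteristic $p>0$; without this, Theorem~\ref{MainBertini} would not apply. The verification that the height-one primes $(f_n)$ are pairwise distinct is a short but necessary Weierstrass-type argument, and the passage between primes of $R$ and their contractions to $B$ relies on the standard integrality theorems for normal domains.
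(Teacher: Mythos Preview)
Your proof is correct and follows essentially the same approach as the paper's: invoke Cohen--Gabber to obtain a module-finite extension $B=\mathbf{k}[[t_1,\ldots,t_d]]\hookrightarrow R$ with separable fraction field extension, contract the $\fq_i$ to height-one primes of $B$, and then select infinitely many height-one primes of $B$ avoiding the finite bad set $\mathcal{S}_{R/B}\cup\{\fq_i\cap B\}$. The paper dispatches this last step with the phrase ``inductively'' and no further detail, whereas you supply the explicit family $f_n=x_1+x_2^n$, which is a welcome addition and works as you claim (note that $B/(f_n)\cong\mathbf{k}[[x_2,\ldots,x_d]]$ already shows $(f_n)$ is prime, so the appeal to Weierstrass preparation is not really needed).
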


\begin{proof}
By assumption, $R$ contains the field $\mathbf{k}$.
When $\mathbf{k}$ is of characteristic $p>0$, we need to use Cohen-Gabber theorem. By \cite[Th\'eor\`em 2.1.1]{Book} (see \cite{KS} for an elementary proof) , there exist a regular local ring $\mathbf{k}[[t_1,\ldots,t_d]]$ and a module-finite extension: $\mathbf{k}[[t_1,\ldots,t_d]] \to R$ such that
$$
\Frac(\mathbf{k}[[t_1,\ldots,t_d]]) \to \Frac(R)
$$
is a separable extension, which is automatic in the characteristic zero case. Let us put
$$
\fp_i:=\mathbf{k}[[t_1,\ldots,t_d]] \cap \fq_i.
$$
Then $\fp_i$ is a height-one prime ideal. We denote by $\mathcal{S}$ the set of height-one primes of $\mathbf{k}[[t_1,\ldots,t_d]]$ attached by Theorem \ref{MainBertini} applied to the map $B:=\mathbf{k}[[t_1,\ldots,t_d]] \to C:=R$. Now we can find pairwise distinct elements $\mathbf{x}_1,\mathbf{x}_2,\ldots \in \mathbf{k}[[t_1,\ldots,t_d]]$ inductively so that $(\mathbf{x}_n) \notin \mathcal{S} \cup \{\fp_1,\ldots,\fp_r\}$ for $n>0$. Hence $R/(\mathbf{x}_n)$ is a reduced ring.
\end{proof}

\begin{remark}
\label{Bertinifail}
\begin{enumerate}
\item
Theorem \ref{MainBertini} fails without the assumption that $\Frac(B) \to \Frac(C)$ is separable. For example, let $C=\mathbf{k}[x,y]$ for a perfect field $\mathbf{k}$ of characteristic $p>0$. Let $B=\mathbf{k}[x^p,y^p]$. Then $B \to C$ is a purely inseparable extension. Set $\mathbf{x}=ax^p+by^p$ with $a,b \in \mathbf{k}^{\times}$. Then $C/(\mathbf{x})$ is not reduced for any choice $a,b \in \mathbf{k}^{\times}$.

\item
Corollary \ref{LocalBertini} can also be proved by using the basic elements and the derivation of completed K\"ahler differential modules, which are the main tools for the proof of the Bertini theorem for normality on local rings in the article \cite{OcSh}.

\item
In \S~\ref{controltheorem}, we will make a special choice of $r \in R$ in Corollary \ref{LocalBertini}, which is necessary to take care of height-one primes lying above $p$ in a normal domain.
\end{enumerate}
\end{remark}

Finally let us prove a lemma on when an ind-etale extension of a coefficient ring $A$ of a complete local ring $R$ preserves many good properties of $R$.

\begin{lemma}
\label{localring}
Let $(R,\fm,\mathbf{k})$ be a Noetherian complete local ring with mixed characteristic and residue field $\mathbf{k}$ and let $(A,pA,\mathbf{k}) \to R$ be a coefficient ring map. Assume that $(A,pA,\mathbf{k}) \to (B,pB,\mathbf{k}')$ is an integral flat extension of discrete valuation rings. We put $R_B:=R \widehat{\otimes}_A B$, where $\widehat{\otimes}$ is the complete tensor product. Then we have the following statements. 
\begin{enumerate}
\item 
$R_B$ is a Noetherian complete local ring and $R \otimes_A B$ is a quasilocal ring.
\item 
If we assume further that $R$ is a normal domain and $A \to B$ is an ind-etale extension, then $R_B$ is a normal domain.
\end{enumerate}
\end{lemma}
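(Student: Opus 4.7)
The strategy is to use Cohen's structure theorem (Lemma \ref{CohenStructure}) to replace $A$ by a larger regular subring of $R$ and reduce everything to the study of the base change $A_0\to B_0$. Pick a system of parameters $p,z_2,\ldots,z_d$ of $R$; then $A_0:=A[[z_2,\ldots,z_d]]\hookrightarrow R$ is module-finite. Set $B_0:=B[[z_2,\ldots,z_d]]$, which is a Noetherian complete local ring with residue field $\mathbf{k}'$. Since $R$ is finite over $A_0$, the ring $R\otimes_{A_0}B_0$ is a finite $B_0$-module, hence automatically complete at the maximal ideal $\fm_{B_0}$; standard manipulations of the completed tensor product then identify $R_B=R\widehat{\otimes}_AB$ with $R\otimes_{A_0}B_0$, so that $R_B$ is both Noetherian and complete. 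For the quasilocality of $R\otimes_AB$ in Part~(1), note that $R\to R\otimes_AB$ is integral (base change of an integral map), so every maximal ideal of $R\otimes_AB$ lies over $\fm$ by going-up, while
\begin{equation*}
R\otimes_AB/\fm(R\otimes_AB)\ =\ (R/\fm)\otimes_AB\ =\ B/pB\ =\ \mathbf{k}'
\end{equation*}
is a field, showing that $R\otimes_AB$ has a unique maximal ideal; then $R_B$ is local as the completion at this maximal ideal.

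For Part~(2), the Henselianness of the complete DVR $A$ lets us write $B=\varinjlim_iB_i$ as a filtered colimit of local finite etale $A$-subalgebras $B_i$. Each $R_i:=R\otimes_AB_i$ is finite etale over the normal ring $R$, hence normal, and is local by Part~(1); the transition maps are flat and injective (since $R$ is $A$-flat), so $R\otimes_AB=\varinjlim_iR_i$ is a (possibly non-Noetherian) normal local domain. To transfer normality to the completion $R_B$, use the identification $R_B\cong R\otimes_{A_0}B_0$ and verify that $A_0\to B_0$ is a regular homomorphism, i.e.\ flat with geometrically regular fibres. Flatness follows from the local flatness criterion, since the regular sequence $p,z_2,\ldots,z_d$ of $A_0$ remains a regular sequence in $B_0$, forcing $\Tor^{A_0}_1(\mathbf{k},B_0)=0$. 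The closed fibre is the separable algebraic extension $\mathbf{k}'/\mathbf{k}$; the remaining fibres arise from fibres of the ind-etale map $A\to B$ by a power-series-type completion, and remain geometrically regular. The standard ascent theorem for normality under flat base change with geometrically regular fibres (see e.g.\ Matsumura, \emph{Commutative Ring Theory}, Theorem~23.9) then yields normality of $R_B=R\otimes_{A_0}B_0$.

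\textbf{Main obstacle.} The hardest step is the systematic verification that \emph{every} fibre of $A_0\to B_0$ is geometrically regular. The closed and generic fibres are easy---the former is separable algebraic by the ind-etale hypothesis, and the latter is a separable algebraic extension of $\mathrm{Frac}(A_0)$---but the intermediate fibres require a careful analysis of how the completion of $A_0\otimes_AB$ at its maximal ideal relates to the ind-etale map $A\to B$ at height-$k$ primes. One must also justify the identification $R\widehat{\otimes}_AB\cong R\otimes_{A_0}B_0$ in the ind-etale case, where $A_0\otimes_AB$ is no longer Noetherian.
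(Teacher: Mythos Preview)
Your Part~(1) is close in spirit to the paper but organized differently: the paper uses a \emph{surjection} $A[[t_1,\ldots,t_n]]\twoheadrightarrow R$ from Cohen's theorem and deduces a surjection $B[[t_1,\ldots,t_n]]\twoheadrightarrow R_B$, whereas you use a module-finite \emph{injection} $A_0\hookrightarrow R$. Your locality argument for $R\otimes_A B$ (integrality of $R\to R\otimes_A B$ plus the fibre computation $(R/\fm)\otimes_A B=\mathbf{k}'$) is actually more direct than the paper's, which reduces modulo $p$ and invokes Sweedler's criterion for $R/(p)\otimes_{\mathbf{k}}\mathbf{k}'$ to be local. One caution: your claim that $B_0=B[[z_2,\ldots,z_d]]$ is a complete local ring requires $B$ to be $p$-adically complete, which is not assumed and typically fails in the ind-\'etale case of Part~(2), where $B=\varinjlim_i B_i$ is a non-complete colimit of finite \'etale $A$-algebras; the correct target is $\widehat{B}[[z_2,\ldots,z_d]]$, and your identification $R_B\cong R\otimes_{A_0}B_0$ needs this adjustment.

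For Part~(2) the paper takes a much shorter route that bypasses your main obstacle entirely. Since $A\to B$ is ind-\'etale, so is $R\to R\otimes_A B$, and the latter is a local extension by Part~(1); hence $R\otimes_A B$ factors through the strict henselization, $R\to R\otimes_A B\to R^{\mathrm{sh}}$. Faithful flatness of $R\otimes_A B\to R^{\mathrm{sh}}$ (with $R^{\mathrm{sh}}$ Noetherian) forces $R\otimes_A B$ to be Noetherian, and Greco's theorem on ind-\'etale extensions of excellent rings shows $R\otimes_A B$ is normal and \emph{excellent}. Normality of the completion $R_B$ then follows immediately from excellence. No fibre-by-fibre regularity analysis of $A_0\to B_0$ is needed. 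Your approach could in principle be completed, but it is the harder path: verifying geometric regularity of all fibres of $A_0\to \widehat{B}[[z]]$ ultimately requires the same excellence-type input (regularity of $A\to\widehat{B}$) that the paper invokes directly.
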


\begin{proof}
Let us prove the assertion $\rm (1)$. There is a surjetion $A[[t_1,\ldots,t_n]] \twoheadrightarrow R$ by Cohen structure theorem. Then we have a surjection
$B[[t_1,\ldots,t_n]] \twoheadrightarrow R_B$, which shows that $R_B$ is a Noetherian complete local ring. We prove that $R \otimes_A B$ is local. Note that $p$ is in the Jacobson radical of $R \otimes_A B$ and so it suffices to see that $(R \otimes_A B)/(p)=R/(p) \otimes_{\mathbf{k}} \mathbf{k}'$ is a local ring. Since $R/(p)$ is a local ring with the residue field $\mathbf{k}$ and $\mathbf{k} \to \mathbf{k}'$ is an algebraic extension, it is not hard to see 
that $R/(p) \otimes_{\mathbf{k}} \mathbf{k}'$ is a local ring (see \cite[Theorem]{Swe} for example). This completes the proof of $\rm (1)$. 

For the proof of the assertion $\rm (2)$, notice that $R \to R \otimes_A B$ is an ind-etale local extension and $R \otimes_A B$ is quasilocal by the assumption that $A \to B$ is ind-etale and $\rm(1)$. In particular, we have $R \to R \otimes_A B \to R^{\rm{sh}}$, where $R^{\rm{sh}}$ is the strict henselization of $R$. Since it is known that $R^{\rm{sh}}$ is Noetherian and $R \otimes_A B \to R^{\rm{sh}}$ is faithfully flat, $R \otimes_A B$ is also Noetherian. Since $R \to R \otimes_A B$ is ind-etale and $R$ is a complete local normal domain, we find that $R\otimes_A B$ is normal and excellent by \cite[Theorem 5.3 (iv)]{Gre}. Then the $\fm$-adic completion of $R \otimes_A B$, which is $R_B$, is a normal domain.\\
\end{proof}

\section{Characteristic ideals and specialization methods}
\label{controltheorem}

In this section, we develop the specialization method for characteristic ideals over Noetherian local normal domains in dimension two. In classical Iwasawa theory, one considers the characteristic ideal attached to the torsion module arising as the projective limit of the $p$-part of the ideal class groups of a certain tower of number fields, which gives rise to the completed group algebra $\mathbb{Z}_p[[1+p\mathbb{Z}_p]]$. In Hida theory, one considers the projective limit of the space of $p$-adic ordinary modular forms and the $p$-adic ordinary Hecke algebras. Thus, the resulting projective limit would be more complicated than $\mathbb{Z}_p[[1+p\mathbb{Z}_p]]$. This motivates us to consider the structure of characteristic ideals attached to finitely generated torsion modules over normal domains. We first make the definition of characteristic ideals. Let $R$ be a Noetherian normal domain and let
$$
N^{\rc}:=\Hom_R(\Hom_R(N,R),R)
$$
denote the \textit{reflexive closure} of an $R$-module $N$ and let $\ell_R(M)$ denote the length of an Artinian $R$-module $M$.

\begin{definition}
\label{charideal}
Let $R$ be a Noetherian normal domain.
\begin{enumerate}
\item
Let $M$ be a finitely generated torsion module over $R$. The \textit{characteristic ideal} associated to $M$ is defined as the reflexive ideal of $R$ as follows:
$$
\Char_R(M):=\big(\prod_{\fp} \fp^{\ell_{R_{\fp}}(M_{\fp})}\big)^{\rc},
$$
where $\fp$ ranges over all height-one prime ideals of $R$. If $M$ is not a torsion $R$-module, we set $\Char_R(M)=0$.

\item
A finitely generated $R$-module $M$ is said to be \textit{pseudo-null}, if $M_{\fp}=0$ for all height-one primes $\fp$ of $R$.

\item
Let $M$ and $N$ be finitely generated $R$-modules. Then an $R$-module map $f:M \to N$ is said to be a \textit{pseudo-isomorphism} if both kernel and cokernel of $f$ are pseudo-null $R$-modules.
\end{enumerate}
\end{definition}

More details on characteristic ideals are found in \cite[\S~7]{OcSh}. If $R$ is a unique factorization domain (UFD), the characteristic ideal is principal and it is not necessary to take the reflexive closure in Definition \ref{charideal}. The following proposition is a collection of the basic properties \cite[Proposition 9.6]{OcSh}.

\begin{proposition}
The following statements hold.

\begin{enumerate}
\item 
Let $R$ be a finitely generated torsion $R$-module. Then we have
$$
\Char_R(M)=\big(\prod_{\Ht \fp=1} \fp^{\ell_{R_{\fp}}(M_{\fp})}\big)^{\rc}=\Fitt_R(M)^{\rc},
$$
where $\Fitt_R(M)$ is the Fitting ideal of $M$. In particular, $\Fitt_R(M) \subseteq \Char_R(M)$. If $R$ is a UFD, then
$$
\Fitt_R(M) \subseteq \prod_{\Ht \fp=1} \fp^{\ell_{R_{\fp}}(M_{\fp})}=\Char_R(M).
$$

\item
Let $0 \to L \to M \to N \to 0$ be a short exact sequence of finitely generated $R$-modules. Then
$$
\Char_R(M)=\big(\Char_R(L) \Char_R(N)\big)^{\rc}.
$$

\item
Let $M$ and $N$ be finitely generated torsion $R$-modules. Then $\Char_R(M) \subseteq \Char_R(N)$ if and only if $\Char_R(M)_{\fp} \subseteq \Char_R(N)_{\fp}$ for all height-one primes $\fp \in \Spec R$.

\end{enumerate}
\end{proposition}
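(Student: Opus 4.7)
The common strategy for all three parts is to localize at height-one primes of $R$ and exploit that each $R_{\fp}$ is a DVR (since $R$ is normal), then to lift back using the standard fact that in a Noetherian normal domain every reflexive (equivalently, divisorial) ideal $I$ equals the intersection $\bigcap_{\Ht\fp=1} I R_{\fp}$ inside $\Frac(R)$. This local-global principle for divisorial ideals is the main technical input; it is implicit in the development of characteristic ideals in \cite[\S 7]{OcSh}.

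For assertion (1), the first equality is nothing but the definition of $\Char_R(M)$, so the content is the identification with $\Fitt_R(M)^{\rc}$. I would verify it after localization: Fitting ideals commute with localization, so $(\Fitt_R(M))_{\fp} = \Fitt_{R_{\fp}}(M_{\fp})$, and the structure theorem for torsion modules over the DVR $R_{\fp}$ gives $\Fitt_{R_{\fp}}(M_{\fp}) = \fp^{\ell_{R_{\fp}}(M_{\fp})} R_{\fp}$. Thus $\Fitt_R(M)$ and $\prod_{\fp} \fp^{\ell_{R_{\fp}}(M_{\fp})}$ agree at every height-one prime of $R$, so their reflexive closures coincide. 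The inclusion $\Fitt_R(M) \subseteq \Char_R(M)$ is then immediate from $I \subseteq I^{\rc}$. When $R$ is a UFD, every height-one prime is principal, so the product $\prod_{\fp} \fp^{\ell_{R_{\fp}}(M_{\fp})}$ is itself principal and hence reflexive, making the reflexive closure superfluous.

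For assertion (2), localizing the short exact sequence at a height-one prime $\fp$ preserves exactness, and additivity of length over the DVR $R_{\fp}$ yields $\ell_{R_{\fp}}(M_{\fp}) = \ell_{R_{\fp}}(L_{\fp}) + \ell_{R_{\fp}}(N_{\fp})$. Hence $\Char_R(M)$ and $\Char_R(L) \cdot \Char_R(N)$ have identical localizations at every height-one prime, so taking reflexive closures gives the claimed equality. For assertion (3), the forward implication is trivial, and the converse reduces, again by the local-global principle, to the observation that containment of reflexive ideals is detected at height-one primes: $I_{\fp} \subseteq J_{\fp}$ for all $\fp$ of height one implies $I = \bigcap_{\fp} I R_{\fp} \subseteq \bigcap_{\fp} J R_{\fp} = J$. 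No single step constitutes a serious obstacle; once the local-global principle for divisorial ideals and the standard computation of Fitting ideals over a DVR are granted, the proposition is essentially a bookkeeping exercise.
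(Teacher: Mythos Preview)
The paper does not supply its own proof of this proposition; it simply records it as ``a collection of the basic properties'' and refers the reader to \cite[Proposition 9.6]{OcSh}. Your sketch is correct and follows the standard route one would expect to find there: localize at height-one primes (where $R_{\fp}$ is a DVR by normality), use the compatibility of Fitting ideals with localization and the elementary computation $\Fitt_{R_{\fp}}(M_{\fp})=\fp^{\ell_{R_{\fp}}(M_{\fp})}R_{\fp}$ over a DVR, and then invoke the fact that a reflexive ideal in a Noetherian normal domain is determined by its localizations at height-one primes. There is nothing to correct.
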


In what follows, we set
$$
M[x]:=\{m \in M~|~xm=0\}
$$
for a module $M$ over a ring $R$ and $x \in R$. This is an $R$-submodule of $M$.

\begin{lemma}
\label{lemma2}
Let $0 \to L \to M \to N \to 0$ be a short exact sequence of modules over a ring $R$. Then for $x \in R$, there is a short
exact sequence:
$$
0 \to L[x] \to M[x] \to N[x] \to L/xL \to M/xM \to N/xN \to 0.
$$
\end{lemma}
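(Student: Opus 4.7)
The plan is to recognize this as a direct application of the snake lemma. Consider the commutative diagram with exact rows
\[
\begin{CD}
0 @>>> L @>>> M @>>> N @>>> 0 \\
@. @VV{x}V @VV{x}V @VV{x}V @. \\
0 @>>> L @>>> M @>>> N @>>> 0
\end{CD}
\]
in which each vertical arrow is multiplication by the fixed element $x \in R$. The rows are exact by the hypothesis that $0 \to L \to M \to N \to 0$ is a short exact sequence of $R$-modules, and the diagram is clearly commutative because the horizontal maps are $R$-linear (so they commute with scalar multiplication by $x$).

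Next, I would identify the kernel and cokernel of each vertical map. By the very definition of $L[x]$, $M[x]$, $N[x]$, these are the kernels of the three multiplication-by-$x$ maps; similarly $L/xL$, $M/xM$, $N/xN$ are the cokernels. The snake lemma, applied to this diagram, produces a six-term exact sequence
\[
0 \to \ker(x|_L) \to \ker(x|_M) \to \ker(x|_N) \xrightarrow{\partial} \coker(x|_L) \to \coker(x|_M) \to \coker(x|_N) \to 0,
\]
which is exactly
\[
0 \to L[x] \to M[x] \to N[x] \xrightarrow{\partial} L/xL \to M/xM \to N/xN \to 0.
\]

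There is essentially no obstacle here: the content of the lemma is entirely subsumed by the snake lemma, and the only task is to check that the connecting homomorphism $\partial$ behaves as expected. Explicitly, for $n \in N[x]$ one lifts $n$ to some $m \in M$, observes that $xm$ lies in $L$ because its image in $N$ is $xn = 0$, and defines $\partial(n) = xm \bmod xL$; a short verification shows this is well-defined, $R$-linear, and makes the sequence exact at $N[x]$ and at $L/xL$. I would either carry out this check by hand or simply cite the snake lemma from a standard homological algebra reference.
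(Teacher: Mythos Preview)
Your proof is correct and follows exactly the same approach as the paper: apply the snake lemma to the commutative diagram whose rows are the given short exact sequence and whose vertical maps are multiplication by $x$. The paper's proof is just a terser version of what you wrote.
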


\begin{proof}
The lemma follows immediately by applying the snake lemma to the following diagram:
$$
\begin{CD}
0 @>>> L @>>> M @>>> N @>>> 0\\
@. @V\times xVV @V\times xVV @V\times xVV \\
0 @>>> L @>>> M @>>> N @>>> 0.\\
\end{CD}
$$
\end{proof}

For a finitely generated module $M$ over a domain $R$, the \textit{rank} of $M$ is defined as the $\Frac(R)$-dimension of the vector space $M \otimes_R \Frac(R)$. We have the following lemma:

\begin{lemma}
\label{lemma3}
Assume that $(R,\fm)$ is a one-dimensional local Noetherian domain and $M$ is a finitely generated $R$-module. Then, for a nonzero element $x \in \fm$, we have
$$
\ell_R(M/xM)-\ell_R(M[x])=\rank_R(M) \cdot \ell_R(R/(x)).
$$
\end{lemma}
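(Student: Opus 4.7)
The plan is to exploit the rank filtration. Let $r = \rank_R(M)$. Since $R$ is a domain, there is an injection $R^r \hookrightarrow M$ whose image is obtained by choosing preimages in $M$ of a $\Frac(R)$-basis of $M \otimes_R \Frac(R)$; this injection is saturated in the sense that the quotient $T := M/R^r$ is a torsion $R$-module. Because $R$ is one-dimensional and local, every finitely generated torsion $R$-module is supported only at $\fm$ and is therefore of finite length. In particular $\ell_R(T)$, $\ell_R(T[x])$, and $\ell_R(T/xT)$ are all finite. Similarly, $M/xM$ and $M[x]$ are modules over $R/(x)$, and $R/(x)$ is a zero-dimensional local Noetherian ring (since $x \ne 0$ and $R$ is a one-dimensional domain), so both are of finite length.

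With the short exact sequence $0 \to R^r \to M \to T \to 0$ in hand, I would apply Lemma \ref{lemma2} to obtain the six-term exact sequence
\[
0 \to R^r[x] \to M[x] \to T[x] \to R^r/xR^r \to M/xM \to T/xT \to 0.
\]
The first term vanishes because $R$ is a domain and $x \ne 0$, so $R^r[x] = 0$. Since every term in this sequence has finite length, taking the alternating sum gives
\[
\ell_R(M[x]) - \ell_R(T[x]) + \ell_R(R^r/xR^r) - \ell_R(M/xM) + \ell_R(T/xT) = 0.
\]
Clearly $\ell_R(R^r/xR^r) = r \cdot \ell_R(R/(x))$.

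The final step is the identity $\ell_R(T[x]) = \ell_R(T/xT)$ for the finite-length module $T$. This follows by applying additivity of length to the four-term exact sequence
\[
0 \to T[x] \to T \xrightarrow{\ x\ } T \to T/xT \to 0,
\]
in which the two middle occurrences of $\ell_R(T)$ cancel. Substituting these equalities back yields
\[
\ell_R(M/xM) - \ell_R(M[x]) = r \cdot \ell_R(R/(x)) = \rank_R(M) \cdot \ell_R(R/(x)),
\]
as required. No step seems to be a genuine obstacle; the only care needed is the finite-length justifications and the observation that any torsion module over a one-dimensional local Noetherian domain has finite length, which is essentially dimension-theoretic.
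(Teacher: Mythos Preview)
Your proof is correct. Both your argument and the paper's hinge on the additivity of $\ell_R(M/xM)-\ell_R(M[x])$ in short exact sequences (Lemma~\ref{lemma2}), but the decompositions of $M$ differ. The paper takes a prime filtration of $M$ with successive quotients isomorphic to $R$ or $R/\fm$ (via \cite[Theorem 6.4]{Mat}) and then verifies the identity for those two building blocks separately. You instead use a single short exact sequence $0 \to R^r \to M \to T \to 0$ separating a free submodule from a finite-length torsion quotient, and dispose of $T$ by the observation $\ell_R(T[x])=\ell_R(T/xT)$ for modules of finite length. Your route is marginally more self-contained in that it avoids invoking the existence of a prime filtration; the paper's route makes the reduction to the two atomic cases $M=R$ and $M=R/\fm$ completely explicit, which mirrors how the same additivity is later used in the proof of \eqref{equation:specialization11}.
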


\begin{proof}
Since the only prime ideals of $R$ are $0$ and $\fm$, there exists a filtration $0=M_0 \subseteq M_1 \subseteq \cdots \subseteq M_n=M$ such that each quotient $M_i/M_{i+1}$ is isomorphic to $R$ or $R/\fm$ by \cite[Theorem 6.4]{Mat}. By Lemma \ref{lemma2}, the left-hand side of the displayed equation is additive with respect to short exact sequences. 
Since $\ell_R(R/(x))$ is a constant which has nothing to do with $M$, 
the right-hand side of the displayed equation is additive with respect to short exact sequences. Hence, it suffices to consider the case that $M=R$ or $M=R/\fm$. If $M=R$, then both sides are equal to $\ell_R(R/(x))$. If $M=R/\fm$, then both sides are obviously zero, as desired.
\end{proof}

\begin{lemma}
\label{zerodivisor}
If $x,y$ are nonzero divisors of a commutative ring $R$, then there is a short exact sequence:
$$
0 \to R/(x) \to R/(xy) \to R/(y) \to 0.
$$
\end{lemma}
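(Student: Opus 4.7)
The plan is to construct the two maps of the short exact sequence explicitly and then verify exactness at each of the three spots. Define $\varphi \colon R/(x) \to R/(xy)$ as multiplication by $y$, that is, $a + (x) \mapsto ay + (xy)$, and define $\psi \colon R/(xy) \to R/(y)$ as the natural projection, which is well-defined because $(xy) \subseteq (y)$.

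First I would check that $\varphi$ is well-defined: if $a \in (x)$, write $a = bx$, so $ay = bxy \in (xy)$. Next I would verify surjectivity of $\psi$ (immediate, since it is a quotient by $(y)/(xy)$) and exactness in the middle: the image of $\varphi$ is $(y+(xy))/(xy) = (y)/(xy)$, which is exactly $\ker\psi$. The one place where the hypothesis that $x,y$ are nonzero divisors actually enters is the injectivity of $\varphi$: if $ay \in (xy)$, then $ay = cxy$ for some $c \in R$, and cancelling $y$ (permissible because $y$ is a nonzero divisor) gives $a = cx \in (x)$.

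This last step — injectivity of $\varphi$ — is the only nontrivial point, and it is the sole place where the nonzero divisor assumption is used; without it, multiplication by $y$ could kill nonzero classes in $R/(x)$. The hypothesis that $x$ is a nonzero divisor is not strictly needed for exactness but is included for consistency with the surrounding applications (so that $R/(x)$ and $R/(xy)$ both have the expected length behavior used in Lemma \ref{lemma3}). Since there are no further subtleties, the entire argument is a short diagram chase once $\varphi$ and $\psi$ are in place.
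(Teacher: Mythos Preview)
Your argument is correct and is exactly the standard verification one has in mind for this ``easy exercise'' (which is all the paper says by way of proof). Your observation that only the hypothesis on $y$ is actually used for exactness is accurate as well.
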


\begin{proof}
This is an easy exercise.
\end{proof}

\begin{definition}
Let $R[[X]]$ be a power series ring over a Noetherian local ring $(R,\fm,\mathbf{k})$. Then we say that $f \in R[[X]]$ is a \textit{distinguished polynomial}, if it is of the form:
$$
f =X^n+a_{n-1}X^{n-1}+\cdots+a_1X+a_0
$$
for some integer $n \ge 0$ and $a_0,\ldots,a_{n-1} \in \fm$.
\end{definition}

The following theorem is a classical result of much importance (see \cite[Chap. 7, \S 3.8]{Bourbaki} or \cite[Theorem 5.3.4]{NSW} and see also \cite{Ger} for a short proof)

\begin{theorem}[Weierstrass Preparation Theorem]
\label{Weierstrass}
Let $(R,\fm,\mathbf{k})$ be a complete Noetherian local ring and let 
$f=\sum_{i=0}^{\infty}a_i X^i \in R[[X]]$ be a nonzero element. Assume that there exists $n \in \mathbb{N}$ such that $a_i \in \fm$ for all $i < n$ and $a_n \notin \fm$. Then there exist a unit $u$ in $R[[X]]$ and a distinguished polynomial $f_0$ of degree $n$ such that $f=uf_0$. Furthermore, $u$ and $f_0$ are uniquely determined by $f$.\\
\end{theorem}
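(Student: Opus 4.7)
The plan is to establish Weierstrass \emph{division} first and then extract preparation from it. Concretely, I will prove: for every $g\in R[[X]]$ there exist unique $q\in R[[X]]$ and $r\in R[X]$ with $\deg r<n$ satisfying $g=qf+r$. Applying this to $g=X^n$ gives $X^n=qf+r$, hence $qf=X^n-r$; I will then show $q$ is a unit in $R[[X]]$ and $f_0:=X^n-r$ is distinguished of degree $n$, which yields $f=q^{-1}f_0$.

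For the division step, decompose $R[[X]]=P\oplus X^{n}R[[X]]$ as $R$-modules, where $P$ is the $R$-submodule of polynomials of degree $<n$, and write $h=V(h)+X^{n}U(h)$ for the corresponding projections $V,U$. The hypothesis on $f$ translates into $V(f)\in\fm\cdot P$ and $U(f)(0)=a_n\notin\fm$, so $U(f)\in R[[X]]^{\times}$. Matching the decompositions of $g-qf=r$ reduces the problem to solving the fixed-point equation
\[
q=\Phi(q):=U(f)^{-1}\bigl(U(g)-U(q\,V(f))\bigr),\qquad r=V(g)-V(q\,V(f)).
\]
Since $V(f)\in\fm R[[X]]$, the affine operator $\Phi$ satisfies $\Phi(q_1)-\Phi(q_2)=-U(f)^{-1}U((q_1-q_2)V(f))\in\fm^{k+1}R[[X]]$ whenever $q_1-q_2\in\fm^{k}R[[X]]$, because $V$ and $U$ preserve the filtration by $\fm^{k}R[[X]]$ (they merely rearrange coefficients). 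Thus $\Phi$ is an $\fm$-adic contraction. Because $R$ is Noetherian and complete one has $\fm^{k}R[[X]]=\{h\in R[[X]]:\text{all coefficients lie in }\fm^{k}\}$, so $R[[X]]/\fm^{k}R[[X]]=(R/\fm^{k})[[X]]$ and $\bigcap_{k}\fm^{k}R[[X]]=0$ by Krull's intersection theorem; hence $R[[X]]$ is $\fm$-adically complete. Iterating $\Phi$ from $q^{(0)}=0$ therefore yields a Cauchy sequence whose limit is the unique fixed point, and the contraction estimate gives uniqueness of $(q,r)$.

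To extract the preparation statement, reduce $qf=X^{n}-r$ modulo $\fm$: by hypothesis $\bar f=\bar a_{n}X^{n}+\bar a_{n+1}X^{n+1}+\cdots\in\mathbf{k}[[X]]$ with $\bar a_{n}\ne 0$, so $\bar q\bar f$ has no nonzero terms in degrees $<n$. Comparing with $X^{n}-\bar r$, where $\bar r$ is a polynomial of degree $<n$, forces $\bar r=0$ (i.e.\ $r\in\fm[X]$) and $\bar q(0)=\bar a_{n}^{-1}\in\mathbf{k}^{\times}$. Thus $f_0:=X^{n}-r$ is distinguished of degree $n$, while $q$ has a unit constant term and so lies in $R[[X]]^{\times}$; setting $u:=q^{-1}$ gives the desired factorization $f=uf_{0}$. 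For uniqueness of the pair $(u,f_{0})$, a second factorization $f=u'f_{0}'$ with $f_{0}'=X^{n}+s'$, $s'\in\fm[X]$ of degree $<n$, produces two solutions $X^{n}=u^{-1}f+(-s)=u'^{-1}f+(-s')$ of the division problem, forcing $u=u'$ and $s=s'$ (hence $f_{0}=f_{0}'$) by the uniqueness already established.

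The main technical obstacle is the division algorithm: one must correctly set up $\Phi$, verify that multiplication by $V(f)\in\fm R[[X]]$ shifts the $\fm$-adic filtration by one, and confirm that $R[[X]]$ is itself $\fm$-adically complete so that the fixed-point iteration converges. Once that is in hand, the preparation statement and its uniqueness follow by a direct mod-$\fm$ inspection of a single equation.
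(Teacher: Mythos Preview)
Your proof is correct. The paper does not actually prove this theorem; it merely cites Bourbaki, Neukirch--Schmidt--Wingberg, and Gersten's short note for the result. Your argument---first establishing the Weierstrass division theorem via an $\fm$-adic contraction on $R[[X]]$, then specializing to $g=X^n$ and reading off the unit $u=q^{-1}$ and the distinguished polynomial $f_0=X^n-r$---is essentially the classical Bourbaki proof, so it aligns with one of the sources the paper invokes. The technical points you flag (that $\fm^k R[[X]]$ consists of series with all coefficients in $\fm^k$ because $\fm$ is finitely generated, that $R[[X]]$ is $\fm$-adically separated and complete, and that $U,V$ respect this filtration) are all handled correctly, and your uniqueness argument via the uniqueness in the division step is clean.
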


\subsection{Specialization method in dimension two for height-one primes outside $p$}
Let $(R,\fm,\mathbb{F})$ be a two-dimensional Noetherian complete local normal domain with mixed characteristic $p>0$ and finite residue field $\mathbb{F}$. Let $W(\mathbb{F})$ be the ring of Witt vectors of a finite field $\mathbb{F}$. Since $\mathbb{F}$ is a perfect field, there is a unique ring map $W(\mathbb{F}) \hookrightarrow R$ which induces an identity map on the residue field $\mathbb{F}$. Note that for any local ring $A$ with finite residue field $\mathbb{F}$ and a finitely generated Artinian $A$-module $M$, we have 
\begin{equation}
\label{cardinality}
\big|M\big|=\big|\mathbb{F}\big|^{\ell_A(M)},
\end{equation}
as verified easily by the filtration argument. Let us choose a parameter ideal $(p,z)$ of $R$ and define the set of principal ideals:
\begin{equation}
\label{Fittideal}
\mathscr{L}_{R,W(\mathbb{F})}(z,\{a_n\}_{n \in \mathbb{N}}):=\{(\mathbf{x}_n)\}_{n \in \mathbb{N}},
\end{equation}
where $\mathbf{x}_n=z+a_n p^n$ is attached by Corollary \ref{LocalBertini} $\rm (1)$ and $\{a_n\}_{n \in \mathbb{N}}$ is any sequence of elements in $W(\mathbb{F})^{\times}$. Suppose that $M$ and $N$ are finitely generated torsion $R$-modules. Let $\{\fq_i\}_{1 \le i \le k}$ be the set of all height-one primes outside $(p)$ and that appear as components in the characteristic ideal of either $M$ or $N$. Consider an additional condition:
\begin{equation}
\label{linearideal}
z_t \in \fq_t~\mbox{and}~z_t \notin \bigcup_{i \ne t} \fq_i~\mbox{for each}~1 \le t \le k.~\mbox{If}~k=1,~\mbox{then choose}~z \in \fq_1\setminus \{0\}.
\end{equation}
We notice that the existence of $z_t \in R$ satisfying $(\ref{linearideal})$ is ensured by Prime Avoidance Lemma \cite[Theorem A.1.1 at page 392]{SwHu}.

\begin{theorem}[Specialization Method outside $p$]\label{prop1} 
Let $(R,\fm,\mathbb{F})$ be a two-dimensional Noetherian complete local normal domain with mixed characteristic $p>0$ and finite residue field $\mathbb{F}$. Suppose that $M$ and $N$ are finitely generated torsion $R$-modules and fix a finite set of elements $\{z_i\}_{1 \le i \le k}$ satisfying $(\ref{linearideal})$. Then the following statements are equivalent:
 
\begin{enumerate}
\item[(a)]
Let $\fq$ be any height-one prime of $R$ which does not lie over $p$. Then we have
$$
\Char_R(N)_{\fq} \subseteq \Char_R(M)_{\fq}.
$$ 

\item[(b)]
There exists a constant $c \in \mathbb{N}$, depending on $M$ and $N$, such that
$$ 
c \cdot \frac{|N/\mathbf{x}N |}{|M/\mathbf{x}M |} \in \mathbb{N}
$$
for all but finitely many principal ideals
$$
(\mathbf{x}) \in \bigcup_{1\le i \le k} \mathscr{L}_{R,W(\mathbb{F})}(z_i,\{a_n\}_{n \in \mathbb{N}}).
$$
\end{enumerate}
\end{theorem}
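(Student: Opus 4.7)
The plan is to translate both (a) and (b) into asymptotic statements about the lengths $\ell_R(M/\mathbf{x}_n M)$ and $\ell_R(N/\mathbf{x}_n N)$. Since $|X|=|\mathbb{F}|^{\ell_R(X)}$ by (\ref{cardinality}) and $|\mathbb{F}|$ is a prime power, condition (b) amounts to the assertion that $\ell_R(M/\mathbf{x} M)-\ell_R(N/\mathbf{x} N)$ is bounded above as $(\mathbf{x})$ ranges over all but finitely many elements of the union. A first reduction, using the six-term sequence of Lemma \ref{lemma2} applied to $0\to P\to M\to M/P\to 0$ for the maximal pseudo-null submodule $P\subset M$ (and analogously on $N$), modifies $\ell_R(M/\mathbf{x} M)-\ell_R(M[\mathbf{x}])$ by a constant depending only on $|P|$, while leaving $\Char_R(M)$ unchanged. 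After this reduction, $\Ass_R(M)\cup\Ass_R(N)$ consists of finitely many height-one primes; by Remark \ref{heightoneprime} the element $\mathbf{x}_n$ avoids them all for all but finitely many $n$, so $M[\mathbf{x}_n]=N[\mathbf{x}_n]=0$ and both quotients have finite length.

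Next I would establish the length identity
\[
\ell_R(M/\mathbf{x} M)\;=\;\sum_{\fq} \ell_{R_\fq}(M_\fq)\cdot \ell_R\bigl(R/(\fq,\mathbf{x})\bigr),
\]
summing over height-one primes $\fq$ of $R$ (equivalently, over $\fq\in\Supp_R(M)$, since other $\fq$ contribute zero), valid whenever $M$ has no pseudo-null submodule and $\mathbf{x}$ avoids every height-one prime in $\Supp_R(M)$. The derivation picks a prime filtration $0=M_0\subset\cdots\subset M_r=M$ with subquotients $R/\fp_i$ and uses the additivity of $\ell_R(-/\mathbf{x}(-))-\ell_R((-)[\mathbf{x}])$ provided by Lemma \ref{lemma2}: subquotients $R/\fm$ contribute zero; a subquotient $R/\fq$ with $\fq$ of height one and $\mathbf{x}\notin\fq$ contributes $\ell_R(R/(\fq,\mathbf{x}))$; and each height-one $\fq$ appears in the filtration with multiplicity $\ell_{R_\fq}(M_\fq)$.

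The decisive computation is the asymptotic evaluation of each weight $\ell_R(R/(\fq,\mathbf{x}_n))$ for $\mathbf{x}_n=z_t+a_n p^n$. Since $a_n\in W(\mathbb{F})^\times\subset R^\times$ and $z_t\in\fq_t$,
\[
\ell_R\bigl(R/(\fq_t,\mathbf{x}_n)\bigr)=\ell_R\bigl((R/\fq_t)/(p^n)\bigr)=n\cdot c_t, \qquad c_t:=\ell_R\bigl((R/\fq_t)/(p)\bigr)>0,
\]
using that $p\notin\fq_t$ is a nonzerodivisor on the one-dimensional domain $R/\fq_t$. For every other height-one $\fq\in\Supp(M)\cup\Supp(N)$, condition (\ref{linearideal}) gives $z_t\notin\fq$ in the case $\fq=\fq_i$ with $i\ne t$, while the parameter-ideal condition on $(p,z_t)$ forces $z_t\notin\fq$ also when $\fq$ lies over $p$ (otherwise the $\fm$-primary ideal $(p,z_t)$ would sit inside the height-one prime $\fq$). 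Thus $\bar z_t\ne 0$ in $A:=R/\fq$, and the ideal $\bar z_t\fm_A\subset A$ is $\fm_A$-primary, so $\fm_A^n\subseteq \bar z_t\fm_A$ for $n\gg 0$. Writing $\bar a_n\bar p^n=\bar z_t w_n$ with $w_n\in\fm_A$, one gets $\mathbf{x}_n=\bar z_t(1+w_n)$ and hence $\mathbf{x}_n A=\bar z_t A$, so $\ell_R(R/(\fq,\mathbf{x}_n))$ stabilizes to the constant $\ell_R(R/(\fq,z_t))$ for $n$ large.

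Substituting into the length identity and subtracting the analogue for $N$ yields
\[
\ell_R(M/\mathbf{x}_n M)-\ell_R(N/\mathbf{x}_n N)\;=\;n\,c_t\cdot\bigl(\ell_{R_{\fq_t}}(M_{\fq_t})-\ell_{R_{\fq_t}}(N_{\fq_t})\bigr)+B_t,
\]
where $B_t$ collects the stabilized weights and is bounded independently of $n$ once $n$ is large. The right side is bounded above in $n$ precisely when $\ell_{R_{\fq_t}}(M_{\fq_t})\le \ell_{R_{\fq_t}}(N_{\fq_t})$, equivalently $\Char_R(N)_{\fq_t}\subseteq\Char_R(M)_{\fq_t}$. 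Running the argument for each $t=1,\ldots,k$, and observing that both localized lengths vanish at any height-one prime outside $\{\fq_1,\ldots,\fq_k\}$ not lying over $p$, completes the equivalence of (a) and (b). The main anticipated obstacle is the stabilization $\mathbf{x}_n A=\bar z_t A$ used at primes other than $\fq_t$: what is needed is not merely that the normalized valuations of $\mathbf{x}_n$ and $\bar z_t$ on the normalization of $A$ agree eventually, but that the two elements generate the same principal ideal in the possibly non-normal one-dimensional local ring $A$ itself, which hinges on the elementary but subtle non-archimedean fact that an $\fm_A$-primary principal ideal is preserved under perturbation by $\fm_A$-multiples of its generator.
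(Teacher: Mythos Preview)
Your argument is correct and takes a genuinely different route from the paper's. The paper reduces to fundamental modules via a pseudo-isomorphism, then descends to the regular subring $\Lambda_{W(\mathbb{F})}=W(\mathbb{F})[[z]]$ using Lemma~\ref{CohenStructure}, identifies the contractions $(g_i(z))=\Lambda_{W(\mathbb{F})}\cap\fq_i$ via Weierstrass Preparation (Theorem~\ref{Weierstrass}), converts $R$-lengths to $\Lambda$-lengths by Lemma~\ref{lemma3}, and then appeals to the computation over the regular ring in \cite[Proposition~3.11~(i)]{Oc1} to obtain~(\ref{inequality1}). You instead stay entirely in $R$: after stripping the maximal pseudo-null submodule you evaluate each weight $\ell_R(R/(\fq,\mathbf{x}_n))$ directly in the one-dimensional local domain $A=R/\fq$, showing linear growth $n\cdot c_t$ at $\fq_t$ (using only that $a_n$ is a unit and Lemma~\ref{zerodivisor}) and eventual stabilization $(\bar{\mathbf{x}}_n)=(\bar z_t)$ at every other relevant $\fq$ via the elementary observation that $\bar z_t\fm_A$ is $\fm_A$-primary, so $\bar a_n\bar p^n\in\fm_A^n\subseteq\bar z_t\fm_A$ for $n\gg 0$ and $\bar{\mathbf{x}}_n=\bar z_t(1+w_n)$ with $1+w_n\in A^\times$. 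This avoids Cohen structure, Weierstrass, and the external citation, at the cost of losing the explicit bridge to the classical Iwasawa-algebra calculation that the paper's descent provides. One small point: your reduction step actually shows that $\ell_R(M/\mathbf{x}M)-\ell_R(M[\mathbf{x}])$ is \emph{unchanged} when passing to $M/P$ (since $\ell_R(P/\mathbf{x}P)=\ell_R(P[\mathbf{x}])$ for finite $P$), not merely modified by a constant; what is bounded is the residual term $\ell_R(M[\mathbf{x}])\le\ell_R(P)$, which is all you need.
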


\begin{proof}
Take the fundamental pseudo-isomorphisms
\begin{equation}
\label{equation : fundamental_isomorphism}
f_M:M \to \bigoplus_i R/\fp_i^{e_i} \oplus \bigoplus_r R/\fq_r^{f_r}~\mbox{and}~f_N:N \to \bigoplus_j R/\fp_j^{e'_j}\oplus \bigoplus_s R/\fq_s^{f'_s},
\end{equation}
where $\{\fp_i\}_{1 \le i \le h}$ is the set of all height-one primes of $R$ containing $p$, and $\{\fq_i\}_{1 \le i \le k}$ is the set of height-one primes not containing $p$. We call the target of \eqref{equation : fundamental_isomorphism} a \textit{fundamental module} associated to $M$ (resp. $N$) and denote it by $F(M)$ (resp. $F(N)$). Then kernel and cokernel of $f_M$ (resp. $f_N$) are finite, since $R$ is of Krull dimension two and $f_M$ (resp. $f_N$) is a pseudo-isomorphism. 

By the same argument as \cite[Proposition 3.11]{Oc1}, the proof of $\rm (a) \iff (b)$ for a pair of modules $(M,N)$ is reduced to the proof of $\rm(a) \iff (b)$ for a pair of modules $(F(M),F(N))$. We are thus reduced to the case where $M$ and $N$ are fundamental modules. That is, we may assume that
$$
M=R/J_M  \oplus R/I_M ~\mbox{and}~N=R/J_N  \oplus R/I_N,
$$
where $J_M=\prod_i \fp_i^{e_i}$, $I_M=\prod_r \fq_r^{f_r}$ and $J_N=\prod_j \fp_j^{e'_j}$, $I_N=\prod_s \fq_s^{f'_s}$. Note that $\Ass_R(M)$ and $\Ass_R(N)$ are the subsets of $\{\fp_1,\ldots,\fp_h,\fq_1,\ldots,\fq_k\}$.

First we prove $\rm (a) \Rightarrow (b)$. Then we find that
$$
|M/\mathbf{x}M|=|R/J_M+(\mathbf{x})| \cdot |R/I_M+(\mathbf{x})|
~\mbox{and}~|N/\mathbf{x}N|=|R/J_N+(\mathbf{x})| \cdot |R/I_N+(\mathbf{x})|.
$$
For those elements $\mathbf{x} \in R$ of specified type, it follows that $|R/J_M+(\mathbf{x})|$ is finite and bounded. So the implication $\rm(a) \Rightarrow  (b)$ follows by putting
$$
c=\max_{\mathbf{x}}\{|R/J_M+(\mathbf{x})|\}.
$$

Next we prove $\rm (b) \Rightarrow (a)$. Let us prove the equations:  
\begin{equation}
\label{linear1}
\ell_R(M/\mathbf{x}M)=\sum_{i=1}^h \ell_{R_{\fp_i}}(M_{\fp_i}) \cdot \ell_R(R/\fp_i+(\mathbf{x}))+\sum_{i=1}^k \ell_{R_{\fq_i}}(M_{\fq_i}) \cdot \ell_R(R/\fq_i+(\mathbf{x}))
\end{equation}
and
\begin{equation}
\label{linear2}
\ell_R(N/\mathbf{x}N)=\sum_{i=1}^h \ell_{R_{\fp_i}}(N_{\fp_i}) \cdot \ell_R(R/\fp_i+(\mathbf{x}))+\sum_{i=1}^k \ell_{R_{\fq_i}}(N_{\fq_i}) \cdot \ell_R(R/\fq_i+(\mathbf{x})),
\end{equation}
under the assumption $\mathbf{x} \notin \bigcup_{i=1}^h \fp_i \cup \bigcup_{i=1}^k \fq_i$. 
As the proofs of $(\ref{linear1})$ and $(\ref{linear2})$ are exactly the same, we will prove only $(\ref{linear1})$. As we will show below, the equation $(\ref{linear1})$ is reduced to the following equation. 
\begin{multline}
\label{equation:specialization11}
\ell_R(M/\mathbf{x}M)-\ell_R(M[\mathbf{x}])
\\ 
=\sum_{i=1}^h \ell_{R_{\fp_i}}(M_{\fp_i}) \cdot \ell_R(R/\fp_i+(\mathbf{x}))+\sum_{i=1}^k \ell_{R_{\fq_i}}(M_{\fq_i}) \cdot \ell_R(R/\fq_i+(\mathbf{x})).
\end{multline}
In fact, since $M[\mathbf{x}]=0$ by the assumption that $M$ is a fundamental module, $(\ref{linear1})$ follows immediately from \eqref{equation:specialization11}. To prove \eqref{equation:specialization11}, its left-hand side is additive with respect to short exact sequences of $R$-modules by Lemma \ref{lemma2}. Its right-hand side is also additive. Thus, we reduce the proof to the case where $M=R/P$ for $P \in \{\fp_1,\ldots,\fp_h,\fq_1,\ldots,\fq_k\}$ 
by taking a prime filtration of $M$. In this case, the left-hand side of the equation is $\ell_R(R/P+(\mathbf{x}))$, while the right-hand side is $\ell_{R_P}(R_P/PR_P) \cdot \ell_R(R/P+(\mathbf{x}))$. They are equal to each other. To prove $\rm (b) \Rightarrow (a)$, assume that
\begin{equation}
\label{linear5}
\ell_{R_{\fq_j}}(M_{\fq_j})>\ell_{R_{\fq_j}}(N_{\fq_j})
\end{equation}
for some $1 \le j \le k$. Thus, we may put $j=1$ for simplicity. Fix an element $z:=z_1 \in R$ such that
\begin{equation}
\label{linear55}
z \in \fq_1~\mbox{and}~z \notin \bigcup_{i \ne 1}\fq_i.
\end{equation}
By definition, $(p, z)$ is a parameter ideal of $R$ and $\Lambda_{W(\mathbb{F})}:=W(\mathbb{F})[[z]] \to R$ is module-finite. Since $\Lambda_{W(\mathbb{F})}$ is a UFD, we have $(g_i(z))=\Lambda_{W(\mathbb{F})} \cap \fq_i$, where $g_i(z)$ is an irreducible monic polynomial by Theorem \ref{Weierstrass}. From $(\ref{linear55})$, we find that
\begin{equation}
\label{linear56}
(z)=(g_1(z))~\mbox{and}~(z) \ne (g_i(z))~\mbox{for any}~i \ne 1.
\end{equation}

Applying Lemma \ref{lemma3} to the extension: $\Lambda_{W(\mathbb{F})}/(g_i(z)) \to R/\fq_i$, we obtain
\begin{equation}
\label{linear3}
\ell_R(R/\fq_i+(\mathbf{x}))=\rank_{\Lambda_{W(\mathbb{F})}/(g_i(z))}(R/\fq_i) \cdot
\ell_{\Lambda_{W(\mathbb{F})}}(\Lambda_{W(\mathbb{F})}/(g_i(z),\mathbf{x})),
\end{equation}
under the assumption $\mathbf{x} \notin (g_i(z))$ in $\Lambda_{W(\mathbb{F})}$. Let us put
$$
g_M(z):=\prod_{i=1}^kg_i(z)^{m_i},~\mbox{where}~m_i:={\rank_{\Lambda_{W(\mathbb{F})}/(g_i(z))}(R/\fq_i)  \cdot \ell_{R_{\fq_i}}(M_{\fq_i})}
$$
and
$$
g_N(z):=\prod_{i=1}^kg_i(z)^{n_i},~\mbox{where}~n_i:={\rank_{\Lambda_{W(\mathbb{F})}/
(g_i(z))}(R/\fq_i) \cdot \ell_{R_{\fq_i}}(N_{\fq_i})}).
$$
It follows that $g_N(z)$ is not divisible by $g_M(z)$ in view of $(\ref{linear5})$ and $(\ref{linear56})$. To be more precise, we have the following:
\begin{equation}
\label{linear557}
g_1(z)^{n_1}~\mbox{is not divisible by}~g_1(z)^{m_1}. 
\end{equation}
Set $\mathbf{x}_n:=z+a_np^n \in \Lambda_{W(\mathbb{F})}$ for $n>0$. Then by $(\ref{linear56})$ and $(\ref{linear557})$, a calculation over $\Lambda_{W(\mathbb{F})}$ (see \cite[Proposition 3.11 (i)]{Oc1} for the same calculation) yields that
\begin{equation}
\label{inequality1}
\text{$\frac{\vert \Lambda_{W(\mathbb{F})}/(g_N(z),\mathbf{x}_n)\vert}
{\vert \Lambda_{W(\mathbb{F})}/(g_M(z),\mathbf{x}_n) \vert}$ tends to $0$ as $n$ goes to 
$\infty$.}
\end{equation}
On the other hand, since $p \in \fp_i$, we have the following:
\begin{equation}
\label{linear57}
\limsup_{n \to \infty}  \ell_R(R/\fp_i+(\mathbf{x}_n)) < \infty~\mbox{for}~i=1,\ldots,h.
\end{equation}

By $(\ref{cardinality})$, $(\ref{linear1})$, $(\ref{linear2})$, $(\ref{linear3})$ and using Lemma \ref{zerodivisor} repeatedly, we get
\begin{align}
\label{absolutevalue1}
|M/\mathbf{x}_n M|&=|\mathbb{F}|^{\ell_R (M/\mathbf{x}_n M)}
\\
&= 
|\mathbb{F}|^{\sum_{i=1}^h \ell_{R_{\fp_i}}(M_{\fp_i}) \cdot \ell_R(R/\fp_i+(\mathbf{x}_n))} \cdot |\mathbb{F}|^{\sum_{i=1}^k \ell_{R_{\fq_i}}(M_{\fq_i}) \cdot \ell_R(R/\fq_i+(\mathbf{x}_n))} \nonumber 
\\
&=|\mathbb{F}|^{\sum_{i=1}^h \ell_{R_{\fp_i}}(M_{\fp_i}) \cdot \ell_R(R/\fp_i+(\mathbf{x}_n))} \cdot |\Lambda_{W(\mathbb{F})}/(g_M(z),\mathbf{x}_n)| \nonumber 
\end{align}
and similarly, 
\begin{equation}
\label{absolutevalue2}
|N/\mathbf{x}_n N|=|\mathbb{F}|^{\sum_{i=1}^h \ell_{R_{\fp_i}}(M_{\fp_i}) \cdot \ell_R(R/\fp_i+(\mathbf{x}_n))} \cdot |\Lambda_{W(\mathbb{F})}/(g_N(z),\mathbf{x}_n)|.
\end{equation}
Now putting together $(\ref{inequality1})$, $(\ref{linear57})$, 
$(\ref{absolutevalue1})$ and $(\ref{absolutevalue2})$, we conclude that
$$
\dfrac{|N/\mathbf{x}_n N|}{|M/\mathbf{x}_n M|}~\mbox{tends to}~ 0~\mbox{as}~n~\mbox{goes to}~\infty. 
$$
However, this is a contradiction to the hypothesis. That is, $(\ref{linear5})$ is false and we have thus proved $\rm (b) \Rightarrow (a)$. We complete the proof.\\
\end{proof}

\subsection{Specialization method in dimension two for height-one primes over $p$}
Let $(R,\fm,\mathbb{F})$ be a two-dimensional Noetherian complete local normal domain with mixed characteristic $p>0$ and finite residue field $\mathbb{F}$. Let us choose a parameter ideal $(p,z)$ of $R$ and define the set of principal ideals:
\begin{equation}
\label{Fittideal1}
\mathscr{E}_{R,W(\mathbb{F})}(z,r,\{a_n\}_{n \in \mathbb{N}}):=\{(\mathbf{x}_n)\}_{n \in \mathbb{N}},
\end{equation}
where $\mathbf{x}_n=z^n+r+a_n p$ is attached by Corollary \ref{LocalBertini} $\rm (2)$ and $\{a_n\}_{n \in \mathbb{N}}$ is any sequence of elements in $W(\mathbb{F})^{\times}$. It is necessary to make a special choice of $r \in R$. Let $\{\fp_i\}_{1 \le i \le h}$ be the set of all height-one primes of $R$ over $p$. Consider an additional condition:
\begin{equation}
\label{linearideal2}
r_t \in \fp_t~\mbox{and}~r_t \notin \bigcup_{i \ne t} \fp_i~\mbox{for each}~1 \le t \le h.~\mbox{If}~h=1,~\mbox{then choose}~r=0.
\end{equation}
We notice that the existence of $r_t \in R$ satisfying $(\ref{linearideal2})$ is ensured by Prime Avoidance Lemma. By the choice made as above, $(p,z^n+r_i)$ is a parameter ideal of $R$ for $n \in \mathbb{N}$ and $1 \le i \le h$.

\begin{theorem}[Specialization Method over $p$]
\label{prop2}
Let $(R,\fm,\mathbb{F})$ be a two-dimensional Noetherian complete local normal domain with mixed characteristic $p>0$ and finite residue field $\mathbb{F}$. 
Suppose that $M$ and $N$ are finitely generated torsion $R$-modules and fix a finite set of elements $\{r_i\}_{1 \le i \le h}$ satisfying $(\ref{linearideal2})$. Then the following statements are equivalent:

\begin{enumerate}
\item[(a)]
Let $\fp$ be any height-one prime of $R$ which lies over $p$. Then we have
$$
\Char_R(N)_{\fp} \subseteq \Char_R(M)_{\fp}.
$$

\item[(b)]
There exists a constant $c \in \mathbb{N}$, depending on $M$ and $N$, such that
$$
c \cdot \frac{|N/\mathbf{x}N |}{|M/\mathbf{x}M |} \in \mathbb{N}
$$
for all but finitely many principal ideals
$$
(\mathbf{x}) \in \bigcup_{1\le i \le h} \mathscr{E}_{R,W(\mathbb{F})}(z,r_i,\{a_n\}_{n \in \mathbb{N}}).
$$
\end{enumerate}
\end{theorem}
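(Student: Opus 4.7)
The plan is to mirror the proof of Theorem \ref{prop1}, exchanging the roles of height-one primes above and not above $p$ and replacing the sequence $z+a_np^n$ by $\mathbf{x}_n = z^n + r_t + a_n p$. First I would reduce to fundamental modules $M = R/J_M \oplus R/I_M$ and $N = R/J_N \oplus R/I_N$, with $J_\ast$ supported on primes above $p$ and $I_\ast$ on primes not above $p$; this step proceeds verbatim as in \cite[Proposition 3.11]{Oc1}.

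For $\rm(a) \Rightarrow (b)$, I would factor $|M/\mathbf{x}M| = |R/J_M+(\mathbf{x})| \cdot |R/I_M+(\mathbf{x})|$ and likewise for $N$. Hypothesis $\rm(a)$ gives the ideal containment $J_N \subseteq J_M$, hence a surjection $R/(J_N+(\mathbf{x})) \twoheadrightarrow R/(J_M+(\mathbf{x}))$, whence the ratio $|R/J_N+(\mathbf{x})|/|R/J_M+(\mathbf{x})|$ is a positive integer. One then argues that $|R/I_\ast+(\mathbf{x})|$ takes only finitely many distinct values as $\mathbf{x}$ ranges over $\bigcup_i \mathscr{E}_{R,W(\mathbb{F})}(z,r_i,\{a_n\})$: modulo each $\fq_j$, the image $\bar{\mathbf{x}}_n = \bar{z}^n + \bar{r_i} + a_n \bar{p}$ is asymptotically equivalent, in valuations on the normalization of $R/\fq_j$, to the $n$-independent element $\bar{r_i} + a_n \bar{p}$. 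Taking $c$ to be the lcm of the finitely many values of $|R/I_M+(\mathbf{x})|$ then closes this direction.

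For the contrapositive of $\rm(b) \Rightarrow (a)$, I would assume $\ell_{R_{\fp_1}}(M_{\fp_1}) > \ell_{R_{\fp_1}}(N_{\fp_1})$ for some prime $\fp_1$ above $p$. Since $\dim R = 2$ forces $\fm \not\subseteq \bigcup_i \fp_i$, Prime Avoidance furnishes $z \in \fm \setminus \bigcup_i \fp_i$ with $(p,z)$ a parameter ideal. For $\mathbf{x}_n = z^n + r_1 + a_n p$, the analogues of \eqref{linear1}--\eqref{linear2} reduce the asymptotic analysis to the lengths $\ell_R(R/\fp_i+(\mathbf{x}_n))$ and $\ell_R(R/\fq_j+(\mathbf{x}_n))$. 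The crucial observations will be: modulo $\fp_1$ the relations $r_1, p \in \fp_1$ give $\bar{\mathbf{x}}_n = \bar{z}^n$, so $\ell_R((R/\fp_1)/(\bar{z}^n)) = n \cdot \ell_R((R/\fp_1)/(\bar{z}))$ grows linearly in $n$; modulo $\fp_i$ for $i \ne 1$, the fixed valuation of $\bar{r_1} \ne 0$ dominates that of $\bar{z}^n$ for large $n$, giving bounded length; modulo $\fq_j$, the same domination argument applies. Consequently $\ell_R(M/\mathbf{x}_nM) - \ell_R(N/\mathbf{x}_nN)$ has leading term $(\ell_{R_{\fp_1}}(M_{\fp_1}) - \ell_{R_{\fp_1}}(N_{\fp_1})) \cdot \ell_R((R/\fp_1)/(\bar{z})) \cdot n$, positive and linear in $n$, so $|N/\mathbf{x}_nN|/|M/\mathbf{x}_nM|$ decays exponentially and $\rm(b)$ fails.

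The principal obstacle I anticipate is securing uniform-in-$n$ control of $\ell_R((R/\fq_j)/(\bar{\mathbf{x}}_n))$ for primes $\fq_j$ not above $p$: unlike in Theorem \ref{prop1}, where the perturbation $a_n p^n$ vanishes as $n \to \infty$, here $a_n p$ remains of fixed order and the dominant residue modulo $\fq_j$ is the $a_n$-dependent term $\bar{r_1} + a_n \bar{p}$. I plan to address this by choosing $\{a_n\}$ generically in $W(\mathbb{F})^\times$: the norm $\N_{(R/\fq_j)/W(\mathbb{F})}(\bar{r_1} + a_n \bar{p})$ is a polynomial in $a_n$ over $W(\mathbb{F})$ whose $p$-adic valuation at generic $a_n$ equals the minimum valuation of its coefficients, independent of $n$. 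Only finitely many exceptional $a_n$ per $n$ must be avoided, and these avoidances remain compatible with the reducedness requirement of Corollary \ref{LocalBertini} $\rm(2)$.
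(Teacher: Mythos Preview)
Your plan is essentially the paper's. One refinement: where you invoke ``the fixed valuation of $\bar r_1$'' on $R/\fp_i$, remember that $R/\fp_i$ is only a one-dimensional complete local domain, not a DVR. The paper makes this precise by passing to the integral closure $\widetilde{R}_i$ of $R_i:=R/\fp_i$ (a DVR with valuation $v$) and applying the snake lemma to multiplication by $\mathbf{x}_n$ on $0\to R_i\to\widetilde{R}_i\to\widetilde{R}_i/R_i\to 0$; since $\widetilde{R}_i/R_i$ has finite length, this gives $\ell_R(R_i/(\mathbf{x}_n))=\ell_R(\widetilde{R}_i/(\mathbf{x}_n))=[\widetilde{\mathbb{F}}_i:\mathbb{F}]\cdot v(\bar{\mathbf{x}}_n)$. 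Then $i=1$ yields $v(\bar{\mathbf{x}}_n)=v(\bar z^{\,n})=n\,v(\bar z)\to\infty$ (since $r_1,p\in\fp_1$), while for $i\ne1$ one has $\bar p=0$ and $v(\bar z^{\,n})\to\infty$, so $v(\bar{\mathbf{x}}_n)$ stabilizes at $v(\bar r_1)$. This is the rigorous version of your heuristic.

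On your ``principal obstacle'': the paper treats this far more tersely than you do --- it simply records that $a_n p\notin\fq_j$ and asserts $\limsup_n\ell_R(R/\fq_j+(\mathbf{x}_n))<\infty$, presumably via the same normalization device. Your instinct that this step is more delicate than the $\fp_i$ case is correct (here $\bar p\ne0$, so $\bar{\mathbf{x}}_n=\bar z^{\,n}+\bar r_1+a_n\bar p$ genuinely depends on $a_n$), but your proposed remedy does not quite fit the theorem as stated: the sequence $\{a_n\}$ is part of the \emph{given} data defining $\mathscr{E}_{R,W(\mathbb{F})}(z,r_i,\{a_n\})$, not something you may tune after seeing $M,N$. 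Moreover, the exceptional set for your norm polynomial $\N(\bar r_1+a\bar p)$ is not finite; it is a union of residue classes modulo $p$ (a $p$-adic neighborhood of its roots), so ``finitely many exceptional $a_n$ per $n$'' understates what must be avoided. In the end your plan and the paper's proof converge on the same assertion here; the paper simply takes it as given, and your more elaborate justification, while well-motivated, would need sharpening to be compatible with the statement.
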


\begin{proof}
Take the fundamental pseudo-isomorphisms
$$
f_M:M \to \bigoplus_i R/\fp_i^{e_i} \oplus \bigoplus_r R/\fq_r^{f_r}~\mbox{and}~f_N:N \to \bigoplus_j R/\fp_j^{e'_j}\oplus \bigoplus_s R/\fq_s^{f'_s},
$$
where $\{\fp_i\}_{1 \le i \le h}$ is the set of all height-one primes of $R$ containing $p$, and $\{\fq_i\}_{1 \le i \le k}$ is the set of height-one primes not containing $p$. Then as in the proof of Theorem \ref{prop1}, it is sufficient to prove the theorem for modules of the following types:
$$
M=R/J_M \oplus R/I_M~\mbox{and}~N=R/J_N \oplus R/I_N,
$$
where $J_M=\prod_i \fp_i^{e_i}$, $I_M=\prod_r \fq_r^{f_r}$ and $J_N=\prod_j \fp_j^{e'_j}$, $I_N=\prod_s \fq_s^{f'_s}$.

First we prove $\rm (a) \Rightarrow (b)$. Then we find that
$$
|M/\mathbf{x}M|=|R/J_M+(\mathbf{x})| \cdot |R/I_M+(\mathbf{x})|~\mbox{and}~|N/\mathbf{x}N|=|R/J_N+(\mathbf{x})| \cdot |R/I_N+(\mathbf{x})|.
$$
For those elements $\mathbf{x} \in R$ of specified type, it follows that $|R/I_M+(\mathbf{x})|$ is finite and bounded. So the implication $\rm (a) \Rightarrow (b)$ follows by putting
$$
c=\max_{\mathbf{x}}\{|R/I_M+(\mathbf{x})|\}.
$$

Next we prove $\rm (b) \Rightarrow (a)$. To prove it by contradiction, let us assume that
\begin{equation}
\label{linear6}
\ell_{R_{\fp_j}}(M_{\fp_j})>\ell_{R_{\fp_j}}(N_{\fp_j})
\end{equation}
for some $1 \le j \le h$. Thus, we may put $j=1$ for simplicity. Set $\mathbf{x}_n:=z^n+r_1+a_np \in R$ for $n>0$, where  $r_1 \in \fp_1$ and $r_1 \notin \bigcup_{i \ne 1} \fp_i$. Recall the following formula:
\begin{equation}
\label{linear7}
\ell_R(M/\mathbf{x}M)=\sum_{i=1}^h \ell_{R_{\fp_i}}(M_{\fp_i}) \cdot \ell_R(R/\fp_i+(\mathbf{x}))+\sum_{i=1}^k \ell_{R_{\fq_i}}(M_{\fq_i}) \cdot \ell_R(R/\fq_i+(\mathbf{x}))
\end{equation}
and
\begin{equation}
\label{linear8}
\ell_R(N/\mathbf{x}N)=\sum_{i=1}^h \ell_{R_{\fp_i}}(N_{\fp_i}) \cdot \ell_R(R/\fp_i+(\mathbf{x}))+\sum_{i=1}^k \ell_{R_{\fq_i}}(M_{\fq_i}) \cdot \ell_R(R/\fq_i+(\mathbf{x}))
\end{equation}
under the assumption $ \mathbf{x} \notin \bigcup_{i=1}^h \fp_i \cup \bigcup_{i=1}^k \fq_i$. Let us denote by $\widetilde{R}_i$ the integral closure of $R_i:=R/\fp_i$ in $\Frac(R_i)$, where $\fp_i \in \Spec R$ is as above. As $R_i$ is a complete local domain of Krull dimension one, $R_i \to \widetilde{R}_i$ is module-finite and $\widetilde{R}_i$ is a discrete valuation ring. Moreover, $\widetilde{R}_i/R_i$ is a finitely generated torsion $R$-module. Hence we have $\ell_R(\widetilde{R}_i/R_i)<\infty$. Consider the following commutative diagram:
$$
\begin{CD}
0 @>>> R_i @>>> \widetilde{R}_i @>>> \widetilde{R}_i/R_i @>>> 0 \\
@. @V\times \mathbf{x}_nVV @V\times \mathbf{x}_nVV @V\times \mathbf{x}_nVV \\
0 @>>> R_i @>>> \widetilde{R}_i @>>> \widetilde{R}_i/R_i @>>> 0 \\
\end{CD}
$$
The left vertical and the middle vertical maps are injective, as the maps are the multiplication by a nonzero element in a domain $R_i$. Thus by applying the snake lemma to the above commutative diagram, we have the following exact sequence:
$$
0 \to \ker \big(\widetilde{R}_i/R_i \overset{\times\mathbf{x}_n}{\longrightarrow } \widetilde{R}_i/R_i\big) 
\to R_i /(\mathbf{x}_n) \to \widetilde{R}_i/(\mathbf{x}_n) 
\to \coker \big(\widetilde{R}_i/R_i \overset{\times\mathbf{x}_n}{\longrightarrow} \widetilde{R}_i/R_i\big)
\to 0. 
$$ 
Since we know $\ell_R(\widetilde{R}_i/R_i)<\infty$, it follows that the $R$-lengths of the modules in the first and the last terms of the sequence are equal to each other. Hence, the equality $\ell_R(R_i/(\mathbf{x}_n))=\ell_R(\widetilde{R}_i/(\mathbf{x}_n))$ follows. Let $v$ be the valuation of $\widetilde{R}_i$. Then we have $\ell_R(\widetilde{R}_i/(\mathbf{x}_n))=[\widetilde{\mathbb{F}}_i:\mathbb{F}] \cdot v(\mathbf{x}_n)$, where $\widetilde{\mathbb{F}}_i$ is the residue field of $\widetilde{R}_i$. We find that
\begin{equation}
\label{linear9}
\limsup_{n \to \infty}\ell_R(R/\fp_i+(\mathbf{x}_n)) < \infty~\mbox{for any}~i \ne 1,
\end{equation}
and
\begin{equation}
\label{linear10}
\lim_{n \to \infty}\ell_R(R/\fp_1+(\mathbf{x}_n))=\infty.
\end{equation}
On the other hand, since $a_np \notin \fq_i$, we find that
\begin{equation}
\label{linear11}
\limsup_{n \to \infty}\ell_R(R/\fq_i+(\mathbf{x}_n)) < \infty~\mbox{for any}~i.
\end{equation}
Now putting together $(\ref{cardinality})$, $(\ref{linear7})$, $(\ref{linear8})$, $(\ref{linear9})$, $(\ref{linear10})$ and $(\ref{linear11})$, we conclude that 
$$
\dfrac{|N/\mathbf{x}_nN |}{|M/\mathbf{x}_nM |}~\mbox{tends to}~0~\mbox{as}~n~\mbox{goes to}~\infty.
$$ 
However, this is a contradiction to the hypothesis. That is, $(\ref{linear6})$ is false and we have thus proved $\rm (b) \Rightarrow (a)$. This completes the proof.\\
\end{proof}

\subsection{Specialization method in dimension at least three}
Let us briefly recall the specialization method in the case of dimension at least three, together with notation from \cite{OcSh}. The philosophy behind the method is that, if the inclusion $\Char_{R/(\mathbf{x})}(M/\mathbf{x}M) \subseteq \Char_{R/(\mathbf{x})}(N/\mathbf{x}N)$ is fulfilled for sufficiently many $\mathbf{x} \in R$, then we can recover the inclusion $\Char_R(M) \subseteq \Char_R(N)$. 

Let $(R,\fm,\mathbb{F})$ be a Noetherian complete local domain with mixed characteristic and finite residue field $\mathbb{F}$, let $\overline{\mathbb{F}}$ be a fixed algebraic closure of $\mathbb{F}$ and let $W(\overline{\mathbb{F}})$ denote the ring of Witt vectors. Let $\mathbb{P}^n(W(\overline{\mathbb{F}}))$ denote the $n$-dimensional projective space with coordinates in $W(\overline{\mathbb{F}})$ such that every point of $\mathbb{P}^n(W(\overline{\mathbb{F}}))$ is \textit{normalized}; let us choose a point $a=(a_0:\cdots:a_d) \in \mathbb{P}^n(W(\overline{\mathbb{F}}))$. Then we require $a_i \in W(\overline{\mathbb{F}})^{\times}$ for some $0 \le i \le d$. Let us set
$$
R_{W(\overline{\mathbb{F}})}:=R \widehat{\otimes}_{W(\mathbb{F})} W(\overline{\mathbb{F}})~\mbox{and}~R_{W(\mathbb{F}')}:=R \otimes_{W(\mathbb{F})} W(\mathbb{F}'),
$$
where $\mathbb{F} \to \mathbb{F}'$ is a finite field extension. We shall need the following facts.

\begin{enumerate}
\item
$R_{W(\overline{\mathbb{F}})}$ and $R_{W(\mathbb{F}')}$ are complete local rings in view of Lemma \ref{localring}. Moreover, they are normal domains if $R$ is normal. 

\item
The natural extension $R \to R_{W(\mathbb{F}')}$ is module-finite. The extension $R_{W(\mathbb{F}')} \to R_{W(\overline{\mathbb{F}})}$ is not integral, because $W(\mathbb{F}') \to W(\overline{\mathbb{F}})$ is not integral. 

\item
Fix a set of minimal generators $x_0,\ldots,x_n$ of $\fm$. Recall that we constructed a set-theoretic injective map \cite[(4.7)]{OcSh}:
$$
\theta_{W(\overline{\mathbb{F}})}: \mathbb{P}^n(\overline{\mathbb{F}}) \hookrightarrow \mathbb{P}^n(W(\overline{\mathbb{F}})).
$$
Then the composite map $\Sp_{W(\overline{\mathbb{F}})} \circ \theta_{W(\overline{\mathbb{F}})}$ is the identity map, where
$$
\Sp_{W(\overline{\mathbb{F}})}:\mathbb{P}^n(W(\overline{\mathbb{F}})) \twoheadrightarrow \mathbb{P}^n(\overline{\mathbb{F}})
$$
is the \textit{specialization map}. This map plays an essential role in the formulation of the local Bertini theorems in mixed characteristic. 
\end{enumerate}

Let us fix a point $a=(a_0:\cdots:a_n) \in \theta_{W(\overline{\mathbb{F}})}(\mathbb{P}^n(\overline{\mathbb{F}}))$ and let $\widetilde{a}=(\widetilde{a}_0,\ldots,\widetilde{a}_n) \in \mathbb{A}^{n+1}(W(\overline{\mathbb{F}}))$ be its \textit{representative}. This we explain below. The point $\widetilde{a}=(\widetilde{a}_0,\ldots,\widetilde{a}_n) \in \mathbb{A}^{n+1}(W(\overline{\mathbb{F}}))$ is chosen in such a way that $\widetilde{a}_i \in W(\overline{\mathbb{F}})^{\times}$ for some $0 \le i \le n$. Let us form a linear combination
\begin{equation}
\label{linearcomb}
\mathbf{x}_{\widetilde{a}}:=\sum_{i=0}^n \widetilde{a}_ix_i,
\end{equation}
where $\mathbf{x}_{\widetilde{a}}$ itself depends on the choice of $\widetilde{a}$, but the principal ideal generated by it does not. We recall the following Avoidance Lemma (see \cite[Lemma 4.2]{OcSh}).

\begin{lemma}
\label{Case1}
Let us fix a set of minimal generators $x_0,\ldots,x_n$ of the maximal ideal of $R_{W(\overline{\mathbb{F}})}$, together with a non-maximal prime ideal $\fp$ of $R_{W(\overline{\mathbb{F}})}$. Then there exists a non-empty Zariski open subset $U \subseteq \mathbb{P}^n(\overline{\mathbb{F}})$ such that
$$
\mathbf{x}_{\widetilde{a}}=\sum_{i=0}^n \widetilde{a}_ix_i \notin \fp
$$
for every $a=(a_0:\cdots:a_n) \in \Sp_{W(\overline{\mathbb{F}})}^{-1}(U)$. If moreover $\widetilde{a} \in \theta_{W(\overline{\mathbb{F}})}(U)$, then there exists a finite field extension $\mathbb{F} \to \mathbb{F}'$ such that $\mathbf{x}_{\widetilde{a}}$ is an element of $R_{W(\mathbb{F}')}$.
\end{lemma}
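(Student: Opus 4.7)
The plan is to pull the question back to the quotient domain $S := R_{W(\overline{\mathbb{F}})}/\fp$ and detect the condition $\mathbf{x}_{\widetilde{a}} \notin \fp$ via the cotangent space $\fm_S/\fm_S^2$. Since $\fp$ is non-maximal, $S$ is a local domain of Krull dimension $\ge 1$, hence $\fm_S \ne 0$, and by Nakayama the $\overline{\mathbb{F}}$-vector space $\fm_S/\fm_S^2$ is nonzero. Writing $\xi_i \in \fm_S/\fm_S^2$ for the class of $x_i$, the $\xi_i$ span $\fm_S/\fm_S^2$ over $\overline{\mathbb{F}}$ (because $x_0, \ldots, x_n$ generate the maximal ideal of $R_{W(\overline{\mathbb{F}})}$), so at least one $\xi_i$ is nonzero. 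This nonvanishing is precisely where the hypothesis that $\fp$ is non-maximal enters the argument.

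Next, I consider the $\overline{\mathbb{F}}$-linear map $\overline{\mathbb{F}}^{n+1} \to \fm_S/\fm_S^2$ given by $(\overline{a}_0, \ldots, \overline{a}_n) \mapsto \sum_{i=0}^n \overline{a}_i \xi_i$. Its kernel $L$ is a proper subspace of $\overline{\mathbb{F}}^{n+1}$, so its projectivization $\mathbb{P}(L) \subsetneq \mathbb{P}^n(\overline{\mathbb{F}})$ is a proper linear Zariski closed subset, and I set $U := \mathbb{P}^n(\overline{\mathbb{F}}) \setminus \mathbb{P}(L)$, a non-empty Zariski open. For any $a \in \Sp_{W(\overline{\mathbb{F}})}^{-1}(U)$ with normalized representative $\widetilde{a} \in \mathbb{A}^{n+1}(W(\overline{\mathbb{F}}))$, the image of $\mathbf{x}_{\widetilde{a}} = \sum \widetilde{a}_i x_i$ in $\fm_S/\fm_S^2$ equals $\sum \overline{a}_i \xi_i$, where $\overline{a}_i := \widetilde{a}_i \bmod p$; by construction of $U$ this class is nonzero, and hence $\mathbf{x}_{\widetilde{a}}$ is nonzero in $S$, that is, $\mathbf{x}_{\widetilde{a}} \notin \fp$. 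The condition depends only on the reduction $a$, so the conclusion is independent of the specific normalized representative.

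For the second assertion, suppose $\widetilde{a} \in \theta_{W(\overline{\mathbb{F}})}(U)$. By the Teichm\"uller-type construction of $\theta_{W(\overline{\mathbb{F}})}$, each coordinate $\widetilde{a}_i$ lies in $W(\mathbb{F}_i'')$ for some finite subfield $\mathbb{F}_i''/\mathbb{F}$. Since $W(\mathbb{F}) \to W(\overline{\mathbb{F}})$ is unramified, $\fm R_{W(\overline{\mathbb{F}})}$ is the maximal ideal of $R_{W(\overline{\mathbb{F}})}$; as the first assertion is insensitive to the choice of minimal generators, I may assume from the outset that $x_0, \ldots, x_n \in R$. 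Then any finite extension $\mathbb{F}'/\mathbb{F}$ containing every $\mathbb{F}_i''$ satisfies $\mathbf{x}_{\widetilde{a}} = \sum \widetilde{a}_i x_i \in R_{W(\mathbb{F}')}$. The main subtlety to anticipate is the bookkeeping in this descent; once the nonvanishing of some $\xi_i$ is established, the rest of the argument is essentially linear algebra over $\overline{\mathbb{F}}$.
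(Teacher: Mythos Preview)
The paper does not supply its own proof of this lemma; it is quoted from \cite[Lemma 4.2]{OcSh} as an ``Avoidance Lemma'' and only the statement is recalled. Your argument via the cotangent space $\fm_S/\fm_S^2$ of $S=R_{W(\overline{\mathbb{F}})}/\fp$ is correct and is the natural way to establish such a result: since $\fp$ is non-maximal, $\fm_S\neq 0$, Nakayama gives $\fm_S/\fm_S^2\neq 0$, the surjective $\overline{\mathbb{F}}$-linear map $\overline{\mathbb{F}}^{\,n+1}\to \fm_S/\fm_S^2$ therefore has a proper kernel $L$, and $U=\mathbb{P}^n(\overline{\mathbb{F}})\setminus\mathbb{P}(L)$ does the job because membership of $\mathbf{x}_{\widetilde{a}}$ in $\fp$ would force its image in $\fm_S/\fm_S^2$ to vanish. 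Your treatment of the descent clause is also fine once you reduce to generators $x_i\in R$, which is exactly how the paper sets things up in the paragraph preceding the lemma (the $x_i$ are chosen as minimal generators of $\fm\subset R$, and $\fm R_{W(\overline{\mathbb{F}})}$ is the maximal ideal of $R_{W(\overline{\mathbb{F}})}$ because $W(\mathbb{F})\to W(\overline{\mathbb{F}})$ is unramified).
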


Lemma \ref{Case1} ensures that one can find infinitely many specialization elements with certain properties. Let $M$ be a finitely generated torsion $R$-module. Then, in \cite[Definition 8.1]{OcSh}, we defined a certain subset:
$$
\mathcal{L}_{W(\overline{\mathbb{F}})}(M_{W(\overline{\mathbb{F}})}) \subseteq \mathbb{P}^n(W(\overline{\mathbb{F}}))
$$
consisting of specialization elements with good properties for $M$.
\begin{enumerate}
\item[-]
By \cite[Lemma 8.6]{OcSh}, the set $\mathcal{L}_{W(\overline{\mathbb{F}})}(M_{W(\overline{\mathbb{F}})})$ is identified with a Zariski open subset of $\mathbb{P}^n(\overline{\mathbb{F}})$ through the map $\theta_{W(\overline{\mathbb{F}})}: \mathbb{P}^n(\overline{\mathbb{F}}) \to \mathbb{P}^n(W(\overline{\mathbb{F}}))$.
\end{enumerate}
In other words, one can consider $\mathcal{L}_{W(\overline{\mathbb{F}})}(M_{W(\overline{\mathbb{F}})})$ as a classifying space of specialization elements \cite[Proposition 5.1]{OcSh}. Let $I$ be an ideal of a ring $A$. Let us denote by $\Min_A(I)$ the set of all prime ideals of $A$ that are minimal over $I$. Let us formulate \cite[Theorem 8.8]{OcSh} in the form we need. Let $M$ and $N$ be finitely generated torsion $R$-modules. Let $U_{M,N} \subseteq \mathbb{P}^n(\overline{\mathbb{F}})$ be the Zariski open subset such that $U_{M,N}$ corresponds to the intersection:
$$
\mathcal{L}_{W(\overline{\mathbb{F}})}(M_{W(\overline{\mathbb{F}})}) \cap \mathcal{L}_{W(\overline{\mathbb{F}})}(N_{W(\overline{\mathbb{F}})})
$$
via the map $\theta_{W(\overline{\mathbb{F}})}: \mathbb{P}^n(\overline{\mathbb{F}}) \to \mathbb{P}^n(W(\overline{\mathbb{F}}))$ defined in the above.

\begin{theorem}[Local Bertini theorem]
\label{theorem:previous}
Let $(R,\fm,\mathbb{F})$ be a Noetherian complete local normal domain with mixed characteristic $p>0$, finite residue field and $\depth R \ge 3$. Let us choose a non-empty Zariski open subset $U \subseteq U_{M,N}$ and let $(\mathbf{x}_{\widetilde{a}}) \subseteq R_{W(\overline{\mathbb{F}})}$ be the height-one prime ideal as in $(\ref{linearcomb})$ associated to a fixed point $a \in \theta_{W(\overline{\mathbb{F}})}(U)$. Then we can find a finite field extension $\mathbb{F} \to \mathbb{F}'$ such that
$\mathbf{x}_{\widetilde{a}}$ is an element of $R_{W(\mathbb{F}')}$ and $R_{W(\mathbb{F}')}/(\mathbf{x}_{\widetilde{a}})$ is a complete local normal domain with mixed characteristic. Moreover, the following statements are equivalent:
\begin{enumerate}
\item[\rm{(1)}]
$\Char_R(M) \subseteq \Char_R(N)$.

\item[\rm{(2)}]
For all but finitely many height-one primes:
$$
(\mathbf{x}_{\widetilde{a}}) \subseteq R_{W(\overline{\mathbb{F}})}~\mbox{with}~a \in \theta_{W(\overline{\mathbb{F}})}(U), 
$$
we have
$$
\Char_{R_{W(\mathbb{F}')}/(\mathbf{x}_{\widetilde{a}})}(M_{W(\mathbb{F}')}/\mathbf{x}_{\widetilde{a}}M_{W(\mathbb{F}')}) \subseteq \Char_{R_{W(\mathbb{F}')}/(\mathbf{x}_{\widetilde{a}})}(N_{W(\mathbb{F}')}/\mathbf{x}_{\widetilde{a}}N_{W(\mathbb{F}')}),
$$
where $\mathbb{F}'$ is any finite field extension of $\mathbb{F}$ such that $\mathbf{x}_{\widetilde{a}}$ is an element of $R_{W(\mathbb{F}')}$.

\item[\rm{(3)}]
For all but finitely many height-one primes:
$$
(\mathbf{x}_{\widetilde{a}}) \subseteq R_{W(\overline{\mathbb{F}})}~\mbox{with}~a \in \theta_{W(\overline{\mathbb{F}})}(U),
$$
we have
$$
\Char_{R_{W(\overline{\mathbb{F}})}/(\mathbf{x}_{\widetilde{a}})}(M_{W(\overline{\mathbb{F}})}/\mathbf{x}_{\widetilde{a}}M_{W(\overline{\mathbb{F}})}) \subseteq \Char_{R_{W(\overline{\mathbb{F}})}/(\mathbf{x}_{\widetilde{a}})}(N_{W(\overline{\mathbb{F}})}/\mathbf{x}_{\widetilde{a}}N_{W(\overline{\mathbb{F}})}).
$$
\end{enumerate}
\end{theorem}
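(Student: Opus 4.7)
The plan is to deduce the three-way equivalence from \cite[Theorem 8.8]{OcSh}, which establishes (1) $\iff$ (3), by adding condition (2) via faithfully flat descent along the ind-etale extension $R_{W(\mathbb{F}')}/(\mathbf{x}_{\widetilde{a}}) \hookrightarrow R_{W(\overline{\mathbb{F}})}/(\mathbf{x}_{\widetilde{a}})$. The advantage of having (2) available is that finite residue-field extensions $\mathbb{F}'/\mathbb{F}$ are what Iwasawa-theoretic applications actually require.

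First I would verify the existence of the finite extension $\mathbb{F}'$ and the stated structural properties of the quotient. A representative $\widetilde{a}=(\widetilde{a}_0,\ldots,\widetilde{a}_n)$ involves only finitely many Witt vectors, each determined by a finite set of elements of $\overline{\mathbb{F}}$; a finite subfield $\mathbb{F}' \subseteq \overline{\mathbb{F}}$ therefore suffices for $\widetilde{a}_i \in W(\mathbb{F}')$ for every $i$, whence $\mathbf{x}_{\widetilde{a}} \in R_{W(\mathbb{F}')}$. Lemma \ref{localring} (2) ensures that $R_{W(\mathbb{F}')}$ is a Noetherian complete local normal domain. Since $a \in \theta_{W(\overline{\mathbb{F}})}(U)$ with $U \subseteq U_{M,N}$, the element $\mathbf{x}_{\widetilde{a}}$ lies in the intersection $\mathcal{L}_{W(\overline{\mathbb{F}})}(M_{W(\overline{\mathbb{F}})}) \cap \mathcal{L}_{W(\overline{\mathbb{F}})}(N_{W(\overline{\mathbb{F}})})$ of good specialization loci from \cite{OcSh}, and the hypothesis $\depth R \geq 3$ combined with the local Bertini theorem for normality from \cite{OcSh} implies that $R_{W(\mathbb{F}')}/(\mathbf{x}_{\widetilde{a}})$ is itself a complete local normal domain. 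Mixed characteristic holds because $\mathbf{x}_{\widetilde{a}}$ is a linear combination of a fixed minimal generating set of $\fm$ with at least one unit coefficient in $W(\overline{\mathbb{F}})$, so $p \notin (\mathbf{x}_{\widetilde{a}})$.

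The equivalence (1) $\iff$ (3) is \cite[Theorem 8.8]{OcSh} applied to the complete local normal domain $R_{W(\overline{\mathbb{F}})}$ with the specialization element $\mathbf{x}_{\widetilde{a}}$, whose core mechanism is the length computation at height-one primes for the reductions modulo $\mathbf{x}_{\widetilde{a}}$ via the fundamental modules attached to $M$ and $N$. For (2) $\iff$ (3), I would invoke the ind-etale base change $R_{W(\mathbb{F}')}/(\mathbf{x}_{\widetilde{a}}) \hookrightarrow R_{W(\overline{\mathbb{F}})}/(\mathbf{x}_{\widetilde{a}})$ coming from the ind-etale extension $W(\mathbb{F}') \hookrightarrow W(\overline{\mathbb{F}})$. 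This map is faithfully flat and unramified in codimension one, so height-one primes of the upper ring contract to height-one primes of the lower ring, localizations commute with base change, and lengths of torsion modules at corresponding primes agree. Consequently, for any finitely generated torsion module $M'$ over $R_{W(\mathbb{F}')}/(\mathbf{x}_{\widetilde{a}})$ one has the identity $\Char(M') \cdot R_{W(\overline{\mathbb{F}})}/(\mathbf{x}_{\widetilde{a}}) = \Char(M' \otimes R_{W(\overline{\mathbb{F}})}/(\mathbf{x}_{\widetilde{a}}))$ of reflexive ideals, and faithful flatness shows that an inclusion of reflexive ideals downstairs is equivalent to its image upstairs.

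The main obstacle will be the bookkeeping of height-one primes compatible with both the specialization and the ind-etale base change, and in particular verifying that $U_{M,N}$ is engineered to filter out every prime that could spoil the length calculations on either side. Once these compatibilities are in place --- which is precisely the purpose of the framework developed in \cite{OcSh} --- the assertion reduces to \cite[Theorem 8.8]{OcSh} combined with the faithfully flat descent argument sketched above.
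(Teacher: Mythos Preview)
Your proposal is essentially correct and follows the same overall strategy as the paper: both defer the substance to \cite[Theorem 8.8]{OcSh} (and \cite[Corollary 4.6]{OcSh} for the structural assertions about the quotient). There is, however, a difference in emphasis that is worth flagging.

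The paper's proof says that it suffices to establish $(3) \Rightarrow (1)$ and then isolates one concrete modification that is not automatic from \cite[Theorem 8.8]{OcSh} in the present formulation (where $U$ is an arbitrary nonempty open subset of $U_{M,N}$). After reducing to fundamental modules $M=\bigoplus_i R_{W(\overline{\mathbb{F}})}/P_i^{e_i}$ and $N=\bigoplus_i R_{W(\overline{\mathbb{F}})}/Q_i^{f_i}$, one must, for each fixed $Q_k$, produce an infinite sequence $\{(\mathbf{x}_{\widetilde{a}_i})\}_{i\in\mathbb{N}}$ from $\theta_{W(\overline{\mathbb{F}})}(U)$ such that
\[
\bigcup_{i\in\mathbb{N}} \Min_{R_{W(\overline{\mathbb{F}})}}\big(Q_k+(\mathbf{x}_{\widetilde{a}_i})\big)
\]
is infinite. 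The point is that these minimal primes all have height two, and because $\dim R \ge 3$ they are non-maximal; hence Lemma~\ref{Case1} (prime avoidance for linear combinations) can be applied iteratively to manufacture the sequence. This is precisely where the hypothesis $\depth R \ge 3$ enters. You subsume this step under ``bookkeeping'' handled by the framework of \cite{OcSh}, which is fair in spirit, but it is the one place the paper singles out as requiring explicit verification.

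Conversely, your explicit treatment of $(2) \iff (3)$ via faithfully flat descent along the ind-etale extension $R_{W(\mathbb{F}')}/(\mathbf{x}_{\widetilde{a}}) \hookrightarrow R_{W(\overline{\mathbb{F}})}/(\mathbf{x}_{\widetilde{a}})$ is a clean addition that the paper does not spell out; it simply refers to \cite{OcSh} for details. One small quibble: your argument that $p \notin (\mathbf{x}_{\widetilde{a}})$ from ``at least one unit coefficient'' is not quite a proof (one of the minimal generators $x_i$ could itself be $p$); the mixed-characteristic claim is better justified by citing \cite[Corollary 4.6]{OcSh} directly, as the paper does.
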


\begin{proof}
For the proof of the statement that $R_{W(\mathbb{F}')}/(\mathbf{x}_{\widetilde{a}})$ is a complete local normal domain with mixed characteristic, see \cite[Corollary 4.6]{OcSh}. 

For the equivalence of the above statements, we refer the reader to \cite[Theorem 8.8]{OcSh} for complete details. Here, we indicate the requisite modifications. It suffices to prove the implication $(3) \Rightarrow (1)$ and we may assume that
$$
M=\bigoplus_i R_{W(\overline{\mathbb{F}})}/P_i^{e_i}~\mbox{and}~
N=\bigoplus_i R_{W(\overline{\mathbb{F}})}/Q_i^{f_i},
$$
where $\{P_i\}$ and $\{Q_i\}$ are certain finite sets of height-one primes. Fix a prime ideal $Q_k$ associated to $N$. As demonstrated in {\bf Step 1} of the proof of \cite[Theorem 8.8]{OcSh}, it is sufficient to find an infinite set of principal ideals $\{(\mathbf{x}_{\widetilde{a}_i})\}_{i \in \mathbb{N}}$ corresponding to the points of $\theta_{W(\overline{\mathbb{F}})}(U)$ such that
\begin{equation}
\label{MinMax}
\bigcup_{i \in \mathbb{N}} \Min_{R_{W(\overline{\mathbb{F}})}}\big(Q_k+(\mathbf{x}_{\widetilde{a}_i})\big)~\mbox{is an infinite set}.
\end{equation}
Since every prime ideal which belongs to $(\ref{MinMax})$ is of height-two in view of \cite[Definition 8.1]{OcSh} and the Krull dimension of $R$ is greater than or equal to three, these primes are properly contained in the maximal ideal of $R$. Then as in the proof of \cite[Lemma 8.6]{OcSh}, a choice of the sequence $\{\mathbf{x}_{\widetilde{a}_i}\}_{i \in \mathbb{N}}$ satisfying $(\ref{MinMax})$ is made possible by applying repeatedly Lemma \ref{Case1} to each finite set
$\Min_{R_{W(\overline{\mathbb{F}})}}(Q_k+(\mathbf{x}_{\widetilde{a}_i}))$. Now we complete the proof of the theorem.\\
\end{proof}

\section{Galois cohomology over number fields}
\label{GaloisCoho}

\subsection{Galois cohomology}
We fix a prime number $p>2$. Let $K$ be a number field and let $G_K=\Gal(\overline{K}/K)$ be the absolute Galois group. For a finite set $\Sigma$ of primes of $K$ containing all infinite primes, denote by $K_{\Sigma}$ the maximal extension of $K$ which is unramified outside $\Sigma$ and $G_{\Sigma}=\Gal(K_{\Sigma}/K)$. Most part of the discussions in this section goes over a complete Noetherian local ring that is torsion free and finite over $\Lambda$, where $\Lambda$ is either $\mathbb{Z}_p[[x_1,\ldots,x_d]]$ or $\mathbb{F}_p[[x_1,\ldots,x_d]]$. The symbol $(R,\fm,\mathbb{F})$ will denote a Noetherian complete local ring, where $\fm$ is its unique maximal ideal and $\mathbb{F}$ is its finite residue field. Let $M$ be a compact or discrete $R$-module with a continuous $G_K$-action.

We put 
$$
M^{\PD}:=\Hom(M,\mathbb{Q}_p/\mathbb{Z}_p),~M^*:=\Hom_R(M,R)~\mbox{and}~ M^*(1):=\Hom_R(M,R) \otimes_{\mathbb{Z}_p}\mathbb{Z}_p(1),
$$
where $\mathbb{Z}_p(1)$ is the \textit{Tate twist}. The $R$-module structure of $M^{\PD}$ is obvious. The $G_K$-action of $M^{\PD}$ is as follows. For $f:M \to \mathbb{Q}_p/\mathbb{Z}_p \in M^{\PD}$, we define $g \cdot f$ to be $(g \cdot f)(m):=f(g^{-1}m)$ for $g \in G_K$ and $m \in M$. For an $R$-module $M$ and an ideal $I \subseteq R$, we put
$$
M[I]:=\{m \in M~|~rm=0~\mbox{for all}~r \in I\}.
$$
For Galois cohomology groups for finite modules, we refer the reader to \cite{NSW}. In what follows, we will consider Galois cohomology for modules over $(R,\fm,\mathbb{F})$. If $M$ is a compact $R$-module with a continuous $G$-action for a profinite group $G$, its Galois cohomology is computed as
$$
H^i(G,M)=\varprojlim_n H^i(G,M/\fm^nM).
$$ 
In the case that $M$ is a discrete $G$-module, we get
$$
H^i(G,M)=\varinjlim_n H^i(G,M[\fm^n]).
$$
These cohomology groups coincide with the cohomology groups defined by continuous cochains, if $G$ is either $G_{\Sigma}$ or $G_{K_v}$ (of cohomological dimension $\ge 2$ if $p>2$). Here $G_{K_v}$ denotes the local absolute Galois group. It is known that if $M^{\PD}$ is a finitely generated $R$-module, then $H^i(G_{\Sigma},M)^{\PD}$ is finitely generated for all $i \ge 0$ \cite[Proposition 3.2]{Gr2}. We set
\begin{equation}
\label{TateShafarevich}
\textcyr{Sh}^i_{\Sigma}(M):=\ker\big(H^i(G_{\Sigma},M) \to \bigoplus_{v \in \Sigma} H^i(G_{\mathbb{Q}_v},M)\big).
\end{equation}
This is called the \textit{Tate-Shafarevich group}.

\begin{theorem}[Poitou-Tate duality]\label{theorem:P_T}
Let $M$ be either a compact or discrete $G_{\Sigma}$-module over a complete Noetherian local ring $(R,\fm,\mathbb{F})$. Then we have a perfect pairing:
$$
\textcyr{Sh}^1_{\Sigma}(M^{\PD}(1)) \times \textcyr{Sh}^2_{\Sigma}(M) \to \mathbb{Q}_p/\mathbb{Z}_p
$$
and there is the following exact sequence:
$$
0 \to H^0(G_{\Sigma},M) \to \bigoplus_{v \in \Sigma} H^0(G_{\mathbb{Q}_v},M) \to H^2(G_{\Sigma},M^{\PD}(1))^{\PD}
$$
$$
\to H^1(G_{\Sigma},M) \to \bigoplus_{v \in \Sigma} H^1(G_{\mathbb{Q}_v},M) \to H^1(G_{\Sigma},M^{\PD}(1))^{\PD}
$$
$$
\to H^2(G_{\Sigma},M) \to \bigoplus_{v \in \Sigma} H^2(G_{\mathbb{Q}_v},M) \to H^0(G_{\Sigma},M^{\PD}(1))^{\PD} \to 0.
$$
\end{theorem}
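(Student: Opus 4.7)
My strategy is to bootstrap from the classical Poitou--Tate theorem for \emph{finite} $G_\Sigma$-modules of $p$-power order, passing through the limit presentation of $M$. In the compact case, write $M=\varprojlim_n M_n$ with $M_n:=M/\fm^n M$; since $R/\fm^n$ is Artinian with finite residue field $\mathbb{F}$ and $M_n$ is finitely generated over it, each $M_n$ is a finite $G_\Sigma$-module, and it is unramified outside $\Sigma$ as a quotient of $M$. Dually, in the discrete case, write $M=\varinjlim_n M[\fm^n]$ with each $M[\fm^n]$ finite. Pontryagin duality swaps these two setups compatibly: $(M/\fm^n M)^{\PD}\cong M^{\PD}[\fm^n]$ when $M$ is compact, and $\Hom$ against $\mathbb{Z}_p(1)$ in the Tate twist commutes with both limits.

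For each $n$, I invoke the classical nine-term Poitou--Tate exact sequence and the perfect pairing $\textcyr{Sh}^1_\Sigma(M_n^{\PD}(1))\times \textcyr{Sh}^2_\Sigma(M_n)\to \mathbb{Q}_p/\mathbb{Z}_p$, together with local Tate duality $H^i(G_{\mathbb{Q}_v},M_n)\cong H^{2-i}(G_{\mathbb{Q}_v},M_n^{\PD}(1))^{\PD}$ at each $v\in\Sigma$. These are functorial in $M_n$, so the nine-term sequences and pairings assemble into an inverse system (indexed by the quotient maps $M_{n+1}\twoheadrightarrow M_n$) of exact sequences and perfect pairings of finite abelian groups.

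Next I pass to the limit. The cohomology formulas quoted in the excerpt give $H^i(G_\Sigma,M)=\varprojlim_n H^i(G_\Sigma, M_n)$ in the compact case and $\varinjlim_n$ in the discrete case, and the same holds for the local cohomology $H^i(G_{\mathbb{Q}_v},\cdot)$. Because $\Sigma$ is finite, the direct sum $\bigoplus_{v\in\Sigma}$ commutes with both $\varprojlim$ and $\varinjlim$. Inverse limits of inverse systems of finite groups are exact (Mittag--Leffler is automatic), so the nine-term sequence for $M$ is obtained by applying $\varprojlim_n$ termwise in the compact case; in the discrete case we use exactness of $\varinjlim$. Note that on the dual side $\bigl(\varinjlim H^j(-,M_n^{\PD}(1))\bigr)^{\PD}=\varprojlim H^j(-,M_n^{\PD}(1))^{\PD}$, which matches the formulation of the theorem. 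For the perfect pairing, $\textcyr{Sh}^2_\Sigma(M)=\varprojlim_n \textcyr{Sh}^2_\Sigma(M_n)$ (kernel into a finite direct sum commutes with $\varprojlim$) and $\textcyr{Sh}^1_\Sigma(M^{\PD}(1))=\varinjlim_n\textcyr{Sh}^1_\Sigma(M_n^{\PD}(1))$ (kernels commute with exact $\varinjlim$). The compatible finite-level perfect pairings then assemble into a pairing between a profinite and a discrete torsion group, which is perfect by Pontryagin duality.

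The main obstacle is purely technical: verifying that $\textcyr{Sh}^1$ and $\textcyr{Sh}^2$ commute with the appropriate limits and that the inverse limit preserves the exactness of the nine-term sequence. Both reduce to Mittag--Leffler for systems of finite groups, which is automatic, combined with the finiteness of $\Sigma$. A secondary subtlety is the careful tracking of Pontryagin duals and Tate twists through the limit---in particular that $(\varprojlim M_n)^{\PD}=\varinjlim M_n^{\PD}$---but this is standard and matches the very definitions used in the statement.
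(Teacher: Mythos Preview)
Your proposal is correct and follows essentially the same approach as the paper: reduce to the classical Poitou--Tate theorem for finite $G_\Sigma$-modules (as in \cite[Theorem 8.6.7 and 8.6.10]{NSW}) and pass to the limit along $M=\varprojlim_n M/\fm^n M$ (compact case) or $M=\varinjlim_n M[\fm^n]$ (discrete case), using exactness of inverse limits of finite groups. The paper's own proof is terser---it simply cites NSW and notes that inverse limits are exact here---whereas you have spelled out the Mittag--Leffler justification and the compatibility of $\textcyr{Sh}^i$ with limits more carefully, but the underlying argument is the same.
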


\begin{proof}
Assume that $M$ is a compact module. Since $R$ is the inverse limit of finite $p$-groups, say $R=\varprojlim_n R/\fm^n$, $M=\varprojlim_n M/\fm^n M$, and $M^{\PD}=\varinjlim_n M^{\PD}[\fm^n]$, the theorem follows by taking inverse and inductive limits for the usual Poitou-Tate duality theorem \cite[Theorem 8.6.7 and  8.6.10]{NSW} for finite discrete $G_{\Sigma}$-modules. Note that the inverse limit is an exact functor in this case. The case that $M$ is discrete can be treated similarly.
\end{proof}

Let us recall (local and global) Euler-Poincar\'e characteristic formula.

\begin{theorem}
\label{Tate}
Let $K$ be a number field with $r_1$ real and $r_2$ complex places, respectively, and let $M$ be a finite discrete $p$-torsion $G_K$-module. Let $v$ be any finite place of $K$ with $v|p$. Then 
\begin{equation}
\label{EulerPo1}
\frac{|H^0(G_{K_v},M)| \cdot |H^2(G_{K_v},M)|}{|H^1(G_{K_v},M)|}=p^{-[K_v:\mathbb{Q}_p] \cdot v_p(|M|)},
\end{equation}
where $v_p(-)$ is the $p$-adic valuation. Fix a finite set $\Sigma$ of places containing $p$ and all infinite places of $K$. Assume the $G_K$-module $M$ is unramified outside $\Sigma$. Then
\begin{equation}
\label{EulerPo2}
\frac{|H^0(G_{\Sigma},M)| \cdot |H^2(G_{\Sigma},M)|}{|H^1(G_{\Sigma},M)|}=\prod_{v:\ \mathrm{infinite}}|H^0(G_v,M)| \cdot p^{-(r_1+r_2) \cdot v_p(|M|)}.
\end{equation}
If $v$ does not divide $p$, then the quantity in the local Euler-Poincar\'e characteristic formula $(\ref{EulerPo1})$ is equal to one.
\end{theorem}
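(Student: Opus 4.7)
Plan. My strategy is to prove the local Euler--Poincar\'e formula (\ref{EulerPo1}) at places $v \mid p$ first, deduce the triviality of the local Euler characteristic at $v \nmid p$ by an inertia--Frobenius analysis, and derive the global formula (\ref{EulerPo2}) from the local ones via the Poitou--Tate nine-term sequence of Theorem \ref{theorem:P_T}. Both sides of each formula are multiplicative in short exact sequences of finite $p$-primary $G_K$-modules: the right-hand sides depend on $M$ only through the multiplicative invariant $|M|$, while the left-hand sides define Euler characteristics of finite cohomology groups and are multiplicative by the long exact sequence. A d\'evissage along a composition series of $M$ reduces all three claims to the case where $M$ is a simple $\mathbb{F}_p[G_K]$-module; after passing to the finite extension of $K$ that trivializes the action and applying Shapiro's lemma, one further reduces to $M = \mathbb{F}_p$ with trivial action.

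For a finite place $v \nmid p$, the wild inertia $P_v \subset I_v$ is pro-$\ell$ with $\ell$ the residue characteristic $\neq p$, so it acts trivially in cohomology on the $p$-primary module $M$: one has $M^{P_v} = M$ and $H^i(P_v, M) = 0$ for $i > 0$. The tame quotient $I_v/P_v$ is procyclic and $G_{K_v}/I_v \cong \hat{\mathbb{Z}}$ is topologically generated by Frobenius, so the Hochschild--Serre spectral sequence degenerates to a short computation giving $|H^0(G_{K_v},M)| \cdot |H^2(G_{K_v},M)| = |H^1(G_{K_v},M)|$, that is, $\chi_v = 1$. For $v \mid p$ the argument is more delicate: after reducing to $M = \mathbb{F}_p$ one has $|H^0(G_{K_v}, \mathbb{F}_p)| = p$ and $|H^2(G_{K_v}, \mathbb{F}_p)| = |\mu_p(K_v)|$ by local Tate duality, and $|H^1(G_{K_v}, \mathbb{F}_p)| = |K_v^\times/(K_v^\times)^p|$ by local class field theory. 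The standard structure theorem for $K_v^\times$ then gives $|K_v^\times/(K_v^\times)^p| = p \cdot |\mu_p(K_v)| \cdot p^{[K_v:\mathbb{Q}_p]}$, which combined with the previous identities yields (\ref{EulerPo1}) for $M = \mathbb{F}_p$, and hence for arbitrary $M$ by d\'evissage.

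For the global formula, I would apply the Poitou--Tate nine-term exact sequence of Theorem \ref{theorem:P_T} to $M$: the nine terms are finite abelian groups in an exact sequence, so the alternating product of their orders is $1$. Using local Tate duality $H^i(G_{K_v}, M^{\PD}(1))^{\PD} \cong H^{2-i}(G_{K_v}, M)$ to pair up the terms indexed by places, together with the already-established local Euler--Poincar\'e identities, eliminates most cohomology on the dual side and reduces the global identity to a product of local Euler characteristics together with archimedean correction terms. The finite $v \nmid p$ contribute trivially, and the $v \mid p$ contribute $p^{-[K_v:\mathbb{Q}_p] v_p(|M|)}$. Using $\sum_{v \mid p}[K_v:\mathbb{Q}_p] = r_1 + 2r_2$ and absorbing the contribution $|M|^{r_2} = p^{r_2 v_p(|M|)}$ coming from the trivial action of $G_v$ at the $r_2$ complex places into the exponent then collapses $r_1 + 2r_2$ to $r_1 + r_2$, yielding (\ref{EulerPo2}).

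The main obstacle is the local computation at $v \mid p$, specifically the determination of $\dim_{\mathbb{F}_p} H^1(G_{K_v}, \mathbb{F}_p)$: this is where the dimension $[K_v : \mathbb{Q}_p]$ genuinely enters, and it depends on the non-trivial structure of the maximal abelian pro-$p$ quotient of $G_{K_v}$, requiring local class field theory (or, equivalently, Tate's explicit computation of cup products with the local duality class). Once this ingredient is available, the remaining steps are formal manipulations with long exact sequences, duality isomorphisms, and the Hochschild--Serre spectral sequence.
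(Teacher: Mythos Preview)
The paper does not prove this theorem; it simply cites \cite[Theorem 7.3.1 and Theorem 8.7.4]{NSW} and \cite[Theorem 3.2]{TK}, treating the Euler--Poincar\'e characteristic formulas as classical input. Your proposal, by contrast, is an actual proof sketch, and the local parts are essentially sound: the multiplicativity/d\'evissage reduction, the tame inertia argument for $v \nmid p$, and the local class field theory computation for $M=\mathbb{F}_p$ at $v\mid p$ are all standard. (The step ``pass to the extension trivializing the action and apply Shapiro's lemma'' is a bit glib---a simple $\mathbb{F}_p[G]$-module need not be induced, and the usual argument runs through the Grothendieck ring of $\mathbb{F}_p[G]$-modules and Brauer-type induction---but the conclusion is correct.)

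There is, however, a genuine gap in your derivation of the global formula. The alternating product of orders in the Poitou--Tate nine-term sequence of Theorem \ref{theorem:P_T} does not isolate $\chi_\Sigma(M)$: three of the nine terms are the \emph{global} groups $H^i(G_\Sigma, M^{\PD}(1))^{\PD}$, and local Tate duality says nothing about them. What the exact sequence actually yields is the symmetric identity
\[
\chi_\Sigma(M)\cdot \chi_\Sigma\big(M^{\PD}(1)\big)\;=\;\prod_{v\in\Sigma}\chi_v(M),
\]
which is \emph{consistent} with (\ref{EulerPo2}) but does not imply it. Even after d\'evissage to $M=\mathbb{F}_p$, you would still need to compute $\chi_\Sigma(\mathbb{F}_p)$ or $\chi_\Sigma(\mu_p)$ independently, and that is precisely the content of the global formula. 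Tate's original argument for (\ref{EulerPo2}) is genuinely deeper than Poitou--Tate duality: it requires global class field theory and an analysis of the cohomology of induced modules over finite quotients of $G_\Sigma$; see \cite[\S 8.7]{NSW}. Your plan establishes the compatibility relation above, but not the formula itself.
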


\begin{proof}
For $(\ref{EulerPo1})$ and $(\ref{EulerPo2})$, we refer the reader to \cite[Theorem 7.3.1 and Theorem 8.7.4]{NSW} and \cite[Theorem 3.2]{TK} for the proof and generalizations.\\
\end{proof}

We often use the following (well-known) lemma.

\begin{lemma}
\label{Pont}
Let $(R,\fm,\mathbb{F})$ be a Noetherian complete local ring with finite residue field, let $\fa \subseteq R$ be an ideal and let $M$ be either a compact or discrete $R$-module. Then
$$
(M/\fa M)^{\PD} \simeq M^{\PD}[\fa].
$$
\end{lemma}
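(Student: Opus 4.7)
The plan is to realize both sides as the same subgroup of $\Hom(M,\mathbb{Q}_p/\mathbb{Z}_p)$, namely those continuous homomorphisms that vanish on $\fa M$, and then check that the identifications are $R$-linear. No surprises are expected; this is essentially the Hom--quotient adjunction instantiated at the dualizing object $\mathbb{Q}_p/\mathbb{Z}_p$.

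First, I would unwind the $R$-module structure on $M^{\PD}$. The paper declares $M^{\PD}=\Hom(M,\mathbb{Q}_p/\mathbb{Z}_p)$, and the $R$-action is the standard one coming from the $R$-action on the source, $(a\cdot f)(m):=f(am)$ for $a\in R$, $f\in M^{\PD}$, $m\in M$. Consequently $f\in M^{\PD}[\fa]$ iff $(a\cdot f)(m)=f(am)=0$ for all $a\in\fa$ and $m\in M$, i.e.\ iff $f$ vanishes on the set $\{am : a\in\fa,\, m\in M\}$. By $\mathbb{Z}$-linearity of $f$, this is equivalent to $f$ vanishing on the submodule $\fa M$ (in the compact case, $f$ is continuous and its target $\mathbb{Q}_p/\mathbb{Z}_p$ is discrete, so $f$ automatically vanishes on the closure of any set on which it vanishes, which is the sensible meaning of $\fa M$).

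Second, I would identify $(M/\fa M)^{\PD}$ with exactly the same subgroup via the universal property of the quotient: pulling back along the projection $\pi:M\twoheadrightarrow M/\fa M$ gives a bijection
\[
\pi^{*}\colon (M/\fa M)^{\PD}\xrightarrow{\sim}\{f\in M^{\PD}\mid f|_{\fa M}=0\},\qquad g\longmapsto g\circ\pi,
\]
with inverse sending $f$ to the unique $\bar f:M/\fa M\to\mathbb{Q}_p/\mathbb{Z}_p$ factoring $f$. Combining the two identifications yields the desired isomorphism $(M/\fa M)^{\PD}\simeq M^{\PD}[\fa]$. A brief check shows it is $R$-linear: the $R$-action on $(M/\fa M)^{\PD}$ is via the $R$-action on $M/\fa M$, which is the one induced from $M$, so $(b\cdot g)\circ\pi=b\cdot(g\circ\pi)$ for every $b\in R$.

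The main (very mild) obstacle is cosmetic rather than conceptual: being careful about continuity and the topology on $\fa M$ in the compact case. This is harmless because $\mathbb{Q}_p/\mathbb{Z}_p$ is discrete, and because both $H^i(G_{\Sigma},-)$ and Pontryagin duality are compatible with the inverse/direct limits $M=\varprojlim M/\fm^n M$ (resp.\ $M=\varinjlim M[\fm^n]$) used elsewhere in the paper, so one can reduce if desired to the finite case, where the statement is standard.
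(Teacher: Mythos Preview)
Your proposal is correct and takes essentially the same approach as the paper. The paper's proof is a single line stating that the isomorphism follows from the short exact sequence $0 \to \fa M \to M \to M/\fa M \to 0$; your argument is simply an explicit unwinding of what this means, identifying both sides with $\{f\in M^{\PD}: f|_{\fa M}=0\}$.
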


\begin{proof}
The claimed isomorphism follows easily from the short exact sequence $0 \to \fa M \to M \to M/\fa M \to 0$. 
\end{proof}

\subsection{Control of Galois cohomology groups}
Let $(R,\fm,\mathbb{F})$ be a reduced local, finite and torsion free ring over $\Lambda$ with residual characteristic $p>2$, where as usual, $\Lambda$ is either $\mathbb{Z}_p[[x_1,\ldots,x_d]]$ or $\mathbb{F}_p[[x_1,\ldots,x_d]]$. Let $T$ be a continuous $R[G_{\mathbb{Q}}]$-module free of finite rank over $R$ (the base field $\mathbb{Q}$ may be replaced with a number field $K$ in the following discussion). 
In this section, we prepare some technique of the specialization method inspired by some papers of Greenberg (see \cite{Gr1}, \cite{Gr2} for more discussions).

\begin{lemma}
\label{last}
Let $(R,\fm,\mathbb{F})$ be a Noetherian complete local domain of dimension at least two with finite residue field $\mathbb{F}$ of characteristic $p>2$. 
Let $T$ be a continuous $R[G_{\mathbb{Q}}]$-module free of rank at least two over $R$ which is ramified at a finite set of primes $\Sigma$ which 
contains the prime $p$. Let us assume the following conditions:
\begin{enumerate}
\item[\rm{(i)}]
The $G_{\mathbb{Q}}$-representation $T \otimes_R R/\fm$ is irreducible.
\item[\rm{(ii)}]
$H^2 (G_{\mathbb{Q}_\ell},T^\ast (1))=0$ for every $\ell \in \Sigma \setminus \{\infty\}$. 
\end{enumerate}
Then, the natural $R/\fa$-module map:
$$
\textcyr{Sh}^2_{\Sigma}(T^\ast (1))/ \fa \textcyr{Sh}^2_{\Sigma}(T^\ast (1))
\to 
\textcyr{Sh}^2_{\Sigma}((T^*/\fa T^*)(1)), 
$$
which is induced by a surjection of $R[G_{\mathbb{Q}}]$-modules: $T^*(1) \twoheadrightarrow 
(T^*/\fa T^*)(1)$, is an isomorphism for any ideal $\fa \subseteq R$. 
\end{lemma}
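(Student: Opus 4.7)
The plan is to work primarily with $H^2(G_\Sigma, -)$ rather than with $\textcyr{Sh}^2_\Sigma$ directly: hypothesis (ii) forces $H^2(G_{\mathbb{Q}_v}, T^\ast(1))$ to vanish at all finite $v \in \Sigma$, and since $p > 2$ makes $|G_\mathbb{R}|=2$ a unit, $H^i(G_\mathbb{R}, T^\ast(1)) = 0$ for $i \ge 1$ as well. Therefore $\textcyr{Sh}^2_\Sigma(T^\ast(1)) = H^2(G_\Sigma, T^\ast(1))$. With this identification in hand, the lemma follows once I establish the isomorphism
\begin{equation*}
H^2(G_\Sigma, T^\ast(1))/\fa H^2(G_\Sigma, T^\ast(1)) \xrightarrow{\sim} H^2(G_\Sigma, (T^\ast/\fa T^\ast)(1))
\end{equation*}
and then identify the right-hand side with $\textcyr{Sh}^2_\Sigma((T^\ast/\fa T^\ast)(1))$.

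The key input is the classical bound $\cd_p(G_\Sigma) \le 2$ for $p>2$ (Tate), which, combined with the formula $H^i(G_\Sigma, N) = \varprojlim_n H^i(G_\Sigma, N/\fm^n N)$ recorded at the start of Section~\ref{GaloisCoho}, yields $H^3(G_\Sigma, N) = 0$ for every compact $R[G_\Sigma]$-module $N$. I choose finitely many generators $a_1,\ldots,a_k$ of $\fa$ (available by Noetherianity of $R$) and consider the $R$-linear surjection $T^\ast(1)^{\oplus k} \twoheadrightarrow \fa T^\ast(1)$, $(x_i) \mapsto \sum a_i x_i$, with kernel $K'$. The long exact sequence attached to $0 \to K' \to T^\ast(1)^{\oplus k} \to \fa T^\ast(1) \to 0$, together with $H^3(G_\Sigma, K') = 0$, yields a surjection $H^2(G_\Sigma, T^\ast(1))^{\oplus k} \twoheadrightarrow H^2(G_\Sigma, \fa T^\ast(1))$; composing with $H^2(G_\Sigma, \fa T^\ast(1)) \to H^2(G_\Sigma, T^\ast(1))$ I obtain the map $(c_i) \mapsto \sum a_i c_i$, whose image is precisely $\fa \cdot H^2(G_\Sigma, T^\ast(1))$. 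This proves injectivity of the displayed map. Surjectivity follows by the same mechanism: $H^3(G_\Sigma, \fa T^\ast(1)) = 0$ forces the map $H^2(G_\Sigma, T^\ast(1)) \to H^2(G_\Sigma, (T^\ast/\fa T^\ast)(1))$ in the long exact sequence of $0 \to \fa T^\ast(1) \to T^\ast(1) \to (T^\ast/\fa T^\ast)(1) \to 0$ to be surjective.

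It remains to identify the target with $\textcyr{Sh}^2_\Sigma((T^\ast/\fa T^\ast)(1))$. Since every class in $H^2(G_\Sigma, (T^\ast/\fa T^\ast)(1))$ lifts to a global class in $H^2(G_\Sigma, T^\ast(1))$, and the local restriction at any $v \in \Sigma$ factors through $H^2(G_{\mathbb{Q}_v}, T^\ast(1))=0$, every such class is locally trivial. Hence $H^2(G_\Sigma, (T^\ast/\fa T^\ast)(1)) = \textcyr{Sh}^2_\Sigma((T^\ast/\fa T^\ast)(1))$, and the chain of isomorphisms yields the lemma. The principal technical hurdle is the uniform vanishing $H^3(G_\Sigma, N) = 0$ for compact $R[G_\Sigma]$-modules such as $K'$ and $\fa T^\ast(1)$; I handle it via the inverse-limit description of cohomology together with the Tate bound $\cd_p(G_\Sigma) \le 2$ when $p > 2$, applied to the finite $p$-primary quotients $N/\fm^n N$.
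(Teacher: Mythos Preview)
Your proof is correct, and it takes a genuinely different route from the paper's argument.

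The paper proceeds by Poitou--Tate duality: it rewrites the statement as the assertion that $\textcyr{Sh}^1_{\Sigma}(D[\fa]) \to \textcyr{Sh}^1_{\Sigma}(D)[\fa]$ is an isomorphism for the discrete module $D=(T^*)^{\PD}$, and then chases a commutative diagram relating the global and local $H^1$'s. Condition~(ii) (via local Tate duality) gives $H^0(G_{\mathbb{Q}_\ell},D)=0$, which makes the local map injective; condition~(i) forces every $G_\Sigma$-subquotient of $D$ to have trivial invariants, which is what makes the global map $H^1(G_\Sigma,D[\fa]) \to H^1(G_\Sigma,D)[\fa]$ bijective. The argument is carried out by induction on a generating set of $\fa$.

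Your approach stays entirely on the compact side and exploits the single fact $\cd_p(G_\Sigma)\le 2$ for $p>2$ to kill all the relevant $H^3$'s. This is shorter and, notably, does not use hypothesis~(i) at all: once the local $H^2$'s vanish (hypothesis~(ii) plus $p>2$ at the real place), the identification $\textcyr{Sh}^2_\Sigma=H^2(G_\Sigma,-)$ holds for both $T^*(1)$ and its quotient, and right-exactness of $H^2(G_\Sigma,-)$ on compact modules does the rest. So your argument actually proves a slightly stronger statement. The paper's approach, by contrast, is more self-contained with respect to duality and $H^0$/$H^1$ computations, and would adapt to situations where one does not wish to invoke the cohomological-dimension bound directly; but in the present setting your route is cleaner.
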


Before proving the lemma, we prepare the notation. 
Let $D:=T \otimes_R R^{\PD}$ on which $G_{\Sigma}=\Gal(\mathbb{Q}_{\Sigma}/\mathbb{Q})$ acts by hypothesis and let us note that
\begin{equation}
\label{niceexact}
(T^*)^{\PD} \simeq \Hom_{\mathbb{Z}_p}(T^* \otimes_R R,\mathbb{Q}_p/\mathbb{Z}_p) \simeq \Hom_R(T^*,R^{\PD}) \simeq D. 
\end{equation}

\begin{proof} 
By $(\ref{niceexact})$ together with the Poitou-Tate duality (Theorem \ref{theorem:P_T}) and Lemma \ref{Pont}, 
the natural $R/\fa$-module map:
$$
\textcyr{Sh}^2_{\Sigma}(T^\ast (1))/ \fa \textcyr{Sh}^2_{\Sigma} (T^\ast (1))
\to 
\textcyr{Sh}^2_{\Sigma}((T^*/\fa T^*)(1)) 
$$
is an isomorphism if and only if the natural map of $R/\fa$-modules:
\begin{equation}\label{equation:sha^1_isom}
\textcyr{Sh}^1_{\Sigma}(D[\fa]) \to \textcyr{Sh}^1_{\Sigma}(D)[\fa]
\end{equation}
is an isomorphism. In order to prove this, we consider the following commutative diagram:  
\begin{equation}\label{equation:commutative_diagram_d[a]}
\begin{CD}
0 @>>> \textcyr{Sh}^1_{\Sigma}(D[\fa]) @>>> H^1 ( G_{\Sigma} , D[\fa ] ) @>>> \displaystyle{\prod_{\ell \in \Sigma \setminus \{\infty\}}}
H^1(G_{\mathbb{Q}_\ell},D[\fa ]) \\ 
@. @VVV @VVV @VVV \\ 
0 @>>> \textcyr{Sh}^1_{\Sigma}(D)[\fa] @>>> H^1 ( G_{\Sigma} , D )[\fa ] @>>> \displaystyle{\prod_{\ell \in \Sigma \setminus \{\infty\}}}
H^1(G_{\mathbb{Q}_\ell},D) [\fa ]. \\ 
\end{CD}
\end{equation}
In order to show that \eqref{equation:sha^1_isom} is an isomorphism, it suffices to show that the map 
$\prod_{\ell \in \Sigma} H^1(G_{\mathbb{Q}_\ell},D[\fa]) \to \prod_{\ell \in \Sigma} H^1(G_{\mathbb{Q}_\ell},D)[\fa]$ of 
\eqref{equation:commutative_diagram_d[a]} is injective and the map $H^1(G_{\Sigma},D[\fa ]) \to H^1(G_{\Sigma} ,D)[\fa]$ 
of \eqref{equation:commutative_diagram_d[a]} is an isomorphism by the snake lemma. 
First, we prove that $\prod_{\ell \in \Sigma} H^1(G_{\mathbb{Q}_\ell},D[\fa]) \to \prod_{\ell \in \Sigma} H^1(G_{\mathbb{Q}_\ell},D)[\fa]$ is injective. Let $\fa=(x_1,\ldots,x_n)$. Then there is a short exact sequence:
\begin{equation}\label{equation:shortD}
0 \to D[x_1,\ldots,x_i] \to D[x_1,\ldots,x_{i-1}] \xrightarrow{\times x_i} x_i D[x_1,\ldots,x_{i-1}] \to 0.
\end{equation}
The condition $\rm(ii)$ together with the local Tate duality shows that $H^0(G_{\mathbb{Q}_{\ell}},D)=0$. Then since $x_i D[x_1,\ldots,x_{i-1}]$ is a submodule of $D$, the induced map: 
$$
\alpha_i:H^1(G_{\mathbb{Q}_\ell},D[x_1,\ldots,x_i]) \to H^1(G_{\mathbb{Q}_\ell},D[x_1,\ldots,x_{i-1}])[x_i ]
$$
is injective. Composing the maps $\alpha_i$ for $i=1,\ldots,n$, we see that the map $H^1(G_{\mathbb{Q}_\ell},D[\fa]) \to 
H^1(G_{\mathbb{Q}_\ell},D)[\fa ]$ is injective. We have proved that the map \eqref{equation:sha^1_isom} is injective and thus, the map $\prod_{\ell \in \Sigma} H^1(G_{\mathbb{Q}_\ell},D[\fa]) \to \prod_{\ell \in \Sigma} H^1(G_{\mathbb{Q}_\ell},D)[\fa]$ is also injective.

Next, we prove that $H^1(G_{\Sigma},D[\fa ]) \to H^1(G_{\Sigma} ,D)[\fa]$ is an isomorphism. Since the rank of $T$ is at least two by assumption, the condition $\rm(i)$ together with Nakayama's lemma implies that $D$ admits no nonzero subquotient on which $G_{\Sigma}$ acts trivially. In particular, we have $H^0(G_{\Sigma},D)=0$. Then by the same argument as above, we can show that 
$H^1(G_{\Sigma},D[\fa]) \to H^1(G_{\Sigma},D)[\fa]$ is injective. In order to prove the surjectivity of $H^1(G_{\Sigma},D[\fa]) \to H^1(G_{\Sigma},D)[\fa]$, 
we consider the short exact sequence:
$$
0 \to x_i D[x_1,\ldots,x_{i-1}] \to D[x_1,\ldots,x_{i-1}] \to D[x_1,\ldots,x_{i-1}]/x_i D[x_1,\ldots,x_{i-1}] \to 0.
$$
Since $D[x_1,\ldots,x_{i-1}]/x_i D[x_1,\ldots,x_{i-1}]$ is a subquotient of $D$, we have
$$
H^0(G_{\Sigma},D[x_1,\ldots,x_{i-1}]/x_i D[x_1,\ldots,x_{i-1}])=0
$$
and hence the following injection is induced: 
\begin{equation}\label{eaution:injection}
H^1(G_{\Sigma},x_i D[x_1,\ldots,x_{i-1}]) \hookrightarrow H^1(G_{\Sigma},D[x_1,\ldots,x_{i-1}]). 
\end{equation}
We also note that $D[x_1,\ldots,x_{i-1}] \xrightarrow{\times x_i} D[x_1,\ldots,x_{i-1}]$ factors as 
\begin{equation}\label{equation:composition}
D[x_1,\ldots,x_{i-1}] \to x_iD[x_1,\ldots,x_{i-1}] \to D[x_1,\ldots,x_{i-1}].
\end{equation}
The injection \eqref{eaution:injection} combined with the map of cohomology induced from \eqref{equation:composition} yields
\begin{multline*}
\ker\big[H^1(G_{\Sigma},D[x_1,\ldots,x_{i-1}]) \to H^1(G_{\Sigma},x_iD[x_1,\ldots,x_{i-1}])\big]
\\ 
=\ker\big[ H^1(G_{\Sigma},D[x_1,\ldots,x_{i-1}]) \xrightarrow{\times x_i} H^1(G_{\Sigma},D[x_1,\ldots,x_{i-1}])\big] . 
\end{multline*}
This equality combined with the long exact sequence of cohomology induced from \eqref{equation:shortD}
shows the surjectivity of the natural map $\beta_i:H^1(G_{\Sigma},D[x_1,\ldots,x_i]) \to H^1(G_{\Sigma},D[x_1,\ldots,x_{i-1}])[x_i]$. Composing the maps $\beta_i$ for $i=1,\ldots,n$, we see that $H^1(G_{\Sigma},D[\fa ]) \to H^1(G_{\Sigma} ,D)[\fa]$ is surjective. This completes the proof of the lemma. 
\end{proof}

\begin{lemma}
\label{last2}
Let $(R,\fm,\mathbb{F})$ be a Noetherian complete local normal domain of dimension at least two with finite residue field $\mathbb{F}$ of characteristic $p>0$ and assume that the following conditions hold: 

\begin{enumerate}
\item[\rm{(i)}]
$H^2(G_{\Sigma},D)$ is a finite group (resp. a trivial group).

\item[\rm{(ii)}]
$T^*(1)/(T^*(1))^{G_{\mathbb{Q}_\ell}}$ is $R$-reflexive for all $\ell \in \Sigma$.

\item[\rm{(iii)}]
$(T^*(1))^{G_{\mathbb{Q}_{\ell_0}}}=0$ for at least one $\ell_0 \in \Sigma \setminus \{\infty\}$.
\end{enumerate}
Then there exist finitely many height-one primes $\fp_1,\ldots,\fp_m$ with the property that, if $\fa \subseteq R$ is a principal ideal with $\fa \nsubseteq \bigcup_{i=1}^m \fp_i$, then
$$
H^2(G_{\Sigma},D[\fa])
$$
is a finite group (resp. a trivial group).
\end{lemma}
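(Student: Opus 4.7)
Write $\fa=(a)$ with $a \neq 0$. Since $R$ is a domain, multiplication by $a$ is injective on $R$ and hence surjective on $R^{\PD}$; because $T$ is $R$-free, multiplication by $a$ is also surjective on $D = T \otimes_R R^{\PD}$, yielding a short exact sequence of continuous $G_\Sigma$-modules
$$
0 \to D[a] \to D \xrightarrow{\cdot a} D \to 0.
$$
Since $p > 2$, $\cd_p(G_\Sigma) \le 2$, so the associated long exact cohomology sequence terminates to give
$$
0 \to H^1(G_\Sigma, D)/a H^1(G_\Sigma, D) \to H^2(G_\Sigma, D[a]) \to H^2(G_\Sigma, D)[a] \to 0.
$$
Hypothesis (i) makes the right-hand term finite (respectively trivial), so the desired finiteness (resp. triviality) of $H^2(G_\Sigma, D[a])$ is equivalent to that of $H^1(G_\Sigma, D)/a H^1(G_\Sigma, D)$.

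\textbf{Dualizing to a question on associated primes.} Set $X := H^1(G_\Sigma, D)^{\PD}$; by \cite[Proposition 3.2]{Gr2} this is a finitely generated $R$-module, and Lemma \ref{Pont} provides the Pontryagin identification
$$
\bigl(H^1(G_\Sigma, D)/a H^1(G_\Sigma, D)\bigr)^{\PD} \cong X[a].
$$
Since $\Ass_R(X[a]) = \{\fp \in \Ass_R(X) : a \in \fp\}$, the $R$-module $X[a]$ has finite length---and hence is finite as a set, because $R$ has finite residue field---exactly when $a$ lies in no non-maximal associated prime of $X$; and $X[a]=0$ exactly when $a$ lies in no associated prime of $X$ at all. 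Thus the lemma will follow once we show that all associated primes of $X$ other than $\fm$ are of height one (for the finite case), and additionally that $\fm\notin\Ass_R(X)$ and $X$ is $R$-torsion (for the trivial case). Granting such a structural result, we take $\{\fp_1,\ldots,\fp_m\}$ to be the height-one primes in $\Ass_R(X)$, a finite set by finite generation of $X$.

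\textbf{Main obstacle: the structure of $\Ass_R(X)$.} The substantive step is therefore the Greenberg-type claim that $X$ carries no nonzero pseudo-null $R$-submodule (and, in the trivial version, that $X$ is additionally $R$-torsion of depth $\ge 1$). The plan is to apply the Poitou-Tate exact sequence (Theorem \ref{theorem:P_T}) so as to express $X$, up to controlled error terms coming from $H^0$ and $H^2$, in terms of the cokernel of the localization map $H^1(G_\Sigma, T^*(1)) \to \bigoplus_{v \in \Sigma} H^1(G_v, T^*(1))$. Condition (ii), combined with local Tate duality, forces each $H^1(G_v, T^*(1))$ to be reflexive up to a finite-length error, so these local modules contribute no pseudo-null submodules to $X$. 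Condition (iii) at the place $\ell_0$ supplies the additional non-vanishing needed to produce an element of $\fm$ acting injectively on $X$, ruling out $\fm\in\Ass_R(X)$ and ensuring torsionness in the trivial case. This structural analysis, in the spirit of \cite{Gr2}, is the delicate heart of the argument; the reductions in the first two paragraphs are formal consequences of the duality and long exact sequence formalism.
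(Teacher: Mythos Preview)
Your reduction via the long exact sequence and the dual reformulation in terms of $\Ass_R(X)$ are correct and match the paper's argument exactly; both reduce the lemma to the surjectivity of multiplication by $a$ on $H^1(G_\Sigma,D)$ for $a$ outside finitely many height-one primes, and both defer this to \cite{Gr2}. Where your third paragraph offers only a rough sketch, the paper is specific: hypotheses (ii) and (iii) are precisely what is needed to invoke \cite[Theorem~1]{Gr2}, which shows that $\textcyr{Sh}^2_\Sigma(D)^{\PD}$ is $R$-reflexive; combined with (i) this forces $\textcyr{Sh}^2_\Sigma(D)=0$ (a nonzero finite module over a domain cannot be reflexive), and this vanishing is then the input to \cite[Proposition~6.10]{Gr2}, which directly supplies the required surjectivity. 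Your proposed mechanism for (ii)---local $H^1$ reflexive, hence no pseudo-null contribution to $X$---does not work as stated, since a quotient of reflexive modules can certainly acquire pseudo-null submodules; and the extra conditions you impose for the trivial case are superfluous, since once multiplication by $a$ is surjective one has $H^2(G_\Sigma,D[a])\cong H^2(G_\Sigma,D)[a]$ uniformly in both cases.
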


\begin{proof}
Recall that a Noetherian domain is \textit{reflexive}, if we have $R=\cap_{\fp} R_{\fp}$, where $\fp$ ranges over all height-one primes of $R$. Since $R$ is a normal domain, it is a reflexive domain and a result of Greenberg \cite[Theorem 1]{Gr2} applies to our case in view of $\rm(ii)$ and $\rm(iii)$. The Pontryagin dual of $\textcyr{Sh}^2_{\Sigma}(D)$ is $R$-reflexive. However, since $H^2(G_{\Sigma},D)$ is finite by $\rm(i)$, it follows that $\textcyr{Sh}^2_{\Sigma}(D)$ is finite and hence it is trivial. We can apply \cite[Proposition 6.10]{Gr2} to our situation. The proof of \cite[Proposition 6.10]{Gr2} will work over a module-finite extension of $\Lambda$. Write $\fa=(f)$ with $f \in R$. Then  
we have $H^2(G_{\Sigma},D[\fa])=H^2(G_{\Sigma},D[f])$. Consider the long exact sequence:
$$
H^1(G_{\Sigma},D) \xrightarrow{\times f} H^1(G_{\Sigma},D) \to H^2(G_{\Sigma},D[f]) \to H^2(G_{\Sigma},D) \xrightarrow{\times f} H^2(G_{\Sigma},D) \to 0,
$$
which is induced by $0 \to D[f] \to D \xrightarrow{\times f} D \to 0$. Then the map $H^1(G_{\Sigma},D) \xrightarrow{\times f} H^1(G_{\Sigma},D)$ is surjective, if $\fa \nsubseteq \bigcup_{i=1}^m \fp_i$ is satisfied for a certain set of height-one primes $\fp_1,\ldots,\fp_m$ of $R$ by \cite[Proposition 6.10]{Gr2}. Hence the map $H^2(G_{\Sigma},D[f]) \to H^2(G_{\Sigma},D)$ is injective, which proves that $H^2(G_{\Sigma},D[f])$ is finite by $\rm(i)$. The case where $H^2(G_{\Sigma},D)$ is trivial can be treated similarly.\\
\end{proof}

\section{Euler system theory over a Noetherian complete local ring}
\label{EulerSystem}
\subsection{Definition of Euler system}
An Euler system for a $p$-adic Galois representation is defined as a norm-compatible system of elements in the tower of the first Galois cohomology groups. We fix a prime number $p>0$ and let $\Sigma$ be a finite set of primes of $\mathbb{Q}$ containing $\{p,\infty \}$. Let $n>0$ be a square-free integer that is not divisible by primes in $\Sigma$ and let $\mathbb{Q}(\mu_n)$ be the $n$-th cyclotomic extension by adjoining a primitive $n$-th root of unity $\mu_n$. Let $G_{\mathbb{Q}(\mu_n)}:=\Gal(\overline{\mathbb{Q}}/\mathbb{Q}(\mu_n))$ and let $\mathbb{Q}(\mu_n)_{\Sigma}/\mathbb{Q}(\mu_n)$ be the maximal Galois extension that is unramified outside primes lying above those in $\Sigma$ and put $G_{\Sigma,n}:=\Gal(\mathbb{Q}(\mu_n)_{\Sigma}/\mathbb{Q}(\mu_n))$. We will write $G_{\Sigma}:=G_{\Sigma,1}$ for simplicity.

\begin{definition}[Euler system over $\mathbb{Q}$]
\label{definitionEuler}
Let $T$ be a finite free module over a Noetherian complete local ring $(R,\fm,\mathbb{F})$ with finite residue field, where $T$ is equipped with a continuous $G_{\mathbb{Q}}$-action and $T$ is unramified outside a finite set of primes $\Sigma$ containing $\{p, \infty\}$. Let $\mathfrak{N}$ be the set of square-free integers which are all relatively prime to $\Sigma$. An \textit{Euler system} for $(T,R,\Sigma)$ is a collection of cohomology classes
$$
\big\{\mathbf{z}_n \in H^1(G_{\mathbb{Q}(\mu_n)},T^*(1))\big\}_{n \in \mathfrak{N}}
$$ 
which satisfies the following properties:
\begin{enumerate}
\item[\rm{(i)}]
$\mathbf{z}_n$ is unramified at all primes of $\mathbb{Q}(\mu_n)$ which are not over primes dividing $np$. 

\item[\rm{(ii)}]
For a prime $\ell$ with $(n,\ell)=1$ and for $n\ell \in \mathfrak{N}$, we have 
$$
\Cor_{\mathbb{Q}(\mu_{n\ell})/\mathbb{Q}(\mu_{n})}(\mathbf{z}_{n\ell})=P(\mathrm{Frob}_{\ell};T)\mathbf{z}_n,
$$ 
where $P(t;T)=\det(1-\Frob_{\ell}t:T \to T)$ and $\Frob_{\ell}$ is the geometric Frobenius in the Galois group $\Gal(\mathbb{Q}(\mu_n)/\mathbb{Q})$.
\end{enumerate}
\end{definition}

For Euler system over a general number field, we refer the reader to \cite{Rub}. By Definition \ref{definitionEuler} (i), we see that $\mathbf{z}_n$ descends to an element in a finitely generated $R$-module $H^1(G_{\Sigma,n},T^*(1))$. There are a few examples of Euler systems discovered and its finding is a deep arithmetic problem. We will need the following condition:

\begin{enumerate}
\item[($\bf{RED}$):] $(R,\fm,\mathbb{F})$ is a reduced local ring and finite flat over $\mathbb{Z}_p$. 
\end{enumerate}

A commutative ring satisfying the condition $(\bf{RED})$ is a one-dimensional local ring, but it is not necessarily a \textit{principal ideal ring}. Thus, it is necessary to develop techniques which allow us to calculate Galois cohomology groups.

\begin{lemma}
\label{dis1}
Assume that $(R,\fm,\mathbb{F})$ is a Noetherian complete reduced local ring with finite residue field. Let $T$ be a continuous $R[G_{\Sigma}]$-module free of rank at least two over $R$. Assume that the $G_{\Sigma}$-representation $T \otimes_R R/\fm$ is irreducible. Then $H^1(G_{\Sigma},T^*(1))$ is an $R$-torsion free module. 
\end{lemma}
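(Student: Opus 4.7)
The plan is to show that for every non-zero divisor $x \in R$ the $x$-torsion of $H^1(G_\Sigma, T^*(1))$ vanishes, which is equivalent to $R$-torsion-freeness since $R$ is reduced (the torsion elements are precisely those annihilated by non-zero divisors). Write $M := T^*(1)$. Since $M$ is $R$-free, the sequence $0 \to M \xrightarrow{\,x\,} M \to M/xM \to 0$ is a short exact sequence of continuous $R[G_\Sigma]$-modules; the associated long exact cohomology sequence yields a surjection $H^0(G_\Sigma, M/xM) \twoheadrightarrow H^1(G_\Sigma, M)[x]$. When $x$ is a unit this torsion is trivially zero, so it suffices to prove $H^0(G_\Sigma, M/xM) = 0$ for any non-zero divisor $x \in \fm$.

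Next I analyze the residual representation $V := M/\fm M = T^*(1)/\fm T^*(1)$. As a $G_\Sigma$-representation $V$ is the twist of $(T/\fm T)^\ast$ by the mod-$p$ cyclotomic character, so by the hypothesis that $T/\fm T$ is irreducible of rank $\geq 2$ the same holds for $V$. In particular $V^{G_\Sigma} = 0$, since any nonzero invariant subspace would coincide with $V$ by irreducibility, making $V$ a trivial representation of dimension $\geq 2$, contradicting irreducibility.

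The heart of the argument is a Nakayama--Krull iteration. Set $N := H^0(G_\Sigma, M/xM)$, $\overline{R} := R/(x)$, and $\overline{\fm} := \fm \overline{R}$. I will show by induction on $k$ that $N \subseteq \overline{\fm}^k (M/xM)$ for every $k \geq 0$, the case $k=0$ being trivial. For the inductive step, since $M$ is $R$-free, the graded piece
\[
\overline{\fm}^k(M/xM)/\overline{\fm}^{k+1}(M/xM) \;\cong\; \fm^k M \big/ \bigl( \fm^{k+1} M + \fm^k M \cap xM \bigr)
\]
is a $G_\Sigma$-equivariant quotient of $\fm^k M / \fm^{k+1} M \cong (\fm^k/\fm^{k+1}) \otimes_{\mathbb{F}} V$, with $G_\Sigma$ acting trivially on the first factor. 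The latter is a direct sum of copies of $V$, hence semisimple as a $G_\Sigma$-module; a quotient of a semisimple module is semisimple with the same isotypic type, so the graded piece is itself a direct sum of copies of $V$, whose $G_\Sigma$-invariants vanish because $V^{G_\Sigma}=0$. Consequently the image of $N$ in this graded piece is zero, establishing $N \subseteq \overline{\fm}^{k+1}(M/xM)$.

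Finally, Krull's intersection theorem applied to the finitely generated module $M/xM$ over the Noetherian local ring $\overline{R}$ gives $\bigcap_k \overline{\fm}^k (M/xM) = 0$ and hence $N = 0$, completing the argument. The main obstacle is the graded analysis in the inductive step: each graded piece may arise as a non-split extension of copies of $V$, but the fact that all composition factors are isomorphic to $V$, combined with the left-exactness of $(-)^{G_\Sigma}$ (or, as above, with the semisimplicity of the ambient module), forces the $G_\Sigma$-invariants to vanish and lets the iteration go through.
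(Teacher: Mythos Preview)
Your proof is correct and follows essentially the same strategy as the paper: reduce to showing $(T^*(1)/xT^*(1))^{G_\Sigma}=0$ for each nonzero divisor $x\in\fm$, then use the long exact sequence of $0\to T^*(1)\xrightarrow{x}T^*(1)\to T^*(1)/xT^*(1)\to 0$. The paper compresses the key step into the one-line assertion that ``$T^*(1)/xT^*(1)$ is irreducible by Nakayama's lemma,'' which is imprecise as stated (over $R/(x)$ this module is typically not irreducible); your Krull--Nakayama iteration on the $\overline{\fm}$-adic filtration is exactly what makes that step rigorous. One minor remark: since $M/xM$ is free over $R/(x)$, the graded piece $\overline{\fm}^k(M/xM)/\overline{\fm}^{k+1}(M/xM)$ is already isomorphic to $(\overline{\fm}^k/\overline{\fm}^{k+1})\otimes_{\mathbb{F}}V$, so it is a direct sum of copies of $V$ outright and not merely a quotient of one; your semisimplicity argument is valid but slightly more than needed.
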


\begin{proof}
The assumption implies that $T^*(1) \otimes_R R/\fm$ is irreducible, since $T^*(1)$ is a twist of $T^*$ by a cyclotomic character and we have $(T^*(1) \otimes_R R/\fm)^{G_{\Sigma}}=0$ by the assumption that $\rank_R T \ge 2$. It follows that for any nonzero divisor $x \in \fm$, the quotient $T^*(1)/x T^*(1)$ is irreducible as a representation of $G_{\Sigma}$ by Nakayama's lemma. Then this implies that
$$
(T^*(1)/x T^*(1))^{G_{\Sigma}}=0,
$$
and the short exact sequence $0 \to T^*(1) \xrightarrow{\times x} T^*(1) \to T^*(1)/x T^*(1) \to 0$ yields that the map $H^1(G_{\Sigma},T^*(1)) \xrightarrow{\times x} H^1(G_{\Sigma},T^*(1))$ is injective. In particular, $H^1(G_{\Sigma},T^*(1))$ contains no nonzero pseudo-null submodules.
\end{proof}

\begin{definition}
\label{definition:isogeneous} 
Let $T_1$ and $T_2$ be finitely generated torsion-free $R$-modules over a Noetherian complete local ring $(R,\fm,\mathbb{F})$ with finite residue field, equipped with continuous $G_K$-actions. Then $T_1$ and $T_2$ are \textit{isogeneous}, if there exists an injective $R[G_K]$-homomorphism $\phi:T_1 \to T_2$ whose cokernel is annihilated by a nonzero divisor of $R$. In this case, $\phi$ is called an \textit{isogeny}. 
\end{definition}

\begin{lemma}
The relation of being isogeneous defined in Definition \ref{definition:isogeneous} is an equivalence relation. 
\end{lemma}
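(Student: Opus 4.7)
The plan is to verify the three properties of an equivalence relation (reflexivity, symmetry, transitivity) separately, with symmetry being the only nontrivial point.

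Reflexivity is immediate: the identity map $\id_{T}:T \to T$ is injective and has zero cokernel, which is annihilated by $1 \in R$.

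For transitivity, suppose $\phi_1:T_1 \to T_2$ and $\phi_2:T_2 \to T_3$ are isogenies with $\coker(\phi_1)$ killed by a nonzero divisor $a$ and $\coker(\phi_2)$ killed by a nonzero divisor $b$. The composition $\phi_2 \circ \phi_1$ is again an injective $R[G_K]$-homomorphism, and applying the snake lemma to the short exact sequences defining the cokernels shows that $\coker(\phi_2 \circ \phi_1)$ fits in a sequence $\coker(\phi_1) \twoheadrightarrow \coker(\phi_2 \circ \phi_1) \twoheadrightarrow \coker(\phi_2) \to 0$ (modulo checking signs), so it is killed by $ab$, which remains a nonzero divisor in $R$.

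The key step is symmetry. Given an injective $R[G_K]$-map $\phi:T_1 \to T_2$ whose cokernel is annihilated by a nonzero divisor $a \in R$, the relation $aT_2 \subseteq \phi(T_1)$ holds, so for each $t \in T_2$ the element $at$ lies in $\phi(T_1)$; since $\phi$ is injective, $\phi^{-1}(at)$ is a well-defined element of $T_1$. I would define
\[
\psi:T_2 \to T_1, \qquad \psi(t):=\phi^{-1}(at),
\]
and verify that it is $R[G_K]$-linear (both $R$-linearity and $G_K$-equivariance are inherited from those of $\phi^{-1}$ restricted to $\phi(T_1)$). The formula $\psi \circ \phi = a \cdot \id_{T_1}$ holds by direct check, so $aT_1 \subseteq \psi(T_2) \subseteq T_1$ and $\coker(\psi)$ is annihilated by $a$. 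For injectivity of $\psi$, note that $\psi(t)=0$ forces $at=0$; since $T_2$ is torsion-free over $R$ (as stipulated in Definition \ref{definition:isogeneous}) and $a$ is a nonzero divisor of $R$, multiplication by $a$ acts injectively on $T_2$, so $t=0$.

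The main obstacle is symmetry: the naive attempt to invert $\phi$ on its image does not directly produce a map $T_2 \to T_1$, which is why one must instead use multiplication-by-$a$ to land inside the image of $\phi$. This construction works thanks to the torsion-freeness hypothesis built into Definition \ref{definition:isogeneous}; without that hypothesis, $\psi$ might fail to be injective, and symmetry could break down.
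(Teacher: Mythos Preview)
Your proof is correct and follows essentially the same route as the paper: for symmetry the paper also uses multiplication by the annihilating nonzero divisor to produce the reverse isogeny $T_2 \simeq xT_2 \hookrightarrow T_1$, which is precisely your map $\psi(t)=\phi^{-1}(at)$, and transitivity is handled by composing and noting the cokernel is killed by the product. One small slip: in your transitivity step the correct sequence is $0 \to \coker(\phi_1) \hookrightarrow \coker(\phi_2 \circ \phi_1) \twoheadrightarrow \coker(\phi_2) \to 0$ (the first map is injective, not surjective), but this does not affect the conclusion that $ab$ annihilates the middle term.
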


\begin{proof}
Let us take an injective $R[G_K]$-homomorphism $\phi:T_1 \hookrightarrow T_2$ as in Definition \ref{definition:isogeneous}, whose cokernel is annihilated by a nonzero divisor $x$ of $R$. Identify $T_1$ with its image $\phi(T_1)$. Since $x$ annihilates $T_2/T_1$, the $R[G_K]$-module $xT_2$ is identified with a submodule of $T_1$. So we have that $T_1/xT_2$ is a submodule of $T_2/xT_2$ and $x(T_1/xT_2)=0$. Since $x$ is a nonzero divisor of $R$, the multiplication $R \xrightarrow{\times x} R$ is injective. Since $T_2$ is a torsion-free $R$-module, we have an isomorphism $T_2 \simeq xT_2$. So we obtained a $G_K$-equivariant injection $T_2 \simeq xT_2 \hookrightarrow T_1$ whose cokernel is annihilated by $x$.  

Suppose that we are given another injective $R[G_K]$-homomorphism $\phi':T_2 \to T_3$ as in Definition \ref{definition:isogeneous}, whose cokernel is annihilated by a nonzero divisor $y$ of $R$. Then the cokernel of the injective $R[G_K]$-homomorphism $\phi' \circ \phi :T_1 \to T_3$ is annihilated by $xy$. 
Thus we proved that the relation of being isogeneous defined in Definition \ref{definition:isogeneous} 
is an equivalence relation. 
\end{proof}

We construct an Euler system in an isogeneous representation.

\begin{lemma}
Assume that the ring $(R,\fm,\mathbb{F})$ satisfies the condition $(\bf{RED})$. Let $T$ be a continuous $R[G_{\Sigma}]$-module which is finite free over $R$ and let $\widetilde{T}:=T \otimes_R \widetilde{R}$, where $\widetilde{R}$ is the normalization of $R$ in its total ring of fractions. Let $\big\{\mathbf{z}_n \in H^1(G_{\mathbb{Q}(\mu_n)},T^*(1))\big\}_{n \in \mathfrak{N}}$ be an Euler system for $(T,R,\Sigma)$. Then there is a natural $R$-module map
$$
H^1(G_{\mathbb{Q}(\mu_n)},T^*(1)) \to H^1(G_{\mathbb{Q}(\mu_n)},\widetilde{T}^*(1))
$$
and one can obtain an Euler system for $(\widetilde{T},\widetilde{R},\Sigma)$:
$$
\big\{\widetilde{\mathbf{z}}_n \in H^1(G_{\mathbb{Q}(\mu_n)},\widetilde{T}^*(1))\big\}_{n \in \mathfrak{N}}
$$ 
which is given as the image of $\big\{\mathbf{z}_n \in H^1(G_{\mathbb{Q}(\mu_n)},T^*(1))\big\}_{n \in \mathfrak{N}}$. 

Finally, assume that $|H^1(G_{\Sigma},T^*(1))/R \mathbf{z}_1|<\infty$. Then we have $|H^1(G_{\Sigma},\widetilde{T}^*(1))/\widetilde{R} \widetilde{\mathbf{z}}_1|<\infty$.
\end{lemma}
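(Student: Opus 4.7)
The plan is to realize the map via scalar extension, transport the Euler system by functoriality, and derive the finiteness claim from a long exact sequence in cohomology.

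First, I would record that $\widetilde{R}$ is a finite $R$-module. Since $R$ is a reduced complete Noetherian local ring finite over $\mathbb{Z}_p$, the total ring of fractions $\Frac(R)=\prod_i \Frac(R/\fp_i)$ (with $\fp_i$ the minimal primes of $R$) is a finite product of finite extensions of $\mathbb{Q}_p$, and $\widetilde{R}$ is the integral closure of $\mathbb{Z}_p$ in it, hence a finite $\mathbb{Z}_p$-module, and a fortiori a finite $R$-module. Moreover $\widetilde{R}/R$ vanishes after localization at each minimal prime (since $R$ is reduced, $R_{\fp_i}=\widetilde{R}_{\fp_i}=\Frac(R/\fp_i)$), so it has support $\{\fm\}$ in $\Spec R$, is of finite length, and is a finite abelian $p$-group. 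Because $T$ is $R$-free, we have $\widetilde{T}^*=\Hom_{\widetilde{R}}(\widetilde{T},\widetilde{R})=T^*\otimes_R\widetilde{R}$, and the inclusion $R\hookrightarrow\widetilde{R}$ gives a $G_\Sigma$-equivariant injection $T^*(1)\hookrightarrow\widetilde{T}^*(1)$. Applying $H^1(G_{\mathbb{Q}(\mu_n)},-)$ yields the natural $R$-module map of the statement.

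Setting $\widetilde{\mathbf{z}}_n$ to be the image of $\mathbf{z}_n$, I would verify the Euler system axioms. Condition (i) of Definition \ref{definitionEuler} is immediate since a $G_\Sigma$-equivariant map carries unramified classes to unramified classes. For (ii), functoriality of corestriction combined with the equality $P(t;\widetilde{T})=P(t;T)$ in $\widetilde{R}[t]$ (which holds since $\widetilde{T}=T\otimes_R\widetilde{R}$ and the determinant is compatible with base change) yields
$$
\Cor_{\mathbb{Q}(\mu_{n\ell})/\mathbb{Q}(\mu_n)}(\widetilde{\mathbf{z}}_{n\ell})=P(\Frob_\ell;\widetilde{T})\,\widetilde{\mathbf{z}}_n,
$$
which is the norm relation for $(\widetilde{T},\widetilde{R},\Sigma)$.

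For the finiteness claim, I apply cohomology to $0\to T^*(1)\to\widetilde{T}^*(1)\to Q\to 0$ with $Q=(T^*\otimes_R(\widetilde{R}/R))(1)$. As $T^*$ is $R$-free of finite rank and $\widetilde{R}/R$ is a finite abelian group, $Q$ is a finite $G_\Sigma$-module, so every $H^i(G_\Sigma,Q)$ is finite, and hence the induced map $\iota\colon H_R:=H^1(G_\Sigma,T^*(1))\to H_{\widetilde{R}}:=H^1(G_\Sigma,\widetilde{T}^*(1))$ has finite kernel and finite cokernel. Since $\iota$ is $R$-linear and $\widetilde{\mathbf{z}}_1=\iota(\mathbf{z}_1)$, we have $\iota(R\mathbf{z}_1)=R\widetilde{\mathbf{z}}_1\subseteq\widetilde{R}\widetilde{\mathbf{z}}_1$, so $\iota(H_R)/R\widetilde{\mathbf{z}}_1$ is a quotient of the finite group $H_R/R\mathbf{z}_1$, hence finite. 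The short exact sequence
$$
0\to \iota(H_R)/R\widetilde{\mathbf{z}}_1\to H_{\widetilde{R}}/R\widetilde{\mathbf{z}}_1\to H_{\widetilde{R}}/\iota(H_R)\to 0
$$
then shows $H_{\widetilde{R}}/R\widetilde{\mathbf{z}}_1$ is finite, and \emph{a fortiori} $H_{\widetilde{R}}/\widetilde{R}\widetilde{\mathbf{z}}_1$ is finite. The principal obstacle I foresee is not any individual deduction but the commutative-algebraic input that $\widetilde{R}/R$ is a finite abelian group; once that is secured (from reducedness and one-dimensionality of $R$ together with finiteness over $\mathbb{Z}_p$), the rest is functoriality and a short diagram chase.
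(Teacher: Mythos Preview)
Your proof is correct and follows essentially the same approach as the paper: both use the finiteness of $\widetilde{R}/R$ to obtain a $G_\Sigma$-equivariant injection $T^*(1)\hookrightarrow\widetilde{T}^*(1)$ with finite cokernel, push the Euler system forward functorially, and deduce the finiteness claim from the finiteness of the kernel and cokernel of the induced map on $H^1$. Your version is in fact more explicit in verifying the norm relation (noting $P(t;\widetilde{T})=P(t;T)$ under base change) and in spelling out the final quotient argument, whereas the paper invokes the snake lemma more tersely.
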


\begin{proof}
Since $\widetilde{R}/R$ is finite, the cokernel of the injective composed map 
$$
\Hom_R(T,R) \to \Hom_R(T,R)\otimes_R \widetilde{R} \to \Hom_{\widetilde{R}}(T \otimes_R \widetilde{R},\widetilde{R})
$$ 
is finite. Let us put $\widetilde{T}^*(1):=\Hom_{\widetilde{R}}(T \otimes_R \widetilde{R},\widetilde{R})(1)$. This induces a natural map of $R$-modules with finite kernel and cokernel:
\begin{equation}
\label{induced1}
H^1(G_{\mathbb{Q}(\mu_n)},T^*(1)) \to H^1(G_{\mathbb{Q}(\mu_n)},\widetilde{T}^*(1)).
\end{equation}
Then via $(\ref{induced1})$, one can define a family of elements:
$$
\big\{\widetilde{\mathbf{z}}_n \in H^1(G_{\mathbb{Q}(\mu_n)},\widetilde{T}^*(1))\big\}_{n \in \mathfrak{N}}
$$ 
as the image of $\big\{\mathbf{z}_n \in H^1(G_{\mathbb{Q}(\mu_n)},T^*(1))\big\}_{n \in \mathfrak{N}}$. This gives an Euler system for $(\widetilde{T},\widetilde{R},\Sigma)$. Indeed, it is evident from the construction that $\widetilde{\mathbf{z}}_n$ is unramified outside $\Sigma$, which descends to an element in $H^1(G_{\Sigma,n},\widetilde{T}^*(1))$ via the surjection $G_{\mathbb{Q}(\mu_n)} \twoheadrightarrow G_{\Sigma,n}$.

Finally, assume that $|H^1(G_{\Sigma},T^*(1))/R \mathbf{z}_1|< \infty$. Since both kernel and cokernel of $(\ref{induced1})$ are finite, it follows that $|H^1(G_{\Sigma},\widetilde{T}^*(1))/\widetilde{R} \widetilde{\mathbf{z}}_1|< \infty$ by snake lemma.
\end{proof}

\begin{lemma}
\label{dis2}
Let $(R,\fm,\mathbb{F})$ be a Noetherian complete local ring with finite residue field and let $M$ be a finitely generated $R$-module with a continuous $G$-action, where $G$ is a profinite group.

\begin{enumerate}
\item[(i)]
Assume that $R$ is reduced and $R \to \widetilde{R}$ is the normalization map. Then the induced map
$$
H^i(G,M) \otimes_R \widetilde{R} \to H^i(G,M \otimes_R \widetilde{R})
$$
has $R$-torsion kernel and cokernel for all $i \ge 0$.

\item[(ii)]
Let $(R,\fm,\mathbb{F}) \to (S,\fn,\mathbb{F}')$ be a flat map of complete Noetherian local rings with finite residue fields. Then the natual map:
$$
H^i(G,M) \otimes_R S \to H^i(G,M \otimes_R S)
$$
is an isomorphism for all $i \ge 0$.
\end{enumerate}
\end{lemma}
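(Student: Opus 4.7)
For part $(\mathrm{i})$, the plan is to use that $R$ is reduced and excellent (being a complete Noetherian local ring), so $\widetilde R$ is a finite $R$-module with the same total ring of fractions. Hence the finitely generated $R$-module $\widetilde R/R$ vanishes at every minimal prime of $R$ and, by prime avoidance, is annihilated by some non-zero-divisor $s \in R$. Tensoring the sequence $0 \to R \to \widetilde R \to \widetilde R/R \to 0$ with $M$ yields
\[
0 \to K \to M \to M \otimes_R \widetilde R \to M \otimes_R (\widetilde R/R) \to 0,
\]
where $K$ is a quotient of $\Tor_1^R(M, \widetilde R/R)$; both $K$ and the cokernel are $s$-torsion. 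Splitting this into two short exact sequences and applying the long exact sequence of continuous $G$-cohomology, I would deduce that $\psi \colon H^i(G, M) \to H^i(G, M \otimes_R \widetilde R)$ has kernel and cokernel killed by $s^2$. For the map $\varphi$ in the statement, factor $\varphi = \mu \circ (\psi \otimes_R \id_{\widetilde R})$, where $\mu$ is induced by the $\widetilde R$-module structure on $H^i(G, M \otimes_R \widetilde R)$; tensoring the four-term exact sequence for $\psi$ with $\widetilde R$ preserves an $s$-power-torsion bound on the kernel and cokernel (the error terms being quotients of various $\Tor_1^R(-, \widetilde R)$'s, killed by $s^2$), and $\ker(\mu)$ is killed by $s$ since, for $\sum a_j \otimes b_j$ in the kernel, one has $s \cdot \sum a_j \otimes b_j = \sum (a_j \cdot s b_j) \otimes 1 = 0$ thanks to $s\widetilde R \subseteq R$. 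Hence $\ker(\varphi)$ and $\coker(\varphi)$ are $s$-power-torsion, and in particular $R$-torsion.

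For part $(\mathrm{ii})$, the strategy is to reduce to finite level and then pass to the limit. Writing $M = \varprojlim_n M_n$ with $M_n := M/\fm^n M$, each $M_n$ is finite (since $\mathbb{F}$ is finite) with continuous $G$-action factoring through some finite quotient $\overline G_n$. Computing $H^i(\overline G_n, M_n)$ via a finite-rank projective resolution of $\mathbb{Z}$ over $\mathbb{Z}[\overline G_n]$ gives a complex of finite direct sums of copies of $M_n$, and flat base change commutes with both $\Hom$ and with taking cohomology, so one obtains the finite-level isomorphism $H^i(G, M_n) \otimes_R S \xrightarrow{\sim} H^i(G, M_n \otimes_R S)$. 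Since each $H^i(G, M_n)$ has finite length over $R$, the inverse system is Mittag--Leffler, so $H^i(G, M) = \varprojlim_n H^i(G, M_n)$ and similarly on the $S$-side. Finally, flat base change by the complete $R$-algebra $S$ commutes with inverse limits of finite length $R$-modules --- one can write $S = \varprojlim_n S/\fm^n S$ and collapse the resulting double inverse limit via Mittag--Leffler --- to obtain the isomorphism in the statement.

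The main obstacle is the final step of $(\mathrm{ii})$: flat base change does not commute with inverse limits in general, so one must genuinely use the finite length of each $H^i(G, M_n)$ and the completeness of $S$ as an $R$-algebra to carry out the interchange. For the profinite groups relevant to this paper (namely $G = G_\Sigma$ or $G = G_{K_v}$ with $p > 2$), finiteness of finite-level cohomology with finite coefficients is classical, so the argument goes through as sketched.
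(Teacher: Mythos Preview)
Your proof is correct, but both parts take a different route from the paper's.

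For (i), the paper argues at the level of the cochain complex rather than the coefficients: taking a projective resolution $P_\bullet$ of the trivial $G$-module $\mathbb{Z}$, it observes that $\Hom(P_\bullet,M)\otimes_R\widetilde{R}\cong\Hom(P_\bullet,M\otimes_R\widetilde{R})$ (since each $P_i$ is finitely generated projective), so that the map in question factors as
\[
H^i(G,M)\otimes_R\widetilde{R}\;\longrightarrow\;H^i\bigl(\Hom(P_\bullet,M)\otimes_R\widetilde{R}\bigr)\;\cong\;H^i(G,M\otimes_R\widetilde{R}),
\]
and the first arrow has $R$-torsion kernel and cokernel because $\widetilde{R}/R$ is $R$-torsion (a universal-coefficient type fact). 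Your approach via the exact sequence $0\to R\to\widetilde{R}\to\widetilde{R}/R\to 0$ tensored with $M$, followed by the long exact sequence in $G$-cohomology and the factorisation through $\mu$, is an equally valid and more elementary alternative; it avoids invoking the resolution but requires explicit $s$-power bookkeeping.

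For (ii), the paper's argument is one sentence: repeat (i) with $\widetilde{R}$ replaced by $S$, and use flatness of $S$ to turn the torsion error into zero (exactness of $-\otimes_R S$ makes $H^i(C^\bullet)\otimes_R S\to H^i(C^\bullet\otimes_R S)$ an isomorphism for any complex $C^\bullet$). Your reduction to finite quotients $M_n=M/\fm^nM$ and finite groups $\overline{G}_n$ is more scrupulous about the profinite nature of $G$, but it then forces you into the limit-versus-tensor interchange that you correctly flag as delicate. In every application of this lemma in the paper, $S$ is in fact finite free over $R$ (residue-field extensions $R/\fm\to\widetilde{R}_i/(\varpi_i)$ or finite \'etale coefficient extensions $R\to R_{W(\mathbb{F}')}$), and in that regime both your limit argument and the paper's complex-level argument go through without the subtlety you raise.
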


\begin{proof}
We prove the assertion $\rm(i)$. Let $P_{\bullet}$ denote the projective resolution of the trivial $G$-module $\mathbb{Z}$. Then the natural map
$$
\Hom(P_{\bullet},M) \otimes_R \widetilde{R} \to \Hom(P_{\bullet},M \otimes_R \widetilde{R})
$$
is an isomorphism, since every $P_i$ is a finitely generated projective module. Since $R$ is a complete local ring and $\Frac(R)=\Frac(\widetilde{R})$, it follows that $R \to \widetilde{R}$ is module-finite and $\widetilde{R}/R$ is an $R$-torsion module. Thus, both kernel and cokernel of the natural map $H^i(G,M) \otimes_R \widetilde{R} \to H^i(\Hom(P_{\bullet},M) \otimes_R \widetilde{R})$ are $R$-torsion. Composing this map with $H^i(\Hom(P_{\bullet},M) \otimes_R \widetilde{R}) \simeq H^i(G,M \otimes_R \widetilde{R})$, both kernel and cokernel of the cohomology map 
$$
H^i(G,M) \otimes_R \widetilde{R} \to H^i(G,M \otimes_R \widetilde{R})
$$ 
are $R$-torsion modules.

We prove the assertion $\rm(ii)$. Since $R \to S$ is flat, we have $(M \otimes_R S)^H=M^H \otimes_R S$ for any subgroup $H \subseteq G$. The proof may be completed by keeping track of $\rm(i)$ by replacing $R \to \widetilde{R}$ with $R \to S$. 
\end{proof}

\begin{lemma}
\label{cond}
Assume that $(R,\fm,\mathbb{F})$ satisfies the condition $(\bf{RED})$ and $T$ is a finite free $R$-module with a continuous $G_{K_v}$-action, where $K$ is a number field and $v$ is a non Archimedean prime of $K$. Let $\widetilde{T}:=T \otimes_R \widetilde{R}$ and assume that $H^2(G_{K_v},T^*(1))=0$. Then $H^2(G_{K_v},\widetilde{T}^*(1))=0$.
\end{lemma}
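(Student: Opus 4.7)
The plan is to convert the $H^2$-vanishing hypothesis into a statement about $G_{K_v}$-coinvariants, which manifestly commute with the base change $R \to \widetilde{R}$, and then translate the conclusion back. I first invoke local Tate duality at the non-archimedean place $v$ applied to the compact $R[G_{K_v}]$-module $T^*(1)$, which is free of finite rank over $R$. The duality provides a perfect Pontryagin pairing of $H^2(G_{K_v},T^*(1))$ with $H^0(G_{K_v},(T^*(1))^{\PD}(1))$; the two Tate twists cancel to give $(T^*(1))^{\PD}(1) \simeq (T^*)^{\PD}$, and combining this identification with the invariants-versus-coinvariants Pontryagin duality $((T^*)^{\PD})^{G_{K_v}} = ((T^*)_{G_{K_v}})^{\PD}$, followed by double-dualizing the finitely generated (hence compact) $R$-module $(T^*)_{G_{K_v}}$, produces a canonical $R$-linear isomorphism
$$H^2(G_{K_v},T^*(1)) \;\simeq\; (T^*)_{G_{K_v}}.$$
Under this identification the hypothesis reads $(T^*)_{G_{K_v}} = 0$.

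Next I exploit that $T$ is finite free over $R$, which yields the canonical identification $\widetilde{T}^* = T^* \otimes_R \widetilde{R}$ as $R[G_{K_v}]$-modules, with $G_{K_v}$ acting trivially on the coefficient ring $\widetilde{R}$. Since coinvariants are right exact and commute with tensoring on the right by a module on which $G_{K_v}$ acts trivially,
$$(\widetilde{T}^*)_{G_{K_v}} \;=\; (T^*)_{G_{K_v}} \otimes_R \widetilde{R} \;=\; 0.$$
Finally, as $\widetilde{R}$ decomposes as a finite product of complete DVRs (each a complete Noetherian local ring with finite residue field), the same Tate-duality translation applied factorwise yields
$$H^2(G_{K_v},\widetilde{T}^*(1)) \;\simeq\; (\widetilde{T}^*)_{G_{K_v}} \;=\; 0,$$
which is the desired vanishing.

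The delicate step is the Tate-twist bookkeeping producing the isomorphism $H^2(G_{K_v},T^*(1)) \simeq (T^*)_{G_{K_v}}$: it relies on $T^*$ being finite free over $R$ so that its Pontryagin dual pairs cleanly against the coinvariants, and on $(T^*)_{G_{K_v}}$ being compact so that Pontryagin double duality returns it. I should also note that a naive alternative via Lemma \ref{dis2}(i) applied to $M = T^*(1)$ only yields that $H^2(G_{K_v},\widetilde{T}^*(1))$ is $R$-torsion, which is strictly weaker than vanishing; the coinvariants reformulation is what enables the clean conclusion.
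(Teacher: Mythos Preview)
Your proof is correct and follows the same overall strategy as the paper: reduce via local Tate duality to showing $(\widetilde{T}^*)_{G_{K_v}}=0$ from $(T^*)_{G_{K_v}}=0$, then translate back. The difference lies only in how the coinvariants step is handled. You use the one-line identity $(T^*\otimes_R\widetilde{R})_{G_{K_v}}=(T^*)_{G_{K_v}}\otimes_R\widetilde{R}$, valid because coinvariants are a cokernel and tensor product is right exact. The paper instead passes through the short exact sequence $0\to T^*\to\widetilde{T}^*\to T^*\otimes_R\widetilde{R}/R\to 0$, takes coinvariants, and then proves $(T^*\otimes_R M)_{G_{K_v}}=0$ for every Artinian $R$-module $M$ by induction on length (reducing to $M=R/\fm$ and invoking $(T^*/\fm T^*)_{G_{K_v}}=0$). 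Your route is shorter and more conceptual; the paper's route has the mild advantage of establishing the auxiliary fact that $(T^*\otimes_R M)_{G_{K_v}}=0$ for all Artinian $M$, though that is not used elsewhere. Your closing remark that Lemma~\ref{dis2}(i) alone would only give torsion, not vanishing, is apt.
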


\begin{proof}
Since $H^2(G_{K_v},\widetilde{T}^*(1))^{\PD} \simeq H^0(G_{K_v},(\widetilde{T}^*)^{\PD})=((\widetilde{T}^*)_{G_{K_v}})^{\PD}$ by local Tate duality theorem \cite[Theorem 1.4.1]{Rub}, we have $H^2(G_{K_v},\widetilde{T}^*(1))=0$ if and only if $(\widetilde{T}^*)_{G_{K_v}}=0$. Hence, it suffices to prove that $(\widetilde{T}^*)_{G_{K_v}}=0$. 
We remark that $H^2(G_{K_v},{T}^*(1))=0$ if and only if $({T}^*)_{G_{K_v}}=0$ by the same argument using local Tate duality. 

Consider the short exact sequence of $R$-modules $0 \to R \to \widetilde{R} \to \widetilde{R}/R \to 0$. 
Since $R$ satisfies $(\bf{RED})$, the quotient $\widetilde{R}/R$ is an 
Artinian $R$-module. 
By applying the tensor product $(-)\otimes_R T^*$ with free $R$-module $T^*$ 
to the above short exact sequence, 
we have a short exact sequence $0 \to T^* \to \widetilde{T}^* \to T^* \otimes_R \widetilde{R}/R \to 0$. 
Taking the $G_{K_v}$-coinvariant quotient, we have an exact sequence:
$$
(T^*)_{G_{K_v}} \to (\widetilde{T}^*)_{G_{K_v}} \to (T^* \otimes_R \widetilde{R}/R)_{G_{K_v}} \to 0.
$$
By the above remark together with the assumption $H^2(G_{K_v},T^*(1))=0$, 
the first term $(T^*)_{G_{K_v}}$ of this sequence is already known to be zero. 
From this, it suffices to prove that the last term 
$(T^* \otimes_R \widetilde{R}/R)_{G_{K_v}}$ is zero in order to prove that the middle 
term is zero. Since $R$ is local, any simple 
$R$-subquotient of $\widetilde{R}/R$ is isomorphic to $R/\fm R$.   
Hence, the $R$-module $T^* \otimes_R \widetilde{R}/R$ has finite length and is obtained by taking successive extensions of $T^*/\fm T^*$. Recall that we have $(T^*/\fm T^*)_{G_{K_v}}=0$ since 
$(T^*)_{G_{K_v}}=0$. 
If we have an exact sequence $0 \to M' \to M \to M'' \to 0$ of Artinian $R$-modules, 
we have an exact sequence as follows 
$$
(T^* \otimes_R M' )_{G_{K_v}}  \to (T^* \otimes_R M)_{G_{K_v}} \to (T^* \otimes_R M'' )_{G_{K_v}} \to 0 . 
$$
by the argument similar to the above. By the induction argument with respect to the $R$-length of $M$, 
we prove $(T^* \otimes_R M)_{G_{K_v}}=0$ for any Artinian $A$-module $M$ and hence $(T^* \otimes_R \widetilde{R}/R)_{G_{K_v}}=0$.\\
\end{proof}

\subsection{Euler system bound over a reduced local ring of dimension one}

Let $T$ be as in the setting of Euler system over $\mathbb{Q}$ and recall the notation $D:=T \otimes_R R^{\PD}$, the discrete Galois module on which the Galois group $G_{\mathbb{Q}}$ acts through the first factor.

\begin{definition}
Let $(R,\fm,\mathbb{F})$ be a complete reduced local ring satisfying $(\bf{RED})$. Then \textit{Euler system bound} holds for $(T,R,\Sigma)$, if both of the following conditions are satisfied:

\begin{enumerate}
\item
$\textcyr{Sh}^2_{\Sigma}(T^*(1))$ is a finite group.

\item
$c \cdot |H^1(G_{\Sigma},T^*(1))/R \mathbf{z}_1|$ is divisible by $| \textcyr{Sh}^2_{\Sigma}(T^*(1))|$ for some $c \in \mathbb{N}$. Let us call $c$ an \textit{error term}, which is explicitly specified by $(T,R,\Sigma)$.
\end{enumerate}
\end{definition}

It is a deep arithmetic problem to determine under which conditions $H^1(G_{\Sigma},T^*(1))/R \mathbf{z}_1$ (or $\textcyr{Sh}^2_{\Sigma}(T^*(1))$) is a finite group. Our aim in \S~\ref{proof2} is to prove that Euler system bound holds over a module-finite extension $\Lambda=\mathbb{Z}_p[[x_1,\ldots,x_d]] \to R$, where $R$ is a Cohen-Macaulay normal domain. To this aim, we prove the following theorem as a direct generalization of Euler system bound over a discrete valuation ring, which was previously established by Kato, Perrin-Riou and Rubin independently (see \cite{KatoKodai}, \cite{Pe} and \cite{Rub} for their work), to a local ring satisfying $(\bf{RED})$. See also \cite[Theorem 4.7]{Oc1}. Let $\mu_A(M)$ denote the number of minimal generators of a finitely generated module $M$ over a local ring $(A,\fm)$. This is known to be equal to $\dim_{A/\fm}(M/\fm M)$ by Nakayama's lemma.

\begin{theorem}\label{Euler}
Let $(R,\fm,\mathbb{F})$ be a complete Noetherian reduced local ring which is finite and flat over $\mathbb{Z}_p$ for a prime number $p>2$. Suppose that
$$
\big\{\mathbf{z}_n \in H^1(G_{\Sigma,n},T^*(1))\big\}_{n \in \mathfrak{N}}
$$ 
is an Euler system for $(T,R,\Sigma)$, $T$ is a free $R$-module of rank two and suppose that the following conditions hold:

\begin{enumerate}
\item[\rm{(i)}]
$T \otimes_R R/\fm$ is absolutely irreducible as a representation of $G_{\mathbb{Q}}$.

\item[\rm{(ii)}]
The quotient $H^1(G_{\Sigma},T^*(1))/R \mathbf{z}_1$ is a finite group.

\item[\rm{(iii)}]
$H^2(G_{\mathbb{Q}_{\ell}},T^*(1))=0$ for every $\ell \in \Sigma \setminus \{\infty\}$ and $H^2(G_{\Sigma},D)$ is a finite group.

\item[\rm{(iv)}]
The determinant character $\wedge^2 \rho:G_{\mathbb{Q}} \to R^{\times}$ (resp. $\wedge^2 \rho^*(1):G_{\mathbb{Q}} \to R^{\times}$) associated with the $G_{\mathbb{Q}}$-representation $T$ (resp. $T^*(1)$) has an element of infinite order.

\item[\rm{(v)}]
The $R$-module $T$ splits into eigenspaces: $T=T^+ \oplus T^-$ with respect to the complex conjugation in $G_{\mathbb{Q}}$, and $T^+_{\fp}$ (resp. $T^-_{\fp}$) is of $R_{\fp}$-rank one for each minimal prime $\fp$ of $R$.

\item[\rm{(vi)}]
There exist $\sigma_1 \in G_{\mathbb{Q}(\mu_{p^{\infty}})}$ and $\sigma_2 \in G_{\mathbb{Q}}$ such that $\rho(\sigma_1) \simeq \begin{pmatrix} 1 & \epsilon \\ 0 & 1 \end{pmatrix} \in GL_2(R)$ for a nonzero divisor $\epsilon \in R$ and $\sigma_2$ acts on $T$ as multiplication by $-1$. 

\end{enumerate}
Then Euler system bound holds for $(T,R,\Sigma)$:
\begin{enumerate}
\item[\rm{(1)}]
$\textcyr{Sh}^2_{\Sigma}(T^*(1))$ is a finite group.
\item[\rm{(2)}]
We have
$$
c \cdot |H^1(G_{\Sigma},T^*(1))/R \mathbf{z}_1|~\mbox{is divisible by}~| \textcyr{Sh}^2_{\Sigma}(T^*(1))|,
$$
where $c:=|R/(\epsilon^k)|$ is an error term for Euler system bound and $k$ is the number of minimal generators of the $R$-module $\textcyr{Sh}^2_{\Sigma}(T^*(1))$.
\end{enumerate}
\end{theorem}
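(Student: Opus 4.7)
The strategy is to reduce Euler system bound over the reduced local ring $R$ to the classical Euler system bound over a complete discrete valuation ring (as proved by Kato, Perrin-Riou, and Rubin) by passing to the normalization of $R$. Since $R$ is reduced, complete, local, and finite and flat over $\mathbb{Z}_p$, it is one-dimensional, and its normalization $\widetilde{R}$ in its total ring of fractions decomposes as a finite product of complete discrete valuation rings $\widetilde{R}=\prod_{i=1}^s R_i$, indexed by the minimal primes of $R$. Set $\widetilde{T}:=T\otimes_R \widetilde{R}\simeq \bigoplus_{i=1}^s \widetilde{T}_i$, where each $\widetilde{T}_i:=T\otimes_R R_i$ is a free $R_i$-module of rank two by hypothesis (v). By the lemma preceding Theorem~\ref{Euler}, the natural map $H^1(G_{\mathbb{Q}(\mu_n)},T^*(1))\to H^1(G_{\mathbb{Q}(\mu_n)},\widetilde{T}^*(1))$ sends the given Euler system $\{\mathbf{z}_n\}$ to an Euler system $\{\widetilde{\mathbf{z}}_n\}$ for $(\widetilde{T},\widetilde{R},\Sigma)$, and by Lemma~\ref{dis2} the kernel and cokernel of this map are annihilated by a fixed nonzero element of $R$ (a conductor for $\widetilde{R}/R$). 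In particular, $H^1(G_\Sigma,\widetilde{T}^*(1))/\widetilde{R}\widetilde{\mathbf{z}}_1$ remains finite by hypothesis (ii).

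Next I would verify that the hypotheses of the classical Euler system bound hold for each triple $(\widetilde{T}_i,R_i,\Sigma)$: absolute irreducibility of the residual representation transfers from (i) because $R/\fm\hookrightarrow R_i/\fm_i$ is a field extension that preserves absolute irreducibility; the local vanishing $H^2(G_{\mathbb{Q}_\ell},\widetilde{T}_i^*(1))=0$ follows from (iii) by Lemma~\ref{cond} applied componentwise; the conditions (iv), (v), (vi) transfer directly, with the image $\epsilon_i$ of $\epsilon$ in $R_i$ still a nonzero divisor. The classical Kolyvagin--Kato--Perrin-Riou--Rubin theorem (see \cite[Theorem 4.7]{Oc1}) then yields, for each $i$, a divisibility
$$
|\textcyr{Sh}^2_\Sigma(\widetilde{T}_i^*(1))|\ \text{divides}\ |R_i/(\epsilon_i^{k_i})|\cdot |H^1(G_\Sigma,\widetilde{T}_i^*(1))/R_i\widetilde{\mathbf{z}}_{1,i}|,
$$
where $k_i$ is the number of minimal generators of $\textcyr{Sh}^2_\Sigma(\widetilde{T}_i^*(1))$ as an $R_i$-module. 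Multiplying across $i$ assembles to Euler system bound over $\widetilde{R}$.

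Finally I would descend back to $R$. Conclusion (1), the finiteness of $\textcyr{Sh}^2_\Sigma(T^*(1))$, follows from the componentwise finiteness over each $R_i$ together with the bounded kernel/cokernel of the comparison map from Lemma~\ref{dis2}. For conclusion (2), I would exploit Poitou--Tate duality (Theorem~\ref{theorem:P_T}) and Lemma~\ref{last} (taking $\fa$ to be a conductor ideal of $\widetilde{R}/R$) to relate $\textcyr{Sh}^2_\Sigma(T^*(1))$ with $\prod_i \textcyr{Sh}^2_\Sigma(\widetilde{T}_i^*(1))$; the precise error $|R/(\epsilon^k)|$ is meant to emerge because each of the $k$ minimal generators of $\textcyr{Sh}^2_\Sigma(T^*(1))$ costs exactly one factor of $\epsilon$ in a Kolyvagin-style annihilation argument using the element $\sigma_1$ of (vi), in parallel with the one-generator-per-$\epsilon$ balance in the DVR case.

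\textbf{Main obstacle.} The delicate step is this final sharp matching of error terms. A naive descent from $\widetilde{R}$ would produce a parasitic factor coming from the conductor of $\widetilde{R}/R$ or would replace $k$ by $\sum_i k_i$, neither of which matches the claimed $|R/(\epsilon^k)|$. Avoiding this forces one to perform the Kolyvagin derivative construction \emph{directly} over $R$, rather than merely importing it from the DVR factors, using that $\epsilon\in R$ is a nonzero divisor and that hypotheses (i), (iv), (v), (vi) ensure the Chebotarev-type image conditions survive integrally. The normalization step is then used only as an auxiliary device to guarantee the finiteness statements needed to make the resulting divisibilities unconditional.
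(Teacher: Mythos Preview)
Your overall architecture matches the paper: pass to the normalization $\widetilde{R}=\prod_i \widetilde{R}_i$, apply the classical DVR bound on each factor, and descend. You also correctly locate the crux at the ``sharp matching of error terms.'' However, your proposed resolution of that crux is not the one that works, and the gap is genuine.

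\textbf{The missing insight.} The worry you raise, that one might end up with $\sum_i k_i$ in place of $k$, is dissolved not by redoing Kolyvagin derivatives over $R$, but by proving that in fact $k_i=k$ for every $i$. This is exactly what the paper does: applying Lemma~\ref{last} with $\fa=(\varpi_i)$ (for each $\widetilde{R}_i$) and with $\fa=\fm$ (for $R$), together with the base-change isomorphism of Lemma~\ref{dis2} along the finite residue field extension $R/\fm\hookrightarrow \widetilde{R}_i/(\varpi_i)$, one gets
\[
\mu_{\widetilde{R}_i}\big(\textcyr{Sh}^2_\Sigma(\widetilde{T}_i^*(1))\big)
=\dim_{R/\fm}\textcyr{Sh}^2_\Sigma\big((T^*/\fm T^*)(1)\big)
=\mu_R\big(\textcyr{Sh}^2_\Sigma(T^*(1))\big)=k.
\]
Hence each $c_i=|\widetilde{R}_i/(\epsilon^k)|$ carries the \emph{same} exponent $k$, and since $\epsilon$ is a nonzero divisor one has $\prod_i c_i=|\widetilde{R}/(\epsilon^k)|=|R/(\epsilon^k)|=c$. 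Your proposal to bypass this by running the derivative construction intrinsically over $R$ would require reproving the entire Kolyvagin machinery over a non-principal local ring, which is neither available in the literature you cite nor necessary.

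\textbf{The descent.} Your suggestion to invoke Lemma~\ref{last} with $\fa$ a conductor ideal of $\widetilde{R}/R$ is not how the comparison goes, and would indeed introduce a parasitic conductor factor. The paper instead proves the exact identity
\[
|\textcyr{Sh}^2_\Sigma(\widetilde{T}^*(1))|=r\cdot |\textcyr{Sh}^2_\Sigma(T^*(1))|,\qquad
r:=\big|\coker\big[H^1(G_\Sigma,T^*(1))\to H^1(G_\Sigma,\widetilde{T}^*(1))\big]\big|,
\]
via a diagram chase through the Poitou--Tate sequences for $D$, $\widetilde{D}$, and the finite module $\mathcal{C}^{\PD}(1)$ (where $\mathcal{C}=\widetilde{T}^*(1)/T^*(1)$), using the Euler--Poincar\'e formula (Theorem~\ref{Tate}) and the vanishing $\textcyr{Sh}^2_\Sigma(\widetilde{D})=0$. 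The same factor $r$ then reappears in the comparison $|H^1(G_\Sigma,\widetilde{T}^*(1))/\widetilde{R}\widetilde{\mathbf{z}}_1|=\frac{r}{|\widetilde{R}/R|}\,|H^1(G_\Sigma,T^*(1))/R\mathbf{z}_1|$, so that it cancels in the final divisibility. This cancellation is the mechanism that eliminates any conductor contribution; it does not come from Lemma~\ref{last}.
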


Before starting the proof, we fix notation. We denote by $\widetilde{R}$ the integral closure of the reduced local ring $R$ in its total ring of fractions. Let $\fp_1,\ldots,\fp_n$ be the set of all minimal primes of $R$. Set $R_i:=R/\fp_i$, $\widetilde{T}:=T \otimes_R \widetilde{R}$ and $\widetilde{T}_i:=T \otimes_R \widetilde{R}_i$.

\begin{proof}
First, we prove the assertion $\rm(1)$, which is not so hard. Then we establish the assertion $\rm(2)$, which requires some computations using Poitou-Tate and Pontryagin dualities.

Let us first show that the hypotheses of the theorem remain true after replacing $(T,R,\Sigma)$ with $(\widetilde{T},\widetilde{R},\Sigma)$. By construction, we have the decomposition $\widetilde{R} =\bigoplus_i \widetilde{R}_i$, where the product is the finite dirct product of rings and each $\widetilde{R}_i$ is a $p$-torsion free complete discrete valuation ring with finite residue field. Moreover, there is the decomposition $\widetilde{T}=\bigoplus_i \widetilde{T}_i$ and each $\widetilde{T}_i$ is a free $\widetilde{R}_i$-module of rank two. The map $R \to \widetilde{R}_i$ factors as $R \twoheadrightarrow R_i \hookrightarrow \widetilde{R}_i$ and the cokernel of $R_i \hookrightarrow \widetilde{R}_i$ is finite. Putting this together with Lemma \ref{dis2} and Lemma \ref{cond}, it is straightforward to see that the hypotheses of the theorem are satisfied for the $G_{\mathbb{Q}}$-representation $\widetilde{T}_i$, and the assumption on the Galois elements $\sigma_1, \sigma_2$ in the theorem remains true for $\widetilde{T}_i$. It follows from the snake lemma that 
\begin{equation}
\label{EuSys00}
c=\big|R/(\epsilon^k)\big|=\big|\widetilde{R}/(\epsilon^k)\big|. 
\end{equation}

Let us prove $\rm(1)$. Consider a natural injection:
$$
T^*:=\Hom_R(T,R) \to \Hom_R(T,R)\otimes_R \widetilde{R} \to \widetilde{T}^*:=\Hom_{\widetilde{R}}(T \otimes_R \widetilde{R},\widetilde{R}).
$$
This induces a map of $R$-modules $\textcyr{Sh}^2_{\Sigma}(T^*(1)) \to \textcyr{Sh}^2_{\Sigma}(\widetilde{T}^*(1))$. Since the injection $R \hookrightarrow \widetilde{R}$ has finite cokernel, the above map has finite kernel and cokernel. By the result of Euler system bound over a discrete valuation ring $\widetilde{R}_i$ (see \cite[Theorem 4.7]{Oc1}), the $\widetilde{R}_i$-module $\textcyr{Sh}^2_{\Sigma}(\widetilde{T}^*_i(1))$ is a finite group. The same is true for $\textcyr{Sh}^2_{\Sigma}(\widetilde{T}^*(1))$. Then we see that $\textcyr{Sh}^2_{\Sigma}(T^*(1))$ is also finite. Thus, we have proved $\rm(1)$.

Let us prove $\rm(2)$. According to Euler system bound over a discrete valuation ring applied to $(\widetilde{T}_i,\widetilde{R}_i,\Sigma)$, we claim the following statement:
\begin{equation}
\label{EuSys1}
c_i \cdot |H^1(G_{\Sigma},\widetilde{T}^*_i(1))/\widetilde{R}_i \mathbf{z}_1|~\mbox{is divisible by}~|\textcyr{Sh}^2_{\Sigma}(\widetilde{T}^*_i(1))|,
\end{equation}
for each $i$, where $c_i:=|\widetilde{R}_i/(\epsilon^k)|$ and $k$ is as in the theorem. Then $(\ref{EuSys1})$ implies the following statement:
\begin{equation}
\label{EuSys2}
c \cdot |H^1(G_{\Sigma},\widetilde{T}^*(1))/\widetilde{R} \mathbf{z}_1|~\mbox{is divisible by}~|\textcyr{Sh}^2_{\Sigma}(\widetilde{T}^*(1))|,
\end{equation}
where $c=|R/(\epsilon^k)|=|\widetilde{R}/(\epsilon^k)|$ by $(\ref{EuSys00})$. So let us prove $(\ref{EuSys1})$. Indeed, in view of \cite[Theorem 4.7]{Oc1}, it suffices to prove that
\begin{equation}
\label{EuSys3}
\mu_{\widetilde{R}_i}\big(\textcyr{Sh}^2_{\Sigma}(\widetilde{T}_i^*(1))\big)=k.
\end{equation}
Since $H^2(G_{\mathbb{Q}_{\ell}},\widetilde{T}_i^*(1))=0$ for $\ell \in \Sigma\setminus\{\infty\}$, we have the following isomorphism by Lemma \ref{last}:
$$
\textcyr{Sh}^2_{\Sigma}(\widetilde{T}_i^*(1))/\varpi_i \textcyr{Sh}^2_{\Sigma}(\widetilde{T}_i^*(1)) \simeq \textcyr{Sh}^2_{\Sigma}((\widetilde{T}_i^*/\varpi_i \widetilde{T}_i^*)(1)),
$$
where $\varpi_i$ is a generator of the maximal ideal of $\widetilde{R}_i$. By Nakayama's lemma, we have
$$
\mu_{\widetilde{R}_i}\big(\textcyr{Sh}^2_{\Sigma}(\widetilde{T}_i^*(1))\big)=\dim_{\widetilde{R}_i/(\varpi_i)}\big(\textcyr{Sh}^2_{\Sigma}((\widetilde{T}^*_i/\varpi_i \widetilde{T}^*_i)(1))\big).
$$
Applying the field extension $R/\fm \to \widetilde{R}_i/(\varpi_i)$ to Lemma \ref{dis2}, it follows from $(\ref{TateShafarevich})$ that
$$
\dim_{\widetilde{R}_i/(\varpi_i)}\big(\textcyr{Sh}^2_{\Sigma}((\widetilde{T}^*_i/\varpi_i \widetilde{T}^*_i)(1))\big)=\dim_{R/\fm}\big(\textcyr{Sh}^2_{\Sigma}((T^*/\fm T^*)(1))\big).
$$
By the hypothesis $\rm(iii)$ and Lemma \ref{last}, we get
$$
\textcyr{Sh}^2_{\Sigma}(T^*(1))/\fm \textcyr{Sh}^2_{\Sigma}(T^*(1)) \simeq \textcyr{Sh}^2_{\Sigma}((T^*/\fm T^*)(1)).
$$
Putting all above equations together, $(\ref{EuSys3})$ (and thus $(\ref{EuSys1})$) follows, as required. 

The rest of the proof will be devoted to proving the equality:
\begin{equation}
\label{EuSys4}
|\textcyr{Sh}^2_{\Sigma}(\widetilde{T}^*(1))|=r \cdot |\textcyr{Sh}^2_{\Sigma}(T^*(1))|,
\end{equation}
where 
\begin{equation}
\label{number1}
r:=|\coker[H^1(G_{\Sigma},T^*(1)) \to H^1(G_{\Sigma},\widetilde{T}^*(1))]|.
\end{equation}
Applying the functor $(-)^{\PD}(1)$ to the exact sequence $0 \to T^*(1) \to \widetilde{T}^*(1) \to \mathcal{C} \to 0$ (the cokernel $\mathcal{C}$ is a finite $p$-group), we get an exact sequence of discrete modules: $0 \to \mathcal{C}^{\PD}(1) \to \widetilde{D} \to D \to 0$. We claim that this induces the commutative diagram with exact rows:
$$
\footnotesize{
\begin{CD}
0 @>>> H^1(G_{\Sigma},\mathcal{C}^{\PD}(1)) @>\phi_1>> H^1(G_{\Sigma},\widetilde{D})  @>>> H^1(G_{\Sigma},D) @>\psi_1>> H^2(G_{\Sigma},\mathcal{C}^{\PD}(1)) \\
@. @V\loc^1_{\mathcal{C}^{\PD}(1)}VV @V\loc^1_{\widetilde{D}} VV @V\loc^1_{D}VV @V\loc^2_{\mathcal{C}^{\PD}(1)}VV \\
0 @>>> \displaystyle{\bigoplus_{\ell \in \Sigma}} H^1(G_{\mathbb{Q}_\ell},\mathcal{C}^{\PD}(1)) @>\phi_2>> \displaystyle{\bigoplus_{\ell \in \Sigma}} H^1(G_{\mathbb{Q}_\ell},\widetilde{D}) @>>> \displaystyle{\bigoplus_{\ell \in \Sigma}} H^1(G_{\mathbb{Q}_\ell},D) @>\psi_2>> \displaystyle{\bigoplus_{\ell \in \Sigma}} H^2(G_{\mathbb{Q}_\ell},\mathcal{C}^{\PD}(1)) \\ 
\end{CD}}
$$

Henceforth, we denote by $\mathcal{C}_1 \xrightarrow{\phi} \mathcal{C}_2$ the map $\coker(\phi_1) \to \coker(\phi_2)$ (resp. by $\mathcal{C}'_1 \xrightarrow{\phi'} \mathcal{C}'_2$ the map $\im(\psi_1) \to \im(\psi_2)$). Let us check the exactness of the above commutative diagram. We can verify that $\phi_1$ is injective as follows. Since $(T^*(1)/\fm T^*(1))_{G_{\Sigma}}=0$ in view of the hypothesis $\rm(i)$ and
$\rank_R T =2$, Nakayama's lemma implies that $(T^*(1))_{G_{\Sigma}}=0$, and thus $\phi_1$ is injective. Also, since $p>2$ and $\mathcal{C}^{\PD}(1)$ is a finite $p$-group, $H^1(G_{\mathbb{R}},\mathcal{C}^{\PD}(1))=0$. Thus, $\phi_2$ is injective by the hypothesis (iii). For later use, we prove the following:
\begin{equation}
\label{surj1}
\loc^2_{\mathcal{C}^{\PD}(1)}~\mbox{is surjective}.
\end{equation}
Indeed, the cokernel of $\loc^2_{\mathcal{C}^{\PD}(1)}$ is equal to $H^0(G_{\Sigma},\mathcal{C})^{\PD}$ by Poitou-Tate duality. To prove that this group vanishes, it suffices to prove that $H^0(G_{\Sigma},\mathcal{C})=0$. By the exact sequence,
$$
\cdots \to H^0(G_{\Sigma},\widetilde{T}^*(1)) \to H^0(G_{\Sigma},\mathcal{C}) \to H^1(G_{\Sigma},T^*(1)) \to \cdots
$$
we know that $H^0(G_{\Sigma},\widetilde{T}^*(1))$ vanishes  by the hypothesis (i) and $H^1(G_{\Sigma},T^*(1))$ is $R$-torsion free by Lemma \ref{dis1}. Since  $H^0(G_{\Sigma},\mathcal{C})$ is finite, it vanishes. Thus, $\loc^2_{\mathcal{C}^{\PD}(1)}$ is surjective. 

Next, we prove the following:
\begin{equation}
\label{surj2}
\textcyr{Sh}^2_{\Sigma}(\widetilde{D})=0.
\end{equation}
Let us recall that $\ker(\loc^2_{\widetilde{D}})=\textcyr{Sh}^2_{\Sigma}(\widetilde{D})$, where $\loc^2_{\widetilde{D}}:H^2(G_{\Sigma},\widetilde{D}) \to \bigoplus_{\ell \in \Sigma} H^2(G_{\mathbb{Q}_{\ell}},\widetilde{D})$. The Pontryagin dual of $\textcyr{Sh}^2_{\Sigma}(\widetilde{D})$, which is $\textcyr{Sh}^1_{\Sigma}(\widetilde{T}^*(1))$, injects into $H^1(G_{\Sigma},\widetilde{T}^*(1))$ by definition. Then $H^1(G_{\Sigma},\widetilde{T}^*(1))$ is torsion free by Lemma \ref{dis1} and the hypothesis (i). Then, $\textcyr{Sh}^1_{\Sigma}(\widetilde{T}^*(1))$ is also torsion free, while the hypothesis (iii) implies that $\textcyr{Sh}^2_{\Sigma}(\widetilde{D})$ (say $\textcyr{Sh}^1_{\Sigma}(\widetilde{T}^*(1))$) is finite. Hence $\textcyr{Sh}^2_{\Sigma}(\widetilde{D})=0$.

Applying the snake lemma to the commutative diagram whose rows are exact:
$$
\begin{CD}
0 @>>> H^1(G_{\Sigma},\mathcal{C}^{\PD}(1)) @>\phi_1>> H^1(G_{\Sigma},\widetilde{D})  @>>> \mathcal{C}_1@>>> 0\\
@. @V\loc^1_{\mathcal{C}^{\PD}(1)}VV @V\loc^1_{\widetilde{D}}VV @V\phi VV \\
0 @>>> \displaystyle{\bigoplus_{\ell \in \Sigma}} H^1(G_{\mathbb{Q}_\ell},\mathcal{C}^{\PD}(1)) @>\phi_2>> \displaystyle{\bigoplus_{\ell \in \Sigma}} H^1(G_{\mathbb{Q}_\ell},\widetilde{D}) @>>> \mathcal{C}_2 @>>> 0\\ 
\end{CD}
$$
we obtain
\begin{equation}
\label{EuSys5}
\frac{|\textcyr{Sh}^1_{\Sigma}(\widetilde{D})|}{r_1}=\frac{|H^1(G_{\Sigma},\mathcal{C}^{\PD}(1))|}{|\bigoplus_{\ell \in \Sigma} H^1(G_{\mathbb{Q}_{\ell}},\mathcal{C}^{\PD}(1))|} \cdot |\ker(\phi)|,
\end{equation}
where 
$$
r_1:=|\ker[\coker(\loc^1_{\widetilde{D}}) \to \coker(\phi)]|.
$$
Next, consider the following commutative diagram whose rows are exact:
$$
\begin{CD}
0 @>>> \mathcal{C}'_1 @>>> H^2(G_{\Sigma},\mathcal{C}^{\PD}(1)) @>>> H^2(G_{\Sigma},\widetilde{D})  \\
@. @V\phi' VV @V\loc^2_{\mathcal{C}^{\PD}(1)}VV @V\loc^2_{\widetilde{D}}VV \\
0 @>>> \mathcal{C}'_2@>>> \displaystyle{\bigoplus_{\ell \in \Sigma}} H^2(G_{\mathbb{Q}_\ell},\mathcal{C}^{\PD}(1))  @>>> \displaystyle{\bigoplus_{\ell \in \Sigma}} H^2(G_{\mathbb{Q}_\ell},\widetilde{D}) \\ 
\end{CD}
$$
Since $\loc^2_{\widetilde{D}}$ is injective by $(\ref{surj2})$, we have
$\ker(\phi')=\ker(\loc^2_{\mathcal{C}^{\PD}(1)})$. On the other hand, it follows from the injectivity of $\loc^2_{\widetilde{D}}$ and the surjectivity of $\loc^2_{\mathcal{C}^{\PD}(1)}$ by $(\ref{surj1})$ that
\begin{equation}
\label{EuSys666}
\phi'~\mbox{is surjective}.
\end{equation}
Moreover, we have
\begin{equation}
\label{EuSys6666}
\frac{|\mathcal{C}'_2|}{|\mathcal{C}'_1|}=\frac{|\bigoplus_{\ell \in \Sigma} H^2(G_{\mathbb{Q}_\ell},\mathcal{C}^{\PD}(1))|}{|H^2(G_{\Sigma},\mathcal{C}^{\PD}(1))|}.
\end{equation}
Then applying the snake lemma together with $(\ref{EuSys666})$ to the commutative diagram with exact rows: 
$$
\begin{CD}
0 @>>> \mathcal{C}_1 @>>> H^1(G_{\Sigma},D) @>\psi_1>> \mathcal{C}'_1 @>>> 0 \\
@. @V\phi VV @V\loc^1_{D}VV @V\phi'VV \\
0 @>>> \mathcal{C}_2@>\phi_2>> \displaystyle{\bigoplus_{\ell \in \Sigma}} H^1(G_{\mathbb{Q}_\ell},D) @>\psi_2>> \mathcal{C}'_2 @>>> 0\\ 
\end{CD}
$$
we obtain
\begin{equation}
\label{EuSys66}
\frac{1}{|\textcyr{Sh}^1_{\Sigma}(D)|}=\frac{|\mathcal{C}'_2|} {|\mathcal{C}'_1|} \cdot \frac{1}{|\ker(\phi)|} \cdot \frac{|\coker(\phi)|}{|\coker(\loc^1_D)|}= \frac{|\mathcal{C}'_2|} {|\mathcal{C}'_1|} \cdot \frac{r_2}{|\ker(\phi)|},
\end{equation}
where
$$
r_2:=|\ker[\coker(\phi) \to \coker(\loc^1_{D})]|.
$$
Now putting together $(\ref{EuSys6666})$ and $(\ref{EuSys66})$, we get
\begin{equation}
\label{EuSys6}
\frac{1}{|\textcyr{Sh}^1_{\Sigma}(D)|}=\frac{|\bigoplus_{\ell \in \Sigma} H^2(G_{\mathbb{Q}_\ell},\mathcal{C}^{\PD}(1))|} {|H^2(G_{\Sigma},\mathcal{C}^{\PD}(1))|} \cdot \frac{r_2}{|\ker(\phi)|},
\end{equation}

Putting together $(\ref{EuSys5})$ and ($\ref{EuSys6})$ and applying the Poitou-Tate duality theorem, we get the following equality:
$$
\frac{|\textcyr{Sh}^2_{\Sigma}(\widetilde{T}^*(1))|}{|\textcyr{Sh}^2_{\Sigma}(T^*(1))|}=\frac{|H^1(G_{\Sigma},\mathcal{C}^{\PD}(1))|}{| H^2(G_{\Sigma},\mathcal{C}^{\PD}(1))|} \cdot \frac{|\bigoplus_{\ell \in \Sigma} H^2(G_{\mathbb{Q}_\ell},\mathcal{C}^{\PD}(1))|}{|\bigoplus_{\ell \in \Sigma} H^1(G_{\mathbb{Q}_\ell},\mathcal{C}^{\PD}(1))|} \cdot r_1 \cdot r_2.
$$
Note that $H^0(G_{\Sigma},\mathcal{C}^{\PD}(1))=0$ by $H^0(G_{\Sigma},\widetilde{D})=0$. Then by the Euler-Poincar\'e characteristic formula $(\ref{EulerPo1})$ and $(\ref{EulerPo2})$ in Theorem \ref{Tate}, treating cases according as $\ell=p$, $\ell \in \Sigma \setminus \{p,\infty\}$, or $\ell=\infty$, respectively, we obtain the following equality:
$$
|\textcyr{Sh}^2_{\Sigma}(\widetilde{T}^*(1))|=|\textcyr{Sh}^2_{\Sigma}(T^*(1))| \cdot r_1 \cdot r_2.
$$
It remains to show that $r_1 \cdot r_2$ is equal to $r$ which is given in $(\ref{number1})$. For its proof, consider the commutative diagram induced by Poitou-Tate duality, whose rows are exact:
$$
\begin{CD}
0 @>>> \coker(\loc^1_{\widetilde{D}}) @>>> H^1(G_{\Sigma},\widetilde{T}^*(1))^{\PD} @>>> \textcyr{Sh}^2_{\Sigma}(\widetilde{D}) @>>> 0\\
@. @VV\tau_1V @VV\tau_2V @VVV \\
0 @>>> \coker(\loc^1_{D}) @>>> H^1(G_{\Sigma},T^*(1))^{\PD} @>>> \textcyr{Sh}^2_{\Sigma}(D) @>>> 0\\
\end{CD}
$$
We claim that $r=|\ker(\tau_2)|$. Since $r_1 \cdot r_2=|\ker(\tau_1)|$, this is equivalent to showing that the induced injective map $\ker(\tau_1) \to \ker(\tau_2)$ is surjective. However, since $\textcyr{Sh}^2_{\Sigma}(\widetilde{D})=0$ by $(\ref{surj2})$, the equality $r=r_1 \cdot r_2$ follows. So we have proved ($\ref{EuSys4}$).

Finally in view of the hypothesis (ii), we claim that
\begin{equation}
\label{EuSys7}
|H^1(G_{\Sigma},\widetilde{T}^*(1))/\widetilde{R} \mathbf{z}_1|=\frac{r}{|\widetilde{R}/R|} \cdot |H^1(G_{\Sigma},T^*(1))/R \mathbf{z}_1|,
\end{equation}
and all groups are finite. To see this, consider the commutative diagram of $R$-modules with exact rows:
$$
\begin{CD}
0 @>>> R @>>> \widetilde{R} @>>> \widetilde{R}/R @>>> 0 \\
@. @V\times \mathbf{z}_1VV @V\times \mathbf{z}_1VV @V\times 0VV \\
0@>>> H^1(G_{\Sigma},T^*(1)) @>>> H^1(G_{\Sigma},\widetilde{T}^*(1)) @>>> \mathcal{M} @>>> 0 \\
\end{CD}
$$
We have $r=|\mathcal{M}|$ by definition and $(\ref{EuSys7})$ follows from the snake lemma. Now, incorporating both $(\ref{EuSys4})$ and $(\ref{EuSys7})$ into $(\ref{EuSys2})$, the required Euler system bound for $(T,R,\Sigma)$ is obtained. This finishes the proof of the theorem.\\
\end{proof}

\subsection{Euler system bound over a Cohen-Macaulay normal domain}

In this subsection, we prove an Euler system bound over Cohen-Macaulay normal complete local domains with Krull dimension at least two using Theorem \ref{Euler}. Later in \S \ref{proof2}, we shall show one of divisibilities of the Iwasawa main conjecture for two-variable Hida deformations by applying this Euler system bound. The main tool in the proof of the Euler system bound is the specialization methods that we have developed so far. We mention several facts on the behavior of Galois cohomology and characteristic ideals under etale extensions. 

Let $(R,\fm,\mathbb{F})$ be a Noetherian complete local normal domain, whose coefficient ring is $W(\mathbb{F})$. 
For a finite field extension $\mathbb{F} \to \mathbb{F}'$, we define $R_{W(\mathbb{F}')}:=R \otimes_{W(\mathbb{F})} W(\mathbb{F}')$.  
Then we have the following. 
\begin{enumerate}
\item
Let $M$ be a finitely generated $R$-module with continuous $G$-action. Then we have 
\begin{equation}
\label{controlGal}
H^i(G,M) \otimes_R R_{W(\mathbb{F}')} \simeq H^i(G,M \otimes_R R_{W(\mathbb{F}')}), 
\end{equation}
\begin{equation}
\label{controlSha}
\textcyr{Sh}^2_{\Sigma}(M) \otimes_R R_{W(\mathbb{F}')} \simeq \textcyr{Sh}^2_{\Sigma}(M \otimes_R R_{W(\mathbb{F}')}), 
\end{equation}
which follow from Lemma \ref{dis2} and the definition of Tate-Shafarevich group. 
\item
The following equality between reflexive ideals holds:
\begin{equation}
\label{controlchar}
\Char_R(M) R_{W(\mathbb{F}')}=\Char_{R_{W(\mathbb{F}')}}(M \otimes_R R_{W(\mathbb{F}')}), 
\end{equation}
which is easy to check from the definition.
\end{enumerate}

\begin{lemma}
\label{topological}
Let $(R,\fm,\mathbb{F})$ be a two-dimensional Noetherian complete local normal domain with mixed characteristic $p>0$ and finite residue field. Let $u \in R^{\times}$ be an element of infinite order. Then the following statements hold.  
\begin{enumerate}
\item[\rm{(1)}] 
Let us choose $z \in \fm$ such that $(p,z)$ is a parameter ideal of $R$. 
Then, for each $n>0$, $R/(z+a_np^n)$ is a reduced ring of mixed characteristic, and 
the image of $u \in R^{\times}$ under the reduction map
$$
R^{\times} \to \big(R/(z+a_np^n)\big)^{\times}
$$
is of infinite order for infinitely many different choices $a_n \in W(\mathbb{F})^{\times}$.

\item[\rm{(2)}]
Let us choose $z, r \in \fm$ such that $(p,z^n+r)$ is a parameter ideal of $R$ for all $n>0$. Then, for each $n>0$, $R/(z^n+r+a_n p)$ is a reduced ring of mixed characteristic, and the image of $u \in R^{\times}$ under the reduction map
$$
R^{\times} \to \big(R/(z^n+r+a_np^n)\big)^{\times}
$$
is of infinite order for infinitely many different choices $a_n \in W(\mathbb{F})^{\times}$.
\end{enumerate}
\end{lemma}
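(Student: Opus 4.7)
The reducedness and mixed characteristic of the quotients in both (1) and (2) are already furnished by Corollary~\ref{LocalBertini}; the new content is to show that the image of $u$ remains of infinite order in the quotient for infinitely many of the ``good'' $a_n$. My strategy in each case is to fix $n$, show that the set of $a_n \in W(\mathbb{F})^{\times}$ for which the image of $u$ becomes a root of unity is countable, and then conclude using that $W(\mathbb{F})^{\times}$ is uncountable (it contains $1+pW(\mathbb{F})$, which is a profinite set of the cardinality of the continuum).

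For part (1), fix $n \geq 1$ and, for each $m \geq 1$, set
$$
A_m:=\{\,a \in W(\mathbb{F})^{\times} \;:\; u^m-1 \in (z+ap^n)R\,\}.
$$
The image of $u$ in $\big(R/(z+a_np^n)\big)^{\times}$ has infinite order precisely when $a_n \notin \bigcup_{m \geq 1}A_m$. Since $u$ has infinite order, $u^m-1$ is nonzero; if it is a unit then $A_m=\emptyset$, and otherwise $u^m-1 \in \fm\setminus\{0\}$ and, by Krull's principal ideal theorem, its minimal primes $\fq_1^{(m)},\ldots,\fq_{s_m}^{(m)}$ are all height-one. For $a \in A_m$, every minimal prime over the nonzero principal ideal $(z+ap^n)$ is height-one and contains $u^m-1$, hence coincides with some $\fq_i^{(m)}$; in particular $z+ap^n \in \fq_i^{(m)}$. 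I next claim that each height-one prime $\fq$ of $R$ contains at most one $z+ap^n$ as $a$ varies in $W(\mathbb{F})^{\times}$: contracting along the module-finite inclusion $B:=W(\mathbb{F})[[z]] \hookrightarrow R$ furnished by Lemma~\ref{CohenStructure}, one has $\fq \cap B=(f)$ with $f$ irreducible in the UFD $B$; if $f \mid z+ap^n$ and $f \mid z+a'p^n$ for $a \ne a'$, then $f \mid (a-a')p^n=v\,p^{k+n}$ for some $v \in W(\mathbb{F})^{\times}$ and $k \geq 0$, so $f \sim p$ in $B$, whence $\fq \cap B=(p)$ and $z \in (p)R$, contradicting that $(p,z)$ is a parameter ideal of the two-dimensional $R$. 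Therefore $|A_m| \leq s_m<\infty$, so $\bigcup_m A_m$ is countable, and intersecting its complement with the cofinite set of $a_n$ provided by Corollary~\ref{LocalBertini}(1) leaves uncountably many (a fortiori infinitely many) valid $a_n$.

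The proof of (2) is entirely analogous: replace $B$ by $B_n:=W(\mathbb{F})[[z^n+r]]$ (module-finite in $R$ for every $n>0$ by Lemma~\ref{CohenStructure}, since $(p,z^n+r)$ is assumed to be a parameter ideal) and replace $(z+ap^n)$ by $(z^n+r+ap)$; the same contraction-to-$B_n$ argument shows that each height-one prime of $R$ contains at most one $z^n+r+ap$ as $a$ varies in $W(\mathbb{F})^{\times}$ (an association $f\sim p$ would again force $z^n+r \in (p)R$, contradicting the parameter hypothesis). The countability count then concludes as before. The principal, and really the only substantive, obstacle in both parts is this contraction-to-$B$ argument bounding the height-one primes that can contain members of the specified families; it is essentially Remark~\ref{heightoneprime} adapted from varying $n$ to fixed $n$ with varying $a \in W(\mathbb{F})^{\times}$.
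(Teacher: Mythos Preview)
Your proof is correct and follows essentially the same strategy as the paper: both arguments reduce to showing that the ``bad'' set of $a_n$ (those for which the image of $u$ becomes a root of unity) is countable, then invoke the uncountability of $W(\mathbb{F})^{\times}$ and Corollary~\ref{LocalBertini} for the reducedness. The paper organizes the countability slightly differently---defining $\mathcal{U}_n$ as the (countable) set of height-one primes of $R$ containing some $u^k-1$, pushing it down to a countable subset $\mathcal{V}_n$ of $\Spec W(\mathbb{F})[[z^n+r]]$, and then using directly that distinct $a_n$ yield distinct primes $(z^n+r+a_np)$ of this regular subring---whereas you bound each $A_m$ by contracting the minimal primes of $(u^m-1)$ back to $B$ and arguing no height-one prime of $R$ can contain two of your linear elements; but this is the same injectivity observation, just packaged differently.
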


\begin{proof}
Since the proofs of the assertions $\rm(1)$ and $\rm(2)$ proceed almost in the same manner, we only prove $\rm(2)$. Let us fix an integer $n>0$ and consider a module-finite extension $W(\mathbb{F})[[z^n+r]] \to R$. By Corollary \ref{LocalBertini} $\rm(2)$, there is a finite subset $S_n \subseteq W(\mathbb{F})^{\times}$ such that $R/(z+r+a_n p^n)$ is reduced of mixed characteristic for every $a_n \in W(\mathbb{F})^{\times} \setminus S_n$. Let us put
$$
\mathcal{U}_n:=\{P \in \Spec R~|~(u^k-1) \subseteq P~\mbox{for some}~k~\mbox{and}~\Ht(P)=1\}.
$$
Let $\mathcal{V}_n$ denote the image of $\mathcal{U}_n$ under the map $\Spec R \to \Spec W(\mathbb{F}) [[z^n+r]]$. Then we can check that $\mathcal{V}_n$ is a countable set. Let us put
$$
T_n:=\{a_n \in W(\mathbb{F})^{\times}~|~(z^n+r+a_np) \in \mathcal{V}_n\}.
$$
Notice that if we choose $a_n \ne a'_n$, then
$$
(z^n+r+a_n p)~\mbox{and}~(z^n+r+a'_n p)
$$
are distinct prime ideals of $W(\mathbb{F})[[z^n+r]]$. Moreover, $W(\mathbb{F})^{\times}$ is an \textit{uncountable} set (for this, one uses the fact that elements of $W(\mathbb{F})$ are written as $p$-series) and it follows that $W(\mathbb{F})^{\times} \setminus S_n \cup T_n$ is uncountable. In particular, it is infinite. Now we conclude that the ring $R/(z^n+r+a_np)$ is reduced of mixed characteristic and the image of $u$ under $R^{\times} \to \big(R/(z^n+r+a_np)\big)^{\times}$ is of infinite order for every $a_n \in W(\mathbb{F})^{\times} \setminus S_n \cup T_n$. This completes the proof of the lemma.
\end{proof}

We shall prove the following theorem via reduction to Theorem \ref{Euler}.

\begin{theorem}\label{veryfinal}
Let $(R,\fm,\mathbb{F})$ be a Noetherian complete local Cohen-Macaulay normal domain 
of Krull dimension $d\geq 2$ with mixed characteristic $p>2$ and finite residue field $\mathbb{F}$. Suppose that
$$
\big\{\mathbf{z}_n \in H^1(G_{\Sigma,n},T^*(1))\big\}_{n \in \mathfrak{N}}
$$ 
is an Euler system for $(T,R,\Sigma)$, $T$ is a free $R$-module of rank two with 
continuous $G_{\mathbb{Q}}$-action and suppose that the following conditions hold:
\begin{enumerate}
\item[\rm{(i)}]
$T \otimes_R R/\fm$ is absolutely irreducible as a representation of $G_{\mathbb{Q}}$.

\item[\rm{(ii)}]
The quotient $H^1(G_{\Sigma},T^*(1))/R \mathbf{z}_1$ is an $R$-torsion module.

\item[\rm{(iii)}]
$H^2(G_{\mathbb{Q}_{\ell}},T^*(1))=0$ for every $\ell \in \Sigma\setminus\{\infty\}$ and $H^2(G_{\Sigma},D)$ is a finite group.

\item[\rm{(iv)}]
The determinant character $\wedge^2 \rho:G_{\mathbb{Q}} \to R^{\times}$ (resp. $\wedge^2 \rho^*(1):G_{\mathbb{Q}} \to R^{\times}$) associated with the $G_{\mathbb{Q}}$-representation $T$ (resp. $T^*(1)$) has an element of infinite order.

\item[\rm{(v)}]
The $R$-module $T$ splits into eigenspaces: $T=T^+ \oplus T^-$ with respect to the complex conjugation in $G_{\mathbb{Q}}$, and $T^+$ (resp. $T^-$) is of $R$-rank one.

\item[\rm{(vi)}]
There exist $\sigma_1 \in G_{\mathbb{Q}(\mu_{p^{\infty}})}$ and $\sigma_2 \in G_{\mathbb{Q}}$ such that $\rho(\sigma_1) \simeq \begin{pmatrix} 1 & P \\ 0 & 1 \end{pmatrix} \in GL_2(R)$ for a nonzero element $P \in R$ and $\sigma_2$ acts on $T$ as multiplication by $-1$. 

\end{enumerate}
Then Euler system bound holds for $(T,R,\Sigma)$:
\begin{enumerate}
\item[\rm{(1)}]
$\textcyr{Sh}^2_{\Sigma}(T^*(1))$ is a finitely generated torsion $R$-module.

\item[\rm{(2)}]
We have an inclusion of reflexive ideals:
\begin{equation}
\label{inclusionchar}
(P^k) \Char_R\big(H^1 (G_{\Sigma},T^*(1))/R \mathbf{z}_1\big) \subseteq \Char_R\big(\textcyr{Sh}^2_{\Sigma} (T^*(1))\big),
\end{equation}
where $k$ is the number of minimal generators of $\textcyr{Sh}^2_{\Sigma}(T^*(1))$ as an $R$-module.
\end{enumerate}
\end{theorem}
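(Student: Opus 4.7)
The plan is to induct on $d = \dim R$, using the specialization methods to reduce to the base case of dimension one, where Theorem \ref{Euler} applies. Set $M := \textcyr{Sh}^2_{\Sigma}(T^*(1))$ and $N := H^1(G_{\Sigma},T^*(1))/R\mathbf{z}_1$; these are finitely generated $R$-modules, and $N$ is torsion by hypothesis (ii). To carry the error factor $(P^k)$ through the characteristic-ideal framework, I would introduce the auxiliary module $N' := N \oplus (R/(P))^{\oplus k}$, whose characteristic ideal satisfies $\Char_R(N') = \bigl((P^k)\Char_R(N)\bigr)^{\rc}$. The desired inclusion \eqref{inclusionchar} then becomes $\Char_R(N') \subseteq \Char_R(M)$, and by reflexivity it suffices to verify this stalkwise at every height-one prime of $R$.

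For the inductive step $d \geq 3$: since $R$ is Cohen-Macaulay, $\depth R = d \geq 3$ and Theorem \ref{theorem:previous} applies. It reduces the inclusion to $\Char_{\bar R}(N'/\mathbf{x}_{\widetilde{a}}N') \subseteq \Char_{\bar R}(M/\mathbf{x}_{\widetilde{a}}M)$ over $\bar R := R_{W(\mathbb{F}')}/(\mathbf{x}_{\widetilde{a}})$ for cofinitely many specialization elements. The quotient $\bar R$ is a complete local normal Cohen-Macaulay domain of dimension $d-1$ with mixed characteristic, and the inductive hypothesis will apply once hypotheses (i)--(vi) are verified to transfer to the specialized Euler system: (i) descends by Nakayama; (iii) transfers by Lemma \ref{last2}; (iv) and (v) descend directly; and (vi) requires only that $\bar P$ be nonzero in $\bar R$, which is a cofinite condition on $\mathbf{x}_{\widetilde{a}}$. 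Crucially, the number $k$ of minimal generators is preserved: by Lemma \ref{last} we have $M/\mathbf{x}_{\widetilde{a}}M \cong \textcyr{Sh}^2_\Sigma((T/\mathbf{x}_{\widetilde{a}}T)^*(1))$, so $\mu_R(M) = \mu_{\bar R}(M/\mathbf{x}_{\widetilde{a}}M)$ by Nakayama. For the base case $d=2$, Theorems \ref{prop1} and \ref{prop2} reduce the stalkwise inclusion to a uniform cardinality estimate $c \cdot |N'/\mathbf{x}N'|/|M/\mathbf{x}M| \in \mathbb{N}$ over the families $\mathscr{L}_{R,W(\mathbb{F})}(z_i,\{a_n\})$ and $\mathscr{E}_{R,W(\mathbb{F})}(z,r_i,\{a_n\})$, where $z_i,r_i$ are chosen via \eqref{linearideal} and \eqref{linearideal2} to detect every height-one prime in the support of $M \oplus N' \oplus R/(P)$. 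Corollary \ref{LocalBertini} combined with Cohen-Macaulayness ensures $R/(\mathbf{x})$ is reduced of mixed characteristic and finite flat over $\mathbb{Z}_p$; Theorem \ref{Euler} then applies to the specialized Euler system, and after identifying $|\bar R/(\bar P^k)| = |R/(P,\mathbf{x})|^k$ and $|M/\mathbf{x}M| = |\textcyr{Sh}^2_\Sigma((T/\mathbf{x}T)^*(1))|$ via Lemma \ref{last}, yields the required divisibility.

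The main obstacle lies in reconciling the Euler system quotient under specialization: the natural map $N/\mathbf{x}N \to H^1(G_\Sigma,(T/\mathbf{x}T)^*(1))/\bar R\bar{\mathbf{z}}_1$ has kernel and cokernel controlled by the finite module $H^2(G_\Sigma,T^*(1))[\mathbf{x}]$ arising from the long exact sequence for $0 \to T^*(1) \xrightarrow{\mathbf{x}} T^*(1) \to T^*(1)/\mathbf{x}T^*(1) \to 0$. One must show that this discrepancy contributes only a bounded multiplicative factor absorbed into the constant $c$ of Theorems \ref{prop1}(b) and \ref{prop2}(b), using the finiteness of $H^2(G_\Sigma, D)$ from hypothesis (iii) together with Lemma \ref{last2} for uniform control in $\mathbf{x}$, and Lemma \ref{dis1} to ensure $\mathbf{z}_1$ is not killed in the specialization. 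A parallel subtlety arises in the inductive step when verifying that hypothesis (ii) --- torsionness of the $H^1$-quotient --- descends from $R$ to $\bar R$, which again comes down to bounding the boundary contribution of $H^2(G_\Sigma,T^*(1))[\mathbf{x}_{\widetilde{a}}]$ for cofinitely many specialization elements.
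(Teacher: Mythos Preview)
Your approach is essentially the same as the paper's: induction on $d$, with the base case $d=2$ handled via Theorems~\ref{prop1} and~\ref{prop2} reducing to Theorem~\ref{Euler}, and the inductive step $d\geq 3$ handled via Theorem~\ref{theorem:previous}. Your identification of $M$ and $N'$, the preservation of $k$ via Lemma~\ref{last}, and the role of $H^2(G_\Sigma,T^*(1))[\mathbf{x}]$ as the obstruction controlling the discrepancy between $N/\mathbf{x}N$ and the specialized $H^1$-quotient all match the paper's argument.

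There is one point you have underestimated. You write that condition~(iv) ``descends directly,'' but this is false: an element $u\in R^\times$ of infinite order need not have infinite-order image in $(R/\fa)^\times$ for an arbitrary height-one ideal $\fa$. The paper treats this carefully. In the $d=2$ case it proves Lemma~\ref{topological}, which exploits the \emph{uncountability} of $W(\mathbb{F})^\times$: the set of $a_n\in W(\mathbb{F})^\times$ for which some $u^k-1$ lies in a height-one prime over $(z^n+r+a_np)$ is only countable, so infinitely many good choices remain. In the $d\geq 3$ case the paper writes $u=1+t$ with $t\in\fm$, observes that $u^{mp^e}-1\equiv (u^m-1)^{p^e}\pmod{p}$ with $(u^m-1)/t\in R^\times$, and concludes that $\bigcup_{m,e}\Min_R(p,u^{mp^e}-1)$ is a \emph{finite} set of primes of height~$\leq 2$, hence avoidable by Lemma~\ref{Case1}. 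Without this, you cannot guarantee that the specialized Euler system satisfies the hypotheses of Theorem~\ref{Euler} (or the inductive hypothesis), and the argument breaks. The rest of your outline is sound.
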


\begin{proof}
Let us put $d:=\dim R$. Let $\mathcal{S}$ be a set of ideals of $R$ (resp. $R_{W(\overline{\mathbb{F}})}$) which consists of certain nonzero principal ideals 
(resp. certain nonzero principal \textit{prime} ideals) when $d=2$ (resp. $d\geq 3$). Later, we make more precise the definition of $\mathcal{S}$. Then we want to define the \textit{subset} $\mathcal{S}_{\rm{(i)}}$ (resp. $\mathcal{S}_{\rm{(ii)}}$, $\mathcal{S}_{\rm{(iii)}}$, 
$\mathcal{S}_{\rm{(iv)}}$, $\mathcal{S}_{\rm{(v)}}$, $\mathcal{S}_{\rm{(vi)}}$) of $\mathcal{S}$ 
to be the set consisting of height-one ideals $\mathfrak{a} \in \mathcal{S}$ for which the condition $\rm(i)$ (resp. $\rm(ii)$, $\rm(iii)$, $\rm(iv)$, $\rm(v)$, $\rm(vi)$) does not hold, after replacing the quantities: $T$, $T^* (1)$, $R$, $P$ and $\mathbf{z}_1$ with their respective quotients 
$T/\mathfrak{a}T$, $(T^*/\mathfrak{a}T^*) (1)$, $R/\mathfrak{a}$, $\overline{P}:=P \pmod{\fa} \in R/\mathfrak{a}$ and $\overline{\mathbf{z}}_1$ which is the image of $\mathbf{z}_1$ in $H^1(G_{\Sigma},(T^*/\mathfrak{a}T^*)(1))$. We put
$$
\mathcal{S}_{\rm{bad}}:=\mathcal{S}_{\rm{(i)}} \cup \mathcal{S}_{\rm{(ii)}} \cup \mathcal{S}_{\rm{(iii)}} \cup \mathcal{S}_{\rm{(iv)}} \cup \mathcal{S}_{\rm{(v)}} \cup \mathcal{S}_{\rm{(vi)}}. 
$$

We prove the assertion $\rm(1)$ and the formula $(\ref{inclusionchar})$ in the assertion $\rm(2)$ simultaneously. Let us point out that $\rm(1)$ is required to prove $\rm(2)$ theoretically. However, the proof of both assertions requires that $\mathcal{S} \setminus \mathcal{S}_{\rm{bad}}$ is large enough. In fact, to prove the assertion $\rm{(1)}$, we need to check that $\mathcal{S} \setminus \mathcal{S}_{\rm{bad}}$ is not empty. To prove the assertion $\rm{(2)}$, we need to check that $\mathcal{S} \setminus \mathcal{S}_{\rm{bad}}$ is infinite and large enough to apply Theorem \ref{prop1} and Theorem \ref{prop2} in the case $d=2$ and Theorem \ref{theorem:previous} in the case $d \ge 3$, respectively. Let us proceed by induction with respect to the Krull dimension $d \geq 2$ of $(R,\fm,\mathbb{F})$. As the initial step for induction of the proofs of assertions $\rm(1)$ and $\rm(2)$, we start from the proof of the case $d=2$.
\\

First, we prove the assertion $\rm(1)$ when $d=2$, assuming that $\mathcal{S} \setminus \mathcal{S}_{\rm{bad}}$ is not empty. Let $\fa \in \mathcal{S} \setminus \mathcal{S}_{\rm{bad}}$. Then $\fa$ is a nonzero principal ideal and
the $(R/\fa)[G_{\mathbb{Q}}]$-representation $T/\fa T$ satisfies the hypotheses \rm{(i)} through \rm{(vi)} of Theorem \ref{Euler}, which in particular implies that $\textcyr{Sh}^2_{\Sigma}((T^*/\fa T^*)(1))$ is finite. By Lemma \ref{last} applied to the map $\textcyr{Sh}^2_{\Sigma}(T^*(1))/\fa \textcyr{Sh}^2_{\Sigma}(T^*(1)) \to \textcyr{Sh}^2_{\Sigma}((T^*/\fa T^*)(1))$, and by Nakayama's lemma, it follows by the finiteness of $\textcyr{Sh}^2_{\Sigma}((T^*/\fa T^*)(1))$ due to
Theorem \ref{Euler}, that $\textcyr{Sh}^2_{\Sigma}(T^*(1))$ is a finitely generated torsion $R$-module. 

Next, we prove the assertion $(\ref{inclusionchar})$ when $d=2$. We define
$\mathcal{S}$ to be the set of height-one ideals $\{(\mathbf{x}_n) \subseteq R\}_{n \in \mathbb{N}}$, where we take $\mathbf{x}_n=z+a_np^n$ as in Lemma \ref{topological} $\rm(1)$ and $(\ref{linearideal})$ for the choice $z \in R$, and where we take $\mathbf{x}_n=z^n+r+a_n p$ as in Lemma \ref{topological} $\rm(2)$ and $(\ref{linearideal2})$ for the choice $z,r \in R$.

We set
\begin{align*}
& M:=\textcyr{Sh}^2_{\Sigma} (T^*(1)), \\
& N:=R/(P^k) \oplus  H^1 (G_{\Sigma},T^*(1))/R \mathbf{z}_1.
\end{align*}
The inclusion \eqref{inclusionchar} holds if and only if 
\begin{equation}\label{inclusionchar_p}
\Char_R (N)_\fp \subseteq \Char_R (M)_\fp~\mbox{holds for every height one prime}~\fp~\mbox{of}~R. 
\end{equation}
The proof of \eqref{inclusionchar_p} is divided into the case when $\fp$ is not lying over $p$ and the case when $\fp$ is lying over $p$. 
Theorem \ref{prop1} plays a role in the former case and Theorem \ref{prop2} plays a role in the latter case. Since the proof goes in the same way in both cases, we shall concentrate on the former case.

Since we are assuming $d=2$, the maximal pseudo-null submodule of $H^2(G_{\Sigma},T^*(1))$ is finite. Let the \textit{error term} $c$ equal the cardinality of the maximal pseudo-null submodule of $H^2(G_{\Sigma},T^*(1))$. By applying the Euler system bound of Theorem \ref{Euler} for the $(R/\mathfrak{a})[G_{\mathbb{Q}}]$-module $T/\mathfrak{a}T$, we find that
\begin{equation}
\label{reduce1}
c \cdot \frac{|N/\mathfrak{a}N|}{|M/\mathfrak{a}M|} \in \mathbb{N}
\end{equation}
for all $\mathfrak{a} \in \mathcal{S} \setminus \mathcal{S}_{\rm{bad}}$. Once we prove that the set $\mathcal{S}_{\rm{bad}}$ is finite, Theorem \ref{prop1} applies to deduce \eqref{inclusionchar_p} for every height-one prime $\fp$ of $R$ which is not lying over $p$. Similarly, the finiteness of $\mathcal{S}_{\rm{bad}}$ and Theorem \ref{prop2} allow us to deduce \eqref{inclusionchar_p} for every height-one prime $\fp$ of $R$ which is lying over $p$. 
Hence the proof of \eqref{inclusionchar} will be completed when $d=2$. Thus, in order to prove the assertion $\rm(2)$ for the case $d=2$, it remains to show that $\mathcal{S}_{\rm{bad}}$ is a finite set.

First, we note that if the condition $\rm(i)$ (resp. $\rm(v)$) holds for $R[G_{\mathbb{Q}}]$-module $T$, 
the condition $\rm(i)$ (resp. $\rm(v)$) holds for the $(R/\mathfrak{a})[G_{\mathbb{Q}}]$-module $T/\mathfrak{a}T$ and for all $\mathfrak{a} \in \mathcal{S}$. So we get $\mathcal{S}_{\rm{(i)}}=\mathcal{S}_{\rm{(v)}}=\emptyset$. It remains to check that $\mathcal{S}_{\rm{(ii)}}$, $\mathcal{S}_{\rm{(iii)}}$, $\mathcal{S}_{\rm{(iv)}}$, $\mathcal{S}_{\rm{(vi)}}$ are finite.

$\mathbf{Finiteness~of}$ $\mathcal{S}_{\rm{(ii)}}$: We need to find an ideal $\mathfrak{a} \in \mathcal{S}$ such that
\begin{equation}
\label{finitegroup}
\big|H^1(G_{\Sigma},(T^*/\fa T^*)(1))/(R/\mathfrak{a}) \mathbf{z}_1\big|< \infty.
\end{equation}
Let us put $(f)=\mathfrak{a}$ and write $R \xrightarrow{\times f} R$ for the multiplication map. Then the short exact sequence $0 \to R \xrightarrow{\times f} R \to R/\mathfrak{a} \to 0$ yields the long exact sequence:
$$
\cdots \to H^1(G_{\Sigma},T^*(1)) \xrightarrow{\times f} H^1(G_{\Sigma},T^*(1)) \to H^1(G_{\Sigma},(T^*/\fa T^*)(1)) \to H^2(G_{\Sigma},T^*(1)) \xrightarrow{\times f} \cdots
$$
For simplicity, we put
$$
N_i:=H^i(G_{\Sigma},T^*(1))~\mbox{and}~J_i:=\Ann_R(N_i/R \mathbf{z}_1),
$$
which is the annihilator of the $R$-module $N_i/R \mathbf{z}_1$. 

Let us consider $N_1$. By hypothesis $\rm(ii)$, $\Supp_R(N_1/R \mathbf{z}_1)=\Spec (R/J_1)$ is a proper closed subset of $\Spec R$. First, we suppose that $\Spec(R/J_1) \subseteq \{\fm\}$. Then we choose any nonzero element $f \in \fm$. Second, we suppose that $\Spec(R/J_1)=\{\fp_1,\ldots,\fp_n,\fm\}$. Then we choose $f \in \fm$ such that $f \notin \bigcup_{i=1}^n \fp_i$.

Let us consider $N_2$, in which the image of $\mathbf{z}_1$ is zero and we have $\Supp_R(N_2)=\Spec(R/J_2)$. It suffices to choose $f \in \fm$ such that $\ker\big[ N_2 \xrightarrow{\times f} N_2\big] $ is finite. First, we suppose that $\Spec(R/J_2) \subseteq \{\fm\}$. Then we choose any nonzero element $f \in \fm$. Second, we suppose that $\Spec(R/J_2)=\{\fq_1,\ldots,\fq_{n'},\fm\}$. Then we choose $f \in \fm$ such that $f \notin \bigcup_{i=1}^{n'} \fq_i$. Finally, suppose that $\Spec(R/J_2)=\Spec R$. Since $R$ is assumed to be an integral domain, this implies that $J_2=(0)$. That is, $N_2$ is a torsion-free $R$-module. Then we choose any nonzero element $f \in \fm$. In all cases considered above, $\ker\big[ N_2 \xrightarrow{\times f} N_2\big] $ is finite. Let $\fa \in \mathcal{S}$. Then
$$
\fa \in \mathcal{S}_{\rm{(ii)}} \iff \fa \subseteq \bigcup_{i=1}^n \fp_i \cup \bigcup_{i=1}^{n'} \fq_i,
$$ 
which shows that $\mathcal{S}_{\rm{(ii)}}$ is finite (see Remark \ref{heightoneprime}). So if $\fa \in \mathcal{S} \setminus \mathcal{S}_{\rm{(ii)}}$,
then $(\ref{finitegroup})$ holds, as required.

$\mathbf{Finiteness~of}$ $\mathcal{S}_{\rm{(iii)}}$: We prove that the finiteness of $H^2(G_{\Sigma},D[\mathfrak{a}])$ follows from Lemma \ref{last2}. Indeed, by Poitou-Tate duality and the hypothesis $|H^2(G_{\Sigma},D)| < \infty$, the group $H^2(G_{\mathbb{Q}_{\ell}},D)$ is finite for every $\ell \in \Sigma$. The local Tate duality shows that $T^*(1)^{G_{\mathbb{Q}_{\ell}}}$ is finite. Since $T^*(1)^{G_{\mathbb{Q}_{\ell}}}$ is $R$-torsion free, it is finite only when $T^*(1)^{G_{\mathbb{Q}_{\ell}}}=0$. Thus, the assumptions of Lemma \ref{last2} are satisfied and we find that $H^2(G_{\Sigma},D[\mathfrak{a}])$ is finite. Next, we prove that $H^2(G_{\mathbb{Q}_{\ell}},(T^*/\fa T^*)(1))=0$ for $\ell \in \Sigma\setminus\{\infty\}$. Notice that the second local Galois cohomology group is exactly  controlled. That is, we have  $H^2(G_{\mathbb{Q}_{\ell}},T^*(1))/\mathfrak{a} H^2(G_{\mathbb{Q}_{\ell}},T^*(1)) \simeq H^2(G_{\mathbb{Q}_{\ell}},(T^*/\fa T^*)(1))$. So the required vanishing follows from the assumption $H^2(G_{\mathbb{Q}_{\ell}},T^*(1))=0$. Hence we have $\mathcal{S}_{\rm{(iii)}}=\emptyset$.

$\mathbf{Finiteness~of}$ $\mathcal{S}_{\rm{(iv)}}$: Since $\wedge^2 \rho:G_{\mathbb{Q}} \to R^{\times}$ (resp. $\wedge^2 \rho^*(1):G_{\mathbb{Q}} \to R^{\times}$) is of infinite order, the image contains an element $u \in R^\times$ (resp. $u' \in R^\times$) of infinite order. Then by Lemma \ref{topological} $\rm(1)$, the image of $u$ (resp. $u'$) under $R^{\times} \to \big(R/\mathfrak{a} \big)^{\times}$ is of infinite order for every $n \in \mathbb{N}$ and $\mathfrak{a}=(z+a_n p^n) \in \mathcal{S}$. Hence we have $\mathcal{S}_{\rm{(iv)}}=\emptyset$.

$\mathbf{Finiteness~of}$ $\mathcal{S}_{\rm{(vi)}}$: It suffices to choose $(f)=\mathfrak{a} \subseteq R$ such that $\mathfrak{a}$ is not contained in $\fp$, where $\fp \in \Min_R(R/(P))$ and  $P \in R$ is given as in $\rm(vi)$. Let
$\fa \in \mathcal{S}$. Then
$$
\mathfrak{a} \in \mathcal{S}_{\rm{(vi)}} \iff \mathfrak{a} \subseteq \fp~\mbox{for some}~\fp \in \Min_R(R/(P)),
$$
which shows that $\mathcal{S}_{\rm{(vi)}}$ is finite (see Remark \ref{heightoneprime}).

Now $\mathcal{S}_{\rm{bad}}=\mathcal{S}_{\rm{(i)}} \cup \mathcal{S}_{\rm{(ii)}} \cup \mathcal{S}_{\rm{(iii)}} \cup \mathcal{S}_{\rm{(iv)}} \cup \mathcal{S}_{\rm{(v)}} \cup \mathcal{S}_{\rm{(vi)}}$ forms a finite set of height-one ideals of $R$. 
As explained at the beginning of the proof, we complete the proofs of assertions $\rm(1)$ and $\rm(2)$ in the case $d=2$. 
\\

Suppose that $d \ge 3$ and that the theorem has been proved in the case that $\dim R=d-1$. Then we establish the theorem in the case $\dim R=d$ by using an induction hypothesis that $(\ref{inclusionchar})$ holds in the case that $\dim R=d-1$, in which the local Bertini theorem (Theorem \ref{theorem:previous}) plays a crucial role. First of all, we set
\begin{align*}
& M:=\textcyr{Sh}^2_{\Sigma} (T^*(1)), \\
& N:=R/(P^k) \oplus  H^1 (G_{\Sigma},T^*(1))/R \mathbf{z}_1.
\end{align*}
Then we want to prove that $\Char_R(N) \subseteq \Char_R (M)$. To this aim, we consider scalar extensions $M_{W(\overline{\mathbb{F}})}$ and $M_{W(\overline{\mathbb{F}})}$ as
$R_{W(\overline{\mathbb{F}})}:=R \widehat{\otimes}_{W(\mathbb{F})} W(\overline{\mathbb{F}})$-modules. Notice that $R_{W(\overline{\mathbb{F}})}$ is a complete local Cohen-Macaulay normal domain with residue field $\overline{\mathbb{F}}$ in view of Lemma \ref{localring}. With the notation as in Theorem \ref{theorem:previous}, we set
$$
\widetilde{\mathcal{S}}:=\{(\mathbf{x}_{\widetilde{a}}) \subseteq R_{W(\overline{\mathbb{F}})}~|~a \in \theta_{W(\overline{\mathbb{F}})}(U_{M,N})\}.
$$
Let us take an element $\fp:=(\mathbf{x}_{\widetilde{a}}) \in \widetilde{\mathcal{S}}$. Then there exists a finite extension of fields $\mathbb{F} \to \mathbb{F}'$ such that $\fp \subseteq R_{W(\mathbb{F}')}$. By Theorem \ref{theorem:previous}, we want to prove that
\begin{equation}
\label{charcheck}
\Char_{R_{W(\mathbb{F}')}/\fp}(N_{W(\mathbb{F}')}/\fp N_{W(\mathbb{F}')}) \subseteq \Char_{R_{W(\mathbb{F}')}/\fp}(M_{W(\mathbb{F}')}/\fp M_{W(\mathbb{F}')}).
\end{equation}

It is easy to check that the conditions $\rm(i),\rm(ii),\rm(iii),\rm(iv),\rm(v)$ and $\rm(vi)$ are not effected by base change $T \to T_{W(\mathbb{F}')}$. After checking that the conditions $\rm(i),\rm(ii),\rm(iii),\rm(iv),\rm(v)$ and $\rm(vi)$ hold for the $R_{W(\mathbb{F}')}/\fp[G_{\mathbb{Q}}]$-module $T_{W(\mathbb{F}')}/\fp T_{W(\mathbb{F}')}$, we obtain the following by induction hypothesis:
\begin{equation}
\label{charcheck2}
(\overline{P}^k) \Char_{R_{W(\mathbb{F}')}/\fp}\big(H^1 (G_{\Sigma},(T^*_{W(\mathbb{F}')}/\fp T^*_{W(\mathbb{F}')})(1))/(R_{W(\mathbb{F}')}/\fp) \overline{\mathbf{z}}_1\big) 
\end{equation}
$$
\subseteq \Char_{R_{W(\mathbb{F}')}/\fp}\big(\textcyr{Sh}^2_{\Sigma}((T^*_{W(\mathbb{F}')}/\fp T^*_{W(\mathbb{F}')})(1))\big).
$$
By Lemma \ref{last} together with $(\ref{controlSha})$, we find that
\begin{equation}
\label{charcheck3}
\Char_{R_{W(\mathbb{F}')}/\fp}(M_{W(\mathbb{F}')}/\fp M_{W(\mathbb{F}')})=\Char_{R_{W(\mathbb{F}')}/\fp}\big(\textcyr{Sh}^2_{\Sigma}((T^*_{W(\mathbb{F}')}/\fp T^*_{W(\mathbb{F}')})(1))\big).
\end{equation}
Next, let us consider the long exact sequence:
\begin{multline*}
\cdots \to H^1 (G_{\Sigma},T^*_{W(\mathbb{F}')}(1)) \xrightarrow{\times \mathbf{x}_{\widetilde{a}}} H^1 (G_{\Sigma},T^*_{W(\mathbb{F}')}(1)) \to H^1 (G_{\Sigma},(T^*_{W(\mathbb{F}')}/\fp T^*_{W(\mathbb{F}')})(1))
\\ 
\to H^2(G_{\Sigma},T^*_{W(\mathbb{F}')}(1)) \xrightarrow{\times \mathbf{x}_{\widetilde{a}}} \cdots
\end{multline*}
which is induced by the exact sequence $0 \to T^*_{W(\mathbb{F}')} \xrightarrow{\times \mathbf{x}_{\widetilde{a}}} T^*_{W(\mathbb{F}')} \to T^*_{W(\mathbb{F}')}/\fp T^*_{W(\mathbb{F}')} \to 0$. The above long exact sequence yields the following short exact sequence:
\begin{multline}\label{equation:shortsequence1}
0 \to H^1 (G_{\Sigma},T^*_{W(\mathbb{F}')}(1))/\fp H^1 (G_{\Sigma},T^*_{W(\mathbb{F}')}(1)) \to H^1 (G_{\Sigma},(T^*_{W(\mathbb{F}')}/\fp T^*_{W(\mathbb{F}')})(1))
\\ 
\to H^2(G_{\Sigma},T^*_{W(\mathbb{F}')}(1))[\mathbf{x}_{\widetilde{a}}] \to 0.
\end{multline}
Let us choose $\mathbf{x}_{\widetilde{a}}$ such that
$$
\mathbf{x}_{\widetilde{a}} \notin \bigcup \fq,
$$
where $\fq$ ranges over all associated prime ideals of the $R_{W(\mathbb{F}')}$-module $H^2(G_{\Sigma},T^*_{W(\mathbb{F}')}(1))$ which are of height 
at most two. Notice that the set of such $\mathbf{x}_{\widetilde{a}}$ is non-empty by the assumption $d \ge 3$. 

For such $\mathbf{x}_{\widetilde{a}}$, we have the following equality by combining \eqref{controlGal} and \eqref{equation:shortsequence1}: 
\begin{multline}
\label{charcheck4}
\Char_{R_{W(\mathbb{F}')}/\fp}(N_{W(\mathbb{F}')}/\fp N_{W(\mathbb{F}')})
\\ 
=(\overline{P}^k) \Char_{R_{W(\mathbb{F}')}/\fp}\big(H^1 (G_{\Sigma},(T^*_{W(\mathbb{F}')}/\fp T^*_{W(\mathbb{F}')})(1))/(R_{W(\mathbb{F}')}/\fp) \overline{\mathbf{z}}_1\big). 
\end{multline}

Thus, in view of $(\ref{charcheck2})$, $(\ref{charcheck3})$ and $(\ref{charcheck4})$, it suffices to define the subset $\mathcal{S}$ of $\widetilde{\mathcal{S}}$ as follows:
$$
\mathcal{S}:=\big\{(\mathbf{x}_{\widetilde{a}})~\big|~\mathbf{x}_{\widetilde{a}} \notin \fq~\mbox{for all}~\fq \in \Ass_{R_{W(\overline{\mathbb{F}})}}\big(H^2(G_{\Sigma},T^*_{W(\overline{\mathbb{F}})}(1))\big) \setminus\{\fm_{R_{W(\overline{\mathbb{F}})}}\}\big\} \cap \widetilde{\mathcal{S}}
$$
to prove $(\ref{charcheck})$. Let us define $U_{\fq} \subseteq \mathbb{P}^n(\overline{\mathbb{F}})$ to be the Zariski-dense open subset attached to each $\fq \in \Ass_{R_{W(\overline{\mathbb{F}})}}\big(H^2(G_{\Sigma},T^*_{W(\overline{\mathbb{F}})}(1))\big) \setminus\{\fm_{R_{W(\overline{\mathbb{F}})}}\}$ via Lemma \ref{Case1}. Now we apply Theorem \ref{theorem:previous} to the Zariski open subset:
$$
U:=\big(\bigcap U_{\fq}\big) \cap U_{M,N} \subseteq \mathbb{P}^n(\overline{\mathbb{F}})
$$
for $\fq \in \Ass_{R_{W(\overline{\mathbb{F}})}}\big(H^2(G_{\Sigma},T^*_{W(\overline{\mathbb{F}})}(1))\big) \setminus\{\fm_{R_{W(\overline{\mathbb{F}})}}\}$.

As the proof of the assertion $\rm(1)$ is similar to the case $d=2$, we skip the details. So let us prove the assertion $\rm(2)$. Write $R$ for $R_{W(\mathbb{F}')}$ to simplify the notation for a finite field extension $\mathbb{F} \to \mathbb{F}'$. As we did in the case $d=2$, we need to determine the subsets $\mathcal{S}_{\rm{(i)}}, \mathcal{S}_{\rm{(ii)}}, \mathcal{S}_{\rm{(iii)}}, \mathcal{S}_{\rm{(iv)}}, \mathcal{S}_{\rm{(v)}}$ and $\mathcal{S}_{\rm{(vi)}}$ of $\mathcal{S}$, respectively. Notice that the conditions $\rm(i)$ and $\rm(v)$ in the theorem hold for the $R/\fp[G_{\mathbb{Q}}]$-module $T/\fp T$ for any $\fp \in \mathcal{S}$ and we have $\mathcal{S}_{\rm{(i)}}=\mathcal{S}_{\rm{(v)}}=\emptyset$. Moreover, the conditions $\rm(iii)$ and $\rm(vi)$ are treated in a similar manner to the case $d=2$ and we find that $\mathcal{S}_{\rm{(iii)}}=\emptyset$ 
and $\mathcal{S}_{\rm{(vi)}}$ is finite. So it remains to discuss the conditions $\rm(ii)$ and $\rm(iv)$ for the $R/\fp[G_{\mathbb{Q}}]$-module $T/\fp T$.

$\mathbf{Finiteness~of}$ $\mathcal{S}_{\rm{(ii)}}$: Write $R \xrightarrow{\times \mathbf{x}_{\widetilde{a}}} R$ for the multiplication map with $(\mathbf{x}_{\widetilde{a}})=\fp$. Then the short exact sequence $0 \to R \xrightarrow{\times \mathbf{x}_{\widetilde{a}}} R \to R/\fp \to 0$ yields the long exact sequence:
$$
\cdots \to H^1(G_{\Sigma},T^*(1)) \xrightarrow{\times \mathbf{x}_{\widetilde{a}}} H^1(G_{\Sigma},T^*(1)) \to H^1(G_{\Sigma},(T^*/\fp T^*)(1)) \to H^2(G_{\Sigma},T^*(1)) \xrightarrow{\times \mathbf{x}_{\widetilde{a}}} \cdots .
$$
Now we have to find $\fp=(\mathbf{x}_{\widetilde{a}}) \subseteq R$ such that
\begin{equation}
\label{torsionmodule}
H^1(G_{\Sigma},(T^*/\fp T^*)(1))/(R/\fp) \mathbf{z}_1~\mbox{is a torsion}~R/\fp \mbox{-module}.
\end{equation}
As the proof of $(\ref{torsionmodule})$ is done similarly to the case $d=2$, we skip the details.

$\mathbf{Finiteness~of}$ $\mathcal{S}_{\rm{(iv)}}$: By hypothesis, there exists an element $u \in R^{\times}$ of infinite order, which is in the image of the determinant character $\wedge^2 \rho: G_{\mathbb{Q}} \to R^{\times}$. Since the ring of Witt vectors $W(\mathbb{F})$ is a coefficient ring of $R$, we have $R=W(\mathbb{F})+\fm$ and $u=a+t$ for some $a \in W(\mathbb{F})^{\times}$ and $t \in \fm$. We shall make a choice of a principal prime ideal $\fp$ such that the following condition holds:
\begin{equation}
\label{character}
\fp~\mbox{has the property that}~u^n-1 \notin \fp~\mbox{for all}~n \ge 1.
\end{equation}
If $a \ne 1$, then there is nothing to prove due to $u^n-1\in R^{\times}$. So we assume that $a=1$ in what follows. Then
$$
u^n-1=(1+t)^n-1=\sum_{k=1}^n{n \choose k}t^k=t \big(n+\frac{n(n-1)}{2}t+\cdots+t^{n-1}\big).
$$
Writing $n=mp^e$ with $e \ge 0$ and $p \nmid m$, we see that $\dfrac{u^m-1}{t} \in R^{\times}$ since $t \in \fm$. 
Thus, there are only finitely many height-one primes in $R$ containing $u^m-1$ for varying $m$. Consider an ideal of $R$:
$$
I_{m,e}:=(p,u^{mp^e}-1).
$$
Since $u^{mp^e}-1 \equiv (u^m-1)^{p^e} \pmod{pR}$ and $\dfrac{u^m-1}{t} \in R^{\times}$, it follows that
\begin{equation}
\label{character2}
\bigcup_{m, e \ge 1}\Min_R(I_{m,e})~\mbox{is a finite set}.
\end{equation}
Every prime ideal $\fq$ belonging to $\bigcup_{m, e \ge 1}\Min_R(I_{m,e})$ is of height at most 2. Moreover, since $d=\dim R \ge 3$, such a prime ideal has the property that $\fq \ne \fm$. Then Lemma \ref{Case1} allows us to find sufficiently many prime ideals $\fp \in \Spec R$ so as to satisfy $(\ref{character})$. In other words, the reduced character $\wedge^2 \overline{\rho}:G_{\mathbb{Q}} \to (R/\fp)^{\times}$ is of infinite order.

Now we get the desired $(\ref{charcheck})$ for all height-one prime ideals $\fp \in \mathcal{S} \setminus \mathcal{S}_{\rm{bad}}$. Hence, $\Char_R(N) \subseteq \Char_R(M)$, that is, $(\ref{inclusionchar})$ has been established in the case $d=\dim R$. This completes the proof of the theorem.
\end{proof}

\section{Modular forms and Hida theory}
\label{Hidafamily}

In this section, we give a brief review on modular form, Hecke algebra and their extension in the context of Hida theory.

\subsection{$p$-optimal complex period via modular symbols}

In this section, we construct a $p$-adic $L$-function associated to a normalized $p$-ordinary Hecke eigencusp form which interpolates the special values of the complex $L$-function. In the next section, we will give the construction of two-variable $p$-adic $L$-function attached to a branch of the nearly ordinary Hecke algebra. This was first constructed by Mazur and Kitagawa \cite{Kit}. Another constructions were given by Greenberg and Stevens \cite{GS93} and Emerton, Pollack and Weston \cite{EPW}.

Let us introduce \textit{p-optimal complex period} and \textit{p-adic period} in order to have the algebraicity of complex $L$-functions. As the complex period of Deligne type \cite{De79} is determined only up to multiple of $\overline{\mathbb{Q}}^{\times}$, it is necessary to introduce the $p$-optimal complex period to formulate the interpolation formula for the Hida deformation of a $p$-adic $L$-function without ambiguities. Fix a prime number $p>0$ and embeddings 
$i_{\infty}:\overline{\mathbb{Q}} \hookrightarrow \mathbb{C}$ and $i_p:\overline{\mathbb{Q}} \hookrightarrow \overline{\mathbb{Q}}_p$. 

\begin{definition}
Let $k \ge 1$ be an integer and let $p>0$ be a prime number. Fix an embedding $i_p:\overline{\mathbb{Q}} \hookrightarrow \overline{\mathbb{Q}}_p$.

\begin{enumerate}
\item
Let $M>0$ be an integer such that $p \mid M$. Then we say that a cusp form $f \in S_k(\Gamma_1(M);\overline{\mathbb{Q}})$ is \textit{p-stablized}, if it is a normalized Hecke eigencusp form.

\item
Let $N>0$ is an integer such that $p \nmid N$. Let $f \in S_k(\Gamma_1(N);\overline{\mathbb{Q}})$ be a normalized Hecke eigencusp form and let $\alpha,\beta \in \overline{\mathbb{Q}}_p$ be the roots of the equation $X^2-a_p(f)X+p^{k-1}\psi_f(p)=0$, where $\psi_f$ is the neben character of $f$. Let
$$
f_{\alpha}:=f(q)-\beta f(q^p),~f_{\beta}:=f(q)-\alpha f(q^p).
$$
which are shown to be cusp forms in the space $S_k(\Gamma_1(Np),\overline{\mathbb{Q}})$. We call $f_{\alpha}$ and $f_{\beta}$ the \textit{p-stabilization} of $f$.

\item
We say that $ f \in S_k(\Gamma_1(M),\overline{\mathbb{Q}})$ is \textit{p-ordinary}, if $i_p(a_p(f)) \in \overline{\mathbb{Q}}_p$ is a $p$-adic unit via $i_p:\overline{\mathbb{Q}} \hookrightarrow \overline{\mathbb{Q}}_p$. A normalized $p$-ordinary Hecke eigencusp form $f \in S_k(\Gamma_1(M),\overline{\mathbb{Q}})$ is called a \textit{p-stabilized newform}, if it comes from the $p$-stabilization of a certain primitive form. 
\end{enumerate}
\end{definition}

\begin{remark}
\begin{enumerate}
\item
Let $f \in S_k(\Gamma_1(M),\overline{\mathbb{Q}})$ be a normalized $p$-stabilized Hecke eigencusp form Then it is proved that $f \in S_k(\Gamma_1(Mp),\overline{\mathbb{Q}})$ is $p$-stabilized.

\item
It is necessary to raise the power of $p$ in the ``level'' for constructing Hida deformations and it is essential to consider $p$-ordinary $p$-stabilized newforms.
\end{enumerate}
\end{remark}

Let $f \in S_k(\Gamma_1(M),\overline{\mathbb{Q}})$ be a $p$-ordinary $p$-stabilized Hecke eigencusp form of weight $k \ge 2$; in particular, we have $p|M$. Denote by $\mathbb{Q}_f:=\mathbb{Q}(\{a_n(f)\})$ the \textit{Hecke field} associated to $f$. Set $\mathbb{Z}_{f,(p)}:=i^{-1}_p(\overline{\mathbb{Z}}_p)$. This is a discrete valuation ring with the field of fractions $\mathbb{Q}_f$. For an integer $k \ge 2$ and a ring $A$, we define an $A$-module
$$
L_k(A):=\bigoplus_{0 \le i \le k-2} A \cdot X^i Y^{k-2-i},
$$
where $A \cdot X^iY^{k-2-i}$ denotes the $A$-submodule of the polynomial ring $A[X,Y]$ generated by $X^iY^{k-2-i}$. We let the matrix
$$
g=
\begin{pmatrix}
a & b \\ c & d 
\end{pmatrix}
\in M_2(\mathbb{Z}) \cap GL_2(\mathbb{Q})
$$
act on $L_k(A)$ by the formula:
$$
g \cdot p(X,Y)=p\Big((X,Y)\begin{pmatrix} a & c \\ b & d \end{pmatrix}\Big).
$$
for $p(X,Y) \in L_k(A)$.

Let $Y_1(M)_{\mathbb{C}}:=\Gamma_1(M) \setminus \mathfrak{H}$ be the complex affine modular curve. We let $\Gamma_1(M)$ act on the product $\mathfrak{H} \times L_k(A)$ diagonally. We define $\mathcal{L}_k(M)$ as a sheaf of continuous sections of the topological covering map:
$$
\Gamma_1(M) \setminus \mathfrak{H} \times L_k(A) \twoheadrightarrow \Gamma_1(M) \setminus \mathfrak{H}=Y_1(M)_{\mathbb{C}}.
$$
Let $H^1_{\mathrm{Betti}}(Y_1(M)_{\mathbb{C}},\mathcal{L}_k(A))$ be the sheaf cohomology using complex topology on $Y_1(M)_{\mathbb{C}}$. 

Let $\mathcal{O}$ be the ring of $p$-adic integers with $\mathcal{K}$ its field of fractions. Then one can define the etale sheaf $\mathcal{L}_k(A)_{\et}$ on $Y_1(M)_{\overline{\mathbb{Q}}}$ when $A=\mathcal{O}$ or $\mathcal{K}$ and the etale cohomology $H^1_{\et}(Y_1(M)_{\overline{\mathbb{Q}}},\mathcal{L}_k(A)_{\et})$. We remark that both Betti and etale cohomologies groups are equipped with Hecke actions via Hecke correspondences.  We take $A=\mathcal{O}$ or $\mathcal{K}$. By the comparison theorem between etale and Betti cohomologies, one obtains an isomorphism:
\begin{equation}
\label{Betti}
H^1_{\Betti}(Y_1(M)_{\mathbb{C}},\mathcal{L}_k(A)) \simeq H^1_{\et}(Y_1(M)_{\overline{\mathbb{Q}}},\mathcal{L}_k(A)_{\et})~\mbox{via}~i_{\infty}:\overline{\mathbb{Q}} \hookrightarrow \mathbb{C}.
\end{equation}
Moreover, the isomorphism $(\ref{Betti})$ preserves Hecke actions on both sides. Deligne constructed the Galois representation attached to a cusp form by using etale cohomology, which will be discussed later.

\begin{definition}
We define \textit{parabolic cohomology} by
$$
H^1_{\Bettip}(Y_1(M)_{\mathbb{C}},\mathcal{L}_k(A)):=\im \Big(H^1_{\Betti,c}(Y_1(M)_{\mathbb{C}},\mathcal{L}_k(A)) \to H^1_{\Betti}(Y_1(M)_{\mathbb{C}},\mathcal{L}_k(A))\Big), 
$$
where $H^1_{\Betti,c}(Y_1(M)_{\mathbb{C}},\mathcal{L}_k(A))$ denotes the compactly supported cohomology.
\end{definition}

The importance of parabolic cohomology is expressed by the following theorem (see \cite[Theorem 1 in \S~6.2]{Hid2} or \cite[Theorem 8.4]{Sh71} for the proof).

\begin{theorem}[Eichler-Shimura isomorphism]
\label{ES}
Fix integers $k \ge 2$ and $M\ge 1$. Then there is a natural isomorphism of $\mathbb{C}$-vector spaces:
$$
\ES:S_k(\Gamma_1(M)) \oplus \overline{S_k(\Gamma_1(M))} \to H^1_{\Bettip}(Y_1(M)_{\mathbb{C}},\mathcal{L}_k(\mathbb{C})).
$$
Moreover, the map $\ES$ is compatible with the action of Hecke operators. Here, $\overline{S_k(\Gamma_1(M))}$ is the $\mathbb{C}$-vector space spanned by the complex conjugates $\overline{f(z)}$ for all $f(z) \in S_k(\Gamma_1(M))$.
\end{theorem}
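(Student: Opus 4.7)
The plan is to follow the classical strategy: realize both sides via group cohomology, construct the map through period integrals against cusp forms, and obtain the isomorphism either by Hodge theory or by a direct dimension count. First I would identify the Betti cohomologies with group cohomologies of $\Gamma_1(M)$. Since $\mathfrak{H}$ is contractible and $\Gamma_1(M)$ acts on it properly discontinuously (passing to a torsion-free finite-index subgroup and then taking invariants if there are elliptic elements), $Y_1(M)_{\mathbb{C}}$ is a classifying space, giving a natural isomorphism
$$
H^1_{\Betti}(Y_1(M)_{\mathbb{C}},\mathcal{L}_k(\mathbb{C})) \simeq H^1(\Gamma_1(M),L_k(\mathbb{C})),
$$
and similarly for compactly supported cohomology. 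Under this dictionary $H^1_{\Bettip}$ corresponds to the subspace $H^1_p(\Gamma_1(M),L_k(\mathbb{C}))$ of classes whose restriction to the stabilizer of every cusp of $\Gamma_1(M)$ vanishes.

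Second I would construct $\ES$ explicitly. For $f\in S_k(\Gamma_1(M))$, form the $L_k(\mathbb{C})$-valued holomorphic $1$-form on $\mathfrak{H}$:
$$
\omega_f(z)=f(z)(X-zY)^{k-2}\,dz.
$$
A direct check using the weight-$k$ modularity of $f$ together with the prescribed action of $\Gamma_1(M)$ on $L_k(\mathbb{C})$ shows that $\omega_f$ is $\Gamma_1(M)$-equivariant. Fixing a base point $z_0\in\mathfrak{H}$, the assignment
$$
\gamma\mapsto c_f(\gamma):=\int_{z_0}^{\gamma z_0}\omega_f
$$
is then a $1$-cocycle on $\Gamma_1(M)$ with values in $L_k(\mathbb{C})$, whose cohomology class is independent of $z_0$. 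The rapid decay of $f$ at every cusp allows one to push the base point out to that cusp and show that the restriction of $c_f$ to the corresponding parabolic subgroup becomes a coboundary; hence $c_f$ defines a class in $H^1_{\Bettip}$. The anti-holomorphic summand is defined in parallel from $\overline{f(z)}(X-\overline{z}Y)^{k-2}\,d\overline{z}$. Hecke equivariance is built in, because the Hecke operators on both sides are defined by the same correspondence on $Y_1(M)_{\mathbb{C}}$, and integration is natural with respect to pullback and pushforward along it.

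Third I would prove that $\ES$ is an isomorphism. Injectivity is immediate: if $c_f$ is a coboundary on $\Gamma_1(M)$ then $\omega_f$ is exact on $\mathfrak{H}$, which by uniqueness of the $q$-expansion forces $f=0$; the same argument applies on the anti-holomorphic side, and the two summands are disjoint because $\omega_f$ is of type $(1,0)$ while its conjugate is of type $(0,1)$. For surjectivity the cleanest route is via Hodge theory: after passing to a smooth compactification $X_1(M)_{\mathbb{C}}$ of $Y_1(M)_{\mathbb{C}}$ and extending $\mathcal{L}_k$ with logarithmic poles along the cusps, the parabolic cohomology $H^1_{\Bettip}$ is the image of compactly supported cohomology in ordinary cohomology, which carries a pure Hodge structure of weight $k-1$, and the resulting Hodge decomposition $H^{k-1,0}\oplus H^{0,k-1}$ is exactly the image of $\ES$. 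The main obstacle is controlling the parabolic condition and surjectivity together: the parabolic vanishing on the cohomological side must be matched with cuspidality on the automorphic side, and one needs either the Hodge-theoretic input above or, more elementarily as in Shimura's book, a direct dimension count of $H^1_p(\Gamma_1(M),L_k(\mathbb{C}))$ via the Euler characteristic of $\Gamma_1(M)$ combined with the classical dimension formula for $S_k(\Gamma_1(M))$.
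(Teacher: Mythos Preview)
Your sketch is a correct outline of the classical proof of the Eichler--Shimura isomorphism. Note, however, that the paper does not supply its own proof of this theorem: it is stated as background and the reader is referred to \cite[Theorem~1 in \S6.2]{Hid2} and \cite[Theorem~8.4]{Sh71}. Your approach via period integrals, the parabolic condition, and either Hodge theory or a dimension count is precisely the strategy carried out in those references (Shimura uses the dimension count; the Hodge-theoretic argument is the more modern formulation). One minor point of convention: the paper's Example~\ref{specialcycle} writes the integrand as $f(z)(zX+Y)^{k-2}\,dz$ rather than your $f(z)(X-zY)^{k-2}\,dz$, reflecting a different normalization of the $\Gamma_1(M)$-action on $L_k$; this is harmless but worth reconciling if you intend your cocycle to literally match the paper's $\eta_f$.
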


Let $\mathcal{D}$ be the free abelian group generated by the set of cusps $\mathbb{P}^1(\mathbb{Q})$. Let $\mathcal{D}_0$ denote the subgroup of $D$ generated by the elements of the form $\{\alpha\}-\{\beta\}$ for $\{\alpha\}, \{\beta\} \in \mathbb{P}^1(\mathbb{Q})$. The group $\Gamma_1(M)$ acts on $\mathfrak{H} \cup \mathbb{P}^1(\mathbb{Q})$ via linear fractional transformations. Via this action, we may regard $\mathcal{D}$ and $\mathcal{D}_0$ as being equipped with $\Gamma_1(M)$-action.

\begin{proposition}
\label{modsy}
Let $A$ be any commutative ring. Then there is an isomorphism of $A$-modules:
$$
\Hom_{\Gamma_1(M)}(\mathcal{D}_0;L_k(A)) \simeq H^1_{\Betti,c}(Y_1(M)_{\mathbb{C}},\mathcal{L}(A)).
$$
This isomorphism is compatible with the Hecke action on both sides.
\end{proposition}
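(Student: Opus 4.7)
The strategy is the classical Eichler--Shimura--Manin identification of compactly supported cohomology of a modular curve with local system coefficients as a group-of-modular-symbols, following for instance the approach in Shimura's book and in Hida's exposition. We assume throughout (for simplicity of exposition) that $M$ is large enough that $\Gamma_1(M)$ acts freely on $\mathfrak{H}$; the small-level cases require the standard minor modifications (replacing equivariant cohomology by honest invariants) or can be handled rationally.

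First I would reformulate compactly supported cohomology in terms of the pair $(X_1(M), C)$, where $X_1(M)$ is the standard cuspidal compactification of $Y_1(M)_{\mathbb{C}}$ and $C$ is the finite set of cusps. By excision and the fact that $C$ admits contractible punctured neighborhoods (through which the local system $\mathcal{L}_k$ becomes equivariantly trivial around each cusp under the action of the parabolic stabilizer), one obtains
$$
H^1_{\Bettip,c}(Y_1(M)_{\mathbb{C}}, \mathcal{L}_k(A))
\simeq H^1(X_1(M), C; \mathcal{L}_k(A)),
$$
the relative singular cohomology of the pair.

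Next I would pass to the universal cover of the pair. The extended upper half plane $\overline{\mathfrak{H}} := \mathfrak{H} \cup \mathbb{P}^1(\mathbb{Q})$, equipped with the Satake-type topology, is contractible, and it covers $X_1(M)$ with deck group $\Gamma_1(M)$ sending $\mathbb{P}^1(\mathbb{Q})$ to $C$. Consequently the singular chain complex of the pair $(\overline{\mathfrak{H}}, \mathbb{P}^1(\mathbb{Q}))$ is a resolution of $\Gamma_1(M)$-modules: from the long exact sequence of the pair, contractibility of $\overline{\mathfrak{H}}$ gives a quasi-isomorphism
$$
C_{*+1}(\overline{\mathfrak{H}}, \mathbb{P}^1(\mathbb{Q})) \simeq \widetilde{C}_{*}(\mathbb{P}^1(\mathbb{Q})),
$$
and the reduced $0$-chains on $\mathbb{P}^1(\mathbb{Q})$ are precisely $\mathcal{D}_0$. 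Applying $\Hom_{\Gamma_1(M)}(-, L_k(A))$ to this resolution computes the relative cohomology, and the degree-one piece collapses to $\Hom_{\Gamma_1(M)}(\mathcal{D}_0, L_k(A))$, producing the desired isomorphism of $A$-modules.

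Finally, for Hecke-equivariance, I would trace the action through both sides. On the cohomological side the Hecke operators are defined via the two degeneracy maps of the standard Hecke correspondence on $Y_1(M)_{\mathbb{C}}$ together with a trace; on the modular-symbols side they are defined by the usual double-coset formulas acting on both $\mathcal{D}_0$ and $L_k(A)$. Since the universal cover description is functorial in $\Gamma_1(M)$-equivariant maps (or more generally in Hecke-correspondence data given by finite-index subgroups and conjugations), the isomorphism produced by the above resolution intertwines the two Hecke actions by naturality. The main obstacle in writing a fully rigorous proof is the careful bookkeeping at the cusps in Step 1 (extending $\mathcal{L}_k(A)$ across the cusps of $X_1(M)$ while correctly identifying compactly supported cohomology as relative cohomology), where one must either invoke Borel--Serre-type compactifications or verify directly that the stabilizer of each cusp acts unipotently on $L_k(A)$ so that the local contribution vanishes as needed.
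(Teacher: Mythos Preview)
The paper does not supply a proof of this proposition; it is stated as a classical result and used without argument (the text moves directly from the statement to the paragraph ``Let $A$ be a ring such that $\frac{1}{2}\in A$\ldots''). So there is no paper-proof to compare against.

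Your outline is essentially the standard Eichler--Shimura--Manin argument, and the ingredients you list are the right ones: identification of $H^1_c$ with relative cohomology of the Borel--Serre (or Satake) compactification, contractibility of the extended upper half-plane, and the observation that the reduced $0$-chains on $\mathbb{P}^1(\mathbb{Q})$ give exactly $\mathcal{D}_0$. You also correctly flag the one place where genuine care is needed, namely extending $\mathcal{L}_k(A)$ across the cusps and tracking the parabolic stabilizers. One small technical point: the line ``$C_{*+1}(\overline{\mathfrak{H}},\mathbb{P}^1(\mathbb{Q}))\simeq \widetilde{C}_*(\mathbb{P}^1(\mathbb{Q}))$'' is a bit loose as written, since you need this as a quasi-isomorphism of complexes of $\Gamma_1(M)$-modules rather than just an equality of homology groups, and the relative singular chain groups are not free $\mathbb{Z}[\Gamma_1(M)]$-modules on the nose (because of cusp stabilizers). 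In practice one either works with the Borel--Serre bordification, where the boundary components are contractible and $\Gamma_1(M)$ acts freely, or one builds an explicit $\Gamma_1(M)$-equivariant resolution using geodesic paths between cusps; either route makes the ``Applying $\Hom_{\Gamma_1(M)}(-,L_k(A))$'' step rigorous.
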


Let $A$ be a ring such that $\frac{1}{2} \in A$. As the $2 \times 2$-matrix $\begin{pmatrix} 1 & 0 \\ 0 & -1 \end{pmatrix} \in \Gamma_1(M)$ induces an involution on any $A[\Gamma_1(M)]$-module, we have the decomposition:
$$
\Hom_{\Gamma_1(M)}(\mathcal{D}_0;L_k(A))=\Hom_{\Gamma_1(M)}(\mathcal{D}_0;L_k(A))^+ \oplus \Hom_{\Gamma_1(M)}(\mathcal{D}_0;L_k(A))^-
$$

\begin{definition}[Modular symbols]
Let $A$ be a commutative ring. Then the $A$-module
$$
\Hom_{\Gamma_1(M)}(\mathcal{D}_0;L_k(A))
$$
is called the \textit{space of modular symbols of weight k} with values in $A$. Let $f=\sum_{i=1}^{\infty} a_n(f) q^n \in S_k(\Gamma_1(M),\overline{\mathbb{Q}})$ be a normalized Hecke eigencusp form of weight $k \ge 2$ and assume that $A$ is a $\mathbb{Z}_{f,(p)}[\frac{1}{2}]$-algebra. We define
$$
\Hom_{\Gamma_1(M)}(\mathcal{D}_0;L_k(A))[f]^{\pm}:=\Big\{h \in \Hom_{\Gamma_1(M)}(\mathcal{D}_0,L_k(A))^{\pm}~\Big|~h|T(n)=a_n(f)h~\mbox{for all}~n \ge 0\Big\},
$$
where $T(n)$ is the Hecke operator. Then
$$
MS_f(A):=\Hom_{\Gamma_1(M)}(\mathcal{D}_0,L_k(A))[f]
$$
is called the \textit{space of modular symbols attached to f}. One can take the $\pm$-decomposition $MS_f^{\pm}(A).$
\end{definition}

\begin{example}
\label{specialcycle}
We give a special cocycle in the space of modular symbols which is associated to a cusp form as follows. Let $f \in S_k(\Gamma_1(M),\overline{\mathbb{Q}})$ be a cusp form of weight $k \ge 2$. We define a map $\eta_f:\mathcal{D}_0 \to L_k(\mathbb{C})$ by
$$
\eta_f(\{\alpha\}-\{\beta\})=\int_{\beta}^{\alpha} f(z)(zX+Y)^{k-2}dz. 
$$
This is called the \textit{modular symbol associated to f}. It is easy to verify that $\eta_f \in MS_f(\mathbb{C})$. Indeed, Eichler-Shimura map is related by the formula $\ES(f,0)=\eta_f$.
\end{example}

Let $f \in S_k(\Gamma_1(M),\overline{\mathbb{Q}})$ be a $p$-ordinary $p$-stabilized newform of weight $k \ge 2$. Let $\mathbb{Z}^{\wedge}_{f,(p)}$ be the completion of $\mathbb{Z}_{f,(p)}$ with respect to the $p$-adic valuation induced by $i_p:\overline{\mathbb{Q}} \hookrightarrow \overline{\mathbb{Q}}_p$. Denote by $\mathbb{Q}^{\wedge}_{f,(p)}$ the field of fractions of $\mathbb{Z}^{\wedge}_{f,(p)}$. By Deligne's theorem, we can attach the $G_{\mathbb{Q}}$-representation $V_f:=H^1_{\et,c}(Y_1(M)_{\overline{\mathbb{Q}}},\mathcal{L}_k(\mathbb{Q}^{\wedge}_{f,(p)})_{\et})[f]$. As is well-known, $V_f$ is a $\mathbb{Q}^{\wedge}_{f,(p)}$-vector space of dimension two. Then one can check that $MS_f^+(\mathbb{Z}_{f,(p)})$ and 
$MS_f^-(\mathbb{Z}_{f,(p)})$ are free $\mathbb{Z}_{f,(p)}$-modules of rank one, as each spans the sub $\mathbb{Q}_{f,(p)}^{\wedge}$-vector space of dimension one inside $V_f$ under the composition:
$$
MS_f(\mathbb{Z}_{f,(p)}) \to
H^1_{\Betti,c}(Y_1(M)_{\mathbb{C}},\mathcal{L}_k(\mathbb{Q}_{f,(p)}^{\wedge}))[f] \simeq V_f=H^1_{\et,c}(Y_1(M)_{\overline{\mathbb{Q}}},\mathcal{L}_k(\mathbb{Q}_{f,(p)}^{\wedge}))[f],
$$
where the first map is due to Proposition \ref{modsy} and the second isomorphism is due to the compactly supported version of $(\ref{Betti})$.

\begin{definition}[$p$-optimal complex period]
Fix a $\mathbb{Z}_{f,(p)}$-basis $b^+_{T_f}$ of the module $MS_f^+(\mathbb{Z}_{f,(p)})$ (resp. $\mathbb{Z}_{f,(p)}$-basis $b^-_{T_f}$ of the module $MS_f^-(\mathbb{Z}_{f,(p)})$). Via an isomorphism $MS_f^{\pm}(\mathbb{Z}_{f,(p)}) \otimes_{\mathbb{Z}_{f,(p)}} \mathbb{C} \simeq MS_f^{\pm}(\mathbb{C})$, one may regard $\{b^{\pm}_{T_f}\}$ as a $\mathbb{C}$-basis of $MS_f^{\pm}(\mathbb{C})$. Let $\eta_f=\eta^+_f+\eta^-_f$ be the sum in the decomposition 
$MS_f(\mathbb{C})=MS_f^+(\mathbb{C}) \oplus MS_f^-(\mathbb{C})$. We define a \textit{p-optimal complex period} $\Omega_{\infty}(f,b^{\pm}_{T_f}) \in \mathbb{C}^{\times}$ to be a constant satisfying the equality 
$$
\eta^{\pm}_f=\Omega_{\infty}(f,b^{\pm}_{T_f}) \cdot b^{\pm}_{T_f},
$$
where $\eta^{\pm}_f$ is the modular symbol associated to $f$ as in Example \ref{specialcycle}.
\\
\end{definition}

\subsection{Nearly ordinary Hida deformation and Galois representation}

We fix notation used in Hida theory (see \cite{Hid0} and \cite{Hid1} for details). For a fixed prime number $p>2$ (we will assume this for the moment), let $\mathbb{Q}_{\infty}/\mathbb{Q}$ be the unique cyclotomic $\mathbb{Z}_p$-extension and let $C_{\infty}$ be the Galois group of $\mathbb{Q}_{\infty}/\mathbb{Q}$. Then there is a canonical character $\chi_{\cyc}:C_{\infty} \simeq 1+p\mathbb{Z}_p \subseteq \mathbb{Z}_p^{\times}$ (the $p$-adic cyclotomic character). Let $N>0$ be an integer such that $p \nmid N$. Let $Y_1(Np^t)_{\mathbb{C}}:=\Gamma_1(Np^t) \setminus \mathfrak{H}$. Then, one can consider an algebraic curve $Y_1(Np^t)$ as defined over $\mathbb{Q}$ associated to $Y_1(Np^t)_{\mathbb{C}}$. Then for 
$d \in (\mathbb{Z}/Np^t\mathbb{Z})^{\times}$ with $d \equiv 1 \pmod p$, the diamond operator $\langle d \rangle$ maps $[E,P]$ to $[E,dP]$. The group of diamond operators forms an inverse system compatible with the inverse system $\{Y_1(Np^t)\}_{t \ge 1}$. We denote the inverse limit by $D_{\infty}$. Then there is a canonical character $\eta:D_{\infty} \simeq 1+p\mathbb{Z}_p \subseteq \mathbb{Z}_p^{\times}$.

Fix an integer $k \ge 2$ and let $\mathcal{O}$ be the ring of integers of a finite extension of $\mathbb{Q}_p$. Let $h_k(\Gamma_{1}(Np^r);\mathcal{O})$ be the Hecke algebra acting on $S_k(\Gamma_1(Np^r);\mathcal{O})$. We define \textit{ordinary Hecke algebra} $h^{\ord}_k(\Gamma_{1}(Np^r);\mathcal{O})$ as the maximal algebra direct summand of $h_k(\Gamma_{1}(Np^r);\mathcal{O})$ such that $h^{\ord}_k(\Gamma_{1}(Np^r);\mathcal{O})$ acts on the space of ordinary forms: $S_k^{\ord}(\Gamma_1(Np^r);\mathcal{O}):=e\big(S^{\ord}_k(\Gamma_1(Np^r);\mathcal{O})\big) \subseteq S_k(\Gamma_1(Np^r);\mathcal{O})$,
where $e$ is Hida's idempotent operator. 

We define the \textit{universal ordinary (Hida's) Hecke algebra} with tame level $N$:
$$
\mathbf{H}^{\ord}_{Np^{\infty}}:=\varprojlim_r h^{\ord}_k(\Gamma_{1}(Np^r);\mathcal{O})
$$
which acts on the $\Lambda$-module:
$$
S_k^{\ord}(\Gamma_1(Np^{\infty});\mathcal{O}):=\bigcup_{r \ge 1} S_k^{\ord}(\Gamma_1(Np^r);\mathcal{O}).
$$
Then we can check that $\mathbf{H}^{\ord}_{Np^{\infty}}$ is regarded as an algebra over $\Lambda:=\mathbb{Z}_p[[D_{\infty}]]$ via the action of $D_{\infty}$. More explicitly, $\mathbf{H}^{\ord}_{Np^{\infty}}$ is a sub-algebra of $\End_{\Lambda}\big(S_k^{\ord}(\Gamma_1(Np^{\infty});\mathcal{O})\big)$ generated by Hecke operators $T_{\ell}$; $\ell \nmid Np$ and $U_{\ell}$; $\ell | Np$. It is a fundamental theorem of Hida theory that the algebra $\mathbf{H}^{\ord}_{Np^{\infty}}$ does not depend on the choice of $k \ge 2$ (see \cite[Theorem 1.1]{Hid1}). Moreover, $\mathbf{H}^{\ord}_{Np^{\infty}}$ is commutative and a semi-local ring which is finite flat over $\Lambda$ and is characterized by the algebra isomorphism:
$$
\mathbf{H}^{\ord}_{Np^{\infty}} \otimes_{\Lambda} \Lambda/(P^r_k) \simeq h^{\ord}_k(\Gamma_{1}(Np^r);\mathcal{O}_{r,k})
$$
for $\mathcal{O}_{r,k}:=\Lambda/(P^r_k)$ and $P^r_k:=u^{p^{r-1}}-\eta(u)^{(k-2)p^{r-1}}$, where $u \in D_{\infty}$ is the fixed topological generator. The above isomorphism is often called the \textit{control theorem for Hida deformations} and it is proved in \cite[Theorem 1.2]{Hid1}. 

We say that $\kappa \in \Hom_{\mathbb{Z}_p}(\mathbf{H}^{\ord}_{Np^{\infty}},\overline{\mathbb{Q}}_p)$ is an \textit{arithmetic character of weight $w(\kappa)$}, if there exists an open subgroup $\Gamma \subseteq D_{\infty}$ for which the restriction of $\kappa$ to the subring $\mathbb{Z}_p[[\Gamma]] \subseteq \mathbb{Z}_p[D_{\infty}]$ coincides with the ring map induced by the character $\eta^{w(\kappa)-2}$ for an integer $w(\kappa) \ge 2$. One may also define the notion of arithmetic character for any $\mathbb{Z}_p[[D_{\infty}]]$-algebra. The \textit{nearly ordinary Hecke algebra} is defined as the completed group algebra:
$$
\mathbf{H}^{\nord}_{Np^{\infty}}:=\mathbf{H}^{\ord}_{Np^{\infty}} \widehat{\otimes}_{\mathbb{Z}_p} \mathbb{Z}_p[[C_{\infty}]],
$$ 
which is a finite flat extension of $\mathbb{Z}_p[[C_{\infty}\times D_{\infty}]]$. The completed group algebra $\mathbb{Z}_p[[C_{\infty}\times D_{\infty}]]$ is non-canonically isomorphic to the power series ring $\mathbb{Z}_p[[X,Y]]$. In fact, an isomorphism is given by a fixed pair of topological generators of $C_{\infty}$ and $D_{\infty}$, respectively. We say that the localization $(\mathbf{H}^{\ord}_{Np^{\infty}})_{\fm}$ for a maximal ideal $\fm \subseteq \mathbf{H}^{\ord}_{Np^{\infty}}$ is a \textit{local component} of $\mathbf{H}^{\ord}_{Np^{\infty}}$.

We let $\mathbf{I}$ denote the integral closure of $\mathbb{Z}_p[[D_{\infty}]]$ in a finite field extension of $\Frac(\mathbb{Z}_p[[D_{\infty}]])$. Let $\mathbf{f}=\sum_{n=1}^{\infty}a_n(\mathbf{f})q^n \in \mathbf{I}[[q]]$ be a $p$-ordinary $p$-stabilized $\mathbf{I}$-adic newform with level $Np^{\infty}$. This means that the specialization $\mathbf{f}_{\kappa}$ is a $p$-ordinary $p$-stabilized cusp newform for all but finitely many arithmetic characters $\kappa \in \Hom_{\mathbb{Z}_p}(\mathbf{I},\overline{\mathbb{Q}}_p)$. We can define the ring map $\mathbf{H}^{\ord}_{Np^{\infty}} \to \mathbf{I}$ by sending $T_{\ell}$ to $a_{\ell}(\mathbf{f})$. Since $\mathbf{I}$ is a domain, this map factors through the quotient $\mathbf{I}^{\ord}_{\mathbf{f}}:=\mathbf{H}^{\ord}_{Np^{\infty}}/\fa_{\mathbf{f}}$ for a minimal prime $\fa_{\mathbf{f}} \subseteq \mathbf{H}^{\ord}_{Np^{\infty}}$. We call $\mathbf{I}^{\ord}_{\mathbf{f}}$ a \textit{branch} of the Hecke algebra $\mathbf{H}^{\ord}_{Np^{\infty}}$. Then $\mathbf{I}^{\ord}_{\mathbf{f}}$ is a local domain which is a finite extension of $\mathbb{Z}_p[[D_{\infty}]]$ and there is a continuous Galois representation attached by Hida:
$$
\rho_{\mathbf{f}}^{\ord}:G_{\mathbb{Q}} \to \Aut_{\mathbf{I}^{\ord}_{\mathbf{f}}}(\mathcal{T}^{\ord}),
$$ 
unramified outside a finite set of primes $\Sigma$, which consists of $\{\infty\}$ and all prime factors of $Np$. The $\mathbf{I}^{\ord}_{\mathbf{f}}$-module $\mathcal{T}^{\ord}$ is defined as a reflexive module
$$
\mathcal{T}^{\ord}:=\big(\varprojlim_n H^1_{\et,c}(Y_1(Np^n)_{\overline{\mathbb{Q}}};\mathbb{Z}^{\wedge}_{\mathbf{f}_{\kappa},(p)})^{\ord}[\mathbf{f}_{\kappa}]\big)^{\rc},
$$
where $\mathbf{f}_{\kappa}$ is any weight two specialization of $\mathbf{f}$ and $\varprojlim_n H^1_{\et,c}(Y_1(Np^n)_{\overline{\mathbb{Q}}};\mathbb{Z}^{\wedge}_{\mathbf{f}_{\kappa},(p)})^{\ord}$ is the inverse limit of the ordinary part of compactly supported etale cohomology groups of affine modular curves:
$$
\cdots \to Y_1(Np^{n+1})_{\overline{\mathbb{Q}}} \to Y_1(Np^n)_{\overline{\mathbb{Q}}} \to \cdots \to Y_1(Np)_{\overline{\mathbb{Q}}},
$$
where the transition map of the cohomology is given by the trace map. The representation $\rho_{\mathbf{f}}^{\ord}$ is called \textit{Hida deformation}.

Under the above setting, there is a well-defined notion of the \textit{residual representation} of $\rho_{\mathbf{f}}^{\ord}$. The original account of residual representations is \cite{MazWi}. A more detailed account is found in \cite{BookIwasawa} and Carayol's article in \cite{Kit}. Let us recall the following facts about $\mathcal{T}^{\ord}$, whose proof was originally given in \cite[Theorem 2.1]{Hid1}. Refer also to \cite[Theorem 2.1.1, Theorem 2.1.3 and Theorem 2.2.1]{EPW} for the review of these facts:
\begin{enumerate}
\item[\rm{(i)}]
$\rho_{\mathbf{f}}^{\ord}$ is unramified outside a finite set $\Sigma$ as above.

\item[\rm{(ii)}]
For a geometric Frobenius $\Frob_{\ell} \in G_{\mathbb{Q}}$ with $\ell \notin \Sigma$, we have $\Tr\rho_{\mathbf{f}}^{\ord}(\Frob_{\ell}))=T_{\ell}$, where $T_{\ell}$ is the Hecke operator in $\mathbf{I}^{\ord}_{\mathbf{f}}$.

\item[\rm{(iii)}]
Let us define $\mathcal{V}^{\ord} :=\mathcal{T}^{\ord} \otimes_{\mathbf{I}^{\ord}_{\mathbf{f}}}\Frac(\mathbf{I}^{\ord}_{\mathbf{f}})$ with $\Frac(\mathbf{I}^{\ord}_{\mathbf{f}})$ being the field of fractions of 
$\mathbf{I}^{\ord}_{\mathbf{f}}$. 
Then, the restriction $\rho_{\mathbf{f}}^{\ord}|_{G_{\mathbb{Q}_p}}$ admits the following $G_{\mathbb{Q}_p}$-invariant filtration:
$$
0 \to F^+\mathcal{V}^{\ord} \to \mathcal{V}^{\ord} \to \mathcal{V}^{\ord}/F^+\mathcal{V}^{\ord} \to 0
$$
in which both $F^+\mathcal{V}^{\ord} $ and $\mathcal{V}^{\ord} /F^+\mathcal{V}^{\ord}$ are $\Frac(\mathbf{I}^{\ord}_{\mathbf{f}})$-modules of rank one and $G_{\mathbb{Q}_p}$ acts on $\mathcal{V}^{\ord}/F^+\mathcal{V}^{\ord}$ via $\alpha$, where $\alpha:G_{\mathbb{Q}_p} \to (\mathbf{I}^{\mathrm{ord}}_{\mathbf{f}})^{\times}$ is an unramified character such that $\alpha(\Frob_p)=U_p$.
\end{enumerate}

Let $\widetilde{\chi}_{\cyc}$ be the universal cyclotomic character $G_{\mathbb{Q}} \twoheadrightarrow C_\infty \hookrightarrow \mathbb{Z}_p [[C_\infty ]]$ induced by $\chi_{\cyc}$. We denote by $\rho_{\mathbf{f}}^{\nord}$ the continuous $G_{\mathbb{Q}}$-representation $\rho_{\mathbf{f}}^{\ord} \widehat{\otimes}\widetilde{\chi}_{\cyc}$ and denote the representation space of $\rho_{\mathbf{f}}^{\ord} \widehat{\otimes}\widetilde{\chi}_{\cyc}$ by $\mathcal{T}^{\nord}$. This is a finitely generated module over $\mathbf{I}^{\nord}_{\mathbf{f}}=\mathbf{I}^{\ord}_{\mathbf{f}} \widehat{\otimes}_{\mathbb{Z}_p} \mathbb{Z}_p[[C_{\infty}]]$. The representation
$$
\rho_{\mathbf{f}}^{\nord}:G_{\mathbb{Q}} \to \Aut_{\mathbf{I}^{\nord}_{\mathbf{f}}}(\mathcal{T}^{\nord})
$$
is called \textit{nearly ordinary deformation} attached to the Hida family $\mathbf{f}$. We also call it \textit{(nearly ordinary) Hida deformation}.

We will consider the following conditions, which are not a major obstacle in practice:

\begin{enumerate}
\item[($\bf{NOR}$):]
$\mathbf{I}^{\ord}_{\mathbf{f}}$ is a normal domain.

\item[($\bf{IRR}$):]
The residual representation associated with $\rho_{\mathbf{f}}^{\ord}$ is absolutely irreducible.

\item[($\bf{FIL}$):] 
The restriction $\rho_{\mathbf{f}}^{\ord}|_{G_{\mathbb{Q}_p}}$ admits a $G_{\mathbb{Q}_p}$-invariant filtration:
$$
0 \to F^+\mathcal{T}^{\ord}  \to \mathcal{T}^{\ord} \to \mathcal{T}^{\ord}/F^+\mathcal{T}^{\ord}   \to 0,
$$
where $F^+\mathcal{T}^{\ord} $ and $\mathcal{T}^{\ord} /F^+\mathcal{T}^{\ord}$ are direct summands as $\mathbf{I}^{\ord}_{\mathbf{f}}$-modules, and the above sequence gives rise to the $G_{\mathbb{Q}_p}$-invariant filtration:
$$
0 \to F^+\mathcal{V}^{\ord}  \to \mathcal{V}^{\ord}  \to \mathcal{V}^{\ord} /F^+\mathcal{V}^{\ord}  \to 0,
$$
after taking the base extension to $\mathcal{V}^{\ord} $. 
\end{enumerate}

We note the following fact.

\begin{lemma}
Assume that the condition $(\bf{NOR})$ holds. Then $\mathbf{I}^{\nord}_{\mathbf{f}}$ is a three-dimensional local Cohen-Macaulay normal domain.
\end{lemma}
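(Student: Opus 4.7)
The plan is to identify $\mathbf{I}^{\nord}_{\mathbf{f}}$ with the formal power series ring $\mathbf{I}^{\ord}_{\mathbf{f}}[[X]]$ through a choice of topological generator of $C_\infty \simeq \mathbb{Z}_p$, which gives $\mathbb{Z}_p[[C_\infty]] \simeq \mathbb{Z}_p[[X]]$, and then to transfer the relevant ring-theoretic properties across the flat local extension $\mathbf{I}^{\ord}_{\mathbf{f}} \hookrightarrow \mathbf{I}^{\ord}_{\mathbf{f}}[[X]]$. Since $\mathbf{I}^{\ord}_{\mathbf{f}}$ is already a complete Noetherian local ring, the completed tensor product $\mathbf{I}^{\ord}_{\mathbf{f}} \widehat{\otimes}_{\mathbb{Z}_p} \mathbb{Z}_p[[X]]$ coincides with $\mathbf{I}^{\ord}_{\mathbf{f}}[[X]]$.

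First I would recall that $\mathbf{I}^{\ord}_{\mathbf{f}}$ itself is a two-dimensional Noetherian complete local normal domain. It is module-finite over the two-dimensional regular local ring $\mathbb{Z}_p[[D_\infty]] \simeq \mathbb{Z}_p[[T]]$; it is local and a domain by construction as the quotient of $\mathbf{H}^{\ord}_{Np^\infty}$ by the minimal prime $\fa_{\mathbf{f}}$; and it is normal by the hypothesis $(\bf{NOR})$. Since a Noetherian local normal ring of Krull dimension two automatically satisfies Serre's condition $(S_2)$, which in dimension two coincides with the Cohen-Macaulay property, $\mathbf{I}^{\ord}_{\mathbf{f}}$ is also Cohen-Macaulay. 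From this it follows immediately that $\mathbf{I}^{\ord}_{\mathbf{f}}[[X]]$ is local with maximal ideal $(\fm,X)$, is a domain, and has Krull dimension $\dim \mathbf{I}^{\ord}_{\mathbf{f}} + 1 = 3$.

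It remains to transfer the Cohen-Macaulay and normal properties to $\mathbf{I}^{\ord}_{\mathbf{f}}[[X]]$. For this I would apply the standard descent/ascent theorems for flat local homomorphisms to the flat inclusion $\mathbf{I}^{\ord}_{\mathbf{f}} \hookrightarrow \mathbf{I}^{\ord}_{\mathbf{f}}[[X]]$, whose closed fiber is the discrete valuation ring $\mathbb{F}[[X]]$. By \cite[Theorem 23.3]{Mat} the total ring is Cohen-Macaulay because both the base and the closed fiber are. The main point, and the step requiring the most care, is normality: here I would invoke \cite[Theorem 23.9]{Mat}, observing that the closed fiber $\mathbb{F}[[X]]$ is geometrically normal over $\mathbb{F}$ (since $\mathbb{F}$ is finite and hence perfect, so the fiber being a normal DVR suffices), while the base $\mathbf{I}^{\ord}_{\mathbf{f}}$ is normal by assumption. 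This yields normality of $\mathbf{I}^{\ord}_{\mathbf{f}}[[X]] = \mathbf{I}^{\nord}_{\mathbf{f}}$ and completes the proof.
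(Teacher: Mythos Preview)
Your proof is correct and follows essentially the same route as the paper: identify $\mathbf{I}^{\nord}_{\mathbf{f}}$ with the power-series ring $\mathbf{I}^{\ord}_{\mathbf{f}}[[X]]$, use Serre's criterion to see that the two-dimensional normal base is Cohen--Macaulay, and then transfer the Cohen--Macaulay and normal properties upward. The paper's proof is a terse two-sentence version of exactly this argument, simply asserting the transfer step without citation, whereas you spell it out via \cite[Theorems~23.3 and 23.9]{Mat}.
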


\begin{proof}
Since $\mathbf{I}^{\ord}_{\mathbf{f}}$ is a two-dimensional normal local domain, it is Cohen-Macaulay by Serre's normality criterion. Now we observe that $\mathbf{I}^{\nord}_{\mathbf{f}}=\mathbf{I}^{\ord}_{\mathbf{f}}[[C_{\infty}]]$ is a three-dimensional local Cohen-Macaulay normal domain. 
\end{proof}

Let us assume the condition $(\bf{IRR})$. Then the $\mathbf{I}^{\ord}_{\mathbf{f}}$-lattice $\mathcal{T}^{\ord}$ is uniquely determined and it is a free module of rank two. Thus, it defines a representation $\rho_{\mathbf{f}}^{\ord}:G_{\mathbb{Q}} \to GL_2(\mathbf{I}^{\ord}_{\mathbf{f}})$. Moreover, since $\mathbf{I}^{\ord}_{\mathbf{f}}$ is a local domain, there is a unique maximal ideal $\fm_{\mathbf{f}}$ of $\mathbf{H}^{\ord}_{Np^{\infty}}$ such that $\fa_{\mathbf{f}} \subseteq \fm_{\mathbf{f}}$. Then the condition $(\bf{IRR})$ implies that the local component $(\mathbf{H}^{\ord}_{Np^{\infty}})_{\fm_{\mathbf{f}}}$ is Gorenstein. These facts together with the proofs are found in \cite[Proposition 2 at page 254]{MazWi}. There is a sufficient condition that assures the local filtration to consist of rank one free modules when $\mathcal{T}^{\ord}$ is free of rank two.

\begin{enumerate}
\item[($\bf{DIST}$):]
The semi-simplification of the $G_{\mathbb{Q}_p}$-module $\mathcal{T}^{\ord}/\fm_{\mathbf{I}^{\ord}_{\mathbf{f}}}\mathcal{T}^{\ord}$ is the sum of two distinct characters of $G_{\mathbb{Q}_p}$ 
with values in $(\mathbf{I}^{\ord}_{\mathbf{f}}/\fm_{\mathbf{I}^{\ord}_{\mathbf{f}}})^\times$, where $\fm_{\mathbf{I}^{\ord}_{\mathbf{f}}}$ is the maximal ideal of 
$\mathbf{I}^{\ord}_{\mathbf{f}}$.
\end{enumerate}

It is known that, if the condition $(\bf{DIST})$ holds and $\mathcal{T}^{\ord}$ is free of rank two, then both $F^+\mathcal{T}^{\ord}$ and $F^-\mathcal{T}^{\ord}$ are free $\mathbf{I}^{\ord}_{\mathbf{f}}$-modules of rank one. Especially, we have the implication: $(\bf{IRR})+(\bf{DIST}) \Rightarrow (\bf{FIL})$. In the rest of the present article, let us denote $\mathcal{T}^{\nord}$ by $\mathcal{T}$ for simplicity.\\

\subsection{Two-variable $p$-adic $L$-function over Hida deformations}

Recall that Kitagawa constructed $\Lambda$-adic modular symbols in \cite{Kit} in order to construct a two-variable $p$-adic $L$-function.

\begin{theorem}[Kitagawa]
\label{MazurKitagawa}
Assume that the condition $(\bf{IRR})$ holds. Then there exist free $\mathbf{I}^{\ord}_{\mathbf{f}}$-modules $\mathbf{MS}^+_{\mathbf{f}}$ and $\mathbf{MS}^-_{\mathbf{f}}$ of rank one such that for any arithmetic character $\kappa$ of $\mathbf{I}^{\ord}_{\mathbf{f}}$ with $w(\kappa) \ge 2$, the specialization $\mathbf{MS}^{\pm}_{\mathbf{f}} \otimes_{\mathbf{I}^{\ord}_{\mathbf{f}}} \kappa(\mathbf{I}^{\ord}_{\mathbf{f}})$ is isomorphic as $\mathbb{Z}^{\wedge}_{\mathbf{f}_{\kappa},(p)}$-module, to the intersection:
$$
i \big(MS_{\mathbf{f}_{\kappa}}(\mathbb{Z}^{\wedge}_{\mathbf{f}_{\kappa},(p)})\big) \cap H^1_{\Betti,c}(Y_1(M_{\kappa})_{\mathbb{C}},\mathcal{L}_{w(\kappa)}(\mathbb{Q}^{\wedge}_{\mathbf{f}_{\kappa},(p)}))[\mathbf{f}_{\kappa}]^{\pm},
$$
where $M_{\kappa}$ denotes the level of the cusp form $\mathbf{f}_{\kappa}$ and the composition map $i:MS_{\mathbf{f}_{\kappa}}(\mathbb{Z}^{\wedge}_{\mathbf{f}_{\kappa},(p)}) \simeq H^1_{\Betti,c}(Y_1(M_{\kappa})_{\mathbb{C}},\mathcal{L}_{w(\kappa)}(\mathbb{Z}^{\wedge}_{\mathbf{f}_{\kappa},(p)})) \to H^1_{\Betti,c}(Y_1(M_{\kappa})_{\mathbb{C}},\mathcal{L}_{w(\kappa)}(\mathbb{Q}^{\wedge}_{\mathbf{f}_{\kappa},(p)}))$ is the natural map induced by $\mathbb{Z}^{\wedge}_{\mathbf{f}_{\kappa},(p)} \to \mathbb{Q}^{\wedge}_{\mathbf{f}_{\kappa},(p)}$.
\end{theorem}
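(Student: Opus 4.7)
The plan is to construct $\mathbf{MS}^{\pm}_{\mathbf{f}}$ as the $\pm$-eigenspace (under the complex conjugation involution) of a $\Lambda$-adic compactly supported parabolic cohomology group localized at the branch $\mathbf{f}$, and then deduce the freeness and the specialization formula by combining Hida's control theorem for the ordinary Hecke algebra with the Gorenstein property that follows from $(\bf{IRR})$. Concretely, first form the inverse system of $\mathcal{O}$-modules
$$
H^1_c(Np^r):=H^1_{\Betti,c}\bigl(Y_1(Np^r)_{\mathbb{C}},\mathcal{L}_2(\mathcal{O})\bigr),
$$
apply Hida's ordinary idempotent $e$ to cut out the ordinary part $eH^1_c(Np^r)$, and take the inverse limit along the trace maps to obtain a finitely generated module $\mathbf{H}^1_c$ over $\mathbf{H}^{\ord}_{Np^{\infty}}$ that is naturally an $\mathbf{H}^{\ord}_{Np^{\infty}}[G_{\mathbb{Q}}]$-module. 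Reducing to the branch attached to $\mathbf{f}$ via the quotient $\mathbf{H}^{\ord}_{Np^{\infty}} \twoheadrightarrow \mathbf{I}^{\ord}_{\mathbf{f}}$ and taking the $\pm$-part under complex conjugation, define
$$
\mathbf{MS}^{\pm}_{\mathbf{f}}:=\bigl(\mathbf{H}^1_c \otimes_{\mathbf{H}^{\ord}_{Np^{\infty}}} \mathbf{I}^{\ord}_{\mathbf{f}}\bigr)^{\pm}.
$$

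The second step establishes the freeness of rank one. Under $(\bf{IRR})$, a theorem of Mazur--Wiles (quoted in the discussion preceding the theorem, \cite[p.~254]{MazWi}) guarantees that the local component $(\mathbf{H}^{\ord}_{Np^{\infty}})_{\fm_{\mathbf{f}}}$ is Gorenstein, and consequently the inverse limit of ordinary cohomology lattices is free of rank two over this local component. Taking complex conjugation $\pm$-parts and reducing modulo the minimal prime $\fa_{\mathbf{f}}$ of the branch then yields free modules of rank one over $\mathbf{I}^{\ord}_{\mathbf{f}}$. The key input here is to identify $\mathbf{MS}^{\pm}_{\mathbf{f}}$ with the $\pm$-part of the $\mathbf{I}^{\ord}_{\mathbf{f}}$-lattice underlying the $\Lambda$-adic Hida module, which by $(\bf{IRR})$ is uniquely determined up to isomorphism and equals the invariant lattice described above.

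The third step proves the interpolation property. Applying an arithmetic character $\kappa$ of weight $w(\kappa) \ge 2$, I use Hida's control theorem together with the compatibility of the compactly supported cohomology under the transition maps to identify
$$
\mathbf{H}^1_c \otimes_{\mathbf{H}^{\ord}_{Np^{\infty}}} \kappa(\mathbf{H}^{\ord}_{Np^{\infty}}) \simeq eH^1_{\Betti,c}\bigl(Y_1(M_\kappa)_{\mathbb{C}},\mathcal{L}_{w(\kappa)}(\mathbb{Z}^{\wedge}_{\mathbf{f}_\kappa,(p)})\bigr),
$$
after projecting to the $\mathbf{f}_\kappa$-isotypic component. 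The image of $MS_{\mathbf{f}_\kappa}(\mathbb{Z}^{\wedge}_{\mathbf{f}_\kappa,(p)})$ inside $H^1_{\Betti,c}\bigl(Y_1(M_\kappa)_{\mathbb{C}},\mathcal{L}_{w(\kappa)}(\mathbb{Q}^{\wedge}_{\mathbf{f}_\kappa,(p)})\bigr)[\mathbf{f}_\kappa]$ is then, via Proposition~\ref{modsy}, precisely the integral $\mathbf{f}_\kappa$-isotypic part of the specialization, and intersecting with the parabolic integral cohomology extracts the correct integral lattice. Taking the $\pm$-parts preserves this identification.

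The main obstacle is controlling the specialization map integrally. While Hida's control theorem is a clean statement at the level of Hecke algebras, transferring it to a statement about cohomological lattices requires that the formation of the ordinary part commutes with the specialization $\otimes_{\mathbf{H}^{\ord}_{Np^{\infty}}} \kappa(\mathbf{H}^{\ord}_{Np^{\infty}})$ on the nose, and that the $\pm$-decomposition is compatible with this specialization (which needs $p>2$ so that $\frac{1}{2} \in \mathcal{O}$). The delicate point is to prove that the natural comparison map is not merely a rational isomorphism but identifies the integral lattices correctly, i.e.\ that the intersection with the compactly supported integral parabolic cohomology exactly matches the tensor product on the $\Lambda$-adic side; this is where the Gorenstein hypothesis coming from $(\bf{IRR})$ is indispensable.
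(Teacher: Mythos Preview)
The paper does not give its own proof of this theorem: immediately after the statement it simply remarks that $(\bf{IRR})$ forces the local component $(\mathbf{H}^{\ord}_{Np^{\infty}})_{\fm_{\mathbf{f}}}$ to be Gorenstein and then refers the reader to \cite[\S~5.5]{Kit} for the argument. Your proposal is therefore not competing against any in-paper proof; rather, you are sketching the construction that Kitagawa carries out in the cited reference. In that sense your outline is on the right track: forming the inverse limit of ordinary compactly supported cohomology, passing to the branch $\mathbf{I}^{\ord}_{\mathbf{f}}$, and using the Gorenstein consequence of $(\bf{IRR})$ together with Hida's control theorem is exactly the skeleton of Kitagawa's argument.

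That said, your sketch glosses over the step that is genuinely hard in \cite{Kit}: the exact integral identification of the specialization with the intersection $i\bigl(MS_{\mathbf{f}_\kappa}(\mathbb{Z}^{\wedge}_{\mathbf{f}_\kappa,(p)})\bigr)\cap H^1_{\Betti,c}[\mathbf{f}_\kappa]^{\pm}$. You correctly flag this as the ``main obstacle,'' but you do not indicate how it is overcome. In Kitagawa's treatment this requires a careful analysis of the universal $p$-adic modular symbols (built from measure-valued symbols rather than directly from \'etale cohomology) and a comparison with the Betti lattice that uses the Gorenstein duality in an essential way to show no index is lost integrally. Simply invoking ``Hida's control theorem together with compatibility'' is not enough: control gives an isomorphism after killing the relevant height-one prime, but matching the resulting lattice with the one cut out by the intersection in the statement is additional work. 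If you want your write-up to stand on its own, you should either spell out this lattice comparison or, as the paper does, cite \cite[\S~5.5]{Kit} explicitly for it.
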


The modules $\mathbf{MS}^{\pm}_{\mathbf{f}}$ are called the \textit{spaces of $\mathbf{I}^{\ord}_{\mathbf{f}}$-adic modular symbols}. As we noted earlier, $(\bf{IRR})$ ensures that the local ring $(\mathbf{H}_{Np^{\infty}}^{\ord})_{\fm_{\mathbf{f}}}$ is Gorenstein, where $\fm_{\mathbf{f}}$ is the unique maximal ideal of $\mathbf{H}_{Np^{\infty}}^{\ord}$ such that $\fa_{\mathbf{f}} \subseteq \fm_{\mathbf{f}}$. The proof of Theorem \ref{MazurKitagawa} is found in \cite[\S~5.5]{Kit}.

\begin{definition}[$p$-adic period]
Assume that the condition $(\bf{IRR})$ holds. Fix a basis  $\mathbf{B}^{+}_{\mathbf{f}}$ of the $\mathbf{I}^{\ord}_{\mathbf{f}}$-module $\mathbf{MS}^{+}_{\mathbf{f}}$ (resp.
$\mathbf{B}^{-}_{\mathbf{f}}$ of the $\mathbf{I}^{\ord}_{\mathbf{f}}$-module $\mathbf{MS}^-_{\mathbf{f}}$). Fix a basis $b_{\mathcal{T}^{\ord}_{\kappa}}^+$ of the $\mathbb{Z}_{\mathbf{f}_{\kappa},(p)}$-module $MS^+_{\mathbf{f}_{\kappa}}(\mathbb{Z}_{\mathbf{f}_{\kappa},(p)})$ (resp. $b_{\mathcal{T}^{\ord}_{\kappa}}^-$ of the $\mathbb{Z}_{\mathbf{f}_{\kappa},(p)}$-module $MS^-_{\mathbf{f}_{\kappa}}(\mathbb{Z}_{\mathbf{f}_{\kappa},(p)})$). We define the \textit{p-adic period} $\Omega_p(\mathbf{B}_{\mathbf{f}}^{\pm},
b^{\pm}_{\mathcal{T}^{\ord}_{\kappa}}) \in \overline{\mathbb{Z}}_p$ to be a constant satisfying the equality: 
$$
\kappa(\mathbf{B}^{\pm}_{\mathbf{f}})=
\Omega_p(\mathbf{B}_{\mathbf{f}}^{\pm},
b^{\pm}_{\mathcal{T}^{\ord}_{\kappa}}) \cdot b^{\pm}_{\mathcal{T}^{\ord}_{\kappa}}.
$$
\end{definition}

We have the following theorem (see \cite[Theorem 1.1]{Kit}).

\begin{theorem}[Kitagawa]
Assume that the conditions $(\bf{NOR})$ and $(\bf{IRR})$ hold and let $\xi=(\chi^j_{\cyc}\phi,\kappa)$ be an arithmetic character such that $\phi:C_{\infty} \to \overline{\mathbb{Q}}^{\times}$ is a finite order Dirichlet character of $p$-power conductor and $1 \le j \le w(\kappa)+1$ (criticality condition). Then there exists an element $L_p(\{\mathbf{B}^{\pm}_{\mathbf{f}}\}) \in \mathbf{I}^{\nord}_{\mathbf{f}}$ which satisfies the following interpolation property over an $\mathbf{I}^{\ord}_{\mathbf{f}}$-adic family of cusp forms:
\begin{equation}
\label{interpolation}
\frac{\xi(L_p(\{\mathbf{B}^{\pm}_{\mathbf{f}}\}))}{\Omega_p(\mathbf{B}_{\mathbf{f}}^{\pm},
b_{\mathcal{T}^{\ord}_{\kappa}}^{(-1)^{j-1}\phi(-1)})}=(-1)^j (j-1)! \times 
\Eul(\mathbf{f}_{\kappa,}j,\phi) \times
G(\phi) \times \frac{L(\mathbf{f}_{\kappa},\phi^{-1},j)}{(2\pi\sqrt{-1})^j \Omega_{\infty}(\mathbf{f}_{\kappa},b_{\mathcal{T}^{\ord}_{\kappa}}^{(-1)^{j-1}\phi(-1)})},
\end{equation}
where
$$
\Eul(\mathbf{f}_{\kappa},j,\phi)=\begin{cases}
\Big(1-\frac{p^{j-1}}{a_p(\mathbf{f}_{\kappa})}\Big), &  \phi=\mathbf{I} \\
\Big(\frac{p^{j-1}}{a_p(\mathbf{f}_{\kappa})}\Big)^{\ord_p(\phi)}, & \phi \ne \mathbf{I} 
\end{cases}
$$
and $\Omega_p(\mathbf{B}_{\mathbf{f}}^{\pm},
b_{\mathcal{T}^{\ord}_{\kappa}}^{(-1)^{j-1}\phi(-1)}) \in \overline{\mathbb{Z}}_p$ is a $p$-adic period and $\Omega_{\infty}(\mathbf{f}_{\kappa},b_{\mathcal{T}^{\ord}_{\kappa}}^{(-1)^{j-1}\phi(-1)}) \in \mathbb{C}$ is a complex period attached to $\mathbf{f}_{\kappa}$ and $b_{\mathcal{T}^{\ord}_{\kappa}}^{\pm}$, determined by the Eichler-Shimura map. $G(\phi)$ is the Gauss sum of the character $\phi$. Finally, the symbol $\xi(L_p(\{\mathbf{B}^{\pm}_{\mathbf{f}}\})) \in \overline{\mathbb{Q}}_p$ denotes the specialization of $L_p(\{\mathbf{B}^{\pm}_{\mathbf{f}}\})$ at the arithmetic point $\xi=(\chi_{\cyc}^j\phi,\kappa)$. 
\end{theorem}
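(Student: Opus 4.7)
The plan is to construct $L_p(\{\mathbf{B}^{\pm}_{\mathbf{f}}\})$ as an $\mathbf{I}^{\ord}_{\mathbf{f}}$-adic Mazur--Mellin transform of the Kitagawa modular symbol $\mathbf{B}^{\pm}_{\mathbf{f}} \in \mathbf{MS}^{\pm}_{\mathbf{f}}$, and then to verify the interpolation formula $(\ref{interpolation})$ by specializing to arithmetic characters. The construction proceeds by evaluating $\mathbf{B}^{\pm}_{\mathbf{f}}$ at divisors $\{a/p^n\} - \{\infty\} \in \mathcal{D}_0$ for $a \in (\mathbb{Z}/p^n\mathbb{Z})^\times$ and bundling the results into an $\mathbf{I}^{\ord}_{\mathbf{f}}$-valued distribution on $C_\infty$. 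To make this precise, I would first pair $\mathbf{B}^{\pm}_{\mathbf{f}}(\{a/p^n\}-\{\infty\})$ with the universal cyclotomic variable on $C_\infty$ (replacing the classical polynomial factor $(zX+Y)^{k-2}$ in Example \ref{specialcycle} by an $\mathbf{I}^{\ord}_{\mathbf{f}}$-adic interpolation, which exists because $\mathbf{MS}^{\pm}_{\mathbf{f}}$ is the universal object specializing to the classical modular symbol in all weights $w(\kappa)\geq 2$ by Theorem \ref{MazurKitagawa}).

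More concretely, first I would define a sequence of elements $\mu_n^{\pm} \in \mathbf{I}^{\ord}_{\mathbf{f}}[(\mathbb{Z}/p^n\mathbb{Z})^{\times}]$ by the rule
$$
\mu_n^{\pm} := \sum_{a \in (\mathbb{Z}/p^n\mathbb{Z})^{\times}} \mathbf{B}^{\pm}_{\mathbf{f}}\big(\{a/p^n\}-\{\infty\}\big) \cdot [a].
$$
Second, I would verify the compatibility $\pi_{n+1,n}(\mu^{\pm}_{n+1}) = U_p \cdot \mu_n^{\pm}$ under the natural projection $(\mathbb{Z}/p^{n+1}\mathbb{Z})^{\times} \twoheadrightarrow (\mathbb{Z}/p^n\mathbb{Z})^{\times}$, using the Hecke relation built into the tower of modular symbols and the action of $U_p$ on $\mathbf{B}^{\pm}_{\mathbf{f}}$; since the $U_p$-eigenvalue $a_p(\mathbf{f})$ is a unit in $\mathbf{I}^{\ord}_{\mathbf{f}}$ (ordinarity), dividing by $a_p(\mathbf{f})^n$ yields a genuinely norm-compatible system $\{\widetilde{\mu}_n^{\pm}\}$ which therefore defines an element $\mu^{\pm}_{\mathbf{f}} \in \mathbf{I}^{\ord}_{\mathbf{f}}[[\mathbb{Z}_p^{\times}]] = \mathbf{I}^{\ord}_{\mathbf{f}}[[C_\infty]] \otimes \mathbb{Z}_p[(\mathbb{Z}/p\mathbb{Z})^{\times}]$. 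Third, projecting through the identification $C_\infty \simeq 1+p\mathbb{Z}_p$ and sorting according to the sign $\pm$ gives the desired element
$L_p(\{\mathbf{B}^{\pm}_{\mathbf{f}}\}) \in \mathbf{I}^{\nord}_{\mathbf{f}}$.

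Fourth, to verify the interpolation formula $(\ref{interpolation})$, I would specialize at an arithmetic character $\xi = (\chi_{\cyc}^j\phi,\kappa)$ in two steps: first apply $\kappa$ to pass from $\mathbf{B}^{\pm}_{\mathbf{f}}$ to a scalar multiple of the classical modular symbol $\eta_{\mathbf{f}_\kappa}^{\pm}$ of $\mathbf{f}_\kappa$ as prescribed by Theorem \ref{MazurKitagawa}; this introduces the $p$-adic period $\Omega_p(\mathbf{B}^{\pm}_{\mathbf{f}}, b^{\pm}_{\mathcal{T}^{\ord}_\kappa})$. Then apply $\chi_{\cyc}^j\phi$, reducing the cyclotomic part of $\mu_{\mathbf{f}}^{\pm}$ to the classical finite sum
$\sum_{a} \phi(a) \eta^{\pm}_{\mathbf{f}_\kappa}(\{a/p^n\}-\{\infty\})$ evaluated at the monomial $X^{j-1}Y^{w(\kappa)-j-1}$. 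Invoking the Birch--Manin integral representation of $L(\mathbf{f}_\kappa,\phi^{-1},j)$ in terms of these finite sums then produces the Gauss sum $G(\phi)$, the factor $(j-1)!$, the sign $(-1)^j$, and the complex period $\Omega_\infty(\mathbf{f}_\kappa, b_{\mathcal{T}^{\ord}_\kappa}^{(-1)^{j-1}\phi(-1)})$; the Euler factor $\mathrm{Eul}(\mathbf{f}_\kappa,j,\phi)$ emerges from the $U_p$-renormalization when passing from level $Np^n$ back to level $Np^{\max(1,\mathrm{cond}(\phi))}$.

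The main obstacle will be step two, namely verifying the norm compatibility of the $\mu_n^{\pm}$ in the $\mathbf{I}^{\ord}_{\mathbf{f}}$-adic setting: one must track the trace maps on Kitagawa's modules $\mathbf{MS}^{\pm}_{\mathbf{f}}$ and confirm that the action of $U_p$ really coincides with the Hecke action present in the definition of $\mathbf{MS}^{\pm}_{\mathbf{f}}$. Once this is established, the interpolation is mostly bookkeeping, but the precise matching of the sign, factorial, and Gauss sum in step four requires careful normalization of $\mathbf{B}^{\pm}_{\mathbf{f}}$ with respect to the involution, and of the polynomial variable corresponding to $\chi_{\cyc}^j\phi$, through the Eichler--Shimura isomorphism of Theorem \ref{ES}.
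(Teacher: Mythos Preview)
The paper does not give its own proof of this theorem: it is stated as a result of Kitagawa with the citation ``see \cite[Theorem 1.1]{Kit}'' immediately preceding the statement, and no proof (not even a sketch) follows---only a remark on the Drinfeld--Manin algebraicity of the special values. So there is no paper proof to compare your proposal against.

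That said, your outline is essentially the standard Mazur--Kitagawa construction as carried out in \cite{Kit}: build an $\mathbf{I}^{\ord}_{\mathbf{f}}$-valued measure on $\mathbb{Z}_p^\times$ from the evaluations of the $\Lambda$-adic modular symbol $\mathbf{B}^{\pm}_{\mathbf{f}}$ at the divisors $\{a/p^n\}-\{\infty\}$, use ordinarity of $U_p$ to renormalize into a norm-compatible system, and then verify the interpolation by specializing in weight first (Theorem~\ref{MazurKitagawa}) and in the cyclotomic variable second (Birch--Manin formula). One point to be careful about: your expression ``evaluated at the monomial $X^{j-1}Y^{w(\kappa)-j-1}$'' has the wrong exponent---it should be $X^{j-1}Y^{w(\kappa)-1-j}$ or rather $X^{j-1}Y^{(w(\kappa)-2)-(j-1)}$, since $L_{w(\kappa)}(A)$ consists of homogeneous polynomials of degree $w(\kappa)-2$. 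Also, the norm-compatibility you flag as the ``main obstacle'' is genuinely where the content of Kitagawa's argument lies, and it requires his control theorem for $\Lambda$-adic modular symbols (his \S5), not just the statement of Theorem~\ref{MazurKitagawa}; if you intend to write out a full proof rather than cite \cite{Kit}, you would need to reproduce that.
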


\begin{remark}
%
By Drinfeld-Manin theorem, 
it is known that we have
$$
\frac{L(\mathbf{f}_{\kappa},\phi^{-1},j)}{(2\pi\sqrt{-1})^j \Omega_{\infty}(\mathbf{f}_{\kappa},b_{\mathcal{T}^{\ord}_{\kappa}}^{(-1)^{j-1}\phi(-1)})}
\in \overline{\mathbb{Q}}.
$$
Hence, the right-hand side of $(\ref{interpolation})$ is regarded as a $p$-adic number through the fixed embedding $i_p:\overline{\mathbb{Q}} \hookrightarrow \overline{\mathbb{Q}}_p$ (see also \cite{De79}).\\
\end{remark}

\section{Algebraic invariants of Hida deformations}
\label{GaloisDeformation}

In the present section, we first review Greenberg's Selmer group attached to the nearly ordinary (Hida) deformation based on the article \cite{Gr1} and prove some necessary results. After that, we recall the definition of Beilinson-Kato Euler system based on \cite{Kato} and introduce a $\Lambda$-adic family of these elements.\\

\subsection{Greenberg Selmer group}
Let $\mathcal{T}$ be the nearly ordinary deformation space attached to an $\mathbf{I}^{\ord}_{\mathbf{f}}$-adic newform $\mathbf{f}$ as introduced in \S~\ref{Hidafamily}. Take the discrete module $\mathcal{D}^{\nord}:=\mathcal{T} \otimes_{\mathbf{I}^{\nord}_{\mathbf{f}}}(\mathbf{I}^{\nord}_{\mathbf{f}})^{\PD}$ viewed as a Galois module via $g \in G_{\mathbb{Q}} \mapsto g(m \otimes \sigma)=g(m) \otimes \sigma$. Then the filtration ($\bf{FIL}$) in \S~\ref{Hidafamily} induces a filtration: $0 \to F^+\mathcal{D}^{\nord}  \to \mathcal{D}^{\nord} \to F^-\mathcal{D}^{\nord}  \to 0$. 
The \textit{Greenberg Selmer group} is defined as
$$
\Sel_{\mathbb{Q}}(\mathcal{T})=\ker\big(H^1(G_{\Sigma},\mathcal{D}^{\nord}) \to 
H^1(I_{\mathbb{Q}_p},\mathcal{D}^{\nord}/ F^+ \mathcal{D}^{\nord}) \times 
\prod_{\ell \nmid p } H^1(I_{\mathbb{Q}_{\ell}},\mathcal{D}^{\nord})\big),
$$
where $H^1(G_{\Sigma},\mathcal{D}^{\nord}) \to H^1(G_{\mathbb{Q}_{\ell}},\mathcal{D}^{\nord})$ is the restriction map. 

We define the \textit{localization map for the nearly ordinary deformations} as the natural composite maps:
$$
\mathbf{Loc}_{/f}: H^1(G_{\Sigma},\mathcal{T}^*(1)) \to H^1(G_{\mathbb{Q}_p},\mathcal{T}^*(1)) \to H^1_{/f}(G_{\mathbb{Q}_p},\mathcal{T}^*(1)),
$$
where $H^1_{/f}(G_{\mathbb{Q}_p},\mathcal{T}^*(1))$ is the \textit{singular part} of the local Galois cohomology as defined in \cite[Lemma 3.10]{Oc}. Specializing to $\mathbf{f}_{\kappa}$ for an arithmetic character $\kappa \in \Hom_{\mathbb{Z}_p}(\mathbf{I}^{\ord}_{\mathbf{f}},\overline{\mathbb{Q}}_p)$ with $w(\kappa) \ge 2$, let $T_{\mathbf{f}_{\kappa}}$ be a $\mathbb{Z}_{\mathbf{f}_{\kappa},(p)}^{\wedge}$-lattice of the $G_{\mathbb{Q}}$-representation attached to $\mathbf{f}_{\kappa}$. Let $\phi:C_{\infty} \to \overline{\mathbb{Q}}^{\times}$ be a finite order Dirichlet character of $p$-power conductor and let $1 \le j \le w(\kappa)+1$ (criticality condition). Then we define the \textit{localization map} as the
composite maps:
$$
\loc_{/f}:H^1(G_{\Sigma},T^*_{\mathbf{f}_{\kappa}}(1-j) \otimes \phi^{-1}) \to H^1(G_{\mathbb{Q}_p},T^*_{\mathbf{f}_{\kappa}}(1-j) \otimes \phi^{-1}) \to H^1_{/f}(G_{\mathbb{Q}_p},T^*_{\mathbf{f}_{\kappa}}(1-j) \otimes \phi^{-1}).
$$
Finally, the \textit{Bloch-Kato's dual exponential map} is defined by
$$
\exp^*:H^1_{/f}(G_{\mathbb{Q}_p},V^*_{\mathbf{f}_{\kappa}}(1-j) \otimes \phi^{-1}) \to \Fil^0 D_{\dR}(V^*_{\mathbf{f}_{\kappa}}(1-j) \otimes \phi^{-1}),
$$
where $V_{\mathbf{f}_{\kappa}}:=T_{\mathbf{f}_{\kappa}} \otimes_{\mathbb{Z}_p} \mathbb{Q}_p$.

\subsection{$p$-adic $L$-function and Beilinson-Kato Euler system}

In \cite{Oc} and \cite{Oc2}, we gave another construction of a two-variable $p$-adic $L$-function attached to $\mathcal{T}$ based on Mazur-Kitagawa $L$-function \cite{Kit}. Recall that Beilinson-Kato Euler system, which is constructed and studied extensively in \cite{Kato}, is adapted into the Hida deformations and it thus defines a $\Lambda$-adic version of Euler system.

We are ready to formulate the main content of Beilinson-Kato elements for nearly ordinary Hida deformations.

\begin{theorem}
\label{Coleman}
Assume that the conditions $(\bf{NOR})$ and $(\bf{IRR})$ hold for the nearly ordinary deformation space $\mathcal{T}$ attached to a Hida family $\mathbf{f}$. Fix an $\mathbf{I}^{\ord}_{\mathbf{f}}$-basis $\{\mathbf{B}^{\pm}_{\mathbf{f}}\}$ of the $\mathbf{I}^{\ord}_{\mathbf{f}}$-modules $\mathbf{MS}^{\pm}_{\mathbf{f}}$. Let us put
$$
\mathfrak{R}:=\{r \in \mathbb{Z}_{\ge 1}~|~(r,2p)=1~\mbox{and}~r~\mbox{is a square-free integer}\}.
$$
Let $G_{\Sigma,r}$ be the Galois group of $\mathbb{Q}(\mu_r)_{\Sigma}/\mathbb{Q}(\mu_r)$ with $\Sigma=\{p,\infty\}$. Then there exists a collection of elements:
$$
\big\{\mathbf{z}^{\BK}_r(\{\mathbf{B}^{\pm}_{\mathbf{f}}\}) \in H^1(G_{\Sigma,r},\mathcal{T}^*(1))\big\}_{r \in \mathfrak{R}}
$$
which defines Euler system for $(\mathcal{T},\mathbf{I}^{\nord}_{\mathbf{f}},p)$ such that the following statements hold:

\begin{enumerate}
\item[\rm(i)]
For $ r \ell\in  \mathfrak{R}$ and the inertial group $I_{\ell} \subseteq G_{\mathbb{Q}}$, we have the following equality
$$
\Cor_{\mathbb{Q}(\mu_{r\ell})/\mathbb{Q}(\mu_{r})}(\mathbf{z}^{\BK}_{r \ell}(\{\mathbf{B}^{\pm}_{\mathbf{f}}\}))=P(\Frob_{\ell};\mathcal{T})\mathbf{z}^{\BK}_{r}(\{\mathbf{B}^{\pm}_{\mathbf{f}}\}),
$$
where $P(\Frob_{\ell};\mathcal{T})=\det(1-\Frob_{\ell}X;\mathcal{T}^*(1)^{I_{\ell}})$ is the characteristic polynomial for $\Frob_{\ell}$.

\item[\rm (ii)]
Let us put
$$
\mathbf{z}^{\BK}(\{\mathbf{B}^{\pm}_{\mathbf{f}}\}):=
\mathbf{z}^{\BK}_1(\{\mathbf{B}^{\pm}_{\mathbf{f}}\}) \in H^1(G_{\Sigma,1},\mathcal{T}^*(1)).
$$
Then for any arithmetic character $\xi:=(\chi_{\cyc}^j \phi, \kappa) \in \Hom_{\mathbb{Z}_p}(\mathbf{I}^{\nord}_{\mathbf{f}},\overline{\mathbb{Q}}_p)$ such that $1 \le j \le w(\kappa)+1$ and $\phi$ is a finite order Dirichlet character of $p$-power conductor, the element
$$
\loc_{/f} \circ \xi(\mathbf{z}^{\BK}_{r \ell}(\{\mathbf{B}^{\pm}_{\mathbf{f}}\})) \in H^1_{/f}(G_{\mathbb{Q}_p}, V^*_{\mathbf{f}_{\kappa}}(1-j) \otimes \phi^{-1})
$$
satisfies the following interpolation formula
$$
\exp^*\big(\loc_{/f} \circ \xi(\mathbf{z}^{\BK}(\{\mathbf{B}^{\pm}_{\mathbf{f}}\}))\big)=
\Omega_p(\mathbf{B}_{\mathbf{f}}^{\sgn(j,\phi)},
b_{\mathcal{T}^{\ord}_{\kappa}}^{\sgn(j,\phi)}) \times
\frac{L(\mathbf{f}_{\kappa},\phi^{-1},j)}{(2 \pi \sqrt{-1})^j \Omega_{\infty}(\mathbf{f}_{\kappa},b_{\mathcal{T}^{\ord}_{\kappa}}^{\sgn(j,\kappa)})} \cdot \overline{\mathbf{f}_{\kappa}}
$$
which holds in $\Fil^0 D_{\dR}(V^*_{\mathbf{f}_{\kappa}}(1-j) \otimes \phi^{-1})$.

\item[\rm(iii)]
There is an injective $\mathbf{I}^{\nord}_{\mathbf{f}}$-module map (Coleman map):
$$
\Xi:H^1_{/f}(G_{\mathbb{Q}_p},\mathcal{T}^*(1)) \to \mathbf{I}^{\nord}_{\mathbf{f}}
$$
with pseudo-null cokernel and such that $L_p(\{\mathbf{B}^{\pm}_{\mathbf{f}}\}):=\Xi(\mathbf{Loc}_{/f}(\mathbf{z}^{\BK}(\{\mathbf{B}^{\pm}_{\mathbf{f}}\})))$ satisfies the following properties:

\begin{enumerate}
\item[\rm{(a)}]
$\Char_{\mathbf{I}^{\nord}_{\mathbf{f}}}\big(H^1_{/f}(G_{\mathbb{Q}_p},\mathcal{T}^*(1))/\mathbf{I}^{\nord}_{\mathbf{f}} \cdot \mathbf{Loc}_{/f}(\mathbf{z}^{\BK}(\{\mathbf{B}^{\pm}_{\mathbf{f}}\})\big)
=\big(L_p(\{\mathbf{B}^{\pm}_{\mathbf{f}}\})\big)$.
 
\item[\rm{(b)}]
$L_p(\{\mathbf{B}^{\pm}_{\mathbf{f}}\}) \in \mathbf{I}^{\nord}_{\mathbf{f}}$ is a two-variable $p$-adic $L$-function which satisfies the interpolation property as described in $(\ref{interpolation})$.
\end{enumerate}

\end{enumerate}
\end{theorem}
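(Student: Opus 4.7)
The plan is to assemble the theorem from three ingredients: Kato's construction of Beilinson-Kato zeta elements in \cite{Kato}, the $\mathbf{I}^{\ord}_{\mathbf{f}}$-adic modular symbols $\mathbf{MS}^\pm_{\mathbf{f}}$ of Theorem \ref{MazurKitagawa}, and the two-variable Coleman/Perrin-Riou map developed in \cite{Oc} and \cite{Oc2}, extending the latter from the regular case $\mathbf{I}^{\nord}_{\mathbf{f}} \simeq \mathcal{O}[[X,Y]]$ to the normal case permitted by $(\bf{NOR})$. I would handle the three assertions in order.

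For assertion $(\mathrm{i})$, I would construct $\mathbf{z}^{\BK}_r(\{\mathbf{B}^\pm_{\mathbf{f}}\})$ by pairing Kato's zeta modular symbols against $\mathbf{B}^\pm_{\mathbf{f}}$ inside the Hida-theoretic inverse limit $\varprojlim_n H^1_{\et,c}(Y_1(Np^n)_{\overline{\mathbb{Q}}(\mu_r)}, -)^{\ord}$, and projecting onto the $\mathbf{f}$-isotypic quotient. Under $(\bf{IRR})$ this projection lands canonically in $H^1(G_{\Sigma,r},\mathcal{T}^*(1))$. The norm relation $(\mathrm{i})$ is then the direct image of Kato's norm relation at finite level, together with the fact that the Hecke eigenvalue $T_\ell$ in $\mathbf{H}^{\ord}_{Np^{\infty}}$ specializes under $\mathbf{H}^{\ord}_{Np^{\infty}} \twoheadrightarrow \mathbf{I}^{\ord}_{\mathbf{f}}$ to $a_\ell(\mathbf{f})$, so that Kato's finite-level Euler factors interpolate to $P(\Frob_\ell;\mathcal{T})$.

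For assertion $(\mathrm{ii})$, at the arithmetic character $\xi=(\chi_{\cyc}^j\phi,\kappa)$ in the critical range, $\xi(\mathbf{z}^{\BK})$ becomes Kato's Beilinson-Kato class for $\mathbf{f}_\kappa$ scaled by the $p$-adic period $\Omega_p(\mathbf{B}_{\mathbf{f}}^{\sgn(j,\phi)},b^{\sgn(j,\phi)}_{\mathcal{T}^{\ord}_\kappa})$ by the defining property of this period and the specialization of $\mathbf{MS}^\pm_{\mathbf{f}}$ in Theorem \ref{MazurKitagawa}. Applying Kato's explicit reciprocity law (\cite[Theorem 16.6]{Kato}) to $\mathbf{f}_\kappa$ twisted by $\phi$ and bookkeeping the complex period against the $p$-optimal basis $b^\pm_{\mathcal{T}^{\ord}_\kappa}$ produces the stated formula in $\Fil^0 D_{\dR}(V^*_{\mathbf{f}_\kappa}(1-j)\otimes\phi^{-1})$. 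The consistency of the signs $\sgn(j,\phi)=(-1)^{j-1}\phi(-1)$ on the two sides is routine.

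For assertion $(\mathrm{iii})$, I would construct $\Xi$ as a two-variable big logarithm obtained by composing the local projection to the singular quotient, the Coates-Wiles derivative along the filtration piece $F^+\mathcal{T}^*(1)$ provided by $(\bf{FIL})$, and the Perrin-Riou logarithm along the cyclotomic variable $C_\infty$. Over the regular locus of $\mathbf{I}^{\nord}_{\mathbf{f}}$ this reproduces the construction of \cite{Oc}; I would then extend $\Xi$ reflexively to the normal domain $\mathbf{I}^{\nord}_{\mathbf{f}}$ using that both $H^1_{/f}(G_{\mathbb{Q}_p},\mathcal{T}^*(1))$ and $\mathbf{I}^{\nord}_{\mathbf{f}}$ are $\mathbf{I}^{\nord}_{\mathbf{f}}$-reflexive, the former via Lemma \ref{last2} applied to the local filtration. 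Injectivity with pseudo-null cokernel reduces, after localizing at a height-one prime $\fq$, to the classical Coleman isomorphism over the discrete valuation ring $(\mathbf{I}^{\nord}_{\mathbf{f}})_\fq$. For $(\mathrm{iii})(\mathrm{a})$, both $\Char_{\mathbf{I}^{\nord}_{\mathbf{f}}}\bigl(H^1_{/f}(G_{\mathbb{Q}_p},\mathcal{T}^*(1))/\mathbf{I}^{\nord}_{\mathbf{f}}\cdot\mathbf{Loc}_{/f}(\mathbf{z}^{\BK})\bigr)$ and $(L_p(\{\mathbf{B}^\pm_{\mathbf{f}}\}))$ are reflexive ideals, and $(\mathrm{ii})$ combined with Kitagawa's defining interpolation $(\ref{interpolation})$ shows that they agree at a Zariski dense set of arithmetic characters. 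The specialization method of Theorem \ref{prop1} and Theorem \ref{prop2} then forces the two reflexive ideals to coincide, and $(\mathrm{iii})(\mathrm{b})$ follows from $(\mathrm{ii})$ combined with the same interpolation property.

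The main obstacle lies in $(\mathrm{iii})$: extending the Coleman construction from a regular two-dimensional base to a merely normal one, and certifying the reflexive ideal equality in $(\mathrm{iii})(\mathrm{a})$ without access to a UFD structure. In Ochiai's earlier treatment, Mahler's basis for $p$-adic distributions on $C_\infty \times D_\infty$ produced $\Xi$ and its properties algorithmically; here one must both verify that the reflexive extension of $\Xi$ lands in $\mathbf{I}^{\nord}_{\mathbf{f}}$ rather than in a strictly larger overring, and verify the characteristic-ideal equality one height-one prime at a time. This is precisely the reason the present paper develops the Krull-dimension-two specialization method of \S~\ref{controltheorem}, so that the characteristic-ideal equality can be patched from local-at-$\fp$ information across both height-one primes lying over $p$ and those avoiding $p$.
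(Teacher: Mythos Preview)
Your outline for parts (i) and (ii) is broadly in line with how the paper treats this theorem: it does not give a self-contained proof but simply cites \cite{Oc}, \cite{Oc1}, \cite{Oc2}, remarking that ``Theorem \ref{Coleman} is the main content of \cite{Oc}.'' So the construction of the Beilinson--Kato elements and the interpolation via Kato's explicit reciprocity law are taken as input from those references, and your sketch of how that input is organized is reasonable.

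The real gap is in your treatment of (iii), and it is a misdiagnosis rather than a missing step. Once $\Xi$ is known to be an injective $\mathbf{I}^{\nord}_{\mathbf{f}}$-module map with pseudo-null cokernel, part (iii)(a) is immediate: $\Xi$ induces a pseudo-isomorphism from $H^1_{/f}(G_{\mathbb{Q}_p},\mathcal{T}^*(1))/\mathbf{I}^{\nord}_{\mathbf{f}}\cdot\mathbf{Loc}_{/f}(\mathbf{z}^{\BK})$ onto $\mathbf{I}^{\nord}_{\mathbf{f}}/(L_p)$, so their characteristic ideals coincide, and in a normal domain a principal ideal is already reflexive. No height-one-prime-by-height-one-prime argument, and in particular no appeal to Theorems \ref{prop1} and \ref{prop2}, is needed here. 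Your last paragraph therefore misidentifies the role of the specialization method: the Krull-dimension-two machinery of \S\ref{controltheorem} is developed for the Euler system bound (Theorem \ref{veryfinal}) and the passage from that bound to the main-conjecture divisibility (Theorem \ref{MainTh}), not for the Coleman map or for Theorem \ref{Coleman} itself.

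Two smaller points. First, your invocation of Lemma \ref{last2} to certify reflexivity of $H^1_{/f}(G_{\mathbb{Q}_p},\mathcal{T}^*(1))$ is misplaced: that lemma controls the finiteness of the global $H^2(G_\Sigma,D[\fa])$ and says nothing about the local singular quotient. Second, the issue of extending $\Xi$ from the regular to the merely normal base is already handled in the cited references under $(\mathbf{NOR})$; the present paper does not revisit that construction, and your proposed ``reflexive extension'' argument is neither needed nor what is done.
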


We refer the reader to \cite{Oc}, \cite{Oc1} and \cite{Oc2} for Beilinson-Kato elements and the construction of Coleman map for Hida deformations. Theorem \ref{Coleman} is the main content of \cite{Oc}, where the $p$-adic $L$-function is constructed via the dual exponential map. Various $p$-adic $L$-functions have been constructed by several people through different methods. We refer the reader to \cite{Oc2} for comparison results on these functions with their connection to the main conjecture.

\begin{lemma}
\label{deRham}
Under the notation as in Theorem \ref{Coleman}, we have the following assertions.

\begin{enumerate}
\item[\rm{(1)}]
There is an exact sequence of finitely generated $\mathbf{I}^{\nord}_{\mathbf{f}}$-modules $:$ 
$$
0 \to H^1(G_{\Sigma},\mathcal{T}^*(1))/\mathbf{I}^{\nord}_{\mathbf{f}} \cdot \mathbf{z}^{\BK}(\{\mathbf{B}^{\pm}_{\mathbf{f}}\}) \to H^1_{/f}(G_{\mathbb{Q}_p},\mathcal{T}^*(1))/\mathbf{I}^{\nord}_{\mathbf{f}} \cdot \mathbf{Loc}_{/f}(\mathbf{z}^{\BK}(\{\mathbf{B}^{\pm}_{\mathbf{f}}\})
$$
$$
\to \Sel_{\mathbb{Q}}(\mathcal{T})^{\PD} \to \textcyr{Sh}^2_{\Sigma}(\mathcal{T}^*(1)) \to 0.
$$

\item[\rm{(2)}]
$\Sel_{\mathbb{Q}}(\mathcal{T})^{\PD}$ is finitely generated and torsion over $\mathbf{I}^{\nord}_{\mathbf{f}}$.
\end{enumerate}
\end{lemma}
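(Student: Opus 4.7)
The plan is to derive (1) from Poitou--Tate global duality and then deduce (2) by combining (1) with Theorem \ref{Coleman}(iii).

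For (1), I would begin from the defining short exact sequence of the Greenberg Selmer group
$$
0 \to \Sel_{\mathbb{Q}}(\mathcal{T}) \to H^1(G_\Sigma,\mathcal{D}^{\nord}) \to H^1(I_{\mathbb{Q}_p},\mathcal{D}^{\nord}/F^+\mathcal{D}^{\nord}) \times \prod_{\ell \ne p} H^1(I_{\mathbb{Q}_\ell},\mathcal{D}^{\nord}),
$$
apply Pontryagin duality and then substitute each dualised term by its Tate-local counterpart for $\mathcal{T}^*(1)$. Under local Tate duality the annihilator of the Greenberg condition at $p$ (involving $F^+\mathcal{D}^{\nord}$ and $I_{\mathbb{Q}_p}$) is the ``finite'' subspace of $H^1(G_{\mathbb{Q}_p},\mathcal{T}^*(1))$, so its quotient appears exactly as the singular part $H^1_{/f}(G_{\mathbb{Q}_p},\mathcal{T}^*(1))$. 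At primes $\ell \in \Sigma\setminus\{p,\infty\}$ the analogous matchings show that unramified conditions are self-dual, and the standing conditions on $\mathcal{T}$ together with the preparations of \S\ref{GaloisCoho} (in particular Lemma \ref{last2}) make the residual local terms disappear. Feeding this into the dualised Poitou--Tate exact sequence of Theorem \ref{theorem:P_T} produces
$$
0 \to H^1_{\mathrm{str}}(\mathcal{T}^*(1)) \to H^1(G_\Sigma,\mathcal{T}^*(1)) \xrightarrow{\mathbf{Loc}_{/f}} H^1_{/f}(G_{\mathbb{Q}_p},\mathcal{T}^*(1)) \to \Sel_{\mathbb{Q}}(\mathcal{T})^{\PD} \to \textcyr{Sh}^2_\Sigma(\mathcal{T}^*(1)) \to 0,
$$
where $H^1_{\mathrm{str}}(\mathcal{T}^*(1)) := \ker(\mathbf{Loc}_{/f})$ denotes the strict Selmer group.

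The hard part will be removing the leading term $H^1_{\mathrm{str}}(\mathcal{T}^*(1))$ after passing to the quotients by $\mathbf{I}^{\nord}_{\mathbf{f}} \cdot \mathbf{z}^{\BK}(\{\mathbf{B}^\pm_{\mathbf{f}}\})$ and $\mathbf{I}^{\nord}_{\mathbf{f}} \cdot \mathbf{Loc}_{/f}(\mathbf{z}^{\BK}(\{\mathbf{B}^\pm_{\mathbf{f}}\}))$. Concretely, I need the inclusion $H^1_{\mathrm{str}}(\mathcal{T}^*(1)) \subseteq \mathbf{I}^{\nord}_{\mathbf{f}} \cdot \mathbf{z}^{\BK}(\{\mathbf{B}^\pm_{\mathbf{f}}\})$, which will guarantee that the induced first map of the four-term sequence of (1) is injective. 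My plan is to establish this by descending to an arithmetic character $\kappa$ with $L(\mathbf{f}_\kappa,1) \ne 0$; at such a point, Kato's original Euler system results force the vanishing of the classical strict Selmer $\ker(\loc_{/f})$ and the generation of $H^1(G_\Sigma,T^*_{\mathbf{f}_\kappa}(1))$ by the Beilinson--Kato class up to finite index. The control lemma (Lemma \ref{last}) together with the specialisation techniques of \S\ref{controltheorem} then allow us to propagate this information back to the $\Lambda$-adic level. The delicate step is controlling pseudo-null phenomena in that propagation, and it is precisely here that the hypotheses $(\mathbf{NOR})$ and $(\mathbf{IRR})$ enter in a crucial way.

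For (2), I would exploit the four-term sequence of (1). Theorem \ref{Coleman}(iii)(a) identifies the characteristic ideal of the second term as $\bigl(L_p(\{\mathbf{B}^{\pm}_{\mathbf{f}}\})\bigr)$, and this principal ideal is nonzero because $L_p$ interpolates the nonvanishing critical values of the complex $L$-function recorded in \eqref{interpolation}. Hence the second term, and therefore also its submodule $H^1(G_\Sigma,\mathcal{T}^*(1))/\mathbf{I}^{\nord}_{\mathbf{f}} \cdot \mathbf{z}^{\BK}(\{\mathbf{B}^\pm_{\mathbf{f}}\})$, is a finitely generated torsion $\mathbf{I}^{\nord}_{\mathbf{f}}$-module. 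Finite generation of $\Sel_{\mathbb{Q}}(\mathcal{T})^{\PD}$ is automatic from the cofinite generation of $H^1(G_\Sigma,\mathcal{D}^{\nord})$ over the Noetherian ring $\mathbf{I}^{\nord}_{\mathbf{f}}$. For the torsion property it then suffices to check that $\textcyr{Sh}^2_\Sigma(\mathcal{T}^*(1))$ is $\mathbf{I}^{\nord}_{\mathbf{f}}$-torsion; this follows from a generic rank computation, since the decomposition of $\mathcal{T}$ into $\pm$-eigenspaces for complex conjugation combined with the local ordinary filtration forces the $\Frac(\mathbf{I}^{\nord}_{\mathbf{f}})$-rank of $H^2(G_\Sigma,\mathcal{T}^*(1))$, and hence of $\textcyr{Sh}^2_\Sigma(\mathcal{T}^*(1))$, to vanish.
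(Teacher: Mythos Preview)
Your Poitou--Tate skeleton for (1) matches the paper's approach, which is extremely terse: the paper simply observes that $(\mathcal{D}^{\nord})^{\PD}\simeq\mathcal{T}^*$, notes that $\textcyr{Sh}^1_\Sigma(\mathcal{D}^{\nord})\subseteq\Sel_{\mathbb{Q}}(\mathcal{T})$ dualises to the rightmost surjection, and declares the rest to follow from Poitou--Tate. In particular the paper does not isolate or prove the injectivity of the first map, so your identification of $H^1_{\mathrm{str}}(\mathcal{T}^*(1))\subseteq\mathbf{I}^{\nord}_{\mathbf{f}}\cdot\mathbf{z}^{\BK}$ as a genuine step goes beyond what the paper signals; the authors presumably regard this as already settled in \cite{Oc2}. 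Your descent-to-Kato plan is reasonable in spirit but heavier machinery than the paper deploys here. One concrete correction: your invocation of Lemma~\ref{last2} to dispose of the local terms at $\ell\in\Sigma\setminus\{p,\infty\}$ is misplaced---that lemma controls the size of $H^2(G_\Sigma,D[\fa])$ under specialisation, not the vanishing of local singular quotients in the Poitou--Tate sequence. The local ordinary filtration you mention is likewise irrelevant to the $\ell\neq p$ terms.

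For (2) the approaches diverge. The paper gives no argument at all and simply refers to \cite[Proposition~4.9]{Oc2}. Your alternative---combining the sequence from (1), the nontriviality of $L_p(\{\mathbf{B}^{\pm}_{\mathbf{f}}\})$ via Theorem~\ref{Coleman}(iii), and a global Euler-characteristic rank count to force $\operatorname{rank}H^2(G_\Sigma,\mathcal{T}^*(1))=0$---is a legitimate self-contained route. The chain is: $L_p\neq 0$ gives torsion of the second term of (1), hence $\operatorname{rank}H^1\le 1$; the Euler characteristic over $\mathbf{I}^{\nord}_{\mathbf{f}}$ gives $\operatorname{rank}H^1-\operatorname{rank}H^2=\operatorname{rank}(\mathcal{T}^*(1))^-=1$; so $\operatorname{rank}H^2=0$ and $\textcyr{Sh}^2_\Sigma$ is torsion. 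This works, but note that the Euler-characteristic formula over the deformation ring is not stated in this paper, and the local ordinary filtration plays no role in that rank count---only the complex-conjugation eigenspace decomposition does.
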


\begin{proof}
For the assertion $\rm(1)$, we have $(\mathcal{D}^{\nord})^{\PD} \simeq \mathcal{T}^*$ as $G_{\mathbb{Q}}$-modules. Since we know $\textcyr{Sh}^1_{\Sigma}(\mathcal{D}^{\nord}) \subseteq \Sel_{\mathbb{Q}}(\mathcal{T})$ and $\mathcal{D}^{\nord} \simeq M^{\PD}(1)$ for $M:=\mathcal{T}^*(1)$, the surjection on the most right side of the above sequence follows. Hence, the existence of the above exact sequence from the Poitou-Tate duality.
 
The assertion $\rm(2)$ is found in \cite[Proposition 4.9]{Oc2}.
\end{proof}

In order to prove one of the expected divisibilities in the Iwasawa main conjecture for the Hida deformations, we will need a deep result of Kato concerning the finiteness of Selmer groups \cite[Section 14]{Kato}. Using this result, it follows that
$$
H^2(G_{\Sigma},\mathcal{D}^{\nord})=0~(\mathrm{weak~Leopoldt~conjecture}).
$$
See \cite[Lemma 8.3]{Oc2} for the proof of this fact. For our purpose, it will be sufficient to have a weaker condition that $H^2(G_{\Sigma},\mathcal{D}^{\nord})$ is a finite group.\\

\subsection{Finiteness of local Galois cohomology groups}

As we have seen in Theorem \ref{Euler} and Theorem \ref{veryfinal}, the vanishing of $H^2(G_{\mathbb{Q}_{\ell}},T^*(1))$ plays a role in Euler system bound. Let us discuss its finiteness in terms of the Fourier expansion of a modular form. Let $f$ be a normalized eigen newform over $\overline{\mathbb{Q}}_p$, of weight $\ge 2$ and tame level $N$. Take $\mathcal{O}$ to be the ring of $p$-integers such that
$a_n(f) \in \mathcal{O}$ for all $n \ge 1$ with $f=\sum_{n=1}^{\infty}a_n(f)q^n$. Denote by $T$ an $\mathcal{O}$-lattice of the $p$-adic Galois representation attached to $f$. Now let $V:=T \otimes_{\mathcal{O}} K$ for the quotient field $K$ of $\mathcal{O}$. Denote by $T^*$ the $\mathcal{O}$-dual representation of $T$. We have the following result.

\begin{lemma}
\label{finite1}
Assume $\ell$ is a prime dividing $N$ and the notation is as above. Then $a_{\ell}(f) \ne 1$ if and only if $H^2(G_{\mathbb{Q}_{\ell}},T^*(1))$ is a finite group.
\end{lemma}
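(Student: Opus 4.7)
The plan is to translate the finiteness of $H^2(G_{\mathbb{Q}_\ell}, T^*(1))$ into a statement about invariants of the Galois representation via local Tate duality, and then use the well-known description of the local component of the modular Galois representation at primes dividing the level to match this invariant condition with the condition $a_\ell(f)\neq 1$.

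First, by local Tate duality applied to the compact $\mathcal{O}[G_{\mathbb{Q}_\ell}]$-module $T^*(1)$, we have a Pontryagin duality
\[
H^2(G_{\mathbb{Q}_\ell}, T^*(1))^{\PD} \simeq H^0\!\left(G_{\mathbb{Q}_\ell}, (T^*(1))^{\PD}(1)\right) \simeq H^0(G_{\mathbb{Q}_\ell}, V/T),
\]
where the last identification uses $(T^*)^{\PD}\simeq T\otimes_{\mathcal{O}} K/\mathcal{O} = V/T$ for the free $\mathcal{O}$-module $T$, and the two Tate twists cancel. Since $V/T$ is a divisible $p$-primary module of finite corank, its $G_{\mathbb{Q}_\ell}$-invariants are finite if and only if $V^{G_{\mathbb{Q}_\ell}}=0$. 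Hence the finiteness of $H^2(G_{\mathbb{Q}_\ell}, T^*(1))$ is equivalent to the vanishing of $V^{G_{\mathbb{Q}_\ell}}$.

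Next, I will use the description of $V|_{G_{\mathbb{Q}_\ell}}$ for the newform $f$ at a prime $\ell \mid N$. Split into cases according to the $\ell$-power in $N$. If $\ell \| N$, then $f$ is a twist of an $\ell$-Steinberg form at $\ell$, and the Weil-Deligne parameter of $V|_{G_{\mathbb{Q}_\ell}}$ is a (twisted) special representation; in particular $V^{I_\ell}$ is one-dimensional and Frobenius acts on it by the eigenvalue $a_\ell(f)$. If $\ell^2 \mid N$, then for a newform one has $V^{I_\ell}=0$ (the local component is either supercuspidal or a ramified principal series) and in that case $a_\ell(f)=0\neq 1$.

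Combining these, $V^{G_{\mathbb{Q}_\ell}} = (V^{I_\ell})^{\Frob_\ell=1}$ is zero in the case $\ell^2\mid N$, and in the case $\ell\|N$ it is zero precisely when $1$ is not an eigenvalue of $\Frob_\ell$ on the one-dimensional space $V^{I_\ell}$, i.e.\ when $a_\ell(f)\neq 1$. Putting everything together yields the equivalence claimed. The main obstacle—really the only non-formal input—is the classical local-global recipe that identifies the $I_\ell$-invariants of $V$ with a line on which $\Frob_\ell$ acts by $a_\ell(f)$ in the $\ell\|N$ case, and the vanishing of $V^{I_\ell}$ in the $\ell^2\mid N$ case; these facts follow from the local Langlands correspondence for $\GL_2(\mathbb{Q}_\ell)$ combined with the compatibility of the Galois representation attached to $f$ with local Langlands at primes of bad reduction (or, more elementarily, from Deligne's construction and Carayol's analysis of newform level structures).
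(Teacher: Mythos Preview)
Your argument is essentially the paper's: reduce via local Tate duality to a vanishing statement in degree zero, then use that for a newform at $\ell\mid N$ the inertia-(co)invariants are at most one-dimensional with Frobenius eigenvalue $a_\ell(f)$. The only cosmetic difference is that you test $V^{G_{\mathbb{Q}_\ell}}=0$ while the paper tests the dual (equivalent) condition $(V^*)_{G_{\mathbb{Q}_\ell}}=0$.

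One correction to your case split: it is not true that $\ell^2\mid N$ forces $V^{I_\ell}=0$. A newform can have local component a principal series $\pi(\chi_1,\chi_2)$ with $\chi_1$ unramified and $\chi_2$ of conductor $\ell^a$, $a\ge 2$; then $\ell^a\parallel N$ but $V^{I_\ell}$ is one-dimensional and $a_\ell(f)\ne 0$. The paper avoids this pitfall by not splitting on $v_\ell(N)$: it simply notes that $V$ is ramified at every $\ell\mid N$, so $\dim_K (V^*)_{I_\ell}\le 1$, and that Frobenius acts on this line by $a_\ell(f)$ whenever it is nonzero (while $a_\ell(f)=0$ when it vanishes). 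Replacing your dichotomy ``$\ell\parallel N$ versus $\ell^2\mid N$'' by ``$\dim V^{I_\ell}=1$ versus $\dim V^{I_\ell}=0$'' repairs the argument without further change.
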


\begin{proof}
$H^2(G_{\mathbb{Q}_{\ell}},T^*(1))$ is a finitely generated $\mathcal{O}$-module and $H^2(G_{\mathbb{Q}_{\ell}},V^*(1)) \simeq H^2(G_{\mathbb{Q}_{\ell}},T^*(1)) \otimes_{\mathbb{Z}_p} \mathbb{Q}_p$. Therefore, the finiteness of $H^2(G_{\mathbb{Q}_{\ell}},T^*(1))$ is equivalent to the vanishing: $H^2(G_{\mathbb{Q}_{\ell}},V^*(1))=0$. Let $I_\ell$ be the inertia group at $\ell$. Then by local and Pontryagin dualities, it suffices to show that $(V^*)_{G_{\mathbb{Q}_{\ell}}}=0$. Note that $(V^*)_{G_{\mathbb{Q}_{\ell}}}$ is the quotient of $(V^*)_{I_{\ell}}$. As $f$ is a newform of tame level $N$, $V$ is ramified at all primes $\ell$ dividing $N$. Hence we have $\dim_K (V^*)_{I_{\ell}} \le 1$. 
If $(V^*)_{I_{\ell}}=0$, there is nothing to prove. So let us assume that
$(V^*)_{I_{\ell}}$ is one-dimensional. Then it defines a character $\varphi:G_{\mathbb{Q}_{\ell}} /I_{\ell} \to K^{\times}$ and thus, the eigenvalue of $\Frob_{\ell}$, acting on this space invertibly, is $a_{\ell}(f)$. Thus,
$$
(1-a_{\ell}(f))(V^*)_{I_{\ell}}=(V^*)_{I_{\ell}} \iff a_{\ell}(f) \ne 1,
$$
which yields that $(V^*)_{G_{\mathbb{Q}_{\ell}}}=0$. The lemma now follows.
\end{proof}

\section{Applications to the Iwasawa Main Conjecture}
\label{proof2}

The aim of this section is to prove the following theorem as an application of Theorem \ref{veryfinal}

\begin{theorem}
\label{MainTh}
Assume that $(\bf{NOR})$, $(\bf{IRR})$ and $(\bf{FIL})$ hold for the nearly ordinary Hida deformation $\mathcal{T}$ attached to a Hida family $\mathbf{f}$. Fix an $\mathbf{I}^{\nord}_{\mathbf{f}}$-basis $\mathbf{B}^{\pm}_{\mathbf{f}}$ of the modules of $\mathbf{I}^{\nord}_{\mathbf{f}}$-adic modular symbols $\mathbf{MS}^{\pm}_{\mathbf{f}}$. Assume further that

\begin{enumerate}
\item[\rm{(i)}]
There exists an element $\sigma_1 \in G_{\mathbb{Q}(\mu_{p^{\infty}})}$ such that $\rho_{\mathbf{f}}^{\nord}(\sigma_1) \simeq \begin{pmatrix} 1 & \epsilon \\ 0 & 1 \end{pmatrix} \in GL_2(\mathbf{I}^{\nord}_{\mathbf{f}})$ for a nonzero element $\epsilon \in \mathbf{I}^{\nord}_{\mathbf{f}}$.

\item[\rm{(ii)}]
There exists an element $\sigma_2 \in G_{\mathbb{Q}}$ such that $\sigma_2$ acts on $\mathcal{T}$ as multiplication by $-1$.

\item[\rm{(iii)}]
If $\ell$ is any prime dividing $Np$, the maximal Galois invariant quotient vanishes; $(\mathcal{T}^*)_{G_{\mathbb{Q}_{\ell}}}=0$.
\end{enumerate}
Let $k$ be the number of minimal $\mathbf{I}^{\nord}_{\mathbf{f}}$-generators of $\textcyr{Sh}^2_{\Sigma}(\mathcal{T}^*(1))$. Then we have
$$
\big(\epsilon^k L_p(\{\mathbf{B}^{\pm}_{\mathbf{f}}\})\big) \subseteq \Char_{\mathbf{I}^{\nord}_{\mathbf{f}}}\big(\Sel_{\mathbb{Q}}(\mathcal{T})^{\PD}\big).
$$
\end{theorem}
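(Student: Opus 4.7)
The plan is to apply Theorem \ref{veryfinal} to the Beilinson--Kato Euler system provided by Theorem \ref{Coleman} with $R=\mathbf{I}^{\nord}_{\mathbf{f}}$, $T=\mathcal{T}$, $\Sigma=\{p,\infty\}\cup\{\ell:\ell\mid N\}$, and $\mathbf{z}_{1}=\mathbf{z}^{\BK}(\{\mathbf{B}^{\pm}_{\mathbf{f}}\})$, and then to combine the resulting Euler system bound
$$
(\epsilon^{k})\,\Char_{R}\!\bigl(H^{1}(G_{\Sigma},\mathcal{T}^{\ast}(1))/R\mathbf{z}^{\BK}\bigr)\subseteq \Char_{R}\!\bigl(\textcyr{Sh}^{2}_{\Sigma}(\mathcal{T}^{\ast}(1))\bigr)
$$
with the four-term exact sequence of Lemma \ref{deRham}(1) and with the identification of the characteristic ideal of $H^{1}_{/f}(G_{\mathbb{Q}_{p}},\mathcal{T}^{\ast}(1))/R\,\mathbf{Loc}_{/f}(\mathbf{z}^{\BK})$ with $(L_{p}(\{\mathbf{B}^{\pm}_{\mathbf{f}}\}))$ supplied by Theorem \ref{Coleman}(iii)(a). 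Under $(\bf{NOR})$ the ring $R$ is a three-dimensional complete local Cohen--Macaulay normal domain and under $(\bf{IRR})$ the lattice $\mathcal{T}$ is free of rank two, so the framework of Theorem \ref{veryfinal} is in force.

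First I would check the six hypotheses of Theorem \ref{veryfinal} in turn. Condition (i) is $(\bf{IRR})$; condition (v) is the standard $\pm$-eigenspace decomposition of the $R$-free lattice $\mathcal{T}$ under complex conjugation (giving rank-one summands because $p$ is odd); and condition (vi) combines hypotheses (i) and (ii) of Theorem \ref{MainTh}. For (iv), $\wedge^{2}\rho_{\mathbf{f}}^{\nord}$ contains a nontrivial power of the universal cyclotomic character $\widetilde{\chi}_{\cyc}\colon G_{\mathbb{Q}}\twoheadrightarrow C_{\infty}\hookrightarrow R^{\times}$, whose image has infinite order, and the same holds after the Tate twist for $\mathcal{T}^{\ast}(1)$. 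For (iii), the local vanishing $H^{2}(G_{\mathbb{Q}_{\ell}},\mathcal{T}^{\ast}(1))=0$ for $\ell\mid Np$ follows from hypothesis (iii) of Theorem \ref{MainTh} via local Tate duality $H^{2}(G_{\mathbb{Q}_{\ell}},\mathcal{T}^{\ast}(1))\simeq(\mathcal{T}^{\ast})_{G_{\mathbb{Q}_{\ell}}}^{\PD}$, while finiteness of $H^{2}(G_{\Sigma},\mathcal{D}^{\nord})$ is the weak Leopoldt statement of Kato recalled immediately after Lemma \ref{deRham}. Finally, for (ii), the torsion-ness of $H^{1}(G_{\Sigma},\mathcal{T}^{\ast}(1))/R\mathbf{z}^{\BK}$ is read off from Lemma \ref{deRham}(1) together with Lemma \ref{deRham}(2) and Theorem \ref{Coleman}(iii)(a), provided $L_{p}(\{\mathbf{B}^{\pm}_{\mathbf{f}}\})\neq 0$ in $R$, which is forced by the interpolation formula of Theorem \ref{Coleman}(ii) since a single nonzero critical value at an arithmetic point suffices.

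Granting all of the above, the additivity of characteristic ideals along Lemma \ref{deRham}(1), combined with Theorem \ref{Coleman}(iii)(a), gives the identity of reflexive ideals
$$
\Char_{R}\!\bigl(H^{1}/R\mathbf{z}^{\BK}\bigr)\cdot\Char_{R}\!\bigl(\Sel_{\mathbb{Q}}(\mathcal{T})^{\PD}\bigr)=\bigl(L_{p}(\{\mathbf{B}^{\pm}_{\mathbf{f}}\})\bigr)\cdot\Char_{R}\!\bigl(\textcyr{Sh}^{2}_{\Sigma}(\mathcal{T}^{\ast}(1))\bigr).
$$
Multiplying the Euler system bound by $(L_{p})$ and substituting the Euler system bound into the right-hand side yields
$$
(\epsilon^{k}L_{p})\cdot\Char_{R}\!\bigl(H^{1}/R\mathbf{z}^{\BK}\bigr)\subseteq \Char_{R}\!\bigl(H^{1}/R\mathbf{z}^{\BK}\bigr)\cdot\Char_{R}\!\bigl(\Sel_{\mathbb{Q}}(\mathcal{T})^{\PD}\bigr).
$$
Since $\Char_{R}(H^{1}/R\mathbf{z}^{\BK})$ is a nonzero ideal (torsion-ness of the module), cancellation one height-one prime at a time inside the DVR $R_{\mathfrak{p}}$ and reflexivity of the resulting inclusions yield the desired $(\epsilon^{k}L_{p})\subseteq \Char_{R}(\Sel_{\mathbb{Q}}(\mathcal{T})^{\PD})$. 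The principal obstacle is the verification of condition (ii) of Theorem \ref{veryfinal}---the non-triviality of the Beilinson--Kato Euler system over the full three-variable deformation, equivalently the non-vanishing of $L_{p}$ in $R$; the remaining conditions propagate directly from the architecture of $\rho_{\mathbf{f}}^{\nord}$.
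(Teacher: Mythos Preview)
Your proposal is correct and follows essentially the same route as the paper: reduce the Selmer-group inclusion, via the four-term exact sequence of Lemma \ref{deRham}(1) and the identification $\Char_{R}(H^{1}_{/f}/R\,\mathbf{Loc}_{/f}(\mathbf{z}^{\BK}))=(L_{p})$ of Theorem \ref{Coleman}(iii)(a), to the Euler system bound $(\epsilon^{k})\Char_{R}(H^{1}/R\mathbf{z}^{\BK})\subseteq\Char_{R}(\textcyr{Sh}^{2}_{\Sigma})$, and obtain the latter from Theorem \ref{veryfinal}. The paper phrases the reduction as a direct equivalence and is terse about checking hypotheses (i)--(vi) of Theorem \ref{veryfinal}; your explicit verification of each condition, including the torsion-ness of $H^{1}/R\mathbf{z}^{\BK}$ via nonvanishing of $L_{p}$ and the weak Leopoldt input for $H^{2}(G_{\Sigma},\mathcal{D}^{\nord})$, fills in exactly what the paper leaves implicit.
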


\begin{proof}
Let $\mathcal{D}^{\nord}$ denote the discrete module associated to $\mathcal{T}$. Then $\mathcal{D}^{\nord} \simeq (\mathcal{T}^*)^{\PD}$. By assumption, $\mathbf{I}^{\nord}_{\mathbf{f}}$ is a three-dimensional Cohen-Macaulay normal domain. Recall that the characteristic ideal is a reflexive ideal and is additive with respect to short exact sequences. Putting together Theorem \ref{Coleman} and Lemma \ref{deRham}, it follows that
$$
\big(\epsilon^k L_p(\{\mathbf{B}^{\pm}_{\mathbf{f}}\})\big) \subseteq \Char_{\mathbf{I}^{\nord}_{\mathbf{f}}}\big(\Sel_{\mathbb{Q}}^{\Sigma}(\mathcal{T})^{\PD}\big)
$$ 
if and only if
$$
\Char_{\mathbf{I}^{\nord}_{\mathbf{f}}}\big(\mathbf{I}^{\nord}_{\mathbf{f}}/(\epsilon^k) \oplus H^1(G_{\Sigma},\mathcal{T}^*(1))/\mathbf{I}^{\nord}_{\mathbf{f}} \mathbf{z}^{\BK}_1(\{\mathbf{B}^{\pm}_{\mathbf{f}}\})\big) \subseteq \Char_{\mathbf{I}^{\nord}_{\mathbf{f}}}\big(\textcyr{Sh}^2_{\Sigma}(\mathcal{T}^*(1))\big).
$$
Or equivalently,
$$
(\epsilon^k) \Char_{\mathbf{I}^{\nord}_{\mathbf{f}}}\big(H^1(G_{\Sigma},\mathcal{T}^*(1))/\mathbf{I}^{\nord}_{\mathbf{f}} \mathbf{z}^{\BK}_1(\{\mathbf{B}^{\pm}_{\mathbf{f}}\})\big) \subseteq \Char_{\mathbf{I}^{\nord}_{\mathbf{f}}}\big(\textcyr{Sh}^2_{\Sigma}(\mathcal{T}^*(1))\big).
$$
Note that all modules appearing inside the characteristic ideals are finitely generated torsion $\mathbf{I}^{\nord}_{\mathbf{f}}$-modules. Then the conclusion of the theorem follows from Theorem \ref{veryfinal}, as all the hypotheses are satisfied.
\end{proof}

We obtain the following corollary of the main theorem. In \cite{MazWi}, some examples of modular $p$-adic Galois representations containing $SL_2(\mathbb{Z}_p)$ are constructed.

\begin{corollary}
\label{MainCo}
Assume that $(\bf{NOR})$ and $(\bf{FIL})$ hold for the nearly ordinary Hida deformation $\mathcal{T}$ attached to a Hida family $\mathbf{f}$ and assume that the maximal Galois invariant quotient vanishes; $(\mathcal{T}^*)_{G_{\mathbb{Q}_{\ell}}}=0$ for every prime $\ell$ dividing $Np$. Fix an $\mathbf{I}^{\nord}_{\mathbf{f}}$-basis $\mathbf{B}^{\pm}_{\mathbf{f}}$ of the modules of $\mathbf{I}^{\nord}_{\mathbf{f}}$-adic modular symbols $\mathbf{MS}^{\pm}_{\mathbf{f}}$. If the image of the restriction map
$$
\rho_{\mathbf{f}}^{\ord}:G_{\mathbb{Q}(\mu_{p^{\infty}})} \to GL_2(\mathbf{I}^{\ord}_{\mathbf{f}})
$$
contains a conjugate of $SL_2(\mathbb{Z}_p[[D_{\infty}]])$, then we have
$$
\big(L_p(\{\mathbf{B}^{\pm}_{\mathbf{f}}\})\big) \subseteq \Char_{\mathbf{I}^{\nord}_{\mathbf{f}}}\big(\Sel_{\mathbb{Q}}(\mathcal{T})^{\PD}\big).
$$
\end{corollary}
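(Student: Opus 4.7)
The plan is to reduce Corollary \ref{MainCo} to Theorem \ref{MainTh} by verifying its hypotheses under the big-image assumption; the key observation is that this assumption forces the element $\epsilon$ in Theorem \ref{MainTh}(i) to be a unit, so that the error factor $(\epsilon^k)$ disappears and one obtains the clean divisibility.

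First I would construct the required Galois elements $\sigma_1,\sigma_2$. By hypothesis there exists $g\in GL_2(\mathbf{I}^{\ord}_{\mathbf{f}})$ such that $g\cdot SL_2(\mathbb{Z}_p[[D_{\infty}]])\cdot g^{-1}$ is contained in the image of $\rho_{\mathbf{f}}^{\ord}|_{G_{\mathbb{Q}(\mu_{p^{\infty}})}}$. Since the universal cyclotomic character $\widetilde{\chi}_{\cyc}$ is trivial on $G_{\mathbb{Q}(\mu_{p^{\infty}})}$, for any $\sigma$ in this subgroup the matrix $\rho_{\mathbf{f}}^{\nord}(\sigma)$ coincides with $\rho_{\mathbf{f}}^{\ord}(\sigma)$ under the natural inclusion $GL_2(\mathbf{I}^{\ord}_{\mathbf{f}})\hookrightarrow GL_2(\mathbf{I}^{\nord}_{\mathbf{f}})$. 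Pulling back the standard unipotent $\begin{pmatrix}1&1\\0&1\end{pmatrix}\in SL_2(\mathbb{Z}_p[[D_{\infty}]])$, I would obtain $\sigma_1\in G_{\mathbb{Q}(\mu_{p^{\infty}})}$ with $\rho_{\mathbf{f}}^{\nord}(\sigma_1)=g\begin{pmatrix}1&1\\0&1\end{pmatrix}g^{-1}$, which is $GL_2(\mathbf{I}^{\nord}_{\mathbf{f}})$-conjugate to $\begin{pmatrix}1&1\\0&1\end{pmatrix}$, verifying Theorem \ref{MainTh}(i) with $\epsilon=1\in(\mathbf{I}^{\nord}_{\mathbf{f}})^{\times}$. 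Similarly, pulling back the central element $-I$ (fixed by any conjugation) gives $\sigma_2\in G_{\mathbb{Q}(\mu_{p^{\infty}})}\subseteq G_{\mathbb{Q}}$ acting on $\mathcal{T}$ as multiplication by $-1$, verifying (ii); hypothesis (iii) is assumed in the statement of the corollary.

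Next I would verify $(\bf{IRR})$. Reducing the containment modulo the maximal ideal $\fm_{\mathbf{I}^{\ord}_{\mathbf{f}}}$, the image of the residual representation $\overline{\rho}_{\mathbf{f}}^{\ord}$ contains a $GL_2$-conjugate of $SL_2(\mathbb{F}_p)$. For $p>2$, the natural representation of $SL_2(\mathbb{F}_p)$ on $\overline{\mathbb{F}}_p^2$ is absolutely irreducible: the unipotent generators $\begin{pmatrix}1&1\\0&1\end{pmatrix}$ and $\begin{pmatrix}1&0\\1&1\end{pmatrix}$ fix the distinct lines spanned by $(1,0)$ and $(0,1)$ respectively, so no common proper invariant subspace exists over any field extension. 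Conjugation preserves absolute irreducibility, so $(\bf{IRR})$ holds.

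With $(\bf{NOR})$ and $(\bf{FIL})$ given by assumption and all remaining hypotheses of Theorem \ref{MainTh} verified with $\epsilon=1$, the theorem yields
$$
(L_p(\{\mathbf{B}^{\pm}_{\mathbf{f}}\}))=(\epsilon^kL_p(\{\mathbf{B}^{\pm}_{\mathbf{f}}\}))\subseteq \Char_{\mathbf{I}^{\nord}_{\mathbf{f}}}(\Sel_{\mathbb{Q}}(\mathcal{T})^{\PD}),
$$
which is the desired inclusion. No single step is genuinely difficult; the main conceptual point — and the reason the corollary deserves a separate statement — is that the big-image hypothesis upgrades the unknown error factor $\epsilon^k$ of Theorem \ref{MainTh} into a unit, yielding the sharp divisibility predicted by the Iwasawa main conjecture. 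The one delicate point worth flagging is that the conjugating element $g$ must be taken integrally in $GL_2(\mathbf{I}^{\ord}_{\mathbf{f}})$ (not merely in its fraction field) for $\epsilon=1$ to be a unit; this integrality is implicit in the phrasing ``the image $\ldots$ contains a conjugate,'' where the conjugate is taken inside the codomain $GL_2(\mathbf{I}^{\ord}_{\mathbf{f}})$ of the representation.
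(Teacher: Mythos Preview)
Your proposal is correct and follows the same route as the paper: verify the hypotheses of Theorem \ref{MainTh} from the big-image assumption, with $\epsilon$ a unit so that the error factor disappears. The paper's own proof is much terser---it only explicitly checks $(\bf{IRR})$ and leaves the construction of $\sigma_1,\sigma_2$ (and hence the fact that one may take $\epsilon=1$) as implicit from the containment of $SL_2(\mathbb{Z}_p[[D_\infty]])$---so your write-up in fact fills in details the paper omits.
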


\begin{proof}
It suffices to check that the condition $(\bf{IRR})$ holds. Since the image of $\rho_{\mathbf{f}}^{\ord}$ contains a conjugate of $SL_2(\mathbb{Z}_p[[D_{\infty}]])$ by assumption, the residual representation associated to $\rho_{\mathbf{f}}^{\ord}$ is absolutely irreducible.
\end{proof}

\begin{remark}
\begin{enumerate}
\item
If the isomorphism holds: $\mathbf{I}^{\nord}_{\mathbf{f}} \simeq \mathcal{O}[[X,Y]]$, then it is not necessary to assume that $(\mathcal{T}^*)_{G_{\mathbb{Q}_{\ell}}}=0$ for every prime $\ell$ dividing $Np$ in Corollary \ref{MainCo}, as it was already considered in \cite{Oc1}.

\item
It is an important problem to find an example of $\mathbf{I}^{\nord}_{\mathbf{f}}$ which is normal, but not regular. The authors do not know if there is such an example. It will be interesting to ask the same problem in the case of the nearly ordinary Hecke algebra attached to a $\Lambda$-adic family of Hilbert modular cusp forms as constructed in \cite{Hid11}. A formulation of the $\Lambda$-adic version of the main conjecture in the totally real case requires an elaborate analysis as given in \cite{Oc3}.  
\end{enumerate}
\end{remark}

\end{document}